\documentclass[11pt, twoside]{article}
\usepackage{ntheorem}
\theorembodyfont{\itshape}

\usepackage{amssymb}  
\usepackage{dsfont}
\usepackage{amsmath}
\usepackage{graphicx}
\usepackage{threeparttable, float}  
\usepackage[mathscr]{euscript}
\usepackage{enumitem}
\setlist[enumerate]{topsep=2pt,partopsep=0pt,itemsep=3pt, parsep=0pt}

\usepackage{indentfirst, multirow, lineno}
\usepackage[figuresright]{rotating}
\usepackage{setspace}
\usepackage{tikz}
\usepackage{accents}
\usepackage{yhmath,bm}

\usepackage{caption}    
\usepackage{subcaption} 

\definecolor{RoyalBlue}{RGB}{65,105,225}
\definecolor{ForestGreen}{RGB}{34,139,34}

\usepackage{yhmath}
\usepackage{accents}
\usepackage{bm}

\usepackage{cite}
\usepackage{appendix}

\newcommand{\undt}[1]{\underaccent{\raisebox{0.2ex}{\scalebox{0.5}{$\bm{\sim}$}}}{#1}}
\newcommand{\undl}[1]{\underaccent{\raisebox{0.2ex}{\scalebox{0.65}{$\bm{\sim}$}}}{#1}}

\allowdisplaybreaks[4]\allowdisplaybreaks

\usepackage[colorlinks=true]{hyperref}
\hypersetup{urlcolor=red, citecolor=blue}

\newtheorem{theorem}{Theorem}[section]

\newtheorem{lemma}[theorem]{Lemma}
\newtheorem{proposition}[theorem]{Proposition}

\newtheorem{remark}{Remark}[section]

\numberwithin{equation}{section}

\newcommand\bee{\begin{equation}}
\newcommand\eee{\end{equation}}

\newcommand\ol{\overline}
\newcommand\kk{\left}
\newcommand\rr{\right}
\newcommand\dd{\displaystyle}
\newcommand\oo{\Omega}
\newcommand\boo{\overline\Omega}
\newcommand\qq{\eqref}
\newcommand\www{\vspace{-2mm}}
\newcommand\zzz{\vspace{-1mm}}

\newcommand\aaa{\vspace{0.5mm}}

\newcommand\ud{\underline}
\newcommand\dx{\mathrm dx}
\newcommand\dt{\mathrm dt}
\newcommand\yy{\infty}
\newcommand\ep{\varepsilon}
\newcommand\nm{\nonumber}

\newenvironment{proof}[1][Proof]{{\noindent\it #1.}}{\hfill \fontsize{10pt}{10pt}$\Box$\vskip 4pt}

\usepackage{lineno}

\begin{document}\thispagestyle{empty}
 \setlength{\baselineskip}{16pt}
\setlength{\abovedisplayskip}{7pt}
\setlength{\belowdisplayskip}{7pt}

\begin{center}{\Large\bf Diffusion contrast induced dynamics in a time-periodic}\\[2mm]
 {\Large\bf competition–diffusion system with equal total resources}
 \footnote{The work of the first author was supported by the Natural Science Foundation of Hunan Province (No. 2026JJ60002) and Hunan Basic Science Research Center for Mathematical Analysis: 2024JC2002. The work of the second author was supported by the National Natural Science Foundation of China: 12571221. The authors are extremely grateful to Professors Binxiang Dai and Zhi-An Wang for their insightful comments and suggestions.}\\[4mm]
Zhenzhen Li\\
{\small School of Mathematics and Statistics, HNP-LAMA, Central South University, Changsha 410083, China}\\[2mm]
Mingxin Wang\footnote{Corresponding author. {\sl E-mail}: mxwang@sxu.edu.cn (M. Wang).}\\
  {\small School of Mathematics and Statistics; Key Laboratory of Complex Systems and Data Science of the Ministry of Education; Shanxi Key Laboratory for Mathematical Technology in Complex Systems, Shanxi University, Taiyuan 030006, China}
\end{center}

\begin{quote}
\noindent{\bf Abstract.} This paper investigates a time-periodic, spatiotemporally heterogeneous competition-diffusion system, where two species have different intrinsic growth rates but equal total resources per period. Using asymptotic analysis and principal eigenvalue theory, we systematically examine its dynamics. When both diffusion rates are small, spatial heterogeneity of the difference in time-averaged resources leads to uniform persistence, stable coexistence, and asymptotic spatial segregation; whereas its spatial uniformity produces multiple regimes—fast-diffuser selection, parameter-dependent coexistence, or slow-diffuser dominance—separated by smooth threshold curves under different structural conditions. In the regime where at least one diffusion rate is large, we obtain a detailed classification of the local dynamics, with sharp delineation of parameter regions for stable coexistence, competitive exclusion, and a novel bistable scenario, where both semi-trivial periodic solutions are linearly stable while an unstable coexistence state also exists. We further characterize the asymptotic profiles of positive solutions in the mixed-scale diffusion limit. Our results show that the interplay between temporal periodicity and spatial heterogeneity can overturn the classical "slower diffuser always prevails" principle, leading to a substantially richer range of ecological outcomes.

\noindent{\bf Keywords:} Time-periodic competition-diffusion system; Spatiotemporal resource heterogeneity; Diffusion-driven dynamical regimes; Competitive exclusion; Asymptotic spatial segregation

\noindent \textbf{AMS Subject Classification (2020)}: 35B35, 35B40, 35K57, 92D25
\end{quote}

\section{Introduction and main results}\label{sec1}
\setcounter{equation}{0}\setlength\arraycolsep{2pt}
\markboth{\rm$~$ \hfill Time-periodic competition-diffusion system\hfill $~$}{\rm$~$ \hfill Z. Li \& M. Wang\hfill $~$}

Nonlinear periodic-parabolic equations arise naturally in population ecology, where environmental conditions display time-periodic variations driven by seasonal or diurnal cycles. Temporal and spatial fluctuations in physical factors---such as temperature, light intensity, and nutrient availability---are widely acknowledged as critical factors that can disrupt and even reverse interspecific competitive interactions.

To elucidate how dispersal evolves in temporally periodic and spatially heterogeneous environments, Hutson et al.~\cite{HMP01} proposed and systematically analyzed the following time-periodic Lotka--Volterra competition-diffusion system  for species with a common resource:
  \begin{equation}\label{1.1}
 \begin{cases}
  u_t=d_1 \Delta u+u\bigl(m(x,t)-u-v\bigr), & x\in\Omega,\; t>0,\\
  v_t=d_2 \Delta v+v\bigl(m(x,t)-u-v\bigr), & x\in\Omega,\; t>0,\\
\partial_\nu u=\partial_\nu v=0, & x\in\partial\Omega,\; t>0,\\
u(x,0)=u_0(x) \geq 0,\not\equiv 0,\; v(x,0)=v_0(x) \geq 0,\not\equiv 0, & x \in \overline{\Omega},
  \end{cases}\end{equation}
where $\Omega \subset \mathbb{R}^n$ is a bounded domain with smooth boundary, $\nu$ is the outward unit normal vector on $\partial\Omega$, $d_1,d_2 > 0$ denote the diffusion rates of $u$ and $v$, respectively. The function $m(x, t)$ is $T$-periodic in time. Representing the local carrying capacity or intrinsic growth rate of the two species, it reflects the environmental influence on the species.

When $m(x,t)=m(x)$ is independent of time $t$,  Dockery et al. \cite{DHMP98} confirmed that
the slower diffuser actually always wipes out its faster counterpart regardless of their initial values---this is known as “the slower diffuser always prevails”. Since then, the effects
of spatial heterogeneity have been studied extensively in the past few decades, from various  perspectives. See \cite{CC03,Lou06,Lou08,LL14,HLM02,GHN20,GHN20a,LL23} and references therein.

To state the result of Dockery et al. \cite{DHMP98} precisely, we define
$Q_T=\Omega \times (0,T]$ and $S_T=\partial\Omega \times (0,T]$ in the usual way. 
For $k,l\geq 0$, we define
  \[C_T^{k,l}(\overline Q_T)=\big\{\phi\in C^{k,l}(\overline Q_T): \phi(x,0)=\phi(x, T),\; \forall\, x\in\boo\big\},\]
and the temporal and spatial averages of a function $m\in C_T(\overline Q_T)$ by
  \[\widehat{m}(x):=\frac{1}{T}\int_0^T m(x,t)\mathrm dt,\quad \ol m(t):=\frac{1}{|\Omega|}\int_{\Omega}m(x,t)\dx.\]
Consider the following
(single species) logistic equation:
\bee\label{1.2}
 \begin{cases}
\theta_t=d\Delta \theta+\theta(m(x,t)-\theta) &\text{in}\;\;Q_T,\\
\partial_{\nu}\theta=0&\text{on}\;\;S_T,\\
\theta(x,0)=\theta(x,T)&\text{on}\;\;\boo,
  \end{cases} \eee
where $d>0$, $m\in C^{\alpha,\alpha/2}_T(\overline Q_T)$. Let $\mu(d,m)$ be the principal eigenvalue of
 \bee\label{1.3}
\begin{cases}
\varphi_t-d\Delta\varphi-m(x,t)\varphi=\mu\varphi&\text{in}\;\;Q_T,\\
\partial_{\nu}\varphi=0&\text{on}\;\;S_T,\\
\varphi(x,0)=\varphi(x,T)&\text{on}\;\;\ol\Omega.
\end{cases}
 \eee

It is well-known that \qq{1.2} admits a positive solution, denoted by $\theta_{d,m}(x,t)$, if and only if $\mu(d,m)<0$. Whenever $\theta_{d,m}$ exists, it belongs to $C^{2+\alpha,1+\alpha/2}_T(\overline Q_T)$, is uniquely determined, and is globally asymptotically stable (with respect to positive initial data). If $\mu(d,m)\geq 0$, then $\theta=0$ is globally asymptotically stable for all nonnegative initial values.

When $m>0$ in $Q_T$, system \eqref{1.1} admits two semi-trivial $T$-periodic solutions $(\theta_{d_1,m},0)$ and $(0,\theta_{d_2,m})$. The following theorem demonstrates that slow diffusion dominance always arises when $m$ does not depend on time $t$.\vskip 4pt

\noindent{\bf Theorem A} (\!\!\cite{DHMP98}) {\it Let $m(x,t)$ be spatially heterogeneous but temporally homogeneous, namely $m(x,t)=m(x)$. If $d_1<d_2$, then $(\theta_{d_1,m},0)$ is globally asymptotically stable for the system \eqref{1.1}. That is, solution $(u,v)$ of \eqref{1.1} converges to  $(\theta_{d_1,m},0)$ as $t\to\infty$. }\vskip 4pt

Theorem {\bf A} indicates that in a spatially varying yet temporally constant environment, faster dispersal is always selected against under completely random dispersal. In particular, coexistence of the two species is impossible. However, the scenario changes drastically when additional temporal periodicity is incorporated into the model. In the pioneering work in 2001, Hutson et al. \cite{HMP01} demonstrated that by considering various choices of the resource function $m(x,t)$ and the dispersal rates $d_1, d_2$ of the two species, three outcomes are possible: selection of the lower dispersal rate, selection of the higher dispersal rate, or stable coexistence of the two species.

More recently, Bai et al.~\cite{BHN23} provided a detailed classification for the dynamics of \eqref{1.1}:
\begin{enumerate}
\item[(a)] If one diffusion rate is sufficiently large, the slower diffuser (provided its rate is not extremely low) drives the faster one to extinction, analogous to the time-independent case.
\item[(b)] When one rate is large and the other relatively small, both semi-trivial periodic solutions lose stability, leading to coexistence.
\item[(c)] When both diffusion rates are sufficiently small, they almost completely characterize the local dynamics of system \eqref{1.1} in terms of $m, d_1$ and $d_2$.
\end{enumerate}
In addition, they also showed that when $I(\Phi)>0$ with $I(\Phi)$ defined in \eqref{eq1.9}, there exists $0<\ep\ll 1$ such that the faster diffuser $(0,\theta_{d_2,m})$ may become globally asymptotically stable when $0<d_1<\ep$ and $d_2>d_1$ sufficiently close to $d_1$. This contrasts sharply with the spatially heterogeneous but 
temporally homogeneous case in Theorem {\bf A}.

These findings highlight a fundamental difference between time-periodic and time-independent environments: unlike the latter case where slower diffusers always dominate~\cite{DHMP98}, the time-periodic setting permits richer outcomes, including faster-diffuser dominance and coexistence, depending on parameter choices.

The above works concern ecologically equivalent competitors sharing the same spatio-temporal resource function. In the present paper, we consider a substantially different situation in which the two species experience distinct intrinsic growth rates and one resource may be autonomous. Specifically, we study the population dynamics of the following system:
  \begin{equation}\label{1.4}
\begin{cases}
u_t=d_1 \Delta u+u\bigl(m_1(x,t)-u-v\bigr), & x\in\Omega,\; t > 0,\\
v_t=d_2 \Delta v+v\bigl(m_2(x,t)-u-v\bigr), & x\in\Omega,\; t > 0,\\
\partial_\nu u=\partial_\nu v=0, & x\in\partial\Omega,\; t > 0,\\
u(x,0)=u_0(x) \geq 0,\not\equiv 0,\; v(x,0)=v_0(x) \geq 0,\not\equiv 0, & x \in \overline{\Omega},
\end{cases}
\end{equation}
where $m_i(x,t)$ ($i=1,2$) are positive $T$-periodic functions representing the intrinsic growth rates of species $u$ and $v$, respectively. The corresponding $T$-periodic problem is
  \begin{equation}\label{1.5}
 \begin{cases}
 U_t=d_1 \Delta U+U\bigl(m_1(x,t)-U-V\bigr) &\text{in }\; Q_T,\\[2pt]
V_t=d_2 \Delta V+V\bigl(m_2(x,t)-U-V\bigr) &\text{in }\; Q_T,\\[2pt]
\partial_\nu U=\partial_\nu V=0 &\text{on }\; S_T,\\[2pt]
U(x,0)=U(x,T),\; V(x,0)=V(x,T) &\text{on }\; \overline{\Omega}.
  \end{cases}\end{equation}

When $m_i$ are time-independent ($m_i=m_i(x)$), system \eqref{1.4} has attracted considerable attention over the past two decades; see, e.g.,~\cite{HN13a, HN13b, DHMP98,HN16a,HN16b,HN17}. Additional contributions can be found in~\cite{CL84,HN13b,HLM02,Lou06,LN12,ZX18,ZTX21} and references therein. However, much less is known for the time-periodic case with $m_1\not\equiv m_2$. From the analytical point of view, this setting is not a routine extension of the common-resource case ($m_1\equiv m_2$), because the two semi-trivial states are now governed by different scalar periodic-parabolic equations, and the corresponding invasion analysis becomes genuinely asymmetric. The purpose of this paper is to investigate how diffusion rates, temporal periodicity, and spatial heterogeneity jointly affect the competition outcome in system \eqref{1.4} when $m_1\not\equiv m_2$.
As a first step, we focus on a biologically meaningful scenario described by the following condition:
\setlist[enumerate]{topsep=4pt,partopsep=1pt,itemsep=3pt, parsep=1pt}
 \begin{enumerate}[leftmargin=15mm]
\item[{\bf(M)}] For $i=1,2$, $m_i\in C_T^{2,1}(\overline{Q}_T)$ satisfies $m_i > 0$, and $\nabla m_i \not\equiv 0$ on $\overline{\Omega}\times[0,T]$;
 $m_1 \not\equiv m_2$ but $\int_{Q_T}(m_1-m_2)\mathrm{d}x\mathrm{d}t=0$.
\end{enumerate}
Condition {\bf(M)} assumes that the two species experience different spatiotemporal resource distributions, yet the \emph{total amount of resources} available to the two species over a full cycle is equal. In particular, it also covers both the autonomous case and the mixed case where one growth rate is autonomous while the other is time-periodic, provided that each $m_i$ remains spatially heterogeneous. This hypothesis is motivated by the works of He and Ni~\cite{HN13a,HN17} and allows us to isolate the effect of spatio-temporal variation from mere resource imbalance.

For system \eqref{1.4}, the trivial solution $(0,0)$ is always linearly unstable for all parameter values. By condition {\bf(M)}, system \eqref{1.5} admits semi-trivial solutions $(\theta_{d_1,m_1},0)$ and $(0,\theta_{d_2,m_2})$.

Let $m\in C^{\alpha,\alpha/2}_T(\overline Q_T)$ be spatially heterogeneous. For each $x\in\boo$, the periodic problem
 \bee\begin{cases}\label{1.6}
  \Phi_t=\Phi(m(x,t)-\Phi),\;\; 0<t\le T,\\[0.1mm]
 \Phi(x,0)=\Phi(x,T)
  \end{cases}\eee
admits a positive solution $\Phi_m$ if and only if $\widehat{m}(x)>0$, and $\Phi_m$ is unique and $\Phi_m\le \max_{\ol Q_T}|m|$ when it exists. Moreover, we infer from the continuous dependence of the solutions to ODE on the parameters that $\Phi_{m}\in C^{\alpha,1}(\overline Q_T)$ (cf. \cite{H09}). Moreover, 
 \bee
 \nabla\Phi_m\not\equiv 0,\;\;\widehat{m}(x)=\widehat \Phi_m(x).
 \label{1.7}\eee

The main contributions are summarized as follows.\vskip 1pt

{\bf Both diffusion rates are small}: When $d_1$ and $d_2$ are sufficiently small, the dynamics are primarily governed by the difference in time-averaged resource quantities $\widehat{m}_1(x)-\widehat{m}_2(x)$.\vspace{-2mm}

\begin{theorem}\label{th1.1} Assume that
  \bee \widehat{m}_1(x)-\widehat{m}_2(x)\not\equiv 0\;\;\;\text{on}\;\;\boo,\label{1.8}\eee
i.e., the averaged difference $\widehat{m}_1(x)-\widehat{m}_2(x)$ is spatially heterogeneous. Define
  \begin{align*}
  \Omega_1&=\{x\in \overline{\Omega}: \widehat{m}_1(x)>\widehat{m}_2(x)\},\\
  \Omega_2&=\{x\in \overline{\Omega}: \widehat{m}_1(x)<\widehat{m}_2(x)\},\\
  \Omega_0&=\overline{\Omega} \setminus (\Omega_1 \cup \Omega_2).
   \end{align*}
Then there exists $\varepsilon_0 > 0$ such that for all $d_1,d_2\in (0,\varepsilon_0)$, system \eqref{1.4} is uniformly persistent, and system \eqref{1.5} admits a linearly stable positive solution. Moreover, in the space $[C((\overline{\Omega}\setminus\Omega_0)\times[0,T])]^2$, any positive solution $(U,V)$ of \eqref{1.5} satisfies
    \bee\lim_{(d_1,d_2)\to(0,0)}(U,V)=\begin{cases}
    (\Phi_{m_1}(x,t), 0), & x\in \Omega_1,\\[2pt]
    (0, \Phi_{m_2}(x,t)), & x\in \Omega_2,
    \end{cases}\label{a.9}\eee
where $\Phi_{m_i}$ is the unique positive solution to \qq{1.6} with $m=m_i$.\vspace{-2mm}
\end{theorem}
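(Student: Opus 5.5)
The plan has three parts. First, show that both semi-trivial periodic solutions are linearly unstable. Second, deduce uniform persistence and a linearly stable positive solution from the monotone (competitive) periodic-parabolic theory. Third, prove the segregation limit \eqref{a.9} by a blow-up and comparison argument.

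\textbf{Instability of the semi-trivial states.} The stability of $(\theta_{d_1,m_1},0)$ is governed by the sign of $\mu(d_2,m_2-\theta_{d_1,m_1})$, and that of $(0,\theta_{d_2,m_2})$ by $\mu(d_1,m_1-\theta_{d_2,m_2})$. We will use two small-diffusion facts.
\begin{itemize}
\item[(i)] $\theta_{d,m}\to\Phi_m$ uniformly on $\overline Q_T$ as $d\to0$. This holds since $\widehat m_i>0$ from $m_i>0$, and can be proved by sub/supersolutions built from $\Phi_m$ plus small corrections, using $\Phi_m\in C^{\alpha,1}$ and the smoothness of $m$.
\item[(ii)] For a continuous periodic weight $h$, $\lim_{d\to0}\mu(d,h)=-\max_{\overline\Omega}\widehat h(x)$, the known small-diffusion limit of the periodic principal eigenvalue.
\end{itemize}
Since $\mu(d,\cdot)$ is Lipschitz in the weight in sup norm, combining (i) and (ii) gives
\[\lim_{d_1,d_2\to0}\mu(d_2,m_2-\theta_{d_1,m_1})=-\max_{\overline\Omega}\bigl(\widehat m_2-\widehat\Phi_{m_1}\bigr)=-\max_{\overline\Omega}\bigl(\widehat m_2-\widehat m_1\bigr),\]
where we used \eqref{1.7}. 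Now \textbf{(M)} gives $\int_\Omega(\widehat m_1-\widehat m_2)\,\mathrm dx=0$, and \eqref{1.8} says this function is not identically zero, so it changes sign. Hence both $\Omega_1$ and $\Omega_2$ are nonempty. The limit above is then strictly negative, and symmetrically $\mu(d_1,m_1-\theta_{d_2,m_2})\to-\max(\widehat m_1-\widehat m_2)<0$. So there is $\varepsilon_0>0$ such that for $d_1,d_2\in(0,\varepsilon_0)$ both semi-trivial states are linearly unstable, uniformly in the two small parameters. The uniformity in the joint limit $(d_1,d_2)\to(0,0)$ is exactly what the Lipschitz estimate together with (i) provides.

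\textbf{Persistence and a stable positive solution.} Next we apply the theory of monotone periodic competitive systems (Hess' book; Hsu–Smith–Waltman). The Poincaré map $P$ is strongly monotone with respect to the competitive order, and $(0,0)$ is a repeller. With both semi-trivial fixed points unstable, $P$ is uniformly persistent and has a positive fixed point. Moreover, the order interval $[(0,\theta_{d_2,m_2}),(\theta_{d_1,m_1},0)]$ in the competitive order contains a minimal and a maximal positive fixed point, each stable from the corresponding side. Linear stability requires one more argument. I would use the Dancer–Hess index argument: the fixed point index of $P$ in the positive cone equals $1$, while the semi-trivial states contribute $0$. I would then pass to a generic perturbation, or invoke the fact that the principal Floquet multiplier of the minimal fixed point is at most $1$. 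The cleaner route is to prove uniqueness of positive solutions for small $d_i$ directly via the segregation limit below, and then obtain linear stability from the limiting structure. I expect either this step, or the uniform convergence near $\partial\Omega_1$, to be the main obstacle.

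\textbf{Segregation limit \eqref{a.9}.} Let $(U,V)$ be a positive solution and let $K\subset\Omega_1$ be compact. We have the a priori bounds $U\le\theta_{d_1,m_1}$ and $V\le\theta_{d_2,m_2}$. First we show $V\to0$ uniformly on $K\times[0,T]$. Suppose instead that $V(x_k,t_k)\ge\delta$ along a sequence. The equation for $V$ has periodic weight $m_2-U-V$. Using localized principal eigenvalues on small balls, with the localized version of (ii), a persistent species must satisfy $\widehat{m_2-U-V}\approx0$ wherever it is bounded away from zero. Combined with the same relation for $U$, the pair of averaged identities forces $\widehat m_1=\widehat m_2$ where both persist, a contradiction in $\Omega_1$.

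Rigorously, I would take a local limit as $d_i\to0$. Harnack inequalities for parabolic equations with small diffusion are delicate, so I would use local sub/supersolutions instead. On a neighborhood of $x_0\in\Omega_1$, I would construct a subsolution for $U$ close to $\Phi_{m_1}-\eta$, valid when $V$ is small, and a supersolution for $V$ decaying in time-average because $\widehat m_2-\widehat\Phi_{m_1}<0$ there. An iteration in the spirit of the Dancer–Du "small diffusion leads to segregation" method then closes the argument. Once $V\to0$ on $K$, the $U$-equation tends pointwise to the ODE \eqref{1.6} with $m=m_1$, and its uniqueness gives $U\to\Phi_{m_1}$. The set $\Omega_2$ is handled symmetrically.
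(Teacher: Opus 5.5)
Your instability step is correct. Using $|\mu(d,h_1)-\mu(d,h_2)|\le\|h_1-h_2\|_\infty$ to control the \emph{joint} limit $(d_1,d_2)\to(0,0)$ is actually more careful than the iterated limits the paper writes down. For persistence and the stable coexistence state, the paper simply quotes Hess via Proposition~\ref{p2.4}(3) once both eigenvalues are negative. Your fallback of deducing uniqueness from the segregation limit would not work: \eqref{a.9} says nothing on $\Omega_0$, and uniqueness alone would not make the principal Floquet multiplier strictly less than $1$.

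The genuine gap is the segregation limit. As you set it up, the local argument is circular: the subsolution $U\gtrsim\Phi_{m_1}-\eta$ needs $V$ small, while the decaying supersolution for $V$ needs $U\approx\Phi_{m_1}$. The heuristic that a persisting species has $\widehat{m_2-U-V}\approx0$ is also not available, for four reasons. First, $U,V$ have no pointwise limits a priori. Second, $d_2\Delta V/V$ need not be small across layers. Third, the case where $V$ persists while $U$ vanishes at a point of $\Omega_1$ is not treated. Fourth, blowing up at a point is awkward because $d_1,d_2\to0$ at unrelated rates.

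``An iteration in the spirit of Dancer--Du'' is the right instinct, but you do not supply what lets it pass to the limit and close. The paper sandwiches every positive solution between the monotone scheme \eqref{3.2} of \emph{scalar} maximal periodic solutions: $\tilde u_1=\theta_{d_1,m_1}$, $\undt v_k=\Theta_{d_2,m_2-\tilde u_k}$, $\tilde u_{k+1}=\Theta_{d_1,m_1-\undt v_k}$, and symmetrically. This gives \eqref{3.3}, and each iterate solves a problem with a single diffusion rate, the other entering only through the weight. These weights, e.g.\ $m_2-\tilde u_1\approx m_2-\Phi_{m_1}$, have sign-changing time averages, so your fact (i) for positive weights is not enough.

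The key input is Lemma~\ref{lm2.5}: $\Theta_{d,f}\to\Theta_f$ in $C(\overline Q_T)$ for arbitrary $f$. Its upper bound comes from auxiliary Neumann problems, and its lower bound from boundary flattening, blow-up and a Liouville theorem; this is also where points of $\Omega_1\cap\partial\Omega$ are handled. Lemma~\ref{lm2.6} supplies the joint-limit version. Sending $(d_1,d_2)\to(0,0)$ at fixed $k$ gives the ODE iterates \eqref{3.5}. The paper then lets $k\to\infty$ and identifies the limits at $x\in\Omega_1$ by periodic ODE competitive exclusion (Proposition~\ref{p3.1}), using $\widehat U_*\ge\widehat m_1-\widehat m_2>0$ there.

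On compact sets you can even stop after finitely many steps. Since $\widehat{\Theta}_f=(\widehat f)^+$, one has $\widehat{\widetilde V}_{k+1}=\bigl(\widehat{\widetilde V}_k-(\widehat m_1-\widehat m_2)\bigr)^+$ on $\Omega_1$. Hence $\widetilde V_k\equiv0$ and $\undl U_k=\Phi_{m_1}$ on $\{\widehat m_1-\widehat m_2\ge\eta\}$ for $k$ large.

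As for your worry near $\partial\Omega_1$: only locally uniform convergence is possible. Uniform convergence up to a common boundary point of $\Omega_1$ and $\Omega_2$ would contradict the continuity of $V$, since $\Phi_{m_2}>0$.
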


This theorem reveals that at small diffusion rates, the system achieves uniform persistence and stable coexistence. However, as the diffusion rate approaches zero, the limiting profile becomes spatially segregated, which depends on the spatial dominance of $\widehat{m}_i(x)$---specifically, the size relationship between $\widehat{m}_1(x)$ and $\widehat{m}_2(x)$. In this scenario, the dominant species will occupy favorable regions and exclude the other species.

When condition \qq{1.8} fails, one only has
$\widehat m_1(x)\equiv \widehat m_2(x)$ on $\overline\Omega$, which does not
in general imply that $m_1(x,t)-m_2(x,t)$ is independent of $x$.
For example, $m_1(x,t)-m_2(x,t)=f(x)h(t)$ with $h$ being $T$-periodic and
$\int_0^T h(t)\,\mathrm dt=0$ also leads to the failure of \qq{1.8}.
In what follows, we focus on the special case where $m_1(x,t)-m_2(x,t)$ is
independent of $x$, and obtain a detailed classification paralleling that
of~\cite{BHN23} for the common-resource case. For $i=1,2$, let $\Phi_i=\Phi_{m_i}$ and define
\begin{equation}\label{eq1.9}
I(\Phi_i)=\int_{\Omega}\kk[\exp\kk(\frac{2}{T}\int_0^T\ln\Phi_i
  \,\mathrm{d}t\rr)\int_0^T \frac{\Delta \Phi_i}{\Phi_i} \,\mathrm{d}t\rr] \mathrm{d}x.
\end{equation}

\begin{theorem}\label{th1.2} Suppose that \eqref{1.8} does not hold, i.e., $\widehat{m}_1(x)\equiv\widehat{m}_2(x)$, and $m_1-m_2$ is independent of $x$. We further assume the following condition:
\begin{enumerate}[leftmargin=15mm]
\item[{\bf(M1)}] $\partial_{\nu}m_i(x,t)=0 \quad \forall\,(x,t)\in \partial\Omega\times[0,T].$
\end{enumerate}
Then there exists $\varepsilon_0>0$ with $\varepsilon_0\ll 1$ such that all the assertions below hold.
  \begin{enumerate}
\item[\rm(1)] {\rm(}Selection for faster diffusion{\rm)} If $I(\Phi_i)>0$ and $\min_{x\in\overline\Omega}\int_0^T\frac{\Delta \Phi_i}{\Phi_i}(x,t)\mathrm dt>0$ for $i=1,2$, then $(\theta_{d_1,m_1},0)$ is linearly stable and $(0,\theta_{d_2,m_2})$ is linearly unstable for all $(d_1,d_2)\in(0,\ep_0)\times(0,d_1)$; see the blue region in Figure {\rm\ref{fig1}(b)}.
\item[\rm(2)] If $I(\Phi_i)>0$ and $\min_{x\in\overline\Omega}\int_0^T\frac{\Delta \Phi_i}{\Phi_i}(x,t)\mathrm dt<0$ for $i=1,2$, then there exists a $C^1$ function $\ol{\mathsf{d}}_2:(0,\ep_0)\to(0,\ep_0)$ with $0<\ol{\mathsf{d}}_2(d_1)<d_1$ such that
\begin{itemize}
\item[$(2a)$] {\rm(}Stable coexistence{\em)} System \eqref{1.4} is uniformly persistent and system \qq{1.5} admits a linearly stable positive solution for all $(d_1,d_2)\in(0,\ep_0)\times(0, \ol{\mathsf{d}}_2(d_1))$; see the yellow region in Figure {\rm\ref{fig1}(c)}.
\item[$(2b)$]  {\rm(}Selection for faster diffusion{\rm)} $(\theta_{d_1,m_1},0)$ is linearly stable and $(0,\theta_{d_2,m_2})$ is linearly unstable for all $(d_1,d_2)\in(0,\ep_0)\times(\ol{\textstyle\mathsf{d}}_2(d_1), d_1)$; see the blue region in Figure {\rm\ref{fig1}(c)}.
    \end{itemize}
Moreover,
 \[\lim_{d_1\to 0^+}\ol{\mathsf{d}}_2(d_1)=0,\quad\lim_{d_1\to 0^+}\overline{\mathsf{d}}_2'(d_1)=\overline{\mathsf{d}}_2'(0)\in(0,1).\aaa\]
\item[\rm(3)] If $I(\Phi_i)<0$ for $i=1,2$, and the condition
  \begin{enumerate}[leftmargin=15mm]
\item[{\bf(M2$'$)}] for $i=1,2$, $m_i(x,t)\not\equiv a(x){\rm e}^{\int_0^tb(s)\mathrm ds}+b(t)$ for any functions $a(x)\in C(\overline\Omega)$ and $b(t)\in C([0,T])$
  \end{enumerate}
holds, then there exists a $C^1$ function $\ud{\mathsf{d}}_2:(0,\ep_0)\to(0,\ep_0)$ with $0<\ud{\mathsf{d}}_2(d_1)<d_1$ such that
\begin{enumerate}
\item[$(3a)$]  {\rm(}Stable coexistence{\em)} System \eqref{1.4} is uniformly persistent and system \qq{1.5} admits a linearly stable positive solution for all $(d_1,d_2)\in(0,\ep_0)\times(0,\ud{\mathsf{d}}_2(d_1))$; see the yellow region in Figure {\rm\ref{fig1}(d)}.
\item[$(3b)$]  {\rm(}Selection for slower diffusion{\rm)} $(\theta_{d_1,m_1},0)$ is linearly unstable and $(0,\theta_{d_2,m_2})$ is linearly stable for all $(d_1,d_2)\in(0,\ep_0)\times(\ud{\mathsf{d}}_2(d_1),d_1)$; see the black region in Figure {\rm\ref{fig1}(d)}.
\end{enumerate}
 Moreover,
  \[\lim_{d_1\to 0^+}\ud{\mathsf{d}}_2(d_1)=0,\quad\lim_{d_1\to 0^+}\ud{\mathsf{d}}_2'(d_1)=\ud{\mathsf{d}}_2'(0)\in(0,1).\]
\end{enumerate}
\end{theorem}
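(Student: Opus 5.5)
The plan is to reduce both invasion problems to the common-resource problem studied by Bai et al.~\cite{BHN23}. The key observation is that under the hypotheses of Theorem \ref{th1.2}, $g(t):=m_1(x,t)-m_2(x,t)$ depends only on $t$. Moreover $\widehat m_1\equiv\widehat m_2$ gives $\int_0^T g\,\mathrm dt=0$.

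\textbf{Step 1: shifting a zero-mean time potential.} The linearized stability of $(\theta_{d_1,m_1},0)$ is governed by the principal eigenvalue of
\[\psi_t-d_2\Delta\psi-(m_2-\theta_{d_1,m_1})\psi=\lambda\psi\]
with periodic and Neumann conditions. Substituting $\psi=\mathrm e^{\int_0^t g(s)\mathrm ds}\tilde\psi$ is admissible, because $\mathrm e^{\int_0^t g}$ is $T$-periodic since $\int_0^Tg=0$. It converts this into the same problem with potential $m_1-\theta_{d_1,m_1}$. Hence
\[\lambda_1:=\mu\big(d_2,\,m_2-\theta_{d_1,m_1}\big)=\mu\big(d_2,\,m_1-\theta_{d_1,m_1}\big).\]
This is exactly the invasion eigenvalue of the common-resource system \eqref{1.1} with $m=m_1$. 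Symmetrically, the stability of $(0,\theta_{d_2,m_2})$ is governed by
\[\lambda_2:=\mu\big(d_1,\,m_1-\theta_{d_2,m_2}\big)=\mu\big(d_1,\,m_2-\theta_{d_2,m_2}\big),\]
which is the common-resource invasion eigenvalue with $m=m_2$. This explains why the hypotheses in (1)--(3) are imposed for both $i=1,2$.

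\textbf{Step 2: small-diffusion expansions.} Under {\bf(M)}, {\bf(M1)} (and {\bf(M2$'$)} in case (3)), I would use the small-diffusion expansion from \cite{BHN23}: $\theta_{d,m}=\Phi_m+d\,\Phi_m^{(1)}+o(d)$, where $\Phi_m^{(1)}$ solves the linearized ODE $\Phi^{(1)}_t=(m-2\Phi_m)\Phi^{(1)}+\Delta\Phi_m$. Condition {\bf(M1)} is what ensures this is compatible with the Neumann boundary. From it I would derive the sign-determining expansion of $\mu(d_2,m-\theta_{d_1,m})$ for $0<d_2<d_1\ll1$. The leading behaviour comes from a weighted combination involving $I(\Phi_m)$ and the pointwise quantity $\int_0^T\Delta\Phi_m/\Phi_m\,\mathrm dt$. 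I would write it as
\[\mu\approx d_1\,F_m(d_2/d_1)+o(d_1)\]
with an explicit $F_m$, and do the same for $\mu(d_1,m-\theta_{d_2,m})$.

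\textbf{Step 3: reading off the regimes.}
\begin{itemize}
\item In case (1), both signs are definite on $\{d_2<d_1\}$. One gets $\lambda_1>0$ and $\lambda_2<0$, i.e.\ $(\theta_{d_1,m_1},0)$ is linearly stable and $(0,\theta_{d_2,m_2})$ is linearly unstable.
\item In cases (2) and (3), exactly one of $\lambda_1,\lambda_2$ changes sign across a ray $d_2=k\,d_1$ with $k\in(0,1)$ a simple root of $F_{m_i}$. The implicit function theorem applied to $\lambda_i(d_1,d_2)/d_1$ in the variables $(d_1,s=d_2/d_1)$ then yields the $C^1$ curves $\ol{\mathsf d}_2$ (respectively $\ud{\mathsf d}_2$). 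It also gives $\mathsf d_2(d_1)\to0$ and $\mathsf d_2'(0)=k\in(0,1)$. The other eigenvalue keeps a fixed sign throughout $d_2<d_1$.
\item Below the curve, both semi-trivial states are linearly unstable. Uniform persistence then follows from the standard theory of monotone periodic (Poincaré map) systems, using Hess's framework and acyclicity of the boundary flow.
\end{itemize}

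\textbf{Step 4 and main obstacle.} The step I expect to be hardest is the existence of a \emph{linearly stable} positive periodic solution in the coexistence regions, beyond mere existence. For this I would follow \cite{BHN23}. The idea is to show, for $d_1,d_2$ small in the wedge, that every positive solution of \eqref{1.5} is close to a profile determined by $(\Phi_{m_1},\Phi_{m_2})$ and is nondegenerate. A fixed-point-index count, with total index $1$ when both boundary states are unstable, then gives uniqueness, and hence stability, of the coexistence state.

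A second delicate point is making the remainders $o(d_1)$ in Step 2 uniform in the ratio $d_2/d_1\in(0,1)$, which is needed near both $d_2\to0$ and $d_2\to d_1$. I would handle this through eigenvalue bounds via the Rayleigh-type characterization with the weight $\exp\big(\tfrac{2}{T}\int_0^T\ln\Phi\,\mathrm dt\big)$ appearing in $I(\Phi)$.
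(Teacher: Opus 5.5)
Your Steps 1--3 follow the paper's route. Since $g=m_1-m_2$ depends only on $t$ and $\int_0^Tg=0$, Proposition~\ref{lm2.1}(4) gives $\mu(d_2,m_2-\theta_{d_1,m_1})=\mu(d_2,m_1-\theta_{d_1,m_1})$ and $\mu(d_1,m_1-\theta_{d_2,m_2})=\mu(d_1,m_2-\theta_{d_2,m_2})$. The paper then quotes the proof of \cite[Theorem 2.1$'$]{BHN23}, with $m=m_1$ for the first eigenvalue and $m=m_2$ for the second, to get all sign information and the curves $\ol{\mathsf d}_2$, $\ud{\mathsf d}_2$. So your Step 2 and your worry about uniformity in $d_2/d_1$ are a re-derivation of BHN23 that is unnecessary once Step 1 is done. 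There is also a small sign slip in Step 1. To turn the potential $m_2-\theta_{d_1,m_1}$ into $m_1-\theta_{d_1,m_1}$ you need $\psi=\mathrm e^{-\int_0^t g}\tilde\psi$. With your sign you get $2m_2-m_1-\theta_{d_1,m_1}$, which has the same eigenvalue but is not the common-resource potential.

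The real problem is Step 4. The existence of a linearly stable positive periodic solution is not an additional hard step. It comes from the same monotone-systems theory you already invoke for persistence. Once both semi-trivial states are linearly unstable, Hess's Theorem 33.3 and Remark 33.2 (Proposition~\ref{p2.4}(3) in the paper) give both uniform persistence and a stable coexistence state, an extremal one in the competitive order. No uniqueness is needed, and uniqueness is more than the theorem claims.

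The route you sketch instead is doubtful. Under the present hypotheses the pointwise kinetic system is degenerate. From $(\ln U-\ln V)_t=m_1-m_2=g(t)$ with $\int_0^Tg=0$, at each $x$ there is a one-parameter family $U/V=C\mathrm e^{\int_0^tg}$, $C>0$, of positive periodic solutions. Each $C$ works because $V$ then solves a periodic logistic equation with $\widehat m_2>0$. Consequently:
\begin{itemize}
\item there is no isolated limit profile built from $(\Phi_{m_1},\Phi_{m_2})$ to linearize about as $d_1,d_2\to0$;
\item nondegeneracy of every coexistence state cannot be read off from that limit;
\item a fixed-point-index count cannot yield uniqueness without first knowing that every coexistence state is stable.
\end{itemize}
Replace Step 4 by an appeal to Proposition~\ref{p2.4}(3), applied below the curves where both eigenvalues are negative, and the argument is complete.
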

\begin{figure}[H]\centering
\begin{tabular}{cc}
\begin{tikzpicture}[thick,fill opacity=1]
\fill[yellow] (0,0)--(0,1.8) --(1.8,1.8)--(1.8,0)--cycle;
\draw[thick, black][->] (0,0)--(2.5,0);\draw[thick, black][->] (0,0)--(0,2.3);
\node[right] at (2.5,0) {$d_1$};
\node[above] at (0,2.3) {$d_2$};
\draw[thick, black] (0,1.8)--(1.8,1.8);
\node[left] at (0,1.8) {$\ep_0$};
\draw[thick, black] (1.8,0)--(1.8,1.8);
\node[below] at (1.8,0) {$\ep_0$};
\node[below] at (-0.1,0.1) {$0$};
\node[below] at (1.2,-0.5){(a) Theorem \ref{th1.1}};
\end{tikzpicture}\hspace{4mm}
\begin{tikzpicture}[thick,fill opacity=1]
\fill[blue] (0,0)--(1.8,1.8) --(1.8,0)--cycle;
\draw[thick, black][->] (0,0)--(2.5,0);\draw[thick, black][->] (0,0)--(0,2.3);
\node[right] at (2.5,0) {$d_1$};
\node[above] at (0,2.3) {$d_2$};
\draw[thick, black] (0,0)--(2.3,2.3);
\draw[thick, black] (0,1.8)--(1.8,1.8);
\node[left] at (0,1.8) {$\ep_0$};
\draw[thick, black] (1.8,0)--(1.8,1.8);
\node[below] at (1.8,0) {$\ep_0$};
\node[below] at (-0.1,0.1) {$0$};
\node[below] at (2.7,2.2) {$d_1=d_2$};
\node[below] at (1.4,-0.5){(b) Theorem \ref{th1.2}(1)};
\end{tikzpicture}\hspace{4mm}
\begin{tikzpicture}[thick,fill opacity=1]
\fill[yellow] (0,0.6) rectangle (0.4,0.8);
\fill[blue] (0,0.0) rectangle (0.4,0.2);
\fill[black] (0,-1.0) rectangle (0.4,-1.2);
{\large\node[right] at (0.65,0.7) {\small Two species UP};
\node[right] at (0.65,0.1) {\small$(\theta_{d_1,m_1},0)$ is LS};
\node[right] at (0.65,-0.5) {\small$(0,\theta_{d_2,m_2})$ is LU};
\node[right] at (0.65,-1.1) {\small$(\theta_{d_1,m_1},0)$ is LU};
\node[right] at (0.65,-1.7) {\small$(0,\theta_{d_2,m_2})$ is LS};
\node[right] at (0.65,-2.4) {};}
\end{tikzpicture}\\[1mm]
\begin{tikzpicture}[thick,fill opacity=1]
\fill[yellow] (0,0)--(1.8,0) --(1.8,0.75)--cycle;
\fill[blue] (0,0)--(1.8,1.8) --(1.8,0.78)--cycle;
\draw[thick, black] (0,0)--(2.5,1.1);
\node[right] at (1.9,0.65) {$\bar\ell$};
\draw[thick, black][->] (0,0)--(2.9,0);\draw[thick, black][->] (0,0)--(0,2.3);
\node[right] at (2.9,0) {$d_1$};
\node[above] at (0,2.3) {$d_2$};
\draw[thick, black] (0,0)--(2.3,2.3);
\draw[thick, black] (0,1.8)--(1.8,1.8);
\node[left] at (0,1.8) {$\ep_0$};
\draw[thick, black] (1.8,0)--(1.8,1.8);
\node[below] at (1.8,0) {$\ep_0$};
\node[below] at (-0.1,0.1) {$0$};
\node[below] at (2.7,2.2) {$d_2=d_1$};
\node[below] at (1.4,-0.5){ $\bar\ell$: asymptote of the curve $d_2=\ol{\mathsf{d}}_2(d_1)$};
\node[below] at (1.4,-1.1){(c) Theorem  \ref{th1.2}(2)};
\end{tikzpicture}
\hspace{3mm}
\begin{tikzpicture}[thick,fill opacity=0.9]
\fill[black] (0,0)--(1.8,1.8) --(1.8,0.9)--cycle;
\fill[yellow] (0,0)--(1.8,0) --(1.8,0.9)--cycle;
\draw[thick,black] (0,0)--(2.5,1.25);
\node[right] at (1.85,0.75) {$\ud\ell$};
\draw[thick, black][->] (0,0)--(2.9,0);\draw[thick, black][->] (0,0)--(0,2.3);
\node[right] at (2.9,0) {$d_1$};
\node[above] at (0,2.3) {$d_2$};
\draw[thick, black] (0,0)--(2.3,2.3);
\draw[thick, black] (0,1.8)--(1.8,1.8);
\node[left] at (0,1.8) {$\ep_0$};
\draw[thick, black] (1.8,0)--(1.8,1.8);
\node[below] at (1.8,0) {$\ep_0$};
\node[below] at (-0.1,0.1) {$0$};
\node[below] at (2.7,2.2) {$d_2=d_1$};
\node[below] at (1.6,-0.5){$\ud\ell$: asymptote of the curve $d_2=\ud{\mathsf{d}}_2(d_1)$};
\node[below] at (1.6,-1.1){(d) Theorem \ref{th1.2}(3)};
\end{tikzpicture}
\end{tabular}\vspace{-3mm}
\caption{Graphical Illustration of Theorem \ref{th1.1} and Theorem  \ref{th1.2}. Abbreviations: UP=uniform persistence, LS=linearly stable, LU=linearly unstable.}
\label{fig1}\vspace{-3mm}
\end{figure}

\begin{remark}\label{rem1.1}
Theorems \ref{th1.1} and \ref{th1.2} are illustrated in Figure~\ref{fig1}. We give several comments on these small-diffusion regimes.
\begin{enumerate}
\item[{\rm(i)}] Theorem \ref{th1.1} establishes that for small diffusion rates $d_1, d_2>0$, the system is uniformly persistent whenever condition \eqref{1.8} is satisfied (see Figure \ref{fig1}(a)). The asymptotic behaviors of positive $T$-periodic solutions as $d_1, d_2 \to 0$ are consistent with those in the spatially heterogeneous but temporally static case (cf. \cite[Theorem 1.10]{HN17}).

\item[{\rm(ii)}] The results of Theorem \ref{th1.2} herein are consistent with those for the case $m_1\equiv m_2$ {\rm(cf. \cite[Theorem 2.1]{BHN23})}. Condition {\bf(M2$'$)} (first introduced in \cite{HMP01} when $m_1\equiv m_2$) is a technical assumption ensuring that $m_i-\Phi_i$ is spatially heterogeneous for $i=1,2$. Theorem \ref{th1.2} classifies the dynamic behaviors of the model system under the conditions that the difference $m_1(x,t)-m_2(x,t)$ is spatially homogeneous, the boundary condition {\bf(M1)} is satisfied, both diffusion rates $d_1$ and $d_2$ are sufficiently small ($0<\ep_0\ll1$), and $d_2<d_1$. The core criterion for this classification hinges on the sign of the integral $I(\Phi_i)$ and the sign of the minimum value of $\int_0^T\frac{\Delta \Phi_i}{\Phi_i}(x,t)\mathrm dt$ (or the additional condition {\bf(M2$'$)} when $I(\Phi_i)<0$); these factors collectively determine whether the two species can attain stable coexistence or one species is competitively excluded by the other.\vspace{-2mm}
\end{enumerate}
\end{remark}

{\bf At least one diffusion rate is large}: When one diffusion rate is large, we derive distinct dynamical regimes that hinge on whether the difference $m_1-m_2$ is spatially dependent. For $m\in C_T^{\alpha,\alpha/2}(\overline Q_T)$ and each $0\le t\leq T$, we denote by $\Gamma_m(\cdot,t)$ the unique solution to
  \bee\begin{cases}
-\Delta \Gamma=m(x,t)-\ol m(t), & x\in\Omega,\\
\partial_{\nu}\Gamma=0, & x\in\partial\Omega,\\[0.5mm]
\ol\Gamma(t)=0.
 \end{cases}\label{1.10}
   \eee
The existence and uniqueness of solutions to \eqref{1.10} follow from the Fredholm alternative theorem in \cite[Theorem 6.2.4]{E10}. It is straightforward to verify that $\Gamma_m(\cdot,t)$ is $T$-periodic in time $t$, and
 \[\|\Gamma_m(\cdot,t)\|_{L^2(\Omega)}\le C\|\nabla\Gamma_m(\cdot,t)\|_{L^2(\Omega)}\leq C \|m(\cdot,t)-\ol m(t)\|_{L^2(\Omega)},\;\,\forall\, t\in[0,T].\]

 For $m\in C_T(\overline Q_T)$ with $m>0$ on $\overline Q_T$, let $l_m(t)$ be the unique positive solution to
  \bee\begin{cases}
 l_t=l(\ol m(t)-l), \;\;0<t\le T,\\[0.5mm]
 l(0)=l(T).\end{cases}\label{1.11} \eee
Hereafter, for brevity, we use the abbreviated integral notations
 \[\int_\oo f=\int_\oo f\dx,\;\;\;\int_{Q_T}f=\int_{Q_T}f\dx\dt.\]
Define
 \begin{align}\label{1.12}
 \mathcal{E}(m_i)=\;&\int_{Q_T}|\nabla\Gamma_{m_i}|^2,\;\;\;i=1,2,\\[0.2mm]
  \label{1.13}
\Pi=\;&\frac 2{\mathcal{E}(m_1)}\int_{Q_T}\kk(\frac{\mathcal{E}(m_2)}{\mathcal{E}(m_1)} l_{m_1}\Gamma_{m_1}^2-(l_{m_1}+ l_{m_2})\Gamma_{m_1}\Gamma_{m_2}+\frac{\mathcal{E}(m_1)}{\mathcal{E}(m_2)} l_{m_2}\Gamma_{m_2}^2\rr).\vspace{-1mm}
 \end{align}

\begin{theorem}[Selection for slower diffusion]\label{th1.3}
The following assertions hold:
\begin{enumerate}
\item[$(1)$]{\rm(}Spatially homogeneous resource difference{\rm)} If $m_1(x,t)-m_2(x,t)$ is spatially homogeneous, then for any $\varepsilon>0$, there exists $d_2^\varepsilon>0$ such that $(\theta_{d_1,m_1},0)$ is globally asymptotically stable for all $\ep<d_1<d_2$ and $d_2>d_2^\varepsilon$.
\item[$(2)$]{\rm(}Spatially heterogeneous resource difference{\rm)} If $m_1(x,t)-m_2(x,t)$ is spatially heterogeneous, then for any given $0 < a < b$, there exists $d_2^{a,b} > 0$ such that $(\theta_{d_1,m_1},0)$ is globally asymptotically stable for all $(d_1,d_2)\in [a,b] \times (d_2^{a,b},\infty)$.
\end{enumerate}\vspace{-3mm}
\end{theorem}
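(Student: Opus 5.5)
We will combine a linear stability analysis of the two semi-trivial states with the monotone structure of the periodic competition system. Since \eqref{1.4} generates a strongly monotone periodic semiflow with respect to the competitive order, global asymptotic stability of $(\theta_{d_1,m_1},0)$ follows from two facts (via the Hess--Lazer / Hsu--Smith--Waltman theory for periodic competitive systems):
\begin{enumerate}
\item[(i)] $(0,\theta_{d_2,m_2})$ is linearly unstable;
\item[(ii)] \eqref{1.5} has no positive $T$-periodic solution.
\end{enumerate}
If (ii) holds, linear stability of $(\theta_{d_1,m_1},0)$ is then automatic, or it can be checked separately.

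\textbf{Step 1: Asymptotics of the semi-trivial state for large $d_2$.} As $d_2\to\infty$,
\[
\theta_{d_2,m_2}=l_{m_2}(t)+d_2^{-1}w+o(d_2^{-1}),
\]
where $w$ solves $-\Delta w=l_{m_2}^{-1}(\cdots)$. More precisely, one can write $\theta_{d_2,m_2}=l_{m_2}+d_2^{-1}l_{m_2}\Gamma_{m_2}$ to leading order, which is the reason $\Gamma_m$ and $l_m$ enter \eqref{1.12}--\eqref{1.13}. We prove this by a standard rescaling argument using parabolic $L^p$ estimates and the uniqueness of \eqref{1.11}.

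\textbf{Step 2: Instability of $(0,\theta_{d_2,m_2})$.} This amounts to showing $\mu(d_1,m_1-\theta_{d_2,m_2})<0$.

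\emph{Case (1).} Here $m_1-m_2=g(t)$ with $\int_0^Tg=0$. Then $\theta_{d_2,m_2}\to l_{m_2}$, and the limit principal eigenvalue is $\mu(d_1,m_1-l_{m_2})$. Using the variational characterization of the periodic principal eigenvalue (for instance, testing with $\varphi$ and dividing by $\varphi$), together with $\int_{Q_T}(m_1-l_{m_2})=\int_{Q_T}(m_2-l_{m_2})=0$ (the latter since $\int_0^T\bar m_2=\int_0^T l_{m_2}$) and the spatial heterogeneity of $m_1$, we get $\mu(d_1,m_1-l_{m_2})<0$ for each fixed $d_1$. However, $\mu(d_1,\cdot)\to0$ as $d_1\to\infty$. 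We therefore need the second-order expansion in $d_2^{-1}$ together with the comparison $d_1<d_2$.

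To handle this, we exploit the fact that $m_1-m_2$ is spatially constant. Setting $\tilde v=\theta_{d_2,m_2}$, the function $\tilde v e^{-\int_0^t g}$-type transformations reduce the problem essentially to the common-resource case of \cite{BHN23}, Theorem (a): the slower diffuser wins when $d_2$ is large and $d_1>\varepsilon$. Concretely, since $\Gamma_{m_1}=\Gamma_{m_2}$ in this case, the expansion of $\mu(d_1,m_1-\theta_{d_2,m_2})$ for large $d_1,d_2$ is proportional to
\[
-\Bigl(\frac1{d_1}-\frac1{d_2}\Bigr)\int_{Q_T}|\nabla\Gamma_{m_1}|^2<0 .
\]
For $d_1$ bounded, the earlier argument applies. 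This yields (i) uniformly for $\varepsilon<d_1<d_2$ once $d_2>d_2^\varepsilon$.

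\emph{Case (2).} Here $d_1\in[a,b]$ is compact and only $d_2\to\infty$. So it suffices to show $\mu(d_1,m_1-l_{m_2})<0$ for all $d_1\in[a,b]$ and then use continuity and compactness. This follows from the Jensen-type inequality for the periodic principal eigenvalue,
\[
\mu(d,h)\le -\frac1{|Q_T|}\int_{Q_T}h ,
\]
with strict inequality when $h$ is not of the form $a(x)+b(t)$ structure-trivial, applied with $\int_{Q_T}(m_1-l_{m_2})=0$.

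\textbf{Step 3: Nonexistence of coexistence states (ii).} Argue by contradiction: suppose there are positive $(U_k,V_k)$ with parameters in the relevant range and $d_{2,k}\to\infty$. Then $V_k$ is nearly spatially homogeneous, $V_k\approx\bar V_k(t)$, and $U_k$ solves an equation close to $\mu(d_1,m_1-U-\bar V)=0$. Passing to the limit gives a positive periodic solution of the limiting shadow system
\[
U_t=d_1\Delta U+U(m_1-U-V),\qquad V'=V\bigl(\bar m_2-\bar U-V\bigr).
\]
We then show this limit must be trivial in $V$. Integrating the $V$ equation gives $\int_0^T(\bar m_2-\bar U-V)=0$, while dividing the $U$ equation by $U$ and integrating gives
\[
\int_{Q_T}(m_1-U-V)=-d_1\int_{Q_T}\frac{|\nabla U|^2}{U^2}<0 .
\]
Since $\int m_1=\int m_2$, these two identities contradict each other. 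Near $V\equiv0$, the linear instability from Step 2 rules out bifurcating branches.

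\textbf{Main obstacle.} Step 3 in case (1) is the hardest part, because $d_1$ may also tend to infinity, so both components degenerate and the two integral identities nearly balance. There I would first try to rescale by $d_1^{-1}$ and carry out the second-order expansion via $\Gamma_{m_i}$, mimicking the argument of \cite{BHN23}.
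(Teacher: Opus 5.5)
Your part (2) follows essentially the paper's route: the limits $\mu(d_2,m_2-\theta_{d_1,m_1})\to-\frac{1}{T|\Omega|}\int_{Q_T}(m_2-\theta_{d_1,m_1})>0$ and $\mu(d_1,m_1-\theta_{d_2,m_2})\to\mu(d_1,m_1-l_{m_2})<0$, compactness in $d_1\in[a,b]$, and nonexistence through the shadow system of Lemma~\ref{lm4.1}. Part (1), however, has a genuine gap exactly at the ``main obstacle'' you leave open, namely the sector $\bar d<d_1<d_2$ with both rates large.

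\emph{Linear instability.} Your expansion $\mu(d_1,m_1-\theta_{d_2,m_2})\approx\frac{1}{T|\Omega|}(d_2^{-1}-d_1^{-1})\mathcal{E}(m_2)$ has a leading term of size $(d_2-d_1)/(d_1d_2)$. Once $d_2-d_1=O(1)$ this is comparable to the remainders, which are at best $O(d_1^{-2})$. So it cannot fix the sign uniformly up to the diagonal.

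\emph{Nonexistence.} Here the situation is worse. If $d_1^k,d_2^k\to\infty$ together, Lemma~\ref{lm4.1} does not apply. The fully homogenized limit $u'=u(\overline{m}_1-u-v)$, $v'=v(\overline{m}_2-u-v)$ with $\int_0^T(\overline{m}_1-\overline{m}_2)\,\mathrm{d}t=0$ has a one-parameter family of positive periodic solutions, $u/v=c\,\mathrm{e}^{\int_0^t(\overline{m}_1-\overline{m}_2)}$, so no contradiction can come from passing to a limit. Nor does the exponential substitution reduce the nonlinear system to the common-resource one: it creates a time-dependent competition coefficient. So the common-resource nonexistence result cannot be quoted either.

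\emph{The missing idea.} No expansion is needed. Since $m_1-m_2=g(t)$ with $\int_0^Tg=0$, Proposition~\ref{lm2.1}(4) gives $\mu(d,m_1-W)=\mu(d,m_2-W)$ for every $W$. For a positive solution $(U,V)$, this turns $\mu(d_1,m_1-U-V)=0=\mu(d_2,m_2-U-V)$ into $\mu(d_1,m_2-U-V)=\mu(d_2,m_2-U-V)$. That is impossible for $d_2>d_1>\bar d$, because $d\mapsto\mu(d,m_2-U-V)$ is strictly increasing on $(\bar d,\infty)$ by the last part of Proposition~\ref{p2.2}. Its hypotheses hold uniformly over all positive solutions: Lemma~\ref{lm2.7} bounds $U+V$ and $(U+V)_t$, and $\|\nabla\Gamma_{U+V}\|_{L^2(Q_T)}=O(d_1^{-1/2})$ keeps $\|\nabla\Gamma_{m_2-U-V}\|_{L^2(Q_T)}\ge\frac12\|\nabla\Gamma_{m_2}\|_{L^2(Q_T)}$. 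The same identity with $W=\theta_{d_2,m_2}$ or $W=\theta_{d_1,m_1}$, together with $\mu(d_i,m_i-\theta_{d_i,m_i})=0$, gives both eigenvalue signs (Lemma~\ref{lm4.2}(2)). The remaining strip $d_1\in[\varepsilon,\bar d]$ is then part (2) with $a=\varepsilon$, $b=\bar d$.

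Your Step 3 also misassigns ingredients, and this affects part (2) as well.
\begin{itemize}
\item A limit $(U_k,V_k)\to(\theta_{d_1,m_1},0)$ is excluded by comparing $\mu(d_2^k,m_2-U_k-V_k)=0$ with $\lim_{d_2\to\infty}\mu(d_2,m_2-\theta_{d_1,m_1})=\frac{d_1}{T|\Omega|}\int_{Q_T}|\nabla\theta_{d_1,m_1}|^2/\theta_{d_1,m_1}^2>0$. That is, it needs the stability of $(\theta_{d_1,m_1},0)$, which you never prove. You call it automatic once (ii) holds, but (ii) itself needs it.
\item The instability from Step 2 is what excludes the other limit $(0,l_{m_2})$, and the limit $(0,0)$ is excluded by $\mu(d_1,m_1)<0$.
\item Your two integral identities only give $\nabla U\equiv0$. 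The contradiction needs the further observation that then $m_1=U_t/U+U+V$ would be spatially homogeneous.
\item By Proposition~\ref{lm2.1}(1), the strict inequality $\mu(d,h)<-\frac{1}{T|\Omega|}\int_{Q_T}h$ holds whenever $h$ is spatially heterogeneous. Your criterion ``not of the form $a(x)+b(t)$'' is incorrect and, taken literally, would not cover $m_1=a(x)+b(t)$.
\end{itemize}
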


\begin{theorem}[Detailed classification]\label{th1.4} Assume that the condition {\bf(M1)} and the following hold:
  \begin{enumerate}[leftmargin=14mm]
\item[{\bf(M2)}] Both $m_2(x,t)-\Phi_{m_1}(x,t)$ and $m_1(x,t)-\Phi_{m_2}(x,t)$ are spatially heterogeneous.
    \end{enumerate}
Suppose further that $\widehat{m}_i(x)\not\equiv \text{constant}$ on $\boo$ for $i=1,2$. Then there exist constants $\bar{d}_1, \bar{d}_2 > 0$, two strictly decreasing $C^1$ functions $\mathsf{d}_1: (\bar{d}_2,\infty)\to (0,\infty)$ and $\mathsf{d}_2: (\bar d_1,\infty)\to (0,\infty)$, and two strictly increasing $C^1$ functions $\widehat{\mathsf{d}}_2, \widetilde{\mathsf{d}}_2: (\bar{d}_1,\infty)\to (0,\infty)$ satisfying
  \[ \mathsf{d}_1(d_2)=O\kk(d_2^{-1}\rr) \;\;\;{\rm as}\;\;d_2 \to\infty,\quad \mathsf{d}_2(d_1)=O\kk(d_1^{-1}\rr) \;\;\;{\rm as}\;\;d_1 \to\infty, \]
and
 \[\lim_{d_1\to\infty}\frac{\widehat{\mathsf{d}}_2(d_1)}{d_1}=\lim_{d_1\to\infty} \frac{\widetilde{\mathsf{d}}_2(d_1)}{d_1}=\frac{\mathcal{E}(m_2)}
 {\mathcal{E}(m_1)},\]
such that the following statements hold with reference to the regions in Figure~$\ref{fig2}$:
\begin{enumerate}
\item[$(1)$]{\rm(}Spatially homogeneous  resource difference{\rm)} Assume that $m_1(x,t)-m_2(x,t)$ is spatially homogeneous.
\begin{enumerate}
\item[$({1}a)$]  {\rm(}Stable coexistence{\em)} System \eqref{1.4} is uniformly persistent and system \qq{1.5} admits a linearly stable positive solution for all
$(d_1,d_2)\in (0,\mathsf{d}_1(d_2))\times(\bar d_2,\infty)$; see the yellow region in Figure {\rm\ref{fig2}(a)}.
\item[$({1}b)$]  {\rm(}Selection for slower diffusion{\rm)} $(\theta_{d_1,m_1},0)$ is linearly stable and $(0,\theta_{d_2,m_2})$ is linearly unstable for all
$(d_1,d_2)\in (\mathsf{d}_1(d_2),d_2)\times(\bar d_2,\infty)$; see the blue region in Figure {\rm\ref{fig2}(a)}.
\end{enumerate}
\item[$(2)$]{\rm(}Spatially heterogeneous resource difference{\rm)} Assume that $m_1(x,t)-m_2(x,t)$ is spatially heterogeneous, and further that condition {\bf(M3)} holds:
    \begin{enumerate}[leftmargin=10mm]
\item[{\bf(M3)}] $m_i\in H^2((0,T),L^p(\Omega))$ for some $p>n$.
\end{enumerate}
\begin{enumerate}
\item[{$({2}a)$}] If either $\ol m_1(t)\equiv\ol m_2(t)$ on $[0,T]$, or $\ol m_1(t)\not\equiv\ol m_2(t)$ on $[0,T]$ with $\Pi>0$, then the following properties hold:
  \begin{enumerate}
\item[$({2a}_1)$] $\widehat{\mathsf{d}}_2(d_1)>\widetilde{\mathsf{d}}_2(d_1)$ for all $d_1\in(\bar d_1,\infty)$; see Figure {\rm\ref{fig2}(b)}.
\item[$({2a}_2)$]  {\rm(}Stable coexistence{\em)} System \eqref{1.4} is uniformly persistent and system \qq{1.5} admits a linearly stable positive solution for parameters in the yellow region of Figure {\rm\ref{fig2}(b)}, which is defined by the union:
    \begin{equation}(d_1,d_2)\in[(0,\mathsf{d}_1(d_2))\times(\bar d_2,\infty)]\cup[(\bar d_1,\infty)\times(\widetilde{\mathsf{d}}_2(d_1), \widehat{\mathsf{d}}_2(d_1))]\cup[(\bar d_1,\infty)\times(0,\mathsf{d}_2(d_1))].\label{1.x14}\end{equation}
\item[$({2a}_3)$]  $(\theta_{d_1,m_1},0)$ is linearly stable, and  $(0,\theta_{d_2,m_2})$ is linearly unstable in the blue region of  Figure {\rm\ref{fig2}(b)}:
  \begin{equation}(d_1,d_2)\in [(\bar d_1,\infty)\times(\widehat{\mathsf{d}}_2(d_1),\infty)]
  \cup[(\mathsf{d}_1(d_2),\bar d_1]\times(\bar d_2,\infty)].\label{1.x15}\end{equation}

\item[$({2a}_4)$]  $(\theta_{d_1,m_1},0)$ is linearly unstable and  $(0,\theta_{d_2,m_2})$ is linearly stable for all $(d_1,d_2)\in(\bar d_1,\infty)\times(\mathsf{d}_2(d_1), \widetilde{\mathsf{d}}_2(d_1))$; see the black region of Figure {\rm\ref{fig2}(b)}.
   \end{enumerate}\end{enumerate}
\begin{enumerate}
\item[{$({2}b)$}]  If $\ol m_1(t)\not\equiv\ol m_2(t)$ on $[0,T]$ and $\Pi<0$, then the following assertions hold:
 \begin{enumerate}
\item[$({2b}_1)$] $\widehat{\mathsf{d}}_2(d_1)<\widetilde{\mathsf{d}}_2(d_1)$ for all $d_1\in(\bar d_1,\infty)$; see Figure {\rm\ref{fig2}(c)}.
\item[$({2b}_2)$] {\rm(}Stable coexistence{\em)} System \eqref{1.4} is uniformly persistent and system \qq{1.5} admits a linearly stable positive solution for all
    \bee(d_1,d_2)\in[(0,\mathsf{d}_1(d_2))\times(\bar d_2,\infty)]\cup[(\bar d_1,\infty)\times(0,\mathsf{d}_2(d_1))];\label{1.x16}\eee
see the yellow region in Figure {\rm\ref{fig2}(c)}.
\item[$({2b}_3)$] $(\theta_{d_1,m_1},0)$ is linearly stable, and $(0,\theta_{d_2,m_2})$ is linearly unstable for all \bee(d_1,d_2)\in[(\mathsf{d}_1(d_2),\bar d_1]\times(\bar d_2,\infty)]\cup [(\bar d_1,\infty)\times(\widetilde{\mathsf{d}}_2(d_1),\infty)];\label{1.x17}\eee see the blue region in Figure {\rm\ref{fig2}(c)}.
\item[$({2b}_4)$] $(\theta_{d_1,m_1},0)$ is linearly unstable and  $(0,\theta_{d_2,m_2})$ is linearly stable for all
     \bee(d_1,d_2)\in(\bar d_1,\infty)\times(\mathsf{d}_2(d_1), \widehat{\mathsf{d}}_2(d_1));\label{1.x18}\eee  
see the black region of Figure {\rm\ref{fig2}(c)}.
\item[$({2b}_5)$] {\rm(}Unstable coexistence{\em)} Both $(\theta_{d_1,m_1},0)$ and $(0,\theta_{d_2,m_2})$ are linearly stable and system \eqref{1.5} has a linearly unstable positive solution for all  
    \bee (d_1,d_2)\in(\bar d_1,\infty)\times(\widehat{\mathsf{d}}_2(d_1), \widetilde{\mathsf{d}}_2(d_1));\label{1.x19}\eee  
see the red region in Figure {\rm\ref{fig2}(c)}.
\end{enumerate}\end{enumerate}\end{enumerate}\vspace{-5mm}
\end{theorem}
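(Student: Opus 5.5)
The stability of the two semi-trivial states is determined by the signs of two principal eigenvalues,
\[
\lambda_1(d_1,d_2):=\mu\bigl(d_2,\;m_2-\theta_{d_1,m_1}\bigr),\qquad
\lambda_2(d_1,d_2):=\mu\bigl(d_1,\;m_1-\theta_{d_2,m_2}\bigr),
\]
where $\mu$ is the principal eigenvalue of \eqref{1.3}. The state $(\theta_{d_1,m_1},0)$ is linearly stable iff $\lambda_1>0$, and $(0,\theta_{d_2,m_2})$ is linearly stable iff $\lambda_2>0$. System \eqref{1.4} is a monotone (competitive) periodic system. So the standard theory applies (Hess; Hsu--Smith--Waltman, Hess--Lazer type results) and gives three conclusions. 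If both semi-trivial states are unstable, the system is uniformly persistent and \eqref{1.5} has a stable positive solution. If one is stable and the other unstable, we get exclusion in the local sense. If both are stable, there is an unstable positive periodic solution; this follows by a degree or index argument on the ordered interval between them. The whole theorem therefore reduces to locating the zero sets $\{\lambda_1=0\}$ and $\{\lambda_2=0\}$ in the region where at least one $d_i$ is large. The curves $\mathsf d_1,\mathsf d_2$ and $\widehat{\mathsf d}_2,\widetilde{\mathsf d}_2$ will be these zero sets, obtained via the implicit function theorem.

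\emph{Step 1: one rate large, the other bounded.} As $d_2\to\infty$, $\theta_{d_2,m_2}\to l_{m_2}(t)$ with an expansion $\theta_{d_2,m_2}=l_{m_2}+d_2^{-1}\psi_2+o(d_2^{-1})$, where $\psi_2$ involves $l_{m_2}\Gamma_{m_2}$ plus a spatially constant correction. Then $\lambda_2\to\mu(d_1,m_1-l_{m_2})$. Since $\int_{Q_T}(m_1-l_{m_2})=\int_{Q_T}(m_1-m_2)=0$ and $m_1-l_{m_2}$ is spatially heterogeneous, the classical eigenvalue results give $\mu(d_1,m_1-l_{m_2})<0$ for small $d_1$ and $\to0^-$ as $d_1\to\infty$. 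So I would instead expand $\lambda_1$ with $d_2$ large: by Rayleigh-type perturbation in $1/d_2$,
\[
\lambda_1=-\frac1{T|\Omega|}\int_{Q_T}(m_2-\theta_{d_1,m_1})\;-\;\frac1{d_2}\,\frac1{T|\Omega|}\int_{Q_T}|\nabla\Gamma_{m_2-\theta_{d_1,m_1}}|^2+o(d_2^{-1}).
\]
The first term equals $\frac1{T|\Omega|}\int_{Q_T}(\theta_{d_1,m_1}-m_1)$. This is $O(d_1)$-small for large $d_1$ (the order is $d_1^{-1}$) and is strictly positive for heterogeneous $m_1$, by the identity $\int\theta_t=0$ and $\int_{Q_T}(m_1-\theta)=\int_{Q_T}d_1|\nabla\theta|^2/\theta^2$. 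Balancing the two terms yields $d_1\sim C/d_2$. This produces the decreasing curve $\mathsf d_1(d_2)=O(d_2^{-1})$ via the implicit function theorem in the variables $(d_1,1/d_2)$, with transversality coming from monotonicity in $d_1$. Symmetric reasoning gives $\mathsf d_2(d_1)$.

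\emph{Step 2: both rates large.} Set $s=d_2/d_1$ and $\epsilon=1/d_1$. Expand $\theta_{d_1,m_1}=l_{m_1}+\epsilon\, l_{m_1}\Gamma_{m_1}+\epsilon(\text{constant-in-}x\ \text{term})+\epsilon^2(\cdots)$, and similarly for $\theta_{d_2,m_2}$. In case (1), $m_1-m_2$ is homogeneous, so $\Gamma_{m_1}=\Gamma_{m_2}$ and $\mathcal E(m_1)=\mathcal E(m_2)$. Then $\lambda_1$ has sign $\propto(d_2-d_1)$ at leading order, giving Theorem~\ref{th1.4}(1) together with Step 1.

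In case (2), the leading terms of $\lambda_1$ and $\lambda_2$ are proportional to $\mathcal E(m_1)/d_1-\mathcal E(m_2)/d_2$ with opposite signs. This forces both $\lambda_i$ to vanish near the line $d_2/d_1=\mathcal E(m_2)/\mathcal E(m_1)$. The curves $\widehat{\mathsf d}_2$ and $\widetilde{\mathsf d}_2$ are obtained from the next-order ($\epsilon^2$) terms, again via the implicit function theorem in $(s,\epsilon)$ at $\epsilon=0$. Their relative order is decided by the sign of the second-order discrepancy, which I expect to compute to $\Pi$ from \eqref{1.13}. When $\ol m_1\equiv\ol m_2$ we have $l_{m_1}=l_{m_2}$; the discrepancy is then a perfect square $\int l(\sqrt{\cdot}\,\Gamma_{m_1}-\sqrt{\cdot}\,\Gamma_{m_2})^2\ge0$, and condition {\bf(M2)} makes it strictly positive. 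Assigning regions then follows from the sign pattern of $(\lambda_1,\lambda_2)$ in each sector, combined with the monotone-system trichotomy above.

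\emph{Main obstacle.} The hardest step is the second-order asymptotics. It requires a rigorous $\epsilon^2$ expansion of $\theta_{d_i,m_i}$ and of the principal eigenpairs, uniform in $s$ on compact sets, with error control in $C_T$. This is where {\bf(M1)} (compatibility at the boundary, needed so that $\Gamma$-corrections have the right regularity) and {\bf(M3)} (time-regularity for the $t$-derivatives appearing in the expansion of $l$-corrections) are used. The second difficulty is the algebraic identification of the coefficient separating the two curves with $\Pi$. I would first try to derive it by testing both eigenvalue equations against the adjoint principal eigenfunctions. I would also glue the regions at $\bar d_1,\bar d_2$ using the global monotonicity of $\mu$ in $d$, and use Theorem~\ref{th1.3} for the far blue region.
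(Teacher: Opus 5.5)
Your architecture is the paper's. Everything reduces to the signs of $\lambda_1=\mu(d_2,m_2-\theta_{d_1,m_1})$ and $\lambda_2=\mu(d_1,m_1-\theta_{d_2,m_2})$. The curves $\mathsf d_1,\mathsf d_2=O(d^{-1})$ come from balancing a term $\sim\mathcal I_i d_i$ against a term $\sim d_j^{-1}\frac{1}{T|\Omega|}\int_{Q_T}|\nabla\Gamma_{m_j-\Phi_{m_i}}|^2$, which is exactly Lemmas \ref{lm4.5}--\ref{lm4.6}. The both-large curves have slope $A=\mathcal E(m_2)/\mathcal E(m_1)$ and are separated at order one by $B_1+AB_2=\Pi$, obtained by a second-order implicit-function argument (Lemmas \ref{lm4.7}--\ref{lm4.10}). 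Two steps, however, do not go through as you describe them.

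\textbf{Part (1) cannot be done ``at leading order.''} The region in (1b) is $\mathsf d_1(d_2)<d_1<d_2$, which touches the diagonal, and on the diagonal both $\lambda_1$ and $\lambda_2$ vanish. Your expansion gives $\lambda_1=\frac{\mathcal E}{T|\Omega|}(d_1^{-1}-d_2^{-1})+O(d^{-2})$. For $0<d_2-d_1=O(1)$ the main term is itself $O(d^{-2})$, so the sign in a strip along the diagonal is undetermined. Even your second-order expansion only locates the zero set up to $o(1)$. The missing idea is an exact identity. Since $m_1-m_2=\ol m_1(t)-\ol m_2(t)$ has zero time average, Proposition \ref{lm2.1}(4) gives $\lambda_1=\mu(d_2,m_1-\theta_{d_1,m_1})$ and $\lambda_2=\mu(d_1,m_2-\theta_{d_2,m_2})$. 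Both vanish exactly at $d_1=d_2$. Combined with $\partial_d\mu>0$ for large $d$ (Proposition \ref{p2.2}), this gives $\lambda_1>0>\lambda_2$ for all $d_2>d_1>\bar d$, i.e.\ $\widehat{\mathsf d}_2(d_1)=d_1$ exactly; see \eqref{4.8} and Lemma \ref{lm4.10}(1).

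\textbf{The gluing cannot rest on ``global monotonicity of $\mu$ in $d$.''} For time-periodic weights, $d\mapsto\mu(d,m)$ is not monotone in general. Theorem \ref{th1.2}(2) of this paper has $\mu(d_2,m_1-\theta_{d_1,m_1})$ negative, then positive, then zero as $d_2$ increases to $d_1$, and Proposition \ref{p2.2} gives monotonicity only for large $d$. For $d_1>\bar d_1$ you must fix both signs over the whole range of $d_2$, and the paper does this piecewise.
\begin{itemize}
\item For $\lambda_1$ on a compact window $d_2\in[\delta_*,d_0]$, it combines the Lipschitz bound $|\mu(d,m_2-\theta_{d_1,m_1})-\mu(d,m_2-l_{m_1})|\le\|\theta_{d_1,m_1}-l_{m_1}\|_\infty$ with $\mu(d,m_2-l_{m_1})<0$ for every $d$.
\item For $\lambda_2$ on $d_2\in[\delta_*,D_2]$, it uses $-\frac{1}{T|\Omega|}\int_{Q_T}(m_1-\theta_{d_2,m_2})=\frac{d_2}{T|\Omega|}\int_{Q_T}|\nabla\theta_{d_2,m_2}/\theta_{d_2,m_2}|^2>0$ together with a uniform large-$d_1$ expansion.
\item In the corners $d_1\to\infty,\,d_2\to0$ (resp.\ $d_1\to0,\,d_2\to\infty$), it uses Lemma \ref{lm4.3}. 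This is exactly where the hypothesis $\widehat m_2\not\equiv\mathrm{const}$ (resp.\ $\widehat m_1\not\equiv\mathrm{const}$) enters. Without it the limiting eigenvalue $-\max_{\overline\Omega}(\widehat m_i-\widehat{\ol m}_i)$ is $0$, and the $O(d^{-1})$ errors decide the sign.
\end{itemize}
Your proposal never uses these hypotheses.

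You have also misplaced the roles of {\bf(M1)} and {\bf(M2)}.
\begin{itemize}
\item {\bf(M1)} is not needed for the $\Gamma$-corrections, since a Neumann problem needs no compatibility. It is needed for the small-diffusion expansion $\partial_d\theta_{d,m}\to\Psi_m$ (Proposition \ref{lm2.8}, \eqref{4.11}). That expansion supplies the transversality $\partial_{d_1}\lambda_1>0$ that you invoke for $\mathsf d_1$.
\item {\bf(M2)} is what makes your balancing constants $\frac{1}{T|\Omega|}\int_{Q_T}|\nabla\Gamma_{m_j-\Phi_{m_i}}|^2$ positive.
\item When $\ol m_1\equiv\ol m_2$, the strict positivity of the perfect square comes from the spatial heterogeneity of $m_1-m_2$, not from {\bf(M2)}.
\end{itemize}
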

\begin{figure}[H]\centering
\begin{tabular}{cc}
\begin{tikzpicture}[scale=0.65][thick,fill opacity=1]
\fill[RoyalBlue] (0.34,3.1) parabola (-0.1,5.2)--(0.3,5.2)--(0.3,3.8)--cycle;
\fill[RoyalBlue] (0.3,3.8)--(3.27,3.8)--(2.5,3.12)--(0.25,3.05)--cycle;
\fill[yellow] (0.35,3.1) parabola (-0.1,5.2) --(-0.2,5.2)--(-0.2,3.1)--cycle;
\fill[RoyalBlue] (0.25,3.75)--(3.25,3.75)--(5.0,5.2)--(0.25,5.2)--cycle;
\draw[thick, black][->] (-0.2,1)--(5.3,1);\draw[thick, black][->] (-0.2,1)--(-0.2,6);
\node[right] at (5.3,1) {\small $d_1$};
\node[above] at (-0.3,6) {\small $d_2$};
\draw[thick, black] (-0.2,1)--(5,5.2);
 \draw[thick, dashed] (2.47,1)--(2.47,3.2);
\draw[thick, black] (-0.2,3.1)--(2.45,3.1);
\draw[thick, black](0.35,3.1) parabola (-0.1,5.4);
\draw[thick,->] (0.6,5.6)--(0,5.3);
\node[right] at (0.1,5.85) {\small $d_1=\mathsf{d}_1(d_2)$};
\node[below] at (-0.3,1.1) {\small $0$};
\node[below] at (2.5,1) {\small $\bar d_2$};
\node[left] at (-0.2,3.0) {\small $\bar d_2$};
\node[below] at (3.65,3.45) {\small $d_1=d_2$};
\end{tikzpicture}
&
\hspace{5mm}\begin{tikzpicture}[scale=0.65][thick, fill opacity=1]
\fill[RoyalBlue] (1.15,3.3) parabola (0.7,5.2) --(1.1,5.2)--(1.1,3.9)-- cycle;
\fill[RoyalBlue] (1.1,3.9)--(3.4,3.9)--(3.4,5.2)--(5.2,5.2)--(3.4,2.9)-- (3.4,3.3)--(1.05,3.3)--cycle;
\fill[RoyalBlue] (1.05,3.8)--(3.45,3.8) --(3.45,5.2)--(1.05,5.2)--cycle;
\fill[yellow] (1.15,3.3) parabola (0.7,5.2) --(0.6,5.2)--(0.6,3.3)--cycle;
\fill[yellow] (3.4,2.2)--(3.4,3)--(5.2,5.15)--(5.2,5.1)--(5.2,4.45)-- (3.4,2.2)--cycle;
\fill[black] (3.4,2.3)--(5.2,4.5)--(5.2,1.12) parabola (3.4,1.6)--cycle;
\fill[yellow] (3.4,1)--(5.2,1)--(5.2,1.12) parabola (3.4,1.6)--cycle;
\draw[thick, black][->] (0.6,1)--(5.7,1);\draw[thick, black][->](0.6,1)--(0.6,6);
\node[right] at (5.7,1) {\small $d_1$};
\node[above] at (0.5,6) {\small $d_2$};
\draw[thick, black] (0.6,3.3)--(3.4,3.3);
 \draw[thick, black] (3.4,1)--(3.4,3.3);
\draw[thick, black](1.14,3.3) parabola (0.7,5.4);
\draw[thick, black](5.4,1.12) parabola (3.4,1.6);
\draw[thick,->] (1.4,5.6)--(0.78,5.3);
\node[right] at (0.9,5.85) {\small $d_1=\mathsf{d}_1(d_2)$};
\draw[thick,->] (5.6,1.8)--(5.3,1.18);
\node[right] at (5.3,2.1) {\small $d_2=\mathsf{d}_2(d_1)$};
\draw[thick, black] (3.4,3)--(5.6,5.65);
\node[above] at (5.2,5.3) {\small $\hat\ell$};
\draw[thick, black] (3.4,2.2)--(5.7,5.1);
\node[right] at (5.4,4.6) {\small $\tilde\ell$};
\node[below] at (0.3,1.1) {\small $0$};
\node[below] at (3.4,1) {\small $\bar d_1$};
\node[left] at (0.65,3.2) {\small $\bar d_2$};
\end{tikzpicture}\\
{\small \hspace{-3mm} (a)\; Theorem \ref{th1.4}\,(1)} & \hspace{-5mm}{\small (b)\; Theorem \ref{th1.4}\,(2a)}\vspace{2mm}
\end{tabular}
\begin{tabular}{l}\begin{tikzpicture}[scale=0.65][thick,fill opacity=0.1]
\fill[RoyalBlue] (0.55,3.3) parabola (0.1,5.2) --(0.5,5.2)--(0.5,3.9)-- cycle;
\fill[RoyalBlue] (0.5,3.9)--(2.8,3.9)--(2.8,5.2)--(4.6,5.2)--(2.8,2.9)-- (2.8,3.3)--(0.45,3.3)--cycle;
\fill[RoyalBlue](0.45,3.85)--(2.85,3.85) --(2.85,5.2)--(0.45,5.2)--cycle;
\fill[yellow] (0.55,3.3) parabola (0.1,5.2) --(0,5.2)--(0,3.3)--cycle;
\fill[red] (2.8,2.45)--(2.8,3)--(4.6,5.2)--(4.6,4.7)--(4.6,4.5)-- (2.8,2.4)--cycle;
\fill[black] (2.8,2.4)--(4.6,4.55)--(4.6,1.12) parabola (2.8,1.6)--cycle;
\fill[yellow] (2.8,1)--(4.6,1)--(4.6,1.12) parabola (2.8,1.6)--cycle;
\draw[thick, black][->] (0,1)--(5.3,1);\draw[thick, black][->](0,1)--(0,6);
\node[right] at (5.3,1) {\small $d_1$};
\node[above] at (-0.1,6) {\small $d_2$};
\draw[thick, black] (2.8,1)--(2.8,3.35);
\draw[thick, black] (0,3.3)--(2.8,3.3);
\draw[thick,black](0.54,3.3) parabola (0.1,5.4);
\draw[thick, black](4.8,1.12) parabola (2.8,1.6);
\draw[thick, ->] (0.8,5.6)--(0.18,5.3);
\node[right] at (0.3,5.85) {\small $d_1=\mathsf{d}_1(d_2)$};
\draw[thick,->] (5,1.8)--(4.7,1.18);
\node[right] at (4.7,2.1) {\small $d_2=\mathsf{d}_2(d_1)$};
\draw[thick, black] (2.8,3)--(5.0,5.7);
\node[above] at (4.5,5.3) {\small $\tilde\ell$};
\draw[thick, black] (2.8,2.4)--(5.1,5.15);
\node[right] at (4.7,4.55) {\small $\hat\ell$};
\node[below] at (-0.1,1.1) {\small $0$};
\node[below] at (2.8,1.0) {\small $\bar d_1$};
\node[left] at (0.05,3.18) {\small $\bar d_2$};
\node[left] at (5,-0.5){\small (c)\; Theorem  \ref{th1.4}\,(2b)};
 \fill[yellow] (8.4,5.8)--(9,5.8)--(9,5.5)--(8.4,5.5)--cycle;
\fill[RoyalBlue] (8.4,5)--(9,5)--(9,4.7)--(8.4,4.7)--cycle;
\fill[red] (8.4,4)--(9,4)--(9,3.7)--(8.4,3.7)--cycle;
\fill[black] (8.4,3)--(9,3)--(9,2.7)--(8.4,2.7)--cycle;
\node[right] at (9.2,5.7) {Two species UP};
\node[right] at (9.2,4.8) {$(\theta_{d_1,m_1},0)$ is LS, $(0,\theta_{d_2,m_2})$ is LU};
\node[right] at (9.2,3.8) {$(\theta_{d_1,m_1},0)$ and $(0,\theta_{d_2,m_2})$ are LS};
\node[right] at (9.2,2.8) {$(\theta_{d_1,m_1},0)$ is LU, $(0,\theta_{d_2,m_2})$ is LS};
\node[right] at (8.2,1.8) {$\hat\ell$: asymptote of the curve $d_2=\widehat{\mathsf{d}}_2(d_1)$};
\node[right] at (8.2,0.8) {$\tilde\ell$: asymptote of the curve $d_2=\widetilde{\mathsf{d}}_2(d_1)$};
\end{tikzpicture}
\end{tabular}
\caption{\small Graphical Illustration of Theorems \ref{th1.3} and \ref{th1.4}. Abbreviations: UP=uniform persistence, LS=linearly stable, LU=linearly unstable.}
\label{fig2}
\end{figure}\vspace{-5mm}

\begin{remark}\label{rem1.2} Theorem \ref{th1.4}, as illustrated in Figure \ref{fig2}, describe the local and global dynamics of system \eqref{1.4} when one diffusion rate is large. In \ref{secB}, explicit examples of $m_1$ and $m_2$ are constructed to verify that all hypotheses of Theorem \ref{th1.4} are satisfied.
\begin{enumerate}
\item[{\rm(i)}] When $m_1(x,t)-m_2(x,t)$ is spatially homogeneous, the linear and global stability criteria for semi-trivial periodic solutions established in Theorem \ref{th1.3}(1) and Theorem \ref{th1.4}(1) coincide with those in \cite[Theorems 2.2 and 2.3]{BHN23} for the case $m_1(x,t)\equiv m_2(x,t)$.

\item[{\rm(ii)}] Theorem \ref{th1.4}($2a_2$) extends the uniform persistence results of {\rm\cite[Theorem 1.6]{HN17}} to the spatially dependent setting where $m_1(x,t)-m_2(x,t)$ varies with $x$; the latter result addresses the time-independent case with $m_i=m_i(x)$, $m_1\not\equiv m_2$, and $\overline{m}_1=\overline{m}_2$.

\item[{\rm(iii)}] Under the conditions $\overline{m}_1(t)\not\equiv\overline{m}_2(t)$ on $[0,T]$ and $\Pi<0$, Theorem \ref{th1.4}($2b_5$) reveals a novel bistability regime $(d_1,d_2)\in(\bar d_1,\infty)\times(\widehat{\mathsf{d}}_2(d_1),\widetilde{\mathsf{d}}_2(d_1))$, in which both semi-trivial periodic solutions are linearly stable. To the best of our knowledge, such a phenomenon has not been previously reported for Lotka-Volterra competition-diffusion systems {\rm(cf. \cite{BHN23,HMP01,HN13a,HN13b,HN16a,HN16b,HN17,LDC23})}.
\end{enumerate}
\end{remark}

The relative positions of the curves $\widehat{\mathsf{d}}_2$ and $\widetilde{\mathsf{d}}_2$ in Theorem \ref{th1.4} determine whether coexistence or bistability prevails for large $d_1$ and $d_2$.  However, the energy-estimate method used in \cite[Proposition 3.2]{BHN23} is insufficient for deriving the precise asymptotic expansions required in this large-diffusion regime. To characterize these curves, we therefore combine asymptotic analysis with the implicit function theorem to establish sharp asymptotic expansions for $\widehat{\mathsf{d}}_2$ and $\widetilde{\mathsf{d}}_2$ when diffusion rates are sufficiently large (see Lemmas \ref{lm4.7} and \ref{lm4.8}).

{\bf Asymptotic profiles}: Finally, we describe the shape of positive periodic solutions when one diffusion rate is small and the other is large.

\begin{theorem}[Asymptotic profiles]\label{th1.5} Assume that $\widehat{m}_1(x)\not\equiv\mathrm{constant}$ on $\boo$, and that conditions {\bf(M1)}--{\bf(M2)} and \qq{1.8} hold. Let $\mathsf{d}_1(d_2)$ and $\bar d_2$ be given in Theorem \ref{th1.4}. Then for all $(d_1,d_2)\in(0,\mathsf{d}_1(d_2))\times(\bar d_2,\infty)$, system \eqref{1.5} admits a linearly stable positive solution $(U,V)$, and
\bee\begin{cases}
\dd\lim_{d_2\to\infty}\lim_{d_1\to 0^+}
 \kk\|\int_0^T\kk[U-\kk(m_1-\min_{x\in\overline\Omega}\widehat{m}_1\rr)\rr]\mathrm dt\rr\|_{C(\overline\Omega)}=0,\\[4mm]
 \dd\lim_{d_2\to\infty}\lim_{d_1\to 0^+}\kk\|\int_0^T\kk(V-\min_{x\in\overline\Omega}\widehat{m}_1\rr)\mathrm dt\rr\|_{C(\overline\Omega)}=0.\end{cases}\label{1.14}
 \eee
\end{theorem}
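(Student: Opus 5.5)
The plan is to take the two limits in the order written in \eqref{1.14}: first $d_1\to0^+$ with $d_2$ fixed, then $d_2\to\infty$.

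\textbf{Step 0 (existence).} Existence and linear stability of a positive solution $(U,V)$ in the region $(0,\mathsf{d}_1(d_2))\times(\bar d_2,\infty)$ will be read off from Theorem \ref{th1.4}. It is part $(1a)$ if $m_1-m_2$ is spatially homogeneous and part $(2a_2)$/$(2b_2)$ otherwise. In the latter case I must check that \eqref{1.8} together with {\bf(M1)}--{\bf(M2)} suffices for the part of the proof of Theorem \ref{th1.4} that produces the region $(0,\mathsf{d}_1(d_2))\times(\bar d_2,\infty)$. That part only uses the instability of both semi-trivial states there, and $\widehat m_2\not\equiv$ const enters only the other branches, so {\bf(M3)} and $\widehat m_2\not\equiv$ const should not be needed.

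\textbf{Step 1 ($d_1\to0$ with $d_2$ fixed).} Uniform $L^\infty$ bounds $0<U\le\max m_1$ and $0<V\le\max m_2$ hold. Parabolic $C^{2+\alpha,1+\alpha/2}$ estimates for the $V$-equation, which is uniformly parabolic since $d_2>\bar d_2$ is fixed, give $V\to V^*$ along subsequences. For $U$ I argue as in the proof of Theorem \ref{th1.1}, comparing with $\Phi$ of perturbed resources on small balls. This yields
\[U\to U^*:=\Phi_{m_1-V^*}\ \text{ on }\{\widehat m_1>\widehat{V^*}\},\qquad U\to0\ \text{ on }\{\widehat m_1<\widehat{V^*}\},\]
uniformly on compacts. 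On the first set, \eqref{1.6} gives $U^*=m_1-V^*-(\ln U^*)_t$, so
\[\int_0^T U^*\,\mathrm dt=\bigl(\widehat m_1-\widehat{V^*}\bigr)^+T .\]
Moreover $V^*$ is a positive periodic solution of
\[V^*_t=d_2\Delta V^*+V^*(m_2-U^*-V^*).\]

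\textbf{Step 2 (the contact set is nonempty).} I claim $A:=\{x:\widehat m_1(x)\le\widehat{V^*}(x)\}\neq\emptyset$. Suppose instead $A=\emptyset$. Then $U^*=m_1-V^*-(\ln U^*)_t$ everywhere, and the $V^*$-equation becomes
\[V^*_t=d_2\Delta V^*+V^*\bigl(m_2-m_1+(\ln U^*)_t\bigr).\]
Divide by $V^*$ and integrate over $Q_T$. Periodicity kills the $(\ln V^*)_t$ and $(\ln U^*)_t$ terms, and $\int_{Q_T}(m_1-m_2)=0$ by {\bf(M)}, so
\[d_2\int_{Q_T}|\nabla\ln V^*|^2=0 .\]
Hence $V^*=V^*(t)$, and then
\[(\ln U^*)_t=(\ln V^*)'+m_1-m_2 .\]
Integrating in $t$ over a period gives $\widehat m_1-\widehat m_2\equiv0$, contradicting \eqref{1.8}. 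So $A\ne\emptyset$, and therefore $\min_{\overline\Omega}\widehat m_1\le \widehat{V^*}(x_0)$ for some $x_0$.

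\textbf{Step 3 ($d_2\to\infty$).} Write $V^*=V^*_{d_2}$. Dividing by $d_2$ and using energy estimates, as in the proof of Theorem \ref{th1.3}, gives $\|\nabla V^*_{d_2}\|\to0$. Hence $V^*_{d_2}\to l(t)$ in $C(\overline Q_T)$, where $l$ is a positive periodic function; the uniform-in-$d_2$ bounds and equicontinuity of $U^*$ come from the ODE structure. Step 2 passes to the limit and gives $\widehat l\ge\min\widehat m_1$.

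For the reverse inequality, divide the $V^*$-equation by $V^*$ and integrate over $Q_T$, then let $d_2\to\infty$:
\[\int_{Q_T}(m_2-U^*-l)=-d_2\int_{Q_T}|\nabla\ln V^*|^2\le0 .\]
Combining this with Step 1 and $\int_{Q_T}m_1=\int_{Q_T}m_2$ gives
\[\int_\Omega(\widehat m_1-\widehat l)\le\int_\Omega(\widehat m_1-\widehat l)^+ .\]
This alone does not pin down $\widehat l$. So I will instead get $\widehat l\le\min\widehat m_1$ from the instability of $(0,\theta_{d_2,m_2})$, which holds throughout the region $d_1<\mathsf{d}_1(d_2)$. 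As $d_1\to0$, this instability forces $u$ to invade everywhere that $\widehat m_1>\widehat\theta_{d_2,m_2}$, and in the limit forces the contact set to shrink to the minimum points of $\widehat m_1$.

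\textbf{Step 4 (conclusion).} With $\widehat l=\min\widehat m_1$ established, Step 1 gives
\[\int_0^T U^*\,\mathrm dt=\int_0^T(m_1-l)\,\mathrm dt=\int_0^T\bigl(m_1-\min\widehat m_1\bigr)\mathrm dt,\qquad \int_0^T V\,\mathrm dt\to T\min\widehat m_1,\]
which is \eqref{1.14}.

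\textbf{Main obstacle.} The hardest step is the sharp identification $\widehat l=\min\widehat m_1$, and specifically the upper bound $\widehat l\le\min\widehat m_1$. The lower bound follows cleanly from the Step 2 contradiction argument. The upper bound needs a finer analysis of the $O(d_2^{-1})$ correction
\[V=l+d_2^{-1}w,\qquad -\Delta w=(m_2-U^*)-\overline{(m_2-U^*)},\]
to show that $\widehat m_1-\widehat l$ cannot stay strictly positive on a set of positive measure while vanishing at its minimum. The other delicate point is the uniform convergence of $U$ near $\partial\{\widehat m_1>\widehat{V^*}\}$, which should be handled by the integrated-in-time formulation appearing in \eqref{1.14}.
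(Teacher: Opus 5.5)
The gap is in Step~3. The upper bound $\widehat l\le\min_{\overline\Omega}\widehat m_1$ is not proved, and the route you sketch for it cannot give it. Instability of $(0,\theta_{d_2,m_2})$ as $d_1\to0$ only yields $\max_{\overline\Omega}(\widehat m_1-\widehat\theta_{d_2,m_2})>0$. As $d_2\to\infty$ this reduces to the triviality $\max_{\overline\Omega}\widehat m_1\ge\widehat{\overline m}_1$. That is information about the semi-trivial state, whose time average is close to $\widehat{\overline m}_1$ for large $d_2$; it cannot locate the coexistence level of $V$ at $\min\widehat m_1$.

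What you are missing is already in your own computation. The inequality $\int_{Q_T}(m_2-U^*-l)\le0$ discards exactly the needed information, because in the limit $d_2\to\infty$ it is an equality. No analysis of the $O(d_2^{-1})$ correction is required.

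To get the equality, average the $V^*_{d_2}$-equation over $\Omega$ and pass to the limit as in Lemma~\ref{lm4.1}; the estimate for $V^*_{d_2}-\overline{V^*_{d_2}}$ there only uses boundedness of the reaction term. This gives the shadow ODE
\[
l'=l\,(\overline m_2-\overline{U_\infty}-l),
\]
where $U_\infty=\Theta_{m_1-l}$ is the uniform limit of $U^*_{d_2}=\Theta_{m_1-V^*_{d_2}}$.
\begin{itemize}
\item Your Step~2 gives $\widehat l\ge\min\widehat m_1>0$, so $l>0$. Dividing by $l$ yields $\int_{Q_T}(m_2-U_\infty-l)=0$; equivalently, $d_2\int_{Q_T}|\nabla\ln V^*_{d_2}|^2\to0$.
\item Combine this with $\int_{Q_T}m_1=\int_{Q_T}m_2$ and $\int_0^TU_\infty\,\mathrm dt=T(\widehat m_1-\widehat l)^+$ to get
\[
\int_\Omega(\widehat m_1-\widehat l)\,\mathrm dx=\int_\Omega(\widehat m_1-\widehat l)^+\,\mathrm dx .
\]
\item Hence $(\widehat m_1-\widehat l)^-\equiv0$, i.e.\ $\widehat l\le\min_{\overline\Omega}\widehat m_1$.
\end{itemize}
This is exactly the paper's Step~5, which uses the sets $\Omega_1,\Omega_2$ together with the identity \eqref{5.30}.

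The rest of your plan is sound, and your Step~2 coincides with the paper's Step~3. Your Step~1 takes a different route from the paper, which never takes a limit of $(U,V)$ itself. The paper sandwiches $U,V$ between the monotone iterates \eqref{3.2}, passes to two limiting pairs, and controls $\liminf$ and $\limsup$ through \eqref{5.25}. Your direct route works, with these adjustments:
\begin{itemize}
\item The $d_1\to0$ version of Lemma~\ref{lm2.6} gives $U=\theta_{d_1,m_1-V}\to\Theta_{m_1-V^*}$ in $C(\overline Q_T)$. So the boundary-layer issue near $\{\widehat m_1=\widehat{V^*}\}$ that you flag does not arise.
\item Positivity of $V^*$ follows from $V\ge\Theta_{d_2,m_2-\theta_{d_1,m_1}}\to\Theta_{d_2,m_2-\Phi_1}>0$, using \textbf{(M2)}.
\item Because you extract subsequences in $d_1$, you should conclude by a contradiction argument over all subsequential limits.
\item Under \eqref{1.8} the difference $m_1-m_2$ cannot be spatially homogeneous, so case $(1a)$ in your Step~0 is vacuous.
\end{itemize}
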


Owing to the symmetry between $u$ and $v$, swapping $(d_1, m_1,u)$ and $(d_2, m_2,v)$ in Theorems \ref{th1.2}, \ref{th1.3} and \ref{th1.5} yields identical statements with the roles of the species reversed.

The remainder of this paper is organized as follows. Section~\ref{sec2} collects preliminary results on periodic-parabolic eigenvalue problems, stability criteria, and asymptotic profiles of single species solutions. Section~\ref{sec3} contains the proofs of Theorems~\ref{th1.1} and~\ref{th1.2} (small diffusion rates). Section \ref{sec4} provides several lemmas concerning the large-diffusion regime, which are essential for proving Theorems \ref{th1.3}-\ref{th1.5}. Section~\ref{sec5} is devoted to the proofs of Theorems~\ref{th1.3}--\ref{th1.5} (large diffusion rates). In Appendix, we construct explicit examples of $m_1$ and $m_2$ satisfying the conditions of Theorems~\ref{th1.3} and \ref{th1.4}.

\section{Preliminary results on the single $T$-periodic parabolic equation}\label{sec2}
\setcounter{equation}{0}\setlength\arraycolsep{2pt}

In this section, we introduce some basic results for subsequent discussion.

\subsection{Properties of principle eigenvalue and the associated positive eigenfunction}

In this subsection, we show some results on the principal eigenvalue and the corresponding eigenfunction of a time-periodic parabolic eigenvalue problem, as well as its adjoint eigenvalue problem.

Let $\mu(d,m)$ be the principal eigenvalue of \qq{1.3}.
Then it is also the principal eigenvalue of the adjoint eigenvalue problem of \eqref{1.3} as follows:
  \bee\label{2.1}\begin{cases}
-\psi_t-d\Delta\psi-m(x,t)\psi=\mu\psi &\text{in}\;\;Q_T,\\
\partial_{\nu}\psi=0&\text{on}\;\;S_T,\\
\psi(x,0)=\psi(x,T)&\text{on}\;\;\ol\Omega,
\end{cases}
 \eee
which possesses a unique positive adjoint eigenfunction $\psi(\cdot;d,m)\in C^{2+\alpha,1+\alpha/2}_T(\overline Q_T)$ up to a multiplicative constant. The following proposition is due to \cite[Theorem 2.4]{HMP01}, \cite[Lemma 2.15]{CC03} and \cite[Lemma 15.3]{H91} (see also \cite{LL23}).

\begin{proposition}\label{lm2.1} The following hold for the principal eigenvalue $\mu(d,m)$ of \eqref{1.3}.
\begin{enumerate}\setlength{\itemsep}{4pt}
\item[\rm (1)] There hold: $-\max_{\overline\Omega}\widehat{m}(x)\leq \mu(d,m)\leq-\widehat{\ol m}$, and $\mu(d,m)=-\widehat{\ol m}$ if and only if $m$ is spatially independent;
\item[\rm (2)] $\mu(d,m)$ depends continuously on $d>0$ and $m$, and is decreasing in $m$, i.e., $m_1\geq m_2$ implies $\mu(d,m_1)\leq \mu(d,m_2)$, where strict inequality holds  when $m_1\geq m_2$ and $m_1\not\equiv m_2$;
\item[\rm (3)] $\dd\lim_{d\to 0}\mu(d,m)=-\max_{\overline\Omega}\widehat{m}(x)$ and $\dd\lim_{d\to\infty}\mu(d,m)=-\widehat{\ol m}$;
\item[\rm (4)] If $V(t)\in L^{\infty}((0,T))$ satisfies $V(0)=V(T)$, then  $\mu(d,m+V)=\mu(d,m)-\frac{1}{T}\int_0^TV(t)\mathrm dt$.
\end{enumerate}
\end{proposition}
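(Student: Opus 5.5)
The four assertions of Proposition~\ref{lm2.1} rest on two tools: a logarithmic transformation of the positive eigenfunction, and the variational/monotonicity structure of periodic-parabolic operators. Let $\varphi>0$ be the principal eigenfunction of \eqref{1.3}.

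\textbf{Step 1: assertion (4).} Put $\tilde\varphi=\varphi\exp\bigl(\int_0^tV(s)\,\mathrm ds-\frac tT\int_0^TV\bigr)$. This factor is $T$-periodic, so $\tilde\varphi$ is admissible. A direct computation shows that $\tilde\varphi$ solves \eqref{1.3} with $m$ replaced by $m+V$ and eigenvalue $\mu-\frac1T\int_0^TV$. Uniqueness of the positive eigenfunction then gives (4).

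\textbf{Step 2: assertion (1).} Set $w=\ln\varphi$. Dividing \eqref{1.3} by $\varphi$ gives
\[w_t-d\Delta w-d|\nabla w|^2=m+\mu .\]
\begin{itemize}
\item \emph{Upper bound.} Integrate over $Q_T$. Periodicity and the Neumann condition kill the $w_t$ and $\Delta w$ terms, leaving $-d\int_{Q_T}|\nabla w|^2=\int_{Q_T}m+\mu T|\Omega|$. Hence $\mu\le-\widehat{\ol m}$, with equality iff $\nabla w\equiv0$.
\item \emph{Equality case.} If $\nabla w\equiv0$, then $\varphi=\varphi(t)$ and $m=\varphi_t/\varphi-\mu$ is independent of $x$. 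Conversely, if $m=m(t)$, then $\varphi=\exp\bigl(\int_0^t m-\frac tT\int_0^T m\bigr)$ is an eigenfunction, which gives $\mu=-\widehat{\ol m}$.
\item \emph{Lower bound.} Integrate the identity in $t$ only, at a point $x_0$ where $\widehat{\Delta w}(x_0)\dots$ Fix a point where $\widehat w$ attains its maximum. There $\Delta\widehat w\le0$ (or $\partial_\nu\widehat w=0$ at the boundary, via Hopf). Averaging in $t$ gives $-d\Delta\widehat w-d\,\widehat{|\nabla w|^2}=\widehat m+\mu$, so $\mu\ge-\widehat m(x_0)-d\,\widehat{|\nabla w|^2}(x_0)$. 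This bound is not clean because of the gradient term.
\end{itemize}
I would instead use the comparison/sup-solution characterization: $\mu\ge-\max\widehat m$ follows by taking the spatially constant test function $\exp\bigl(\int_0^t M(s)\,\mathrm ds-\dots\bigr)$ with $M(t)=\max_x m(x,t)$, shifted to be periodic. This only yields $-\widehat{\max_x m}$, however. The sharp bound $-\max_x\widehat m$ should instead be taken from \cite[Theorem 2.4]{HMP01}, which uses the $x$-dependent supersolution $\exp\bigl(\int_0^t m(x,s)\,\mathrm ds-t\,\widehat m(x)\bigr)$ together with a perturbation argument as $d\to0$.

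\textbf{Step 3: assertion (2).} Continuity in $d$ and $m$ follows from Krein--Rutman applied to the Poincaré map, since the principal eigenvalue is simple and isolated. Monotonicity follows from the comparison principle: $e^{-\mu T}$ is the spectral radius of the period map, and the period map is monotone in $m$. Strict monotonicity comes from the strong maximum principle.

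\textbf{Step 4: assertion (3).}
\begin{itemize}
\item \emph{Limit $d\to\infty$.} Use the integrated identity from Step 2 together with the fact that $\nabla w\to0$, or a singular-perturbation argument in which $\varphi$ tends to a function of $t$ only. The limiting eigenvalue is then that of the ODE $\varphi'=(\ol m(t)+\mu)\varphi$, namely $-\widehat{\ol m}$.
\item \emph{Limit $d\to0$.} This is the delicate part and the main obstacle. One builds sub- and supersolutions localized near a maximizer of $\widehat m$. I would cite \cite[Lemma 2.15]{CC03} and \cite[Lemma 15.3]{H91} rather than redo the construction.
\end{itemize}
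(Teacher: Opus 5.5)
Your arguments for (4), for the upper bound and equality case in (1), and your sketches of (2) and of the limit $d\to\infty$ in (3) are fine. For the last one, note that the Step~2 identity carries a factor $d$, so you need $d\int_{Q_T}|\nabla w|^2\to0$, not merely $\nabla w\to0$; the singular-perturbation route is the one that actually closes it.

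The genuine gap is the lower bound $\mu(d,m)\ge-\max_{\overline\Omega}\widehat m$ in (1). You defer it to \cite[Theorem 2.4]{HMP01} via the $x$-dependent function $E=\exp\bigl(\int_0^t m(x,s)\,\mathrm ds-t\,\widehat m(x)\bigr)$ and a perturbation argument as $d\to0$. For fixed $d>0$, however, $E$ is not a supersolution, because of the uncontrolled term $-d\Delta E$ (and $\partial_\nu E\neq0$ in general). A perturbation in $d\to0$ can only give $\liminf_{d\to0}\mu(d,m)\ge-\max_{\overline\Omega}\widehat m$, which is half of (3), not an inequality valid for every $d$.

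In fact no argument can close this step, because the bound is false for genuinely time-dependent $m$, and your own Step~2 identity shows it. Take $\Omega=(0,1)$ and $m(x,t)=10+\cos(\pi x)\sin(2\pi t/T)$. Then $\widehat m\equiv10$, so $\max_{\overline\Omega}\widehat m=\widehat{\overline m}=10$, while $m$ depends on $x$. Hence $\nabla w\not\equiv0$, and $\mu(d,m)=-10-\frac{d}{T|\Omega|}\int_{Q_T}|\nabla w|^2<-\max_{\overline\Omega}\widehat m$ for every $d>0$. Equivalently, assertion (1) is self-contradictory for any such $m$: the two-sided bound would force $\mu=-\widehat{\overline m}$, which the ``if and only if'' clause forbids.

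The correct general lower bound is exactly the one your spatially constant supersolution gives, namely $\mu(d,m)\ge-\frac1T\int_0^T\max_{\overline\Omega}m(\cdot,t)\,\mathrm dt$. It coincides with $-\max_{\overline\Omega}\widehat m$ only when $m$ is time-independent, which is presumably where the misstatement originates. So rather than citing the reference, you should state (1) with this corrected lower bound and flag the discrepancy. The paper itself invokes the stronger, false inequality later: in the case $\max_{\overline\Omega}\widehat m\le0$ at the start of the proof of Lemma~\ref{lm2.5}, and in Step~3 of the proof of Theorem~\ref{th1.5}. Those steps need separate justification.
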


In the following, we characterize the asymptotic properties of $\mu(d,m)$ and its corresponding eigenfunctions $\varphi$ and $\psi$. The following proposition plays a key role in studying the dynamics of \eqref{1.4} when the diffusion rate $d_2$ is sufficiently large.

\begin{proposition}[\!{\rm\cite[Proposition 3.2]{BHN23}}]\label{p2.2}
Let $m\in C_T^{\alpha,\alpha/2}(\overline Q_T)$ and $m_t\in L^2(Q_T)$ satisfy
  \[\|m\|_{L^{\infty}(Q_T)}+\|m_t\|_{L^2(Q_T)}\leq M\]
for some constant $M>0$. Let $\varphi(\cdot;d,m)$ and $\psi(\cdot;d,m)$ be the principal eigenfunctions of \eqref{1.3} and \eqref{2.1} corresponding to $\mu(d,m)$ respectively with normalization: $\|\varphi\|_{L^2(Q_T)}=\|\psi\|_{L^2Q_T)}=1$.
Then there exist constants $d_M,C_M>0$ such that for any $d>d_M$, $\varphi$ and $\psi$ can be rewritten as
  \begin{align*}
\varphi(x,t;d,m)&=\ol\varphi(t;d,m)+\ol\varphi(t;d,m)
\Gamma_m(x,t)d^{-1}+\varphi_*(x,t;d,m)d^{-2},\\
 \psi(x,t;d,m)&=\ol\psi(t;d,m)+\ol\psi(t;d,m)\Gamma_m(x,t)d^{-1}
+\psi_* (x,t;d,m)d^{-2},
\end{align*}
where $\ol\varphi(t;d,m)$ and $\ol\psi(t;d,m)$ satisfy
  \begin{align*}
\displaystyle\ol\varphi(t;d,m)&=\ol\varphi(0)\exp\kk(\int_0^t(\ol m(s)+\mu)\rr)+ O(d^{-1/2}),\\
\displaystyle\ol\psi(t;d,m)&=\ol\psi(0)\exp\kk(-\int_0^t(\ol m(s)+\mu)\rr)+O(d^{-1/2}),
  \end{align*}
and
\[1/C_M\leq \ol\varphi(t;d,m),\;\ol\psi(t;d,m)\leq C_M, \;  \forall\,t\in[0,T],\]
while $\varphi_*(x,t;d,m)$ and $\psi_*(x,t;d,m)$ satisfy $\|\varphi_*\|_{L^2((0,T),H^1(\Omega))},\; \|\psi_*\|_{L^2((0,T),H^1(\Omega))}\leq C_M$.

Moreover, for each constant $\kappa>0$, there exists a constant $d_{\kappa,M}>0$ depending only on $\kappa$ and $M$ such that when $m$ further satisfies $\|\nabla\Gamma_m\|_{L^2(Q_T)}\geq \kappa$, there holds:
  \[\frac{\partial\mu(d,m)}{\partial d}>0,\; \forall\,d>d_{\kappa,M}. \]
\end{proposition}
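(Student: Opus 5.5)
The plan is to split the eigenfunction into its spatial mean and its mean-zero part, and to read off both pieces from the equation. Write $\varphi=\ol\varphi(t)+\tilde\varphi$ with $\int_\Omega\tilde\varphi=0$, and do the same for $\psi$. We then \emph{define} $\varphi_*$ by
\[
\varphi=\ol\varphi+\ol\varphi\,\Gamma_m d^{-1}+\varphi_*d^{-2}.
\]
Since $\ol\Gamma_m=0$, this forces $\ol\varphi_*=0$. The task is then to bound $\varphi_*$ uniformly and to describe $\ol\varphi$.

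\textbf{Step 1: equation for the remainder.} Substitute the ansatz into \eqref{1.3}. By \eqref{1.10} we have $-d\Delta(\ol\varphi\Gamma_m d^{-1})=\ol\varphi\,(m-\ol m)$. This cancels the leading inhomogeneous term, so the remainder satisfies
\[
d^{-1}\partial_t\varphi_*-\Delta\varphi_*-d^{-1}(m+\mu)\varphi_*
=-d\bigl(\ol\varphi'-(\ol m+\mu)\ol\varphi\bigr)-\bigl[(\ol\varphi\Gamma_m)_t-(m+\mu)\ol\varphi\Gamma_m\bigr].
\]
Integrating the original equation over $\Omega$ gives the exact ODE
\[
\ol\varphi'=(\ol m+\mu)\ol\varphi+d^{-1}\frac{1}{|\Omega|}\int_\Omega m\,\Gamma_m\,\ol\varphi+d^{-2}\frac{1}{|\Omega|}\int_\Omega m\,\varphi_* .
\]
Substituting this ODE back, the right-hand side of the remainder equation becomes an $L^2(Q_T)$ function of size $C(|\ol\varphi|+|\ol\varphi'|)$. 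This needs control of $\Gamma_{m,t}$, and by elliptic estimates for \eqref{1.10} that is exactly where the hypothesis $m_t\in L^2(Q_T)$ is used.

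\textbf{Step 2: a priori bounds.} I would first obtain bounds before using the expansion.
\begin{itemize}
\item Testing \eqref{1.3} with $\tilde\varphi$ and using $T$-periodicity (the time-derivative term integrates to zero) gives $d\|\nabla\tilde\varphi\|_{L^2(Q_T)}^2\le C\|\varphi\|_{L^2(Q_T)}^2$. With Poincaré and the normalization this yields $\|\tilde\varphi\|_{L^2(Q_T)}=O(d^{-1/2})$, and hence $\|\ol\varphi\|_{L^2(0,T)}$ is bounded above and below.
\item Proposition \ref{lm2.1}(1) bounds $\mu$ uniformly, so the ODE gives $|\ol\varphi'|\le C(|\ol\varphi|+d^{-1/2})$.
\item Periodicity of this almost-linear ODE then gives $1/C_M\le\ol\varphi\le C_M$. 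It also gives $\ol\varphi(t)=\ol\varphi(0)\exp(\int_0^t(\ol m+\mu))+O(d^{-1/2})$, by Gronwall on the difference with the exact exponential solution.
\end{itemize}

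\textbf{Step 3: $H^1$ bound on the remainder.} Test the remainder equation with $\varphi_*$; periodicity again kills the $\partial_t$ term. This gives
\[
\|\nabla\varphi_*\|^2\le Cd^{-1}\|\varphi_*\|^2+C\|\varphi_*\|.
\]
Since $\ol\varphi_*=0$, Poincaré converts this into $\|\varphi_*\|_{L^2(0,T;H^1)}\le C_M$ for $d>d_M$. The same argument applied to \eqref{2.1} gives the expansion for $\psi$, with $-\psi_t$, which is why the exponent in the formula for $\ol\psi$ has the opposite sign.

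\textbf{Step 4: monotonicity in $d$.} Differentiate \eqref{1.3} in $d$, test with $\psi$, and use the adjoint equation \eqref{2.1}. This gives
\[
\partial_d\mu=\frac{\int_{Q_T}\nabla\varphi\cdot\nabla\psi}{\int_{Q_T}\varphi\psi}.
\]
Now insert the expansions:
\begin{itemize}
\item The numerator equals $d^{-2}\int_{Q_T}\ol\varphi\,\ol\psi\,|\nabla\Gamma_m|^2+O(d^{-3})$. The cross terms involving $\nabla\varphi_*$ and $\nabla\psi_*$ are controlled by Step 3.
\item The product $\ol\varphi\,\ol\psi$ equals the constant $\ol\varphi(0)\ol\psi(0)$ up to $O(d^{-1/2})$, because the two exponentials cancel. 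By Step 2 this constant lies in $[C_M^{-2},C_M^2]$.
\item The denominator is comparable to $T|\Omega|\,\ol\varphi(0)\ol\psi(0)$.
\end{itemize}
Hence $\partial_d\mu\ge d^{-2}\bigl(c_M\kappa^2-Cd^{-1/2}\bigr)>0$ once $d>d_{\kappa,M}$.

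\textbf{Main obstacle.} The delicate point is Step 2: obtaining the two-sided bound on $\ol\varphi$ and the $O(d^{-1/2})$ exponential approximation uniformly in $m$ from only $\|m\|_\infty+\|m_t\|_{L^2}\le M$. This matters because Step 4 needs those constants to be independent of $d$. I would handle it by comparing the ODE with its exact exponential solution, using Gronwall on the periodic interval, together with the normalization to fix the constant $\ol\varphi(0)$.
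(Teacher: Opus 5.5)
Your proposal is correct and takes essentially the same energy-estimate route as the cited proof in \cite{BHN23}, which the paper names an ``energy-estimate method'': you split off the spatial mean, use $T$-periodicity to kill the $\partial_t$ terms, bound the second-order remainder in $L^2(0,T;H^1(\Omega))$ using $m_t\in L^2(Q_T)$, and insert the expansions into $\partial_d\mu=\int_{Q_T}\nabla\varphi\cdot\nabla\psi\big/\int_{Q_T}\varphi\psi$, the same formula the paper reuses in Lemma~\ref{lm4.5}. Two small corrections: the energy estimate controls $\|\tilde\varphi(\cdot,t)\|_{L^2(\Omega)}$ only in $L^2(0,T)$, so the $d^{-1/2}$ term in your bound on $|\ol\varphi'|$ is an $L^2(0,T)$ forcing rather than a pointwise bound (this suffices for variation of constants and for Step 3); and every spatially constant term in the remainder equation, including the $d^{-1}\frac{1}{|\Omega|}\int_\Omega m\varphi_*$ produced by substituting the ODE, drops out when you test with the mean-zero $\varphi_*$.
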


\subsection{Stabilities of nonnegative $T$-periodic solutions}\label{sec2.2}

In this subsection, we state some criteria concerning the linear and global stability of nonnegative solutions to \eqref{1.5}. Similar to \cite[Lemma 3.2]{HMP01}, we have the following conclusions.\vspace{-1mm}

\begin{proposition}\label{p2.3} If $\mu(d_2,m_2-\theta_{d_1,m_1})>0\; ($resp. $<0)$, then  $(\theta_{d_1,m_1},0)$ is linearly stable $($resp. unstable$)$. If $\mu(d_1,m_1-\theta_{d_2,m_2})>0\; ($resp. $<0)$, then $(0,\theta_{d_2,m_2})$ is linearly stable $($resp. unstable$)$.
\end{proposition}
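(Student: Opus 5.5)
The plan is to linearize \eqref{1.5} at $(\theta_{d_1,m_1},0)$ and reduce linear stability to the sign of the principal eigenvalue of a decoupled, triangular periodic-parabolic eigenvalue problem. Writing $U=\theta_{d_1,m_1}+\phi$ and $V=\psi$ and dropping quadratic terms gives
\[\phi_t=d_1\Delta\phi+(m_1-2\theta_{d_1,m_1})\phi-\theta_{d_1,m_1}\psi,\qquad \psi_t=d_2\Delta\psi+(m_2-\theta_{d_1,m_1})\psi,\]
with Neumann boundary conditions and $T$-periodicity. The $\psi$-equation does not involve $\phi$. Hence the Floquet/Poincaré spectrum of the linearized period map is the union of the spectra of the two diagonal blocks.

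For the $\psi$-block, the principal eigenvalue of $\psi_t-d_2\Delta\psi-(m_2-\theta_{d_1,m_1})\psi$ is exactly $\mu(d_2,m_2-\theta_{d_1,m_1})$. The corresponding Poincaré map is strongly positive and compact on $C(\overline\Omega)$, so by Krein--Rutman its spectral radius equals $e^{-\mu T}$.

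For the $\phi$-block, I will show that $\mu(d_1,m_1-2\theta_{d_1,m_1})>0$. The function $\theta=\theta_{d_1,m_1}$ is a positive solution of $\theta_t-d_1\Delta\theta-(m_1-\theta)\theta=0$, so $\mu(d_1,m_1-\theta)=0$ by uniqueness of the positive principal eigenfunction. Since $m_1-2\theta\le m_1-\theta$ with $m_1-2\theta\not\equiv m_1-\theta$, Proposition~\ref{lm2.1}(2) gives strict monotonicity and hence $\mu(d_1,m_1-2\theta)>0$. Therefore the $\phi$-block always has spectral radius strictly less than $1$.

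Consequently:
\begin{itemize}
\item If $\mu(d_2,m_2-\theta_{d_1,m_1})>0$, the whole linearized Poincaré map has spectral radius $<1$. This is linear stability, and by the principle of linearized stability for periodic parabolic systems it also gives local asymptotic stability.
\item If $\mu(d_2,m_2-\theta_{d_1,m_1})<0$, take the principal eigenfunction $\psi_0>0$ of the $\psi$-block. Then $e^{-\mu T}>1$ is an eigenvalue of the full map. Indeed, for any $\lambda$ outside the spectrum of the $\phi$-block, in particular $\lambda=e^{-\mu T}>1$, one can solve for the $\phi$-component in the triangular system. This yields instability.
\end{itemize}

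The statement for $(0,\theta_{d_2,m_2})$ follows by the symmetric argument, swapping $(d_1,m_1,u)$ with $(d_2,m_2,v)$.

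The main obstacle is justifying the spectral decomposition for the triangular system in the periodic setting. Specifically, one must check that an unstable eigenvalue of the $\psi$-block lifts to an eigenvector of the full linearized Poincaré map. This requires solving the inhomogeneous periodic problem for $\phi$, which is uniquely solvable precisely because $\mu(d_1,m_1-2\theta)>0$ rules out resonance. I would handle this using the resolvent of the $\phi$-block Poincaré operator, following the argument of \cite[Lemma 3.2]{HMP01}.
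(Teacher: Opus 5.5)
Your proposal is correct and takes essentially the same route as the paper. The paper gives no separate proof; it defers to Lemma 3.2 of Hutson et al., whose argument is exactly your triangular linearization: the $\phi$-block is harmless because $\mu(d_1,m_1-2\theta_{d_1,m_1})>0$ (strict monotonicity from $\mu(d_1,m_1-\theta_{d_1,m_1})=0$), so stability is decided by the sign of $\mu(d_2,m_2-\theta_{d_1,m_1})$, and your lifting of the multiplier $e^{-\mu T}>1$ through the resolvent of the $\phi$-block Poincar\'e map correctly closes the instability direction.
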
\vspace{-1mm}

Combined Proposition \ref{p2.3} with the theory of monotone semi-flows (cf. \cite{H91}), this  yields the following results on the global dynamics of system \eqref{1.4}.\vspace{-.1mm}

\begin{proposition}\label{p2.4} The following statements hold.
\begin{enumerate}
\item[\rm(1)] If $\mu(d_2,m_2-\theta_{d_1,m_1})>0$, $\mu(d_1,m_1-\theta_{d_2,m_2})<0$ and \qq{1.5} has no positive solution, then $(\theta_{d_1,m_1},0)$ is globally asymptotically stable {\rm(cf. \cite[Theorem 34.1]{H91})}.

\item[\rm(2)] If the inequalities in {\rm(1)} are reversed and \qq{1.5} has no positive solution, then $(0,\theta_{d_2,m_2})$ is globally asymptotically stable {\rm(cf. \cite[Theorem 34.1]{H91})}.

\item[\rm(3)] If both principal eigenvalues are negative, then \eqref{1.4} is uniformly persistent and \qq{1.5} possesses a linearly stable positive solution {\rm(cf. \cite[Remark 33.2 and Theorem 33.3]{H91})}.

\item[\rm(4)] If both principal eigenvalues are positive, then \qq{1.5} has a linearly unstable positive solution {\rm (cf. \cite[Theorem 35.1]{H91}).}
\end{enumerate}
\end{proposition}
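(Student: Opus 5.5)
The statement is Proposition \ref{p2.4}, the last result before the cut. It is a collection of standard consequences of monotone dynamical systems theory for time-periodic competitive systems. My plan is to reduce everything to the Poincaré (period) map of \eqref{1.4} and invoke the cited results of Hess \cite{H91}, with Proposition \ref{p2.3} supplying the required spectral information.

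\textbf{Setup.} First I set up the period map $\mathcal P(u_0,v_0)=(u(\cdot,T),v(\cdot,T))$ on $X^+=C(\boo)^+\times C(\boo)^+$. The parabolic comparison principle for competitive systems shows that $\mathcal P$ is strongly monotone with respect to the competitive order $\le_K$ (with $(u_1,v_1)\le_K(u_2,v_2)$ iff $u_1\le u_2$, $v_1\ge v_2$). Parabolic regularity together with the a priori bounds $u,v\le \max m_i+$(decaying term) shows that $\mathcal P$ is compact and point dissipative. Its fixed points in $X^+$ are exactly the $T$-periodic solutions of \eqref{1.5}: $(0,0)$, the two semi-trivial states, and the positive solutions. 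The state $(0,0)$ is a repeller, since $\mu(d_i,m_i)<0$ by Proposition \ref{lm2.1}(1) and the assumption $m_i>0$.

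\textbf{Linking eigenvalues to spectral radii.} I would show that the linearization of $\mathcal P$ at $(\theta_{d_1,m_1},0)$ is triangular. Its spectral radius in the $v$-direction is $e^{-\mu(d_2,m_2-\theta_{d_1,m_1})T}$. The $u$-block has spectral radius $<1$, because $\theta_{d_1,m_1}$ is linearly stable for the scalar equation: $\mu(d_1,m_1-2\theta)>\mu(d_1,m_1-\theta)=0$ by Proposition \ref{lm2.1}(2). The sign conditions on the two principal eigenvalues therefore translate exactly into stability or instability of the fixed points of $\mathcal P$. The same holds at $(0,\theta_{d_2,m_2})$.

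\textbf{The four cases.}
\begin{enumerate}
\item[(1)] The order interval $[(0,\theta_{d_2,m_2}),(\theta_{d_1,m_1},0)]_K$ attracts all positive orbits. It contains no positive fixed point, and its endpoints are respectively unstable and stable. The monotone-connection (order-interval trichotomy) theorem, \cite[Theorem 34.1]{H91}, then gives convergence of every orbit to $(\theta_{d_1,m_1},0)$.
\item[(2)] This is symmetric to (1).
\item[(3)] When both semi-trivial states are unstable, the boundary of $X^+$ is an acyclic union of isolated invariant sets $\{(0,0)\}$, $\{(\theta_{d_1,m_1},0)\}$, $\{(0,\theta_{d_2,m_2})\}$. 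Each is repelling into the interior, since the corresponding principal eigenvalues are negative. Acyclicity-based persistence theory, \cite[Remark 33.2, Theorem 33.3]{H91}, yields uniform persistence. Monotonicity then gives a minimal and a maximal positive fixed point in the attracting interval. At least one of them is linearly stable, because the principal eigenvalue of the linearization of $\mathcal P$ at the extremal fixed point is $\le 1$; a nondegeneracy argument then gives $<1$, as in Hess.
\item[(4)] When both semi-trivial states are stable, a degree or index argument on the order interval applies. The fixed-point index of $\mathcal P$ on the interval equals the sum of the local indices, and the two stable endpoints each contribute index $1$. This forces an interior fixed point of index $-1$, hence one whose linearization has principal eigenvalue $>1$, i.e.\ an unstable positive solution, \cite[Theorem 35.1]{H91}.
\end{enumerate}

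\textbf{Main obstacle.} The main obstacle is the linear stability assertion in (3): ensuring the extremal fixed point is nondegenerate, rather than merely having spectral radius $\le 1$. I would handle it as in Hess. If every positive fixed point had spectral radius $\ge1$, I would use strong monotonicity to construct a sub-solution near the minimal one, producing a smaller fixed point, which is a contradiction. If degeneracy persisted, I would fall back on the index computation to obtain a fixed point with index $+1$ that is linearly stable.
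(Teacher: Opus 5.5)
Your reduction is the same route the paper takes: the period map, the triangular linearization at the semi-trivial states (your spectral computation at $(\theta_{d_1,m_1},0)$ is correct), and then Hess's monotone theory. The paper itself gives no argument beyond Proposition~\ref{p2.3} and the cited results of \cite{H91}. Your acyclicity route to uniform persistence in (3) is a valid alternative to Hess's sandwiching between the extremal coexistence states.

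\textbf{The gap: strict linear stability in (3).} This is the step you flag yourself, and it is not closed.
\begin{itemize}
\item Perturbing the minimal positive fixed point $E_*$ along the principal eigenvector $e$ of $L=D\mathcal P(E_*)$ gives a strict super-solution only when $r(L)>1$. Even then you must exclude that the decreasing orbit from $E_*-\varepsilon e$ converges to $(0,\theta_{d_2,m_2})$, which requires the instability of that state.
\item If $r(L)=1$, the first-order term vanishes and nothing follows. So this argument yields only $r(L)\le 1$.
\item Your fallback is false. For an isolated nondegenerate fixed point the index is $(-1)^{\beta}$, where $\beta$ counts the real eigenvalues of $L$ exceeding $1$ (with multiplicity). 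Index $+1$ is therefore compatible with $\beta=2$, e.g.\ a repelling node of a planar competitive map.
\item A degenerate fixed point with $r(L)=1$ can also have index $+1$; the model case is $x\mapsto x-x^{3}$. Index sums also presuppose isolated fixed points.
\end{itemize}
Monotonicity plus degree theory thus give a positive periodic solution whose linearized principal eigenvalue is $\ge 0$ and which attracts from one side. Nothing in your argument rules out degeneracy. Strict linear stability must either be imported from \cite{H91} as stated, which is what the paper does, or the conclusion must be read in this weak sense. The same caveat applies to (4): index $-1$ gives only $r(L)\ge 1$, and $r(L)>1$ again needs nondegeneracy.

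\textbf{A smaller slip in (1) and (4).} The interval $[(0,\theta_{d_2,m_2}),(\theta_{d_1,m_1},0)]_K=\{0\le u\le \theta_{d_1,m_1},\ 0\le v\le \theta_{d_2,m_2}\}$ also contains $(0,0)$.
\begin{itemize}
\item In (1), the order-interval trichotomy needs the endpoints to be the only fixed points in the interval, so it does not apply verbatim. Argue instead with the $K$-subsolution $(\varepsilon\varphi,\theta_{d_2,m_2+\delta})$, where $\delta>0$ is small and $\varphi$ is the principal eigenfunction for $\mu(d_1,m_1-\theta_{d_2,m_2+\delta})<0$.
\item Its orbit increases in $\le_K$ to a fixed point with $u>0$. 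Since \eqref{1.5} has no positive solution, that fixed point is $(\theta_{d_1,m_1},0)$.
\item Every positive solution lies $K$-above this subsolution after finitely many periods, with $\varepsilon$ depending on the solution. It is therefore squeezed to $(\theta_{d_1,m_1},0)$.
\item In (4), your index count must include $(0,0)$. Relative to the interval it repels in both directions and has index $0$. Hence $1=0+1+1+\sum(\text{indices of positive fixed points})$, and your conclusion survives.
\end{itemize}
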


\subsection{Asymptotic profiles of the periodic solution $\theta_{d,m}(x,t)$} \label{sec2.3}
{\setlength\arraycolsep{2pt}

In this subsection, we present several basic results on the asymptotic profiles of $\theta_{d,m}(x,t)$. To this end, we first introduce solutions to specific equations and analyze their key properties.

Let $d>0$ and $m\in C_T^{\alpha,\alpha/2}(\overline Q_T)$. We denote by $\Theta_{d,m}$ and $\Theta_m$ the maximal nonnegative solutions of \eqref{1.2} and \eqref{1.6}, respectively. Then either $\Theta_{d,m}\equiv 0$ or $\Theta_{d,m}=\theta_{d,m}>0$ on $\overline Q_T$.

\begin{lemma}\label{lm2.5} Let $d>0$ and $m\in C_T^{\alpha,\alpha/2}(\overline Q_T)$. Then we have
 \bee{\lim_{d\to 0}\|\Theta_{d,m}-\Theta_m\|_{C(\overline Q_T)}=0.}
 \label{2.2}\eee
\end{lemma}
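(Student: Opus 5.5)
We want to show that $\Theta_{d,m}\to\Theta_m$ uniformly on $\overline Q_T$ as $d\to0$, where $\Theta_m$ is the maximal nonnegative periodic solution of \eqref{1.6}. Note that $\Theta_m=\Phi_m$ on $\{\widehat m>0\}$ and $\Theta_m=0$ on $\{\widehat m\le0\}$. The plan is a sub/supersolution squeeze. Uniform closeness to $\Theta_m$ is obtained by constructing, for each $\epsilon>0$, spatially smooth barriers built from the ODE profiles of slightly perturbed resources. We then use the comparison principle for \eqref{1.2} together with the maximality/uniqueness of $\Theta_{d,m}$.

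\emph{Upper bound.} Fix $\epsilon>0$. Mollify $m$ in $x$ to get $m^\epsilon\in C_T^{2,1}$ with $m\le m^\epsilon\le m+\epsilon$, and set $\bar\Phi:=\Phi_{m^\epsilon+\epsilon}$, the positive solution of \eqref{1.6} with $m$ replaced by $m^\epsilon+\epsilon$. Since $\widehat{m^\epsilon+\epsilon}\ge\widehat m+\epsilon$, where needed we use instead $\Phi_{m^\epsilon+\epsilon}$ on a neighbourhood and a small positive ODE solution elsewhere. The cleaner route is to work with $\Theta_{m^\epsilon+\epsilon}$ directly. This function is $C^2$ in $x$ wherever it is positive, with bounds depending on $\epsilon$, via smooth dependence of the periodic ODE solution on the parameter $x$; as in \eqref{1.7}, $\Phi$ inherits regularity from $m$.

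For $d$ small, $d|\Delta\bar\Phi|\le\epsilon\bar\Phi$ in the region where $\bar\Phi$ is bounded below. Then $\bar\Phi$ satisfies
\[
\bar\Phi_t-d\Delta\bar\Phi-\bar\Phi(m-\bar\Phi)\ge\bar\Phi\,(m^\epsilon+\epsilon-m)-d|\Delta\bar\Phi|\ge0 .
\]
A Neumann correction near $\partial\Omega$ is needed, obtained by choosing the mollification so that $\partial_\nu m^\epsilon\ge0$, or by adding a small boundary-layer function. Since $\Theta_{d,m}$ is the limit of the solution starting from a large constant, comparison gives $\Theta_{d,m}\le\bar\Phi$. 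Degenerate zones where $\widehat m+\epsilon$ is small are avoided by simply taking the supersolution $\max\{\Theta_{m^\epsilon+\epsilon},\delta(\epsilon)\}$-type constructions. Alternatively, one can argue on the set $\{\widehat m\le0\}$ that $\Theta_{d,m}$ is small because the principal eigenvalue there is close to $-\max\widehat m$ by Proposition~\ref{lm2.1}(3).

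\emph{Lower bound.} We do the symmetric construction with $m^\epsilon-\epsilon$ replaced by a mollified function lying below $m-\epsilon$. Let $\underline\Phi=\Theta_{m_\epsilon}$, which is positive where $\widehat m_\epsilon>0$. To get a genuine subsolution with Neumann data, multiply by a cutoff, or use $\max\{\underline\Phi\,\chi,0\}$ with $\chi$ supported in $\{\widehat m>2\epsilon\}$; a maximum of subsolutions is a subsolution. Comparison from below uses maximality of $\Theta_{d,m}$: when $\Theta_{d,m}>0$ it is the unique positive solution, which is globally attracting, so it dominates any positive periodic subsolution. Finally, $\Theta_{m\pm\epsilon}\to\Theta_m$ uniformly as $\epsilon\to0$, by continuous dependence of the periodic ODE solution on $m$. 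This holds even near $\{\widehat m=0\}$, because there $\Phi_m\to0$ continuously, as the ODE solution with small mean growth is small. Combining the upper and lower bounds gives \eqref{2.2}.

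\emph{Main obstacle.} The hardest step is handling the degenerate set $\{\widehat m(x)=0\}$ together with the Neumann boundary. On that set the ODE profiles lose uniform positivity, so $d\Delta\bar\Phi$ cannot be absorbed into $\epsilon\bar\Phi$. I would handle it by using only constants or cutoffs there: positive constants $\delta$ serve as supersolution pieces after shifting $m$ upward, and the cutoff removes the set for subsolutions. The equicontinuity in $x$ of the limits then yields uniform convergence.
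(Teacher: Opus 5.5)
Your upper barrier breaks down on the degenerate set you single out, and the remedies you propose there do not work. Across the interface $\{\widehat{m^\epsilon}=-\epsilon\}$, $\Theta_{m^\epsilon+\epsilon}$ behaves like the positive part of a smooth function, so it has a convex kink. Hence $\Delta\Theta_{m^\epsilon+\epsilon}$ has a positive singular part on the interface, $-d\Delta\bar\Phi$ has a negative one, and $\bar\Phi$ is not a supersolution for any $d>0$.

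Your three proposed fixes all fail:
\begin{itemize}
\item Passing to $\max\{\Theta_{m^\epsilon+\epsilon},\delta\}$ goes in the wrong direction: minima, not maxima, of supersolutions are supersolutions.
\item A constant $\delta>0$ satisfies $\delta_t-d\Delta\delta-\delta(m-\delta)=\delta(\delta-m)\ge0$ only where $m\le\delta$. This fails in general, since $\widehat m(x)\le0$ does not stop $m(x,t)$ from being large and positive during part of the period.
\item The eigenvalue alternative is unavailable: Proposition~\ref{lm2.1}(3) concerns the global principal eigenvalue and gives no smallness of $\Theta_{d,m}$ on the subset $\{\widehat m\le0\}$.
\end{itemize}
What is missing is a positive, \emph{time-dependent} correction with a quantitative margin near the degenerate set. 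The paper supplies it through $w=u_d+\epsilon P_d$. Here $P$ solves the linearized equation \eqref{2.3}, shifted by the nonnegative function $\frac1T\int_0^T(2\Theta_m-m)$ (see \eqref{2x.13}). The term $\epsilon^2P_d^2$ provides the margin. The small-diffusion linear periodic problems \eqref{2b.6} and \eqref{2b.8} regularize $\Theta_m$ and $P$ and build in the Neumann condition, so no mollification of $m$ is needed. Your route could instead be rescued by raising $m$ by an $x$-dependent amount, with $\partial_\nu\ge0$ arranged near $\partial\Omega$, so that the time average is at least $\epsilon$ everywhere. The resulting ODE profile is then globally positive, $C^2$ in $x$, and $O(\epsilon)$ where $\widehat m\le0$; but this is not what you wrote.

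The lower bound has an analogous gap at $\partial\Omega$. A cutoff $\chi$ supported in $\{\widehat m>2\epsilon\}$ either vanishes near $\partial\Omega$, and then gives nothing at boundary points $x_*$ with $\widehat m(x_*)>0$, or it does not. In the latter case you must also ensure $\partial_\nu(\chi\underline\Phi)\le0$ on $\partial\Omega$, which you never arrange for the lower barrier. Moreover, $\chi\underline\Phi$ is a subsolution only if $-d\Delta(\chi\underline\Phi)\le\epsilon\chi\underline\Phi$. Near the edge of $\mathrm{supp}\,\chi$, where $\chi\to0$, this fails for small $d$ unless the cutoff is built so that $\Delta(\chi\underline\Phi)\ge0$ there (e.g.\ $\chi=\zeta^k$ with $k$ large). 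Your remark that a maximum of subsolutions is a subsolution does not help, because $\chi\underline\Phi\ge0$ already.

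The paper avoids a global lower barrier altogether. It argues by contradiction and uses the interior convergence result of \cite{SSJ24} when $x_*\in\Omega$. For $x_*\in\partial\Omega$ it:
\begin{itemize}
\item flattens the boundary and rescales by $\sqrt{d_k}$;
\item passes to an entire periodic solution on a half-space with Neumann data;
\item shows this limit is positive at an interior point via the subsolution $\sigma\eta(t)\phi_k$;
\item reflects evenly and identifies the limit with $\Theta_m(0,t)$ through the Liouville-type results of \cite{N09}.
\end{itemize}
Your barrier strategy is genuinely different and could in principle be completed. As written, however, the degenerate set and the boundary are left unresolved, and the specific fixes you propose for the upper bound are incorrect.
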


\begin{proof}\; If $\max_{x\in\overline\Omega}\widehat m\le0$, then $\Theta_m\equiv 0$, and $\mu(d,m)\ge 0$ by Proposition \ref{lm2.1}(1); hence $\Theta_{d,m}\equiv 0$. The limit \eqref{2.2} holds.

In the following we consider the case where $\max_{x\in\overline\Omega}\widehat m>0$.

{\it Step 1}. Using Proposition \ref{lm2.1}(3), we have $\lim_{d\to 0}\mu(d,m)=-\max_{x\in\overline\Omega} \widehat m<0$. Then there exists $d^*>0$ such that $\mu(d,m)<0$ for all $0<d\le d^*$. Consequently, $\Theta_{d,m}=\theta_{d,m}>0$ on $\overline Q_T$ for all $0<d\le d^*$.

{\it Step 2}. For each $x\in\overline\Omega$, since $m\in C_T^{\alpha,\alpha/2}(\overline Q_T)$ and
 \[\int_0^T\bigg(m-2\Theta_m+\frac{1}{T}\int_0^T(2\Theta_m-m)\bigg)\dt=0,\]
problem
\begin{equation}\begin{cases}\label{2.3}
P_t=\left(m-2\Theta_m+\dfrac{1}{T}\int_0^T(2\Theta_m-m)\right)P,\quad t\in(0,T],\\[2mm]
\dd\min_{t\in[0,T]}P(x,t)=1\end{cases}
\end{equation}
admits a unique positive $T$-periodic solution $P\in C_T^{\alpha,\alpha/2}(\overline Q_T)$.

We show that for each fixed $0<\varepsilon<1$, there exists $d_*=d_*(\varepsilon)\in(0,d^*)$ such that for all $0<d<d_*$,
  \begin{equation}\label{2.4}
\theta_{d,m}(x,t)\le \Theta_m(x,t)+\ep P(x,t)+\frac{\ep^2}{4B_0},
\quad \forall\, (x,t)\in\overline Q_T,
  \end{equation}
where
 \[B_0:=3M_0+3,\;\;\;M_0:=\max\Bigl\{1,\,\max_{\overline Q_T}|m|,\,\max_{\overline Q_T}P\Bigr\}.\]
Once this is done, letting $d\to0$ firstly and $\ep\to0$ secondly, we obtain
  \begin{equation}
\limsup_{d\to0}\theta_{d,m}(x,t)\le \Theta_m(x,t)\;\;\;\text{uniformly on}\;\; \ol Q_T.
\label{2.5}
\end{equation}

Indeed, owing to the fact that $\Theta_m\in C_T^{\alpha,\alpha/2}(\overline Q_T)$, by \cite[Proposition 4.4.8]{L95}, the periodic parabolic problem
\begin{equation}\begin{cases}\label{2b.6}
(u_d)_t-d\Delta u_d+u_d=\Theta_m(m-\Theta_m)+\Theta_m
& \text{in }\ Q_T,\\
\partial_{\nu}u_d=0 & \text{on }\ S_T,\\
u_d(x,0)=u_d(x,T) & \text{on }\ \overline\Omega
\end{cases}\end{equation}
admits a unique strong solution $u_d$, and hence $u_d$ is a classical solution by parabolic regularity. Moreover, by \cite[Lemma 4.2]{BHN23},
\begin{equation}\label{2b.7}
\lim_{d\to0}\|u_d-\Theta_m\|_{C(\overline Q_T)}=0.
\end{equation}

Next, consider the periodic parabolic problem
\begin{equation}\begin{cases}\label{2b.8}
(P_d)_t-d\Delta P_d+P_d=P_t+P
& \text{in }\ Q_T,\\
\partial_{\nu}P_d=0 & \text{on }\ S_T,\\
P_d(x,0)=P_d(x,T) & \text{on }\ \overline\Omega.
\end{cases}\end{equation}
Again by \cite[Proposition 4.4.8]{L95}, \eqref{2b.8} admits a unique strong solution $P_d$,
which is also classical. Moreover, by \cite[Lemma 4.2]{BHN23},
\begin{equation}\label{2b.9}
\lim_{d\to0}\|P_d-P\|_{C(\overline Q_T)}=0.
\end{equation}

Fix $0<\varepsilon<1$. By \eqref{2b.7} and \eqref{2b.9}, shrinking $d_*>0$ if necessary,
we may assume that for all $0<d<d_*$,
\begin{equation}\label{2b.10}
\|u_d-\Theta_m\|_{C(\overline Q_T)}\le \min\left\{\frac{\ep^2}{16B_0},\,\frac{\ep}{32M_0}\right\},
\end{equation}
and
 \begin{equation}\label{2b.11}
\|P_d-P\|_{C(\overline Q_T)}\le \min\left\{\frac12,\,\frac{\ep}{16B_0}\right\}.
\end{equation}
Since $0\le \Theta_m\le M_0$ and $P\ge 1$ on $\overline Q_T$, the estimates \eqref{2b.10}--\eqref{2b.11} imply that
\begin{equation}\label{2b.12}
|u_d|\le M_0+1,\quad \frac12\le P_d\le M_0+1
\quad \text{on }\; \overline Q_T.
\end{equation}

We now construct an upper solution with the form 
 \[w=u_d+\ep P_d\;\;\;\;\text{for all}\;\; 0<d<d_*.\]
Notice that $\widehat\Theta_m(x)=\widehat m(x)$ if $\widehat m(x)>0$, and $\Theta_m(x,t)\equiv 0$ if $\widehat m(x)\leq 0$. It is clear that  
  \bee\int_0^T(2\Theta_m-m)\geq 0\;\;\;\;\text{for every}\;\;x\in\overline\Omega.
   \label{2x.13}\eee
In view of \eqref{2b.10} and \eqref{2b.12}, there holds that 
 \begin{align}
 |\Theta_m(m-\Theta_m)-u_d(m-u_d)|&\leq (|m|+|u_d|+\Theta_m)|u_d-\Theta_m| \notag \\
&\leq B_0\,|u_d-\Theta_m|\le \frac{\ep^2}{16}
\quad \text{on }\; \overline Q_T  \label{2b.13}
  \end{align}
for $0<d<d_*$. From \eqref{2.3}, \eqref{2b.6} and \eqref{2b.8} we have
 \begin{align*}
&w_t-d\Delta w-w(m-w)\\
=\;&(u_d)_t-d\Delta u_d-u_d(m-u_d)+\ep\bigl[(P_d)_t-d\Delta P_d-(m-2u_d)P_d\bigr]+\ep^2P_d^2\\
=\;&\Theta_m(m-\Theta_m)+\Theta_m-u_d-u_d(m-u_d)+\ep\big[(P_t+P)-(m-2u_d+1)P_d \big]+\ep^2P_d^2 \\
 =\;&\Theta_m(m-\Theta_m)-u_d(m-u_d)+\Theta_m-u_d+\ep(1+m-2u_d)(P-P_d)\\
 &+\ep\bigg(2(u_d-\Theta_m)+\frac{1}{T}\int_0^T(2\Theta_m-m)\bigg)P+\ep^2P_d^2 \quad \text{in }\;  Q_T
\end{align*}
for $0<d<d_*$. Combining this with \eqref{2b.10}--\eqref{2b.13} and the fact that $P\le M_0$, we obtain 
  \begin{align*}
w_t-d\Delta w-w(m-w)
\ge\;&-\frac{\ep^2}{16}-\ep B_0|P_d-P|-(2\ep M_0+1)|u_d-\Theta_m|+\ep^2P_d^2\\
\ge\;&-\frac{\ep^2}{16}-\frac{\ep^2}{16}-\frac{\ep^2}{16}+\frac{\ep^2}{4}
>0 \quad \text{in } \; Q_T.
  \end{align*}
Clearly, $\partial_\nu w=0$ on $S_T$, and $w(x,0)=w(x,T)$ on $\overline\Omega$. Thus $w$ is a strict upper solution of
\eqref{1.2}.

Furthermore, by \eqref{2b.10} and \eqref{2b.12},
 \[w=u_d+\ep P_d=\Theta_m+u_d-\Theta_m+\ep P_d\geq -|u_d-\Theta_m|+\frac{\ep}{2}\geq \frac{\ep}{2}-\frac{\ep}{32M_0} >0\quad \text{on }\;\overline Q_T.\]
Let $\varphi_{d,m}$ be the positive eigenfunction corresponding to $\mu(d,m)$. Taking
$\sigma>0$ sufficiently small, we can check that $\sigma\varphi_{d,m}$ is a lower solution
of \eqref{1.2} and $\sigma\varphi_{d,m}\leq w$ on $\overline Q_T$. Since $\sigma\varphi_{d,m}(x,0)\in C^{2+\alpha}(\boo)$, by the upper and lower solution
method (cf. \cite[Theorem 7.3]{W21}) and the uniqueness of maximal solutions to \eqref{1.2},
we conclude that $\theta_{d,m}\le w=u_d+\ep P_d$ on $\overline Q_T$. Finally, by \eqref{2b.10} and \eqref{2b.11}, it yields 
  \begin{align*}
\theta_{d,m}\le u_d+\ep P_d\le\Theta_m+\ep P+\frac{\ep^2}{16B_0}+\frac{\ep^2}{16B_0}
\le \Theta_m+\ep P+\frac{\ep^2}{4B_0}\quad \text{on }\; \overline Q_T.
\end{align*}
This proves \eqref{2.4}.

{\it Step 3}. We show that
  \bee
 \liminf_{d\to0}\theta_{d,m}(x,t)\ge \Theta_m(x,t)\;\;\;\text{uniformly on}\;\; \ol Q_T.
 \label{2.7}\eee
Assume for contradiction that there exist $\varepsilon_0>0$, $d_k\to 0$, and $(x_k,t_k)\in\overline{Q}_T$ such that
 \begin{equation}\label{2.8}
 \Theta_m(x_k,t_k)-\theta_{d_k,m}(x_k,t_k)\ge\varepsilon_0,\;\;\forall\, k.
 \end{equation}
We can assume $(x_k,t_k)\to(x_*,t_*)\in\overline{Q}_T$. Then, by \eqref{2.8}, $\Theta_m(x_*,t_*)\geq\varepsilon_0$. Thus $\widehat m(x_*)>0$.

If $x_*\in\oo$, then there exists a closed ball $\ol B_r(x_*)\subset\oo$ such that $\widehat m(x)>0$ on $\ol B_r(x_*)$. By \cite[Theorem 1.1]{SSJ24}, $\lim_{d\to 0}\theta_{d,m}=\Theta_m$ uniformly on $\ol B_r(x_*)\times[0,T]$. This contradicts \eqref{2.8}.

Now we consider the case $x_*\in\partial\Omega$. We may assume that $x_*=0$, and use the boundary flattening transformation to handle it. Write $x=(x',x^n)$,  $0=(0',0^{n})$, and let $B^{n-1}_r(0')$ be the ball in $\mathbb{R}^{n-1}$ centered at $0'$ with radius $r$. Because $\partial\Omega\in C^{2+\alpha}$, there exist $r>0$, a $C^{ 2+\alpha}$ function $f: B^{n-1}_r(0')\to \mathbb{R}$, and a suitable Cartesian coordinate system (which can be obtained by translating and rotating the coordinate system) such that, after this change of coordinates:\vspace{-1mm}
\begin{enumerate}[leftmargin=4.5mm]
\item[$\bullet$] $f(0')=0$ and $\nabla_{x'}f(0')=0$, and
  \begin{align*}
\partial\Omega\cap B_r(0)=\big\{x\in B_r(0): x^n=f(x')\big\},\;\;
\Omega\cap B_r(0)=\big\{x\in B_r(0):x^n>f(x')\big\}.
\end{align*}
\end{enumerate}
Then, for all sufficiently large $k$, the sequence $x_k$ satisfies $x^n_k\ge f(x'_k)$.

Define the flattening map
 \[F(x)=(x',x^n-f(x')),\;\;x\in\Omega\cap B_r(0).\]
Then the function $y=F(x)$ has an inverse function $x=H(y)$ defined in $B_{2\rho}^+=B_{2\rho}(0)\cap\{y^n>0\}$ for some $0<\rho\ll1$. Define 
 \[\tilde w_k(y,t)=\theta_{d_k,m}\big(H(y),t\big).\] 
 Then $\tilde w_k$ satisfies
  \[\begin{cases}
  \displaystyle \partial_t\tilde w_k=d_k\left(\sum_{i,j=1}^na_{ij}\frac{\partial^2\tilde w_k}{\partial y^i\partial y^j}\right.+\left.\dd\sum_{i=1}^n b_i\frac{\partial\tilde w_k}{\partial y^i}\right)\!+\!\tilde w_k\big( m(H(y),t)\!-\!\tilde w_k \big), \;\;
  y\in B_{2\rho}^+,\; t\in(0,T],\\[5mm]
 \displaystyle\sum_{j=1}^n a_{nj}(y)\frac{\partial\tilde w_k}{\partial y^j}=0, \hspace{6mm}y\in B_{2\rho}\cap\{y^n=0\},\;t\in(0,T],\\
\tilde w_k(y,0)=\tilde w_k(y,T), \hspace{5mm}y\in\overline{B_{2\rho}^+},
 \end{cases}\]
where 
\[a_{ij}(y)=\sum_{q=1}^n\frac{\partial F_i}{\partial x^{q}}\big(H(y)\big)\frac{\partial F_j}{\partial x^{q}}\big(H(y)\big),\;\;
  b_i(y)=\Delta_x F_i\big(H(y)\big),\;\; 1\leq i,j\leq n.\] 
Now set 
 \[y_k=F(x_k).\] 
Then $y_k^n=x_k^n-f(x_k')\geq 0$, and $y_k\in B_{\rho}(0)$ for all sufficiently large $k$ since $x_k\to 0$. This naturally leads to a dichotomy between two distinct scenarios.

{\it Case} (i): the sequence $\big\{y_k^n/\sqrt{d_k} \big\}_{k=1}^{\infty}$ is bounded. By extracting a subsequence if necessary, we may assume that $y_k^n/\sqrt{d_k}\to \chi\geq 0$ as $k\to\infty$. Define
 \[\check w_k(z,t)=\tilde w_k\big(y_k'+\sqrt{d_k}z',\sqrt{d_k}z^n,t\big),\;\;
(z,t)\in B_{\rho/\sqrt{d_k}}^+\times[0,T].\]
Then $\check w_k$ satisfies
  \begin{equation}\label{2.9}
  \begin{cases}
\displaystyle \partial_t\check w_k=\mathcal L_k\check w_k+\check w_k(\check m^k(z,t)-\check w_k), & z\in B_{\rho/\sqrt{d_k}}^+,\; t\in(0,T], \\
 \displaystyle\sum_{j=1}^n\check a^k_{nj}(z)\frac{\partial\check w_k}{\partial z^j}=0, & z\in B_{\rho/\sqrt{d_k}}\cap\{z^n=0\},\; t\in(0,T], \\
\check w_k(z,0)=\check w_k(z,T), & z\in\overline{B_{\rho/\sqrt{d_k}}^+},
\end{cases}
\end{equation}
where
  \begin{align*}
  \mathcal L_k&=\sum_{i,j=1}^n\check a^k_{ij}(z)\frac{\partial^2}{\partial z^i\partial z^j}+\sqrt{d_k}\sum_{i=1}^n\check b^k_i(z)\frac{\partial}{\partial z^i},\\
\check a^k_{ij}(z)&=a_{ij}\big(y_k'+\sqrt{d_k}z',\sqrt{d_k}z^n\big),\\[0.2mm]
\check b^k_i(z)&=b_i\big(y_k'+\sqrt{d_k}z',\sqrt{d_k}z^n \big), \\[0.2mm]
\check m^k(z,t)&=m(H(y_k'+\sqrt{d_k}z',\sqrt{d_k}z^n),t).
\end{align*}
Since $f(0')=0$ and $\nabla_{x'} f(0')=0$, it is straightforward to verify that
  \begin{align}
 \lim_{k\to\infty}\check a^k_{ij}(z)=\delta_{ij}\,\;\text{(the Kronecker delta function)\;\;locally uniformly in }\; \mathbb R^n_+,\label{2.10}\end{align}
and for any given $R>0$,
  \begin{align}
 \|\check a^k_{ij}\|_{C^\alpha(\ol B_R^+)},\;\|\check b^k_i\|_{C^\alpha(\ol B_R^+)},\;
  \|\check m^k\|_{C^{\alpha,\frac\alpha 2}(\ol B_R^+\times[0,T])}\;\; \text{are bounded in }\; k. \label{2.11}\end{align}

For any $R>0$, we define
  \[z^0=(0,\dots,0,2R)\in \mathbb R^n_+,\qquad \mathcal{O}=B_R(z^0).\]
It is clear that $\mathcal{O}\Subset \mathbb R^n_+$. Let $\lambda$ denote the principal eigenvalue of $-\Delta$ in $\mathcal{O}$ subject to the homogeneous Dirichlet boundary condition. In view of $\widehat{m}(0)>0$, we may fix a constant $\delta\in\bigl(0,\widehat{m}(0)/8\bigr)$ and choose a large $R$ such that $\lambda<\widehat{m}(0)-4\delta$. Since $B_{\rho/\sqrt{d_k}}^+\nearrow\mathbb R^n_+$, the inclusion $\mathcal{O}\subset B_{\rho/\sqrt{d_k}}^+$ holds for all large $k$.
For each such $k$, let $\lambda_k$ be the principal eigenvalue of the elliptic problem
  \[\begin{cases}
-\mathcal L_k\phi_k=\lambda_k\phi_k, & z\in \mathcal O,\\
\phi_k=0, & z\in \partial\mathcal O,
\end{cases}\]
and let $\phi_k\in C^{2+\alpha}(\overline{\mathcal O})$ be the corresponding positive eigenfunction with normalization $\phi_k(z^0)=1$. By the uniform convergence $\check{a}^k_{ij}\to\delta_{ij}$ and $\sqrt{d_k}\check{b}^k_i\to0$ on $\overline{\mathcal O}$, the operator $\mathcal{L}_k$ converges uniformly to $\Delta$ on $\overline{\mathcal O}$, which implies $\lambda_k\to \lambda$ as $k\to\infty$. Consequently,
  \[\lambda_k<\widehat{m}(0)-3\delta \quad \text{for all large } k.\]
Furthermore, by the Harnack inequality and global Schauder estimates, there exists a constant $M_\phi>0$ independent of $k$ such that $\|\phi_k\|_{C(\overline{\mathcal O})}\leq M_\phi$ for all large $k$. Let $\eta(t)$ be the unique positive $T$-periodic solution to the scalar ODE
  \[\eta_t=\big(m(0,t)-\widehat{m}(0)\big)\eta,\quad \min_{t\in[0,T]}\eta(t)=1,\]
and set $M_\eta=\max_{t\in[0,T]}\eta(t)$. We choose $\sigma>0$ small so that $\sigma M_\eta M_\phi\leq \delta$, and define 
 \[v_k(z,t):=\sigma\eta(t)\phi_k(z),\quad (z,t)\in \overline{\mathcal O}\times[0,T].\]
Since $\check{m}^k(z,t)\to m(0,t)$ uniformly on $\overline{\mathcal O}\times[0,T]$ as $k\to\infty$, it follows that for all large $k$,
  \[|\check m^k(z,t)-m(0,t)|\le\delta\quad \text{on }\;\overline{\cal O}\times[0,T].\]
A direct calculation yields
 \begin{align}
 (v_k)_t-\mathcal L_k v_k-v_k(\check m^k-v_k)
&=\sigma\eta\phi_k\bigl(\lambda_k+m(0,t)-\widehat m(0)-\check m^k+\sigma\eta\phi_k\bigr)   \notag\\
&\le \sigma\eta\phi_k\bigl(\lambda_k-\widehat m(0)+\delta+\sigma M_{\eta} M_{\phi}\bigr)  \notag\\
&\le -\delta\sigma\eta\phi_k<0
\quad \text{in }\; {\cal O}\times(0,T]. \label{2.12}
  \end{align}

Notice that $\check w_k, v_k\in C(\overline {\cal O}\times[0,T])$, with $\check w_k>0$ and $v_k\ge 0$. There exists $\gamma>0$ such that $\check w_k\ge \gamma v_k$ on $\overline {\cal O}\times[0,T]$. Define
  \[\gamma_k=\sup\bigl\{\gamma>0:\ \check w_k\ge \gamma v_k
 \;\;\; \text{on}\;\;\overline {\cal O}\times[0,T]\bigr\}.\]
Then $0<\gamma_k<\infty$, as $v_k>0$ in ${\cal O}\times(0,T]$. We assert that $\gamma_k>1$.  If not, then $\gamma_k\le 1$. Set
  \[q_k=\check w_k-\gamma_kv_k.\]
Then $q_k\geq 0$ on $\overline {\cal O}\times[0,T]$, and there exists $(\bar z_k,\bar t_k)\in\overline {\cal O}\times[0,T]$ such that $q_k(\bar z_k,\bar t_k)=0$. Since $v_k=0$ and $\check w_k>0$ on $\partial {\cal O}\times[0,T]$, we have $\bar z_k\notin\partial {\cal O}$. Viewing the time variable on the torus $\mathbb T=\mathbb R/T\mathbb Z$, we conclude that $(\bar z_k,\bar t_k)\in {\cal O}\times\mathbb T$ is an interior minimum point of $q_k$. Consequently 
$(q_k)_t(\bar z_k,\bar t_k)=0$, $\nabla_z q_k(\bar z_k,\bar t_k)=0$, and $D_z^2 q_k(\bar z_k,\bar t_k)\ge0$. This implies
  \begin{equation}\label{2.13}
(q_k)_t-\mathcal L_k q_k\le0\quad \text{at }\; (\bar z_k,\bar t_k).
\end{equation}

On the other hand, using $\check w_k(\bar z_k,\bar t_k)=\gamma_kv_k(\bar z_k,\bar t_k)$ together with the assumption $0<\gamma_k\le 1$, we deduce from \eqref{2.9} and \eqref{2.12} that, at $(\bar z_k,\bar t_k)$, 
 \begin{align*}
 (q_k)_t-\mathcal L_k q_k&=\check w_k(\check m^k-\check w_k)-\gamma_k((v_k)_t-\mathcal L_k
  v_k)\nonumber\\
  &> \check w_k(\check m^k-\check w_k)-\gamma_kv_k(\check m^k-v_k)\\
  &=\check w_k(1-\gamma_k)v_k\ge 0.\end{align*}
This contradicts \eqref{2.13}. Hence $\gamma_k>1$, and so $\check w_k>v_k$ on $\overline{\cal O}\times[0,T]$ for all sufficiently large $k$. Particularly,
  \begin{equation}\label{2.15}
  \check w_k(z^0,t)>\sigma \eta(t),\;\; \forall\; t\in[0,T],\, k\gg1.
  \end{equation}
  
For any given $R>0$, observing \eqref{2.10} and \eqref{2.11}, and applying the local $L^p$ estimates firstly and the local Schauder estimates secondly for $B_R^+\times[T/2,T]\subset B^+_{\rho/\sqrt{d_k}}\times[0,T]$, we deduce that $\|\check w_k\|_{C^{2+\alpha,1+\frac\alpha 2}(\ol B_R^+\times[0,T])}$ is uniformly bounded in $k$. There exist $\check w_R\in C^{2,1}(\ol B_R^+\times[0,T])$ and a subsequence of $\{\check w_k\}$, denoted by itself, such that $\check w_k\to\check w_R$ in $C^{2,1}(\ol B_R^+\times[0,T])$. Choose $R_l\to\infty$. Using the diagonal argument, we can find $\check w_{\infty}\in C^{2,1}(\ol{\mathbb R}^n_+\times[0,T])$  and a subsequence of $\{\check w_k\}$, denoted by itself, such that $\check w_k\to\check w_{\infty}$ in $C^{2,1}_{\rm loc}(\ol{\mathbb R}^n_+\times[0,T])$. Letting $k\to\infty$ in \eqref{2.9}, and noting that $d_k\to 0$ while $\check a_{ij}^k(z)\to\delta_{ij}$, we find that $\check w_{\infty}$ satisfies the limiting problem
 \[\begin{cases}
\partial_t\check w_{\infty}=\Delta\check w_{\infty}+\check w_{\infty}(m(0,t)-\check w_{\infty}), & z^n>0,\; t\in(0,T],\\
\partial_{z^n}\check w_{\infty}=0, &z^n=0,\; t\in(0,T],\\
\check w_{\infty}(z,0)=\check w_{\infty}(z,T), & z^n\ge 0.
  \end{cases}\]
Letting $k\to\infty$ in \eqref{2.15}, we obtain $\check w_\infty(z^0,t)\ge \sigma\eta(t)$ for all $t\in[0,T]$. Hence, $\inf_{t\in\mathbb R}\check w_\infty(z^0,t)>0$ by the time-periodicity of $\check w_\infty$. Finally, extending $\check w_{\infty}$ to the whole space $\mathbb R^n$ by even reflection across the hyperplane $\{z^n=0\}$, we assert  that $\check w_{\infty}$ is a bounded nonnegative solution of
  \begin{equation}\label{2.16}
  \begin{cases}
\partial_t\check w_{\infty}=\Delta \check w_{\infty}+\check w_{\infty}\big(m(0,t)-\check w_{\infty} \big) & \text{in } \mathbb R^n\times(0,T],\\
\check w_{\infty}(z,0)=\check w_{\infty}(z,T) & \text{in }\mathbb R^n,
 \end{cases}  \end{equation}
As $\widehat m(0)>0$, the kinetic equation \eqref{1.6} with $x=0$ admits a unique positive $T$-periodic solution $\Theta_m(0,t)$. By \cite[Theorem 1.5]{N09}, the condition $\inf_{t\in\mathbb R}\check w_\infty(z^0,t)>0$ implies $\inf_{(z,t)\in\mathbb R^n\times\mathbb R}\check w_\infty(z,t)>0$. Now $\check w_\infty$ is a bounded entire solution of \eqref{2.16}, and $\Theta_m(0,t)$
is also a bounded entire solution of \eqref{2.16} independent of $z$. By \cite[Theorem 1.3]{N09},
  \[\check w_\infty(z,t)\equiv \Theta_m(0,t)\quad \text{in }\; \mathbb R^n\times[0,T].\]
In the $z$-variables, the original point $x_k$ corresponds to $z_k^*:=\left(0',{y_k^n}/{\sqrt{d_k}}\right)\to (0',\chi)$. Hence,
  \[\theta_{d_k,m}(x_k,t_k)=\check w_k(z_k^*,t_k)\to \check w_\infty((0',\chi),t_*)=\Theta_m(0,t_*)\;\;\text{as}\;\;k\to\infty.\]
From the convergence $\Theta_m(x_k,t_k)\to \Theta_m(0,t_*)$, it follows that  $\Theta_m(x_k,t_k)-\theta_{d_k,m}(x_k,t_k)\to 0$, which contradicts \eqref{2.8}.

{\it Case }(ii): The sequence $\big\{y_k^{n}/\sqrt{d_k} \big\}^{\infty}_{k=1}$ is unbounded. Passing to a subsequence if necessary, we may assume that $y_k^{n}/\sqrt{d_k}\to \infty$ as $k\to\infty$. In this regime, we define
 \[\check w_k(z,t)=\tilde w_k\big(y_k+\sqrt{d_k}z,t\big),\;\;\text{with}\;\;z\in B_{\rho/\sqrt{d_k}}\cap \big\{(z',z^n)\!:z^n>-y^n_k/\sqrt{d_k}\big\}=:D_k,\]
and set
 \begin{align*}
   \mathcal L_k\psi&=\sum_{i,j=1}^n\check a^k_{ij}(z)\frac{\partial^2\psi}{\partial z^i\partial z^j}+\sqrt{d_k}\sum_{i=1}^n\check b^k_i(z)\frac{\partial\psi}{\partial z^i},\\[.1mm]
\check a^k_{ij}(z)&=a_{ij}(y_k+\sqrt{d_k}z), \;\;\check b^k_i(z)=b_i(y_k+\sqrt{d_k}z)\\[.1mm]
\check m^k(z,t)&=m(H(y_k+\sqrt{d_k}z),t),\\[.1mm]
S_k&=B_{\rho/\sqrt{d_k}}\cap \bigl\{(z',z^n): z^n=-y_k^n/\sqrt{d_k}\bigr\}.
  \end{align*}
A direct calculation shows that $\check w_k$ satisfies
 \[\begin{cases}
\displaystyle \partial_t\check w_k=\mathcal L_k\check w_k+\check w_k(\check m^k(z,t)-\check w_k), & z\in D_k,\; t\in(0,T],\\
 \displaystyle\sum_{j=1}^n\check a^k_{nj}(z)\frac{\partial\check w_k}{\partial z^j}=0, & z\in S_k,\; t\in(0,T],\\
\check w_k(z,0)=\check w_k(z,T), & z\in\overline{D_k}.
\end{cases}\]

Note that $y_k^n/\sqrt{d_k}\to\infty$. For any fixed $\chi>0$, there exists $k_\chi$ such that $B_\chi\subset D_k$ for all $k\geq k_\chi$. Consequently, the domains $D_k$ exhaust the entire space $\mathbb R^n$ as $k\to\infty$. By the same compactness argument employed in Case (i)--utilizing the local $L^p$ and Schauder estimates and the compactness argument--we may extract a further subsequence such that $\check w_k$ converges to a function $\check w_{\infty}\in C^{2,1}(\mathbb R^n\times[0,T])$ on any compact subset of $\mathbb R^n\times[0,T]$, and $\check w_{\infty}$ solves the whole-space problem \eqref{2.16}. As in Case (i), we can prove $\check w_\infty(z,t)\equiv \Theta_m(0,t)$ in $\mathbb R^n\times[0,T]$ and $\lim_{k\to\infty}|\Theta_m(x_k,t_k)-\theta_{d_k,m}(x_k,t_k)|=0$, and thereby contradicting \eqref{2.8}. Thus, \eqref{2.7} holds.

Combining \eqref{2.5} and \eqref{2.7}, we conclude $\lim_{d\to0}\|\theta_{d,m}-\Theta_m\|_{C(\overline Q_T)}=0$.
\end{proof}

By use of Lemma \ref{lm2.5} and the upper and lower solution method, we can prove the following lemma. Since its proof is analogous to that of \cite[Theorem 3.4]{FL17}, we omit the details here.

\begin{lemma}\label{lm2.6} Let $f, f_{(d_1,d_2)}\in C_T^{\alpha,\alpha/2}(\overline Q_T)$. If $\lim_{(d_1,d_2)\to(0,0)}\|f_{(d_1,d_2)}-f\|_{C(\overline Q_T)}=0$, then
  \[\lim_{(d_1,d_2)\to(0,0)}\|\Theta_{d_1,f_{(d_1,d_2)}}
-\Theta_{f}\|_{C(\overline Q_T)}=0.\zzz\]
\end{lemma}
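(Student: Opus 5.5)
The plan is to sandwich $\Theta_{d_1,f_{(d_1,d_2)}}$ between maximal solutions of two auxiliary problems with fixed, $d$-independent coefficients, and to control both by Lemma~\ref{lm2.5}. Write $\ep_{(d_1,d_2)}:=\|f_{(d_1,d_2)}-f\|_{C(\overline Q_T)}$, which tends to zero by hypothesis. Fix $\delta>0$. For $(d_1,d_2)$ close to $(0,0)$ we have $\ep_{(d_1,d_2)}<\delta$, and hence
\[f-\delta\le f_{(d_1,d_2)}\le f+\delta\quad\text{on}\;\;\overline Q_T.\]

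The first step is monotonicity of maximal solutions in the coefficient. If $g_1\le g_2$, then $\Theta_{d,g_1}\le\Theta_{d,g_2}$. If $\Theta_{d,g_1}\equiv0$ this is trivial. Otherwise, $\mu(d,g_2)\le\mu(d,g_1)<0$ by Proposition~\ref{lm2.1}(2), so $\theta_{d,g_2}$ exists. The function $\theta_{d,g_1}$ is a lower solution of the $g_2$-problem, and a large constant $M\ge\max g_2$ is an upper solution. The upper and lower solution method together with uniqueness of the positive solution gives $\theta_{d,g_1}\le\theta_{d,g_2}$. The same comparison applies to the kinetic problem \eqref{1.6}, pointwise in $x$. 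We therefore get
\[\Theta_{d_1,f-\delta}\le\Theta_{d_1,f_{(d_1,d_2)}}\le\Theta_{d_1,f+\delta}.\]

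Next, apply Lemma~\ref{lm2.5} with $m=f\pm\delta$; these are fixed functions in $C_T^{\alpha,\alpha/2}(\overline Q_T)$. This yields $\Theta_{d_1,f\pm\delta}\to\Theta_{f\pm\delta}$ uniformly as $d_1\to0$, and hence as $(d_1,d_2)\to(0,0)$. Consequently,
\[\limsup\|\Theta_{d_1,f_{(d_1,d_2)}}-\Theta_f\|_{C(\overline Q_T)}\le\max_\pm\|\Theta_{f\pm\delta}-\Theta_f\|_{C(\overline Q_T)}.\]
It remains to show that the right-hand side tends to $0$ as $\delta\to0$, i.e.\ continuity of $\Theta_g$ in $g$ at $g=f$ in the sup norm. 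This is the main obstacle, because it is delicate near points where $\widehat f(x)=0$, where the kinetic solution degenerates.

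For this continuity, at each $x$ we have $\Theta_{f+\delta}(x,\cdot)$ positive iff $\widehat f(x)>-\delta$, and $\widehat\Theta_{f+\delta}=\widehat f+\delta$ there. Write $\Theta=\exp(\ln\Theta)$ and use the explicit formula for the periodic logistic ODE:
\[\Theta_g(x,t)^{-1}=\frac{\int_{t-T}^{t}e^{-\int_s^t g(x,\tau)\mathrm d\tau}\,\mathrm ds}{1-e^{-T\widehat g(x)}}\quad\text{when }\widehat g(x)>0,\]
and $\Theta_g=0$ otherwise. From this formula $\Theta_g$ is continuous in $g$ uniformly on sets where $\widehat g\ge\eta>0$. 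Near $\widehat g\to0^+$ the same formula gives $\Theta_g\le C\widehat g$, since the numerator is bounded below and $1-e^{-T\widehat g}\approx T\widehat g$. This is a uniform bound in terms of $\widehat f+\delta$.

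With these facts, split $\overline\Omega$ into $\{\widehat f\ge\eta\}$ and $\{\widehat f<\eta\}$. On the first set, uniform continuity from the formula controls the difference. On the second set, both $\Theta_{f\pm\delta}$ and $\Theta_f$ are bounded by $C(\eta+\delta)$. Choosing $\eta$ small first and then $\delta$ small finishes the proof.
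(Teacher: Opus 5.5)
Your proposal is correct and follows essentially the route the paper indicates (its proof is omitted, citing only Lemma~\ref{lm2.5} and the upper--lower solution method): you squeeze $\Theta_{d_1,f_{(d_1,d_2)}}$ between $\Theta_{d_1,f\pm\delta}$ by comparison, pass to the limit with Lemma~\ref{lm2.5}, and then let $\delta\to0$. Your explicit-formula treatment of the final step fills in what the paper leaves implicit. In fact the formula $\Theta_g(x,t)=\bigl(1-e^{-T\widehat g(x)}\bigr)^+\big/\int_{t-T}^{t}e^{-\int_s^t g(x,\tau)\,\mathrm d\tau}\,\mathrm ds$, valid at every $x$, already gives $\|\Theta_{f\pm\delta}-\Theta_f\|_{C(\overline Q_T)}\le C\delta$ directly, so your splitting into $\{\widehat f\ge\eta\}$ and $\{\widehat f<\eta\}$ is not needed.
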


Next, we establish estimates for $\theta_{d,m}$ and positive solutions to \eqref{1.5}. For brevity, we set $\|\cdot\|_p=\|\cdot\|_{L^p(Q_T)}$ for $1\le p\le\infty$. 
 
\begin{lemma}\label{lm2.7} Let $m\in C_T^{\alpha,\alpha/2}(\overline Q_T)$ with $m>0$ on $\overline Q_T$. Then the following assertions hold.
\begin{enumerate}
\item[\rm(1)] The estimates below hold {\rm(\!\!\cite[Lemma 4.4]{BHN23})}:
  \[  \|\theta_{d,m}\|_\infty<\|m\|_\infty,\;\; \|\partial_t\theta_{d,m}\|_\infty\leq C\|m\|_\infty^2,\;\; \min_{\overline Q_T}\theta_{d,m}>\min_{\overline Q_T}m.\zzz\]
\item[\rm(2)] If $(U,V)$ is a positive solution of \eqref{1.5}, then
 \begin{align}
\|U\|_\infty &<\|\theta_{d_1,m_1}\|_\infty<\|m_1\|_\infty, \;\; \|V\|_\infty<\|\theta_{d_2,m_2}\|_\infty<\|m_2\|_\infty,\label{2.17}\\
\|U_t\|_{L^2(Q_T)} &\leq C\|m_1\|_{\infty}\bigl(\|m_1\|_{\infty}+\|m_2\|_{\infty}\bigr), \label{2.18}\\
\|V_t\|_{L^2(Q_T)} &\leq C\|m_2\|_{\infty}\bigl(\|m_1\|_{\infty}+\|m_2\|_{\infty}\bigr).\label{2.19}
 \end{align}
\end{enumerate}
Here, the positive constant $C$ depends only on $T$ and $\Omega$.\www
\end{lemma}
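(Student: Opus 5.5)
Part (1) is quoted from \cite[Lemma 4.4]{BHN23}, so the work lies in part (2). I will handle \eqref{2.17} by comparison and \eqref{2.18}--\eqref{2.19} by an energy identity obtained from testing the equations with $U_t$ and $V_t$.

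\emph{Proof of \eqref{2.17}.} Let $(U,V)$ be a positive solution of \eqref{1.5}. Since $V>0$, $U$ satisfies
\[U_t-d_1\Delta U=U(m_1-V-U)< U(m_1-U)\quad\text{in } Q_T.\]
Hence $U$ is a strict $T$-periodic lower solution of \eqref{1.2} with $(d,m)=(d_1,m_1)$. A constant $K\ge\|m_1\|_\infty$, $K>\|U\|_\infty$, is an upper solution. By the upper and lower solution method and uniqueness of $\theta_{d_1,m_1}$, we get $U\le\theta_{d_1,m_1}$. For strictness, I set $w=\theta_{d_1,m_1}-U\ge0$. It satisfies
\[w_t-d_1\Delta w-(m_1-\theta_{d_1,m_1}-U)w=UV>0,\]
and the strong maximum principle on the periodic cylinder gives $w>0$ on $\overline Q_T$. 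Taking maxima then yields $\|U\|_\infty<\|\theta_{d_1,m_1}\|_\infty$, and part (1) gives $\|\theta_{d_1,m_1}\|_\infty<\|m_1\|_\infty$. The bound for $V$ is symmetric.

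\emph{Proof of \eqref{2.18}.} I multiply the $U$-equation by $U_t$ and integrate over $Q_T$. By periodicity,
\[\int_{Q_T}d_1\nabla U\cdot\nabla U_t=\frac{d_1}{2}\int_{Q_T}\partial_t|\nabla U|^2=0,\]
and the Neumann condition removes the boundary term. This leaves
\[\int_{Q_T}U_t^2=\int_{Q_T}U(m_1-U-V)U_t\le \|U(m_1-U-V)\|_{L^2(Q_T)}\|U_t\|_{L^2(Q_T)}.\]
By \eqref{2.17}, $|U(m_1-U-V)|\le\|m_1\|_\infty(\|m_1\|_\infty+\|m_1\|_\infty+\|m_2\|_\infty)$ pointwise. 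Therefore
\[\|U_t\|_2\le |Q_T|^{1/2}\cdot 2\|m_1\|_\infty(\|m_1\|_\infty+\|m_2\|_\infty).\]
This is \eqref{2.18} with $C$ depending only on $T$ and $|\Omega|$. The same computation for $V$ gives \eqref{2.19}.

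\emph{Main obstacle.} The delicate point is justifying the integration by parts in time, $\int_{Q_T}\partial_t|\nabla U|^2=0$. This requires $\nabla U_t\in L^1$, or at least enough regularity to make sense of it. Since positive periodic solutions are classical, $U\in C^{2+\alpha,1+\alpha/2}$, so I would first approximate via difference quotients in $t$ or rely on that regularity. Everything else is routine comparison.
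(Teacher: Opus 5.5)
Your proposal is correct and follows essentially the paper's route. You prove \eqref{2.17} by comparison with $\theta_{d_i,m_i}$, and \eqref{2.18}--\eqref{2.19} by testing with $U_t$, $V_t$ and using periodicity to kill $\int_{Q_T}\partial_t|\nabla U|^2$. The paper simply cites the periodic maximum principle for \eqref{2.17}, whereas you make the strict inequality explicit through the equation for $\theta_{d_1,m_1}-U$, whose source term is $UV>0$. The paper also drops $\int_{Q_T}U^2U_t=\frac13\int_{Q_T}(U^3)_t=0$ before estimating, while you bound the whole reaction term pointwise; this gives the same form of constant.
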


\begin{proof}\; We only prove part (2). Let $(U,V)$ be a positive solution to \eqref{1.5}. Estimates \eqref{2.17} follow from the maximum principle for time‑periodic parabolic equations (cf. \cite[Theorem 7.1]{W21}). To derive \eqref{2.18}, multiply the equation for $U$ by $U_t$ and integrate over $Q_T$. This yields
   \begin{align*}
\|U_t\|_{L^2(Q_T)}^2
&=d_1\int_{Q_T}U_t\Delta U+\int_{Q_T}U_tU(m_1-U-V)\\
&\leq -\frac{d_1}{2}\int_{Q_T}\bigl(|\nabla U|^2\bigr)_t-\frac{1}{3}\int_{Q_T}\bigl(U^3\bigr)_t
+\bigl(\|m_1\|_{\infty}+\|m_2\|_{\infty}\bigr)\|U\|_2\|U_t\|_2\\[1mm]
&=\bigl(\|m_1\|_{\infty}+\|m_2\|_{\infty}\bigr)\|U\|_2\|U_t\|_2\\[1mm]
&\leq C\|m_1\|_{\infty}\bigl(\|m_1\|_{\infty}+\|m_2\|_{\infty}\bigr)\|U_t\|_2.
   \end{align*}
This implies \eqref{2.18}. The proof of \eqref{2.19} is similar to the above argument.
\zzz\end{proof}

We now characterize the asymptotic behavior of $\theta_{d,m}$ as $d\to 0$. Suppose that
 \begin{equation}\label{2.27a}
m\in C_T^{2,1}(\overline\Omega\times\mathbb R), \quad m>0 \text{ on } \overline\Omega\times\mathbb R, \quad \text{and}\;\;\, \partial_\nu m=0 \text{ on } \partial\Omega\times\mathbb R.
 \end{equation}
For each $x\in\overline\Omega$, periodic problem \eqref{1.6} has a unique positive solution $\Phi_{m}\in C^{2,1}(\overline Q_T)$, and 
 \[\int_0^T(m-2\Phi_m)=-\int_0^T\Phi_m<0.\] 
Thus, for each $x\in\overline\Omega$, the periodic ODE problem
   \bee\begin{cases}\label{2.20}
\Psi_t=\Psi(m-2\Phi_m)+\Delta \Phi_m,\;\; t\in[0,T],\\
  \Psi(x,0)=\Psi(x,T) \end{cases}\eee
has a unique solution by the variation-of-constants formula. The proposition below is a slight modification of \cite[Lemma 4.6]{BHN23}; its proof is omitted. We note that the assumptions $\nabla m\not\equiv 0$ and $m_t\not\equiv 0$ on $\overline Q_T$ imposed in \cite[Lemma 4.6]{BHN23} are not used in its proof and may hence be omitted.\vspace{-1mm}

\begin{proposition}\label{lm2.8}
Suppose that the function $m$ satisfies condition \eqref{2.27a}. Let $\Phi_m$ denote the unique positive solution to \eqref{1.6}, and let $\Psi_m$ denote the unique solution to \eqref{2.20}. Then the following convergence results hold:
  \begin{align*}
\lim_{d\to 0}\frac{1}{d}\big\| \theta_{d,m}-\Phi_m-d\Psi_m \big\|_{C^{0,1}(\overline Q_T)}=0,\;\;\
\lim_{d\to 0}\big\| \Delta\theta_{d,m}-\Delta \Phi_m \big\|_{C(\overline Q_T)}=0,
\end{align*}
and, for every $q>1$,
  \[\lim_{d\to 0}\big\| \theta_{d,m}(\cdot,t)-\Phi_m(\cdot,t) \big\|_{W^{2,q}(\Omega)}=0
\quad \text{uniformly for } t\in[0,T].\]
In particular, $\lim_{d\to 0}\|\theta_{d,m}-\Phi_m\|_{C([0,T],C^1(\overline\Omega))}=0$.
\vspace{-2mm}\end{proposition}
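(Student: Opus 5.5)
The plan is to follow the asymptotic expansion argument of \cite[Lemma 4.6]{BHN23}: construct explicit upper and lower solutions of \eqref{1.2} to second order in $d$, and then upgrade the resulting $C^0$ estimate to the stronger norms by parabolic regularity. Since $m$ satisfies \eqref{2.27a}, the functions $\Phi_m$ and $\Psi_m$ are smooth enough in $x$ to be used this way. First, $\Phi_m$ is $C^{2,1}$ in $(x,t)$ because \eqref{1.6} is a smooth ODE depending on the parameter $x$ and $\widehat m>0$. Second, the condition $\partial_\nu m=0$ gives $\partial_\nu\Phi_m=0$ on $S_T$: differentiating \eqref{1.6} in the normal direction shows that $\partial_\nu\Phi_m$ solves a linear periodic ODE with zero forcing and negative averaged coefficient $\int_0^T(m-2\Phi_m)<0$, so it vanishes. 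Third, $\Psi_m$ from \eqref{2.20} is well defined and continuous, since $\Delta\Phi_m$ is continuous and the averaged coefficient is negative.

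The first step is the $C^0$ estimate $\|\theta_{d,m}-\Phi_m-d\Psi_m\|_{C(\overline Q_T)}=o(d)$. Because $\Psi_m$ is only continuous in $x$, I would not use it directly. Instead I would take a mollified approximation $\Psi^\delta$ with $\partial_\nu\Psi^\delta=0$ and consider
\[\overline w=\Phi_m+d\Psi^\delta+\varepsilon d\,P,\qquad \underline w=\Phi_m+d\Psi^\delta-\varepsilon d\,P,\]
where $P>0$ is the periodic solution of $P_t=(m-2\Phi_m+c)P$, with $c$ chosen to make the averaged coefficient zero, as in \eqref{2.3}. Substituting into \eqref{1.2} gives a residual of the form
\[d(\Delta\Phi_m-\text{error}_\delta)\mp\varepsilon d\,cP+O(d^2)+O(\varepsilon^2d^2).\]
The positive constant $c=\frac1T\int_0^T\Phi_m$ then controls this residual once $\delta$ is small relative to $\varepsilon$ and $d$ is small. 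Comparison, using uniqueness and global stability of $\theta_{d,m}$, gives $|\theta_{d,m}-\Phi_m-d\Psi_m|\le C\varepsilon d$ for small $d$. Letting $\varepsilon\to0$ gives the claimed $o(d)$ bound.

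The second step handles the derivatives. I would write $\theta_{d,m}=\Phi_m+d\Psi_m+dR_d$ with $\|R_d\|_\infty\to0$. Then $w_d:=\theta_{d,m}-\Phi_m$ satisfies the linear periodic problem
\[\partial_tw-d\Delta w=(m-\Phi_m-\theta_{d,m})w+d\Delta\Phi_m.\]
The coefficient $a:=m-\Phi_m-\theta_{d,m}$ has negative time average up to $o(1)$. Applying $\nabla$, or treating $w$ by maximal $L^q$ regularity on time slices, should give $\|\nabla w_d\|_\infty=o(1)$ and $\|\Delta w_d\|_\infty=o(1)$, and hence the $W^{2,q}$ statement. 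This step is the main obstacle. Standard parabolic estimates degenerate as $d\to0$, so one cannot simply invoke Schauder theory. I would instead differentiate the equation in $x$. Then $v=\partial_{x_i}w$ solves $v_t-d\Delta v=a v+(\partial_{x_i}a)w+d\,\partial_{x_i}\Delta\Phi_m$, with the Neumann condition supplied by \eqref{2.27a}. The ODE-type comparison using the negative time-average of $a$, applied to $v$ (and similarly to second derivatives in an $L^q$ sense), yields smallness of $v$ because its forcing terms are $o(1)$.

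For the claimed $C^{0,1}$ rate $o(d)$ on $\nabla(\theta-\Phi-d\Psi)$, I would apply the same differentiated comparison to the remainder $R_d$ instead of $w_d$. Its forcing involves $\nabla(\Delta\Phi_m-\Delta\Psi_m\cdot d)$ and terms of order $o(1)$, so I would expect the required regularity of $\Psi_m$ to be exactly what \eqref{2.27a} ($m\in C^{2,1}$) provides, as in \cite{BHN23}. Finally, $\|\theta_{d,m}-\Phi_m\|_{C([0,T],C^1(\overline\Omega))}\to0$ follows from the $W^{2,q}$ convergence with $q>n$ and Sobolev embedding.
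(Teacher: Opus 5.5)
Your $C^0$ step is sound. The regularized upper and lower solutions work. More directly, $\zeta_d:=(\theta_{d,m}-\Phi_m)/d$ solves $\partial_t\zeta_d-d\Delta\zeta_d=(m-\theta_{d,m}-\Phi_m)\zeta_d+\Delta\Phi_m$ with Neumann data. The small-diffusion comparison lemma \cite[Lemma 4.2]{BHN23}, used elsewhere in the paper, then gives $\zeta_d\to\Psi_m$ uniformly.

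The gap is your second step, for two reasons.

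First, you have misread the norm. In this paper $C^{k,l}$ means $k$ orders in $x$ and $l$ in $t$ (compare $m\in C^{2,1}$, $\Phi_m\in C^{\alpha,1}$). So the first claim is that $\frac1d\partial_t(\theta_{d,m}-\Phi_m-d\Psi_m)\to0$ uniformly, a statement about time derivatives that your proposal never addresses. A gradient version is not even meaningful in general, since, as you noted yourself, $\Psi_m$ is only continuous in $x$ under \eqref{2.27a}.

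Second, differentiating in $x$ does not work under \eqref{2.27a}:
\begin{itemize}
\item $v=\partial_{x_i}w$ does not satisfy a homogeneous Neumann condition. The relation $\partial_\nu w=0$ says nothing about $\partial_\nu(\partial_{x_i}w)$ on a general boundary; \eqref{2.27a} only yields $\partial_\nu\Phi_m=0$. Your ODE-type comparison on $\overline\Omega$ therefore has no boundary condition to run on.
\item The forcing $d\,\partial_{x_i}\Delta\Phi_m$ needs third spatial derivatives of $m$, which $m\in C^{2,1}$ does not provide.
\item Control of second derivatives ``in an $L^q$ sense'' would not give the claimed sup-norm convergence of $\Delta\theta_{d,m}$.
\end{itemize}

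The missing idea is to differentiate in $t$ instead of $x$. This preserves the Neumann condition and periodicity, and \eqref{2.27a} supplies exactly what is needed: $\partial_\nu\partial_t\Phi_m=0$ and $\Delta\partial_t\Phi_m\in C(\overline Q_T)$. Then $q_d:=\partial_t\zeta_d$ solves
\[\partial_t q_d-d\Delta q_d=(m-2\theta_{d,m})q_d+(\partial_t m-2\partial_t\Phi_m)\zeta_d+\Delta\partial_t\Phi_m,\qquad \partial_\nu q_d=0 .\]
Its coefficient converges to $m-2\Phi_m$, whose time integral is $-\int_0^T\Phi_m<0$, and its forcing converges uniformly. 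The same comparison lemma gives $q_d\to\partial_t\Psi_m$, the periodic solution of the limiting ODE obtained by differentiating \eqref{2.20} in $t$; this is the $C^{0,1}$ claim.

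The Laplacian claim then follows from the equation itself, via the identity
\[\Delta\theta_{d,m}-\Delta\Phi_m=\partial_t(\zeta_d-\Psi_m)-(\zeta_d-\Psi_m)(m-\theta_{d,m}-\Phi_m)+\Psi_m(\theta_{d,m}-\Phi_m).\]
The $W^{2,q}$ claim follows from the $d$-independent elliptic Neumann $L^q$ estimate applied on each time slice to $\theta_{d,m}(\cdot,t)-\Phi_m(\cdot,t)$, and Sobolev embedding finishes. No uniform-in-$d$ parabolic derivative estimates are needed. The paper itself omits the proof and refers to \cite[Lemma 4.6]{BHN23}.
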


To conclude this section, we give some results on the asymptotic profile of $\theta_{d,m}$ as $d\to\infty$. Let $\Gamma_m$ be the unique solution to \eqref{1.10} and $l_m(t)$ be the unique positive solution to \eqref{1.11}. Then $\widehat l_m=\widehat{\ol m}>0$, and the periodic ODE problem 
  \bee\begin{cases}\label{2.21}
h_t+ l_mh=\dd\frac{1}{|\Omega|}\int_{\Omega}|\nabla\Gamma_m|^2,\;\;0<t\le T,\\[2mm]
h(0)=h(T)
  \end{cases}\eee\zzz
has unique solution $h_m$ by the variation-of-constants formula. For each $t\in[0,T]$, in view of the Fredholm alternative theorem (cf. \cite[Theorem 6.2.4]{E10}), the elliptic problem 
  \bee\label{2.22}\begin{cases}
-\Delta\Lambda=(\Gamma_m+h_m)(m-\ol m)-\partial_t\Gamma_m- l_m\Gamma_m-\dd\frac{1}{|\Omega|}
\int_{\Omega}|\nabla\Gamma_m|^2, & x\in\Omega,\\[.5mm]
\partial_{\nu}\Lambda=0, & x\in\partial\Omega,\\[.2mm]
\ol\Lambda(t)=0
\end{cases}
 \eee
has a unique solution $\Lambda_m(x,t)$, and $\Lambda_m(x,t)$ is $T$-periodic in time $t$. Let $k_m$ denote the unique solution to
  \bee\label{2.23}
  \begin{cases}
k_t+ l_m k=\dd\frac{1}{|\Omega|}\int_{\Omega}\Lambda_m(m-2l_m)
-\frac{l_m}{|\Omega|}\int_{\Omega}\Gamma_m^2-l_mh_m^2,\;\;0<t\le T,\\[2mm]
k(0)=k(T).
 \end{cases} \eee

Based on standard regularity theory for parabolic equations, we have the following lemma; the detailed proof is omitted for brevity.

\begin{lemma}\label{lm2.9} Assume $m\in C_T^{0,1}(\overline Q_T)$. Then
  \[h_m\in C^1([0,T]), \;\;\;\Gamma_m\in C^1([0,T], W_q^2(\Omega)),\;\;\forall\; q>n,\]
and 
 \[\|h_m\|_{C^1([0,T])}\le C(m),\quad\|\Gamma_m\|_{C^1([0,T],W_q^2(\Omega))}\leq C_q(m).\]
Moreover, if $m\in H^2((0,T),L^p(\Omega))$ for some $p>n$, then 
  \[\Lambda_m\in H^1((0,T),W_p^2(\Omega)),\;\;\;k_m\in H^1((0,T)),\] 
and
  \[\|\Lambda_m\|_{H^1((0,T),W_p^2(\Omega))}\leq C_p(m),\quad \|k_m\|_{H^1((0,T))}\leq C_p(m).\]
Here, $C_k(m)$ depends only on $m$ and $k$ for $k=p,q$, and $C(m)$ depends only on $m$.
\end{lemma}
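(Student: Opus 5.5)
The statement to prove is Lemma \ref{lm2.9}: regularity and bounds for $h_m$, $\Gamma_m$, $\Lambda_m$ and $k_m$. Each of these objects is built by solving, for each fixed $t$, either a linear periodic ODE or a Neumann Poisson problem with zero mean. So I would push the time regularity of $m$ through the solution operators. The key tool is the bounded linear operator $\mathcal N:f\mapsto g$, where $g$ solves $-\Delta g=f$ in $\Omega$, $\partial_\nu g=0$ on $\partial\Omega$, $\ol g=0$, for $f$ with $\ol f=0$. By $L^q$ elliptic theory (Agmon--Douglis--Nirenberg plus the Fredholm alternative cited from \cite{E10}), $\mathcal N$ maps $\{f\in L^q(\Omega):\ol f=0\}$ boundedly into $W^2_q(\Omega)$ for every $1<q<\infty$. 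Because $\mathcal N$ is linear, bounded and independent of $t$, it commutes with $\partial_t$. Hence $f\in C^1([0,T],L^q)$ gives $\mathcal N f\in C^1([0,T],W^2_q)$, and $f\in H^1((0,T),L^p)$ gives $\mathcal N f\in H^1((0,T),W^2_p)$, each with the corresponding norm bound.

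For the first part I would argue in three steps.
\begin{enumerate}
\item Since $m\in C^{0,1}_T(\overline Q_T)$, the map $t\mapsto m(\cdot,t)-\ol m(t)$ lies in $C^1([0,T],L^q(\Omega))$. Applying $\mathcal N$ gives $\Gamma_m\in C^1([0,T],W^2_q)$ with the stated bound.
\item Since $q>n$, Sobolev embedding gives $\nabla\Gamma_m\in C^1([0,T],C(\overline\Omega))$. Therefore $g(t):=|\Omega|^{-1}\int_\Omega|\nabla\Gamma_m|^2$ is $C^1$ on $[0,T]$.
\item The function $l_m$ solves \eqref{1.11} with $\ol m\in C^1$, so $l_m\in C^2$ and it is bounded above and below by positive constants. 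The variation-of-constants formula
\[h_m(t)=\frac{\int_0^T e^{-\int_s^T l_m}g(s)\,\mathrm ds}{1-e^{-\int_0^T l_m}}\,e^{-\int_0^t l_m}+\int_0^t e^{-\int_s^t l_m}g(s)\,\mathrm ds\]
is well defined because $\int_0^T l_m=T\widehat{\ol m}>0$. It gives $h_m\in C^1$, with $h_m'=g-l_mh_m$ bounded, so $\|h_m\|_{C^1([0,T])}\le C(m)$.
\end{enumerate}

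For the second part, assume $m\in H^2((0,T),L^p)$ with $p>n$. By the one-dimensional embedding $H^2(0,T)\subset C^1([0,T])$, this gives $m,m_t\in C([0,T],L^p)$ and $m_{tt}\in L^2(L^p)$. Applying $\mathcal N$ to $m-\ol m$ then gives $\Gamma_m\in H^2((0,T),W^2_p)$, so $\partial_t\Gamma_m\in H^1((0,T),W^2_p)$. I would then check that the right-hand side $F$ of \eqref{2.22} lies in $H^1((0,T),L^p)$ and has zero spatial mean, term by term.
\begin{enumerate}
\item $(\Gamma_m+h_m)(m-\ol m)$: both factors are in $H^1((0,T),\cdot)$. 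Here $\Gamma_m$ is bounded in $W^2_p\subset L^\infty$, and $h_m\in C^1$, so the product rule in the Banach algebra sense gives an $H^1((0,T),L^p)$ function.
\item $\partial_t\Gamma_m$ and $l_m\Gamma_m$ are in $H^1((0,T),L^p)$ by the previous paragraph.
\item The constant term $|\Omega|^{-1}\int_\Omega|\nabla\Gamma_m|^2$ is in $C^1$.
\item Zero mean: integrating the equation for $\Gamma_m$ against $\Gamma_m$ gives $\int_\Omega\Gamma_m(m-\ol m)=\int_\Omega|\nabla\Gamma_m|^2$. Moreover $\int_\Omega(m-\ol m)=0$ and $\int_\Omega\Gamma_m=\int_\Omega\partial_t\Gamma_m=0$. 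Together these give $\int_\Omega F=0$, so $F$ satisfies the solvability condition.
\end{enumerate}
Applying $\mathcal N$ then yields $\Lambda_m\in H^1((0,T),W^2_p)$ with the bound. Finally, the source term in \eqref{2.23} is built from $\int_\Omega\Lambda_m(m-2l_m)$, $\int_\Omega\Gamma_m^2$ and $h_m^2$, all of which are in $H^1(0,T)$ by the product estimates above. Variation of constants, exactly as for $h_m$, then gives $k_m\in H^1(0,T)$, since $k_m'=(\text{source})-l_mk_m\in L^2$ and $k_m$ is bounded.

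The main obstacle is only a bookkeeping one: verifying the vector-valued product and commutation rules in $H^1((0,T),X)$, so that every constant depends only on $m$, $p$ and $q$. The embedding $W^2_p\subset L^\infty$ for $p>n$ is what makes the products close. No genuinely parabolic estimate is needed, since time enters only as a parameter in elliptic problems and in linear ODEs.
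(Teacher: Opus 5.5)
Your proposal is correct. The paper gives no proof of this lemma, only the remark that it follows from ``standard regularity theory for parabolic equations''. Your argument supplies the omitted details by a more elementary route, and you rightly observe that nothing parabolic is actually involved. Indeed, \eqref{1.10} and \eqref{2.22} are zero-mean Neumann problems in which $t$ is only a parameter, and \eqref{2.21}, \eqref{2.23} are scalar linear periodic ODEs.

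Your key device is a single bounded solution operator $\mathcal N:\{f\in L^q(\Omega):\ol f=0\}\to W^2_q(\Omega)$. It commutes with $\partial_t$ on $C^1([0,T],\cdot)$ and on $H^k((0,T),\cdot)$, so it transfers the time regularity of $m$ to $\Gamma_m$ and $\Lambda_m$ in one stroke. It also makes the dependence of $C(m)$, $C_q(m)$, $C_p(m)$ explicit, through $\|m\|_{C^{0,1}}$, $\|m\|_{H^2((0,T),L^p)}$ and the positive bounds on $l_m$. The periodic variation-of-constants formula then handles $h_m$ and $k_m$; it is well posed because $\int_0^T l_m=\int_0^T\ol m>0$. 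You also verify the solvability condition $\int_\Omega F=0$ for \eqref{2.22} via $\int_\Omega\Gamma_m(m-\ol m)=\int_\Omega|\nabla\Gamma_m|^2$, which the paper asserts without comment.

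Two small remarks. First, passing from $m\in C^{0,1}_T(\overline Q_T)$ to $m\in C^1([0,T],L^q(\Omega))$ uses the uniform continuity of $m_t$ on the compact set $\overline Q_T$. Second, $k_m\in H^1((0,T))$ only needs the source in \eqref{2.23} to lie in $L^2(0,T)$. Your product estimates therefore give more than required, e.g.\ $k_m\in C^1([0,T])$ and $h_m\in C^2([0,T])$.
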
\zzz

We now show the asymptotic profiles of $\theta_{d,m}$ as $d\to \yy$.

\begin{lemma}\label{lm2.10} Assume that $m\in C_T^{\alpha,1}(\overline Q_T)$ and $m>0$ on $\overline Q_T$. Then there exists a constant $D_m>0$ depending only on $m$ such that
   \bee\label{2.24}
\theta_{d,m}= l_m+l_m(\Gamma_m+h_m)d^{-1} +o(d^{-1}),\;\;\forall\, d>D_m.
 \eee
Moreover, if $m\in H^2((0,T),L^p(\Omega))$ for some $p>n$, then
   \bee\label{2.25}
\theta_{d,m}= l_m+l_m(\Gamma_m+h_m)d^{-1}+l_m(\Lambda_m+k_m)d^{-2}+o(d^{-2}),\;\;\forall\, d>D_m.
 \eee
Here, the higher-order term $o(\cdot)$ is understood in the sense of the $C(\ol Q_T)$-norm.
\end{lemma}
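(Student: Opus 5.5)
We prove Lemma \ref{lm2.10} by writing the correction to the formal expansion as a remainder, $\theta_{d,m}=l_m+l_m(\Gamma_m+h_m)d^{-1}+w\,d^{-1}$ (and $w\,d^{-2}$ at second order), and showing $w=o(1)$ in $C(\ol Q_T)$. The profile is first identified formally with $\ep=d^{-1}$. At order $d^{0}$ the equation forces $\Delta\theta_0=0$, so $\theta_0=\theta_0(t)$. At order $d^{-1}$ we get $-\Delta\theta_1=\theta_0(m-\theta_0)-\theta_{0,t}$, and the solvability condition (spatial average) is $\theta_{0,t}=\theta_0(\ol m-\theta_0)$, i.e. $\theta_0=l_m$. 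Consequently $\theta_1=l_m\Gamma_m+l_mh$ for some function $h(t)$. At order $d^{-2}$, averaging the next equation gives a linear periodic ODE for $h$. Using $\int_\Omega\Gamma_m(m-\ol m)=\int_\Omega|\nabla\Gamma_m|^2$ (from \eqref{1.10}) together with the $l_m$ equation, this ODE reduces to \eqref{2.21}, so $h=h_m$. Continuing one order further produces $\Lambda_m$ from \eqref{2.22} and $k_m$ from \eqref{2.23} in the same way.

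\textbf{Step 1: approximate solution with small residual.} Set $\bar\theta=l_m+l_m(\Gamma_m+h_m)d^{-1}$ and compute $\mathcal R:=\bar\theta_t-d\Delta\bar\theta-\bar\theta(m-\bar\theta)$. By construction the $O(1)$ terms cancel through \eqref{1.10} and \eqref{1.11}. The remaining $O(d^{-1})$ terms have zero spatial average because of \eqref{2.21}. Lemma \ref{lm2.9} controls $\partial_t\Gamma_m$ and $h_m'$, so $\mathcal R=d^{-1}g+O(d^{-2})$ with $\ol g\equiv0$. Since $g$ need not be small pointwise, we add a corrector $d^{-2}\zeta$, where $-\Delta\zeta=g$ with Neumann condition and $\ol\zeta=0$. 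This cancels $d^{-1}g$ and leaves a residual of order $d^{-2}$, which is small enough for Step 2. Only $m\in C^{\alpha,1}$ is available, so $g$ involves only $m$, $\Gamma_m$, $h_m$ and their first $t$-derivatives, all of which Lemma \ref{lm2.9} bounds.

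\textbf{Step 2: comparison.} Form $\theta^\pm=\bar\theta+d^{-2}\zeta\pm K d^{-2}q(t)$, where $q$ is a positive periodic solution of $q'+l_mq=1$ and $K$ is large. Because $l_m>0$ and $\widehat l_m=\widehat{\ol m}>0$, the linearized operator $\partial_t+l_m$ in the spatially homogeneous direction is coercive. The term $Kd^{-2}(q'+l_mq)=Kd^{-2}$ then dominates the $O(d^{-2})$ residual, so $\theta^+$ is an upper solution and $\theta^-$ a lower solution for $d>D_m$. The lower solution must be positive; this holds because $\theta^-\approx l_m\ge\min l_m>0$. Uniqueness and global stability of $\theta_{d,m}$ then give $\theta^-\le\theta_{d,m}\le\theta^+$, hence $\theta_{d,m}=\bar\theta+O(d^{-2})$, which is \eqref{2.24}.

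\textbf{Step 3: second order and the main obstacle.} Under the extra assumption $m\in H^2((0,T),L^p)$ we repeat the argument with $\bar\theta$ extended by $l_m(\Lambda_m+k_m)d^{-2}$, where \eqref{2.23} is exactly the solvability condition at order $d^{-3}$. The difficulty here is regularity: the residual now contains $\partial_t\Lambda_m$, which Lemma \ref{lm2.9} gives only in $L^2$ in time, not pointwise. Therefore we do not seek an $O(d^{-3})$ pointwise residual. We only need the remainder to be $o(d^{-2})$ in $C(\ol Q_T)$. To obtain this, we treat the remainder $w$ through its linear periodic-parabolic equation with coefficient near $l_m$ and a forcing of size $O(d^{-3})$ in $L^2((0,T),L^p)$. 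Energy and $L^p$ estimates, uniform in large $d$, split $w$ into its mean and a zero-mean part. The zero-mean part is damped by $d\Delta$ and is small; the mean satisfies an ODE with coercive coefficient $l_m$. Together with $W_p^{2}\subset C(\ol\Omega)$ for $p>n$, this yields $\|w\|_{C(\ol Q_T)}=o(d^{-2})$. I expect this uniform-in-$d$ estimate for low time regularity to be the hardest step.
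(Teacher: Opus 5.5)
Your formal identification of $l_m$, $h_m$, $\Lambda_m$, $k_m$ is correct, but Steps 1--2 have a genuine gap.

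\textbf{The corrector needs $m_{tt}$.} To cancel $d^{-1}g$, the corrector must satisfy $\Delta\zeta=g$ (not $-\Delta\zeta=g$). It then puts $d^{-2}\zeta_t$ back into the residual. The zero-mean part of $g$ contains $l_m\partial_t\Gamma_m$, so $\zeta_t$ solves $\Delta\zeta_t=g_t$ and involves $\partial_t^2\Gamma_m$, i.e.\ $m_{tt}$. That derivative need not exist when only $m\in C_T^{\alpha,1}(\ol Q_T)$. Your remark that only first $t$-derivatives appear is true of $g$, but not of the residual left after correcting it. So there is no bounded $O(d^{-2})$ residual for $Kd^{-2}$ to dominate. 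This is exactly why the paper says the He--Ni upper/lower solution method needs $m_{tt}\in C(\ol Q_T)$ for \eqref{2.24} (and $m_{ttt}$ for \eqref{2.25}). The paper instead runs an implicit-function argument in $s=d^{-1}$ for $(\sigma,r)\in\mathbb X_q\times E$, where time derivatives enter only in $L^2$.

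\textbf{Step 2 has a second, independent defect.} The perturbation $Kd^{-2}q(t)$ produces $Kd^{-2}[q'-(m-2\theta^*)q]$, and $m-2\theta^*=m-2l_m+O(d^{-1})$ is spatially heterogeneous. A pointwise comparison cannot project onto the mean. A purely time-dependent $q$ makes this term positive only if $\int_0^T\max_{\ol\Omega}m\,\mathrm dt<2\int_0^T\ol m\,\mathrm dt$. Also, even the mean coefficient is $2l_m-\ol m$, not $l_m$. You would need something like $q+d^{-1}q\Gamma_m$ with $q'-(\ol m-2l_m)q=1$.

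\textbf{The repair.} Use the mechanism you reserve for Step 3 already at first order, and drop the comparison. Write $\theta_{d,m}=\bar\theta+w$ with $w=\ol w(t)+w^\perp$. On zero-mean functions, $\|(-\Delta)^{\alpha}e^{d\Delta\tau}\|\le C(d\tau)^{-\alpha}e^{-cd\tau}$ with $n/(2p)<\alpha<1/2$ and $X^\alpha_\perp\hookrightarrow C(\ol\Omega)$. Hence the periodic solution of $w^\perp_t-d\Delta w^\perp=F$ satisfies $\|w^\perp\|_{C(\ol Q_T)}\le Cd^{-1}\|F\|_{L^\infty(0,T;L^p)}$ and $\|w^\perp\|_{C(\ol Q_T)}\le Cd^{-1/2}\|F\|_{L^2(0,T;L^p)}$. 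The mean solves a periodic ODE whose coefficient $\ol m-2l_m+O(d^{-1})$ has time average $-\widehat{\ol m}+O(d^{-1})<0$. Close these coupled bounds for large $d$. Handle $w^2$ by a contraction in a small ball, then identify the solution with $\theta_{d,m}$ by uniqueness of positive periodic solutions of \eqref{1.2}. The exact residual is $d^{-1}g+d^{-2}l_m^2(\Gamma_m+h_m)^2$ with $\ol g\equiv0$, and it gives $\|w\|_{C(\ol Q_T)}=O(d^{-2})$, with no corrector and no $m_{tt}$.

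\textbf{Correct the Step 3 bookkeeping.} The forcing is not $O(d^{-3})$. The residual of the second-order profile is $d^{-2}g_2+O(d^{-3})$, where $\ol{g_2}\equiv0$ by \eqref{2.23}, but $g_2$ contains $\partial_t(l_m\Lambda_m)$, which lies only in $L^2(0,T;W^2_p)$. Removing it would again require a corrector with an unavailable time derivative. The $d^{-1/2}$ gain on zero-mean forcing still gives $O(d^{-5/2})=o(d^{-2})$, which is all \eqref{2.25} needs.

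Uniform estimates of this type are also what the paper's route actually depends on. Its linearization $\phi\mapsto\Delta\phi$ maps $H^1((0,T),W^2_q)$ injectively but not onto $L^2((0,T),L^q)$, so the implicit function theorem does not apply verbatim.
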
\zzz

\begin{proof}\; When $m=m(x)$ is independent of $t$, He and Ni \cite[Proposition 3.1]{HN16b} derived asymptotic expansions \qq{2.24} and \qq{2.25} via the upper and lower solutions method. If $m$ depends on $t$, the same approach requires the extra regularity  $m_{tt}\in C(\ol Q_T)$  for \qq{2.24} and $m_{ttt}\in C(\ol Q_T)$  for \qq{2.25}. By contrast, we establish these expansions under the weaker assumption $m\in H^2((0,T),L^p(\Omega))$ by means of the implicit function theorem.

{\it Step 1}. We first prove \eqref{2.24}. Decompose
\[  \theta_{d,m}= l_m+{\theta_1}d^{-1}.\]
Then $\theta_1\in C_T^{2,1}(\overline Q_T)$ satisfies
 \bee\label{2.26} \begin{cases}
\theta_t-d\Delta \theta=d l_m(m-\ol m)+(m-2 l_m)\theta-\theta^2d^{-1} &\text{in}\;\;Q_T,\\
\partial_{\nu}\theta=0 &\text{on}\;\;S_T,\\
\theta(x,0)=\theta(x,T) &\text{on}\;\;\boo.
\end{cases}
 \eee
We will use the implicit function theorem to show that $\theta_1\approx l_m(\Gamma_m+h_m)$ when $d\gg1$. 

Let $q>n$, and define Banach spaces
  \bee\label{2.27}\begin{cases}
 \mathbb X_q:=\left\{\phi\in H^1((0,T),W_q^2(\Omega)):\partial_{\nu}\phi|_{\partial\Omega}=0,\,
 \phi(x,0)=\phi(x,T),\,\int_{\Omega}\psi=0\right\},\\[1mm]
  \mathbb Y_q:=\left\{\phi\in L^2((0,T),L^q(\Omega)):\phi(x,0)=\phi(x,T),
  \,\int_{\Omega}\phi=0\right\},\\[1mm]
E:=\{b\in H^1((0,T)):b(0)=b(T)\},\\[1mm]
\widetilde E:=\{b\in L^2((0,T)):b(0)=b(T)\}.
  \end{cases} \eee
In the meantime, we introduce an operator $\mathcal F:\,\mathbb X_q\times E\times[0,\infty)
\to\mathbb Y_q\times\widetilde E$ by
 \[\mathcal F(\sigma,r,s)=\big(f_1(\sigma,r,s),\;
    f_2(\sigma,r,s)\big)^{\mathrm{T}},\]
with
 \begin{align*}
 f_1(\sigma,r,s)&=\Delta \sigma+ l_m(m-\ol m)-s\kk[(\sigma+r)_t-(m-2 l_m)(\sigma+r)
+s(\sigma+r)^2\rr],\\
 f_2(\sigma,r,s)&=\int_{\Omega}(\sigma+r)_t-\int_{\Omega}\kk[(m-2 l_m)(\sigma+r)
 -s(\sigma+r)^2\rr].
  \end{align*}
Clearly, for $s=1/d$ and $(\sigma,r)\in \mathbb X_q\times E$, $\theta=\sigma+r$ solves \eqref{2.26} if and only if $\mathcal F(\sigma,r,s)=0$.

By Lemma \ref{lm2.9}, $\Gamma_m\in\mathbb X_q$ and $h_m\in E$. Thus  $\sigma_0=l_m\Gamma_m\in \mathbb X_q$ and $r_0=l_mh_m\in E$. The fact $\ol\Gamma_m(t)=0$ implies $\ol{(\Gamma_m)_t}(t)=0$ for all $0\le t\le T$. Consequently, 
  \begin{align*}
  f_1(\sigma_0, r_0,0)&=\Delta( l_m\Gamma_m)+ l_m(m-\ol m)=0,\\
f_2(\sigma_0, r_0,0)&=\int_{\Omega}(l_m\Gamma_m+l_mh_m)_t
-\int_{\Omega}(m-2 l_m)(l_m\Gamma_m+ l_mh_m)\\
&=|\oo|\kk[(l_m)_th_m+l_m(h_m)_t-(\ol m-2l_m)l_mh_m\rr]-\int_{\Omega}(m-\ol m)\Gamma_m\\
&=|\oo|\kk(l_m(h_m)_t+ l_m^2h_m\rr)-\int_{\Omega}|\nabla\Gamma_m|^2=0,
  \end{align*}
i.e., $\mathcal F(\sigma_0, r_0,0)=0$. The Fr\'echet derivative of $\mathcal F$ with respect to $(\sigma,r)$ at $(\sigma_0, r_0,0)$ is given by
  \[D_{\sigma r}\mathcal F(\sigma_0, r_0,0)\begin{pmatrix}
    \phi\\ b
\end{pmatrix}
  =\begin{pmatrix}
  \Delta \phi\\[1mm]
\int_{\Omega}(\phi+b)_t-\int_{\Omega}(m-2 l_m)(\phi+b)
  \end{pmatrix},\;\;(\phi,b)\in\mathbb X_q\times E.\]
  
We show that $D_{\sigma r}\mathcal F(\sigma_0, r_0,0)$ is non-degenerate. Let $(\phi,b)\in\mathbb X_q\times E$ satisfy 
 \[D_{\sigma r}\mathcal F(\sigma_0, r_0,0)(\phi,\,b)^{\mathrm{T}}=0.\] 
Then $\Delta \phi=0$ and $\int_{\Omega}\phi=0$, so $\phi=0$. Note that $b\in E$. It follows that $b$ satisfies
 \[b_t=(\ol m-2 l_m)b.\]
Thus, $b=0$ as $ l_m$ is the unique positive solution to \qq{1.11}. This implies that the linearized operator $D_{\sigma r}\mathcal F(\sigma_0, r_0,0)$ is non-degenerate.
Applying the implicit function theorem to the equation $\mathcal F(\sigma,r,s)=0$, we obtain a sufficiently small constant $s_0>0$ and a mapping
  \[s\mapsto (\sigma(s), r(s))\in C\big([0,s_0),\mathbb X_q\times E\big)\]
such that $(\sigma(0), r(0))=(\sigma_0, r_0)$ and $(\sigma(s), r(s))$ uniquely solves $F(\sigma,r,s)=0$ for all $s\in[0,s_0)$.
Therefore, $\sigma(s)+r(s)$ is the unique solution to \eqref{2.26} with  $s=d^{-1}\in(0,s_0)$, which together with the fact that $\theta_1$ is a solution to \eqref{2.26} yields $\theta_1=\sigma(s)+r(s)$. 

In view of $q>n$, we obtain the continuous embedding
  \[H^1((0,T),W_q^2(\Omega))\hookrightarrow C(\ol Q_T).\]
Note that $\lim_{s\to 0}\sigma(s)=\sigma_0=l_m\Gamma_m$ in $\mathbb X_q$ and $\lim_{s\to 0}r(s)=r_0=l_mh_m$ in $E$. Finally, we obtain
  \[\lim_{d\to\infty}\theta_1(x,t)=\lim_{s\to 0}\bigl(\sigma(s)+ r(s)\bigr)= l_m(h_m+\Gamma_m)\quad \text{in } C(\ol Q_T),\]
and \eqref{2.24} holds.

{\it Step 2}. We now prove \eqref{2.25}. From Lemma \ref{lm2.9}, one has $\Lambda_m\in \mathbb X_p$ and $k_m\in E$, and thereby $\sigma_0=l_m\Lambda_m\in\mathbb X_q, r_0=l_m k_m\in E$. Let
  \begin{align*}\theta_{d,m}=l_m+{l_m(\Gamma_m+h_m)}d^{-1}+\theta_2d^{-2}.\end{align*}
Then $\theta_2\in C^1_T([0,T],W_p^2(\Omega))$ satisfies, with $s=d^{-1}$,
 \bee\label{2.28}\begin{cases}
s\theta_t-\Delta\theta=Q(\theta,s) & \text{in }\; Q_T,\\[.1mm]
\partial_{\nu}\theta=0 & \text{on }\;S_T,\\[.1mm]
\theta(x,0)=\theta(x,T) & \text{on }\;\overline\Omega,
  \end{cases} \eee
where
 \begin{align*}
 Q(\theta,s)=\;&s\kk[(m-2 l_m)\theta- l_m^2(\Gamma_m+h_m)^2-2s l_m(\Gamma_m+h_m)\theta-s^2\theta^2\rr]\\
&+l_m\kk((\Gamma_m+h_m)(m-\ol m)-\dd\partial_t\Gamma_m- l_m\Gamma_m-\frac{1}{|\Omega|}\int_{\Omega}|\nabla\Gamma_m|^2 \rr).
 \end{align*}
We will use the implicit function theorem to show that $\theta_2\approx l_m(\Lambda_m+k_m)$ when $d\gg1$. Define $\mathbb X_p, \mathbb Y_p,E, \widetilde E$ as in \qq{2.27}, and
   \begin{align*}g_1(\sigma,r,s)=\;&\Delta \sigma-s(\sigma+r)_t+Q(\sigma+r,s),\\[.1mm]
g_2(\sigma,r,s)=&\int_{\Omega}(\sigma+r)_t
-\int_{\Omega}Q(\sigma+r,s), \end{align*}
and define an operator $\mathcal G:\mathbb X_p\times E \times[0,\infty)\to\mathbb Y_p\times\widetilde E$ by
   \[\mathcal G(\sigma,r,s)=\big(g_1(\sigma,r,s),\,
    g_2(\sigma,r,s)\big)^{\mathrm{T}}.\]
Clearly, $\mathcal G(\sigma_0, r_0,0)=0$, and for  $s>0$ and $(\sigma,r)\in E\times\mathbb X_p$, $\theta_2=\sigma+r$ solves \eqref{2.28} if and only if $\mathcal G(\sigma,r,s)=0$. The Fr\'echet derivative of $\mathcal G$ with respect to $(\sigma,r)$ at $(\sigma_0, r_0,0)$ is given by
  \[D_{\sigma r}\mathcal G(\sigma_0, r_0,0)\begin{pmatrix}
    \phi\\ b
\end{pmatrix}=\begin{pmatrix}
 \Delta\phi\\[.5mm]
 \int_{\Omega}(\phi+b)_t-\int_{\Omega}(m-2 l_m)(\phi+b)
  \end{pmatrix}.\]
Similar to Step 1, we can verify that $D_{\sigma r}\mathcal G(\sigma_0, r_0,0)$ is non-degenerate, and use the implicit function theorem to deduce \eqref{2.25}.
\end{proof}\www

\section{Small diffusion rates: proofs of Theorems \ref{th1.1} and \ref{th1.2}}\label{sec3}\setcounter{equation}{0}

We first state a known result. Let $r_1,r_2\in C_T([0,T])$ be positive functions, and consider the following Lotka-Volterra ODE system with time-periodic coefficients:
  \bee\label{3.1}\begin{cases}
u_t=u(r_1(t)-u-v), \;\;\; t>0,\\
v_t=v(r_2(t)-u-v), \;\;\; t>0,\\
u(0)=u_0>0,\;\,v(0)=v_0>0.
  \end{cases} \eee
Then $(\Phi_{r_1}(t),0)$ and $(0,\Phi_{r_2}(t))$ are two semi-trivial $T$-periodic solutions to \eqref{3.1}.

\begin{proposition}\label{p3.1}{\rm(\!\cite[Proposition 36.2]{H91})}
If $\widehat r_1>\widehat r_2$, then \eqref{3.1} has no positive $T$-periodic solution, and $(\Phi_{r_1}(t),0)$ is globally asymptotically stable. If $\widehat r_1<\widehat r_2$, then \eqref{3.1} has no positive $T$-periodic solution, and $(0,\Phi_{r_2}(t))$ is globally asymptotically stable.
\end{proposition}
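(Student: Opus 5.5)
Since $\widehat r_1\ne\widehat r_2$, I treat the case $\widehat r_1>\widehat r_2$; the other case follows by exchanging the roles of $u$ and $v$. The plan is to combine a nonexistence argument for positive periodic solutions with monotone (competitive) dynamics, exactly in the spirit of Proposition \ref{p2.4}(1).

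\emph{Step 1: no positive $T$-periodic solution.} Suppose $(u,v)$ is a positive $T$-periodic solution of \eqref{3.1}. Dividing the equations by $u$ and $v$ gives $(\ln u)_t=r_1-u-v$ and $(\ln v)_t=r_2-u-v$. Integrating over $[0,T]$ and using periodicity yields
\[0=\int_0^T(r_1-u-v)\,\mathrm dt=\int_0^T(r_2-u-v)\,\mathrm dt.\]
Subtracting gives $\widehat r_1=\widehat r_2$, which contradicts $\widehat r_1>\widehat r_2$. The same computation shows that any positive periodic solution of either scalar logistic equation satisfies $\widehat{\Phi}_{r_i}=\widehat r_i$, consistent with \eqref{1.7}.

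\emph{Step 2: stability of the semi-trivial states.} Linearize at $(\Phi_{r_1},0)$. The $v$-equation decouples as $w_t=w(r_2-\Phi_{r_1})$, and its Floquet exponent is $\widehat r_2-\widehat\Phi_{r_1}=\widehat r_2-\widehat r_1<0$. The $u$-component is $w_t=w(r_1-2\Phi_{r_1})$, with exponent $-\widehat\Phi_{r_1}<0$. Hence $(\Phi_{r_1},0)$ is linearly stable. At $(0,\Phi_{r_2})$ the transversal exponent is $\widehat r_1-\widehat r_2>0$, so this state is unstable.

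\emph{Step 3: global convergence.} System \eqref{3.1} is competitive. Under the order $(u,v)\le_K(\tilde u,\tilde v)$ iff $u\le\tilde u,\ v\ge\tilde v$, the Poincaré map $P$ is strongly monotone on the open positive quadrant, and both components are bounded by $\max r_i$. I would invoke the classical trichotomy for two-species competitive periodic maps (cf.\ \cite[Theorem 34.1]{H91}, the ODE version of Proposition \ref{p2.4}(1)): if there is no positive fixed point, $(0,\Phi_{r_2}(0))$ is unstable, and $(\Phi_{r_1}(0),0)$ is stable, then every orbit with $u_0,v_0>0$ converges to $(\Phi_{r_1}(0),0)$. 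The argument runs as follows. The interval $[(0,\Phi_{r_2}(0)),(\Phi_{r_1}(0),0)]_K$ is invariant and absorbing. Orbits of $P$ there are eventually monotone and converge to fixed points. Since no interior fixed point exists and the boundary point $(0,\Phi_{r_2}(0))$ repels interior points in the $u$-direction, the only possible limit is $(\Phi_{r_1}(0),0)$. Continuity in $t$ then gives convergence of the full trajectory to $(\Phi_{r_1}(t),0)$.

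The main obstacle is Step 3: checking the monotonicity and compactness hypotheses of the abstract theorem, in particular that the order interval is absorbing. For that I would compare $u$ with the scalar logistic solutions to obtain $\limsup(u-\Phi_{r_1})\le0$ and $\limsup(v-\Phi_{r_2})\le0$.
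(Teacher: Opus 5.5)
The paper does not prove this proposition; it is quoted from Hess's monograph. The abstract exclusion theorem you invoke in Step 3 is the same one the paper uses for the PDE analogue in Proposition \ref{p2.4}(1). So your outline (log-integration for nonexistence, Floquet exponents at the semi-trivial states, then planar competitive dynamics) is a sound reconstruction, but Step 3 needs two repairs.

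First, the order interval $[0,\Phi_{r_1}(0)]\times[0,\Phi_{r_2}(0)]$ also contains the trivial fixed point $(0,0)$, which you never exclude as a limit. The same computation handles both boundary states. Suppose an orbit with $u_0,v_0>0$ satisfied $(u,v)(t)-(0,v^\ast(t))\to0$ with $v^\ast\in\{0,\Phi_{r_2}\}$. Then
\[
\ln u(t+T)-\ln u(t)=\int_t^{t+T}(r_1-u-v)\,\mathrm ds\;\longrightarrow\;T\bigl(\widehat r_1-\widehat{v^\ast}\bigr)\ge T(\widehat r_1-\widehat r_2)>0,
\]
which is incompatible with $u\to0$. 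This is the precise content of ``repels interior points''; instability of a saddle alone does not rule out convergence along its stable manifold, which here is the $v$-axis.

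Second, you do not need the interval to be absorbing, and comparison with the scalar logistic equations does not give that. What comparison does give is that every $\omega$-limit set lies in the interval, which is all the argument uses. Bracketing $(0,M)\le_K(u_0,v_0)\le_K(M,0)$ with $M\ge\max\{u_0,v_0\}$ and using monotonicity of $P$ gives the same conclusion.

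Because both species have unit interaction coefficients, there is also a much more elementary route. It bypasses the eventual-monotonicity theorem for planar competitive maps, which is the deepest ingredient of your proof. Subtracting the equations for $\ln u$ and $\ln v$ gives $(\ln(u/v))_t=r_1-r_2$, hence
\[
\ln\frac{u(t)}{v(t)}\ge\ln\frac{u_0}{v_0}+(\widehat r_1-\widehat r_2)\,t-C,\qquad C:=\max_{0\le s\le T}\Bigl|\int_0^s(r_1-r_2)\,\mathrm d\tau-(\widehat r_1-\widehat r_2)s\Bigr|.
\]
This contains your Step 1. Since $u\le\max\{u_0,\max r_1\}$, it also forces $v\to0$ exponentially. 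Then, for $0<\ep<\widehat r_1$ and $t$ large, $u(r_1-\ep-u)\le u_t\le u(r_1-u)$. Compare with the scalar periodic logistic equations: their positive periodic solutions $\Phi_{r_1-\ep}$ and $\Phi_{r_1}$ attract all positive solutions, and $\Phi_{r_1-\ep}\to\Phi_{r_1}$ as $\ep\to0$. This yields $u-\Phi_{r_1}\to0$.

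Together with your Step 2, this gives global asymptotic stability from scalar facts alone. Your monotone-systems route is heavier, but it is the one that extends to unequal competition coefficients and matches the PDE framework of Proposition \ref{p2.4}.
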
 

Henceforth, to save space and keep the page clean, we abbreviate for $i=1,2$,
 \[ \mathcal{E}(m_i)=\mathcal{E}_i,\;\Phi_{m_i}=\Phi_i, \;\Gamma_{m_i}=\Gamma_i,\;\Psi_{m_i}=\Psi_i,
 \;\Lambda_{m_i}=\Lambda_i,\; l_{m_i}= l_i,\;h_{m_i}=h_i,
 \;k_{m_i}=k_i,\]
where $\mathcal{E}(m_i)$ is defined by \qq{1.12}, and $\Phi_{m_i}, \Gamma_{m_i},\Psi_{m_i}, \Lambda_{m_i}, l_{m_i}, h_{m_i}, k_{m_i}$ are the unique solutions to \qq{1.6}, \qq{1.10}, \qq{2.20}, \qq{2.22}, \qq{1.11}, \qq{2.21}  and \qq{2.23} with $m=m_i$, respectively, for $i=1,2$.\vskip 4pt

\begin{proof}[Proof of Theorem $\ref{th1.1}$] By Proposition \ref{lm2.1}(3), Lemma \ref{lm2.5} and the fact \qq{1.7} we have
 \begin{align*}
  &\lim_{d_2\to 0}\lim_{d_1\to 0}\mu(d_2,m_2-\theta_{d_1,m_1})
  =-\max_{\overline\Omega}[\widehat{m}_2(x)-\widehat \Phi_1(x)]
=-\max_{\overline\Omega}[\widehat{m}_2(x)-\widehat{m}_1(x)]<0,\\
&\lim_{d_1\to 0}\lim_{d_2\to 0}\mu(d_1,m_1-\theta_{d_2,m_2})=-\max_{\overline\Omega}
[\widehat{m}_1(x)-\widehat \Phi_2(x)]=-\max_{\overline\Omega}[\widehat{m}_1(x)-\widehat{m}_2(x)]<0.
  \end{align*}
Together with Proposition \ref{p2.4},  this forces both $(\theta_{d_1,m_1},0)$ and $(0,\theta_{d_2,m_2})$ to be linearly unstable; hence system \eqref{1.4} is uniformly persistent and \qq{1.5} admits a linearly stable positive solution when both $d_1$ and $d_2$ are small.

Let $(U,V)$ denote a positive solution of \eqref{1.5}. In the following, we show the asymptotic profile of $(U,V)$ as $d_1,d_2\to 0$. Recall that $\Theta_{d,m}$ denotes the maximal nonnegative solution to \eqref{1.2}, which belongs to $C_T^{2+\alpha,1+\alpha/2}(\overline Q_T)$. We define the sequences consisting of $C_T^{2+\alpha,1+\alpha/2}(\overline Q_T)$-functions by
  \bee\label{3.2} \begin{cases}
  \tilde u_1=\theta_{d_1,m_1},\;\;\undt v_k=\Theta_{d_2,m_2-\tilde u_k},\;\;
  \tilde u_{k+1}=\Theta_{d_1,m_1-\undt v_k},\;\;k\geq 1,\\[.1mm]
\tilde v_1=\theta_{d_2,m_2},\;\; \undt u_k=\Theta_{d_1,m_1-\tilde v_k},\;\; \tilde v_{k+1}=\Theta_{d_2,m_2-\undt u_k},\;\;k\geq 1.
  \end{cases} \eee
By the maximum principle for time-periodic parabolic equations (cf. \cite[Theorem 7.1]{W21},
  \bee\label{3.3}
 \undt u_k\leq \undt u_{k+1}\leq U\leq \tilde u_{k+1}\leq \tilde u_k,\;\;\;
\undt v_k\leq \undt v_{k+1}\leq V\leq \tilde v_{k+1}\leq \tilde v_k,\;\;\forall\, k\geq 1.
 \eee

{\it Step 1}. We show that there exist $C_T^{\alpha,1}(\overline Q_T)$-functions:  $\widetilde U_k$, $\undl U_k$, $\widetilde V_k$ and $\undl V_k$, such that
  \bee\label{3.4}
 \tilde u_k\to\widetilde U_k,\;\; \undt u_k\to\undl U_k,\;\;\;
\tilde v_k\to\widetilde V_k,\;\; \undt v_k\to\undl V_k\;\;\;\text{in}\;\;C(\tilde Q_T)
   \eee
as $(d_1,d_2)\to(0,0)$ for all $k$. In addition, these functions are recursively given by
  \bee\label{3.5}\begin{cases}
\widetilde U_1=\Phi_1,\;\;\undl V_1=\Theta_{m_2-\widetilde U_1},\;\;
\widetilde U_{k+1}=\Theta_{m_1-\undl V_k}, \;\;
 \undl V_{k+1}=\Theta_{m_2-\widetilde U_{k+1}},\\[1mm]
\widetilde V_1=\Phi_2, \;\;\undl U_1=\Theta_{m_1-\widetilde V_1},\;\;
\widetilde V_{k+1}=\Theta_{m_2-\undl U_k},\;\;
 \undl U_{k+1}=\Theta_{m_1-\widetilde V_{k+1}},
 \end{cases}
  \eee
where $\Phi_i$ ($i=1,2$) is the unique positive $T$-periodic solution of \eqref{1.6} with $m=m_i$ for each $x\in\overline\Omega$.

Firstly, it follows from Lemma \ref{lm2.6} that, in $C(\ol Q_T)$,
  \begin{align*}\lim_{(d_1,d_2)\to(0,0)}\tilde u_1=\Phi_1=:\widetilde U_1,\;\;\;
  \lim_{(d_1,d_2)\to(0,0)}\tilde v_1=\Phi_2=:\widetilde V_1.
  \end{align*}
Thus, by use of Lemma \ref{lm2.6} again, in $C(\ol Q_T)$,
 \begin{align*}
  \lim_{(d_1,d_2)\to(0,0)}\undt v_1=\lim_{(d_1,d_2)\to(0,0)}\Theta_{d_2,m_2-\tilde u_1}=\Theta_{m_2-\widetilde U_1}=:\undl V_1,\\
 \lim_{(d_1,d_2)\to(0,0)}\undt u_1=\lim_{(d_1,d_2)\to(0,0)}\Theta_{d_1,m_1-\tilde v_1}=\Theta_{m_1-\widetilde V_1}=:\undl U_1.
 \end{align*}
By applying Lemma \ref{lm2.6} repeatedly, we can thereby obtain, in $C(\ol Q_T)$,
  \begin{align*}
  \lim_{(d_1,d_2)\to(0,0)}\tilde u_2=\lim_{(d_1,d_2)\to(0,0)}\Theta_{d_1,m_1-\undt v_1}=\Theta_{m_1-\undl V_1}=:\widetilde U_2,\\
  \lim_{(d_1,d_2)\to(0,0)}\undt v_2=\lim_{(d_1,d_2)\to(0,0)}\Theta_{d_2,m_2-\tilde u_2}=\Theta_{m_2-\widetilde U_2}=:\undl V_2,\\
  \lim_{(d_1,d_2)\to(0,0)}\tilde v_2=\lim_{(d_1,d_2)\to(0,0)}\Theta_{d_2,m_2-\undt u_1}=\Theta_{m_2-\undl U_1}=:\widetilde V_2,\\
  \lim_{(d_1,d_2)\to(0,0)}\undt u_2=\lim_{(d_1,d_2)\to(0,0)}\Theta_{d_1,m_1-\tilde v_2}=\Theta_{m_1-\widetilde V_2}=:\undl U_2.
  \end{align*}
By induction, \qq{3.4} and \qq{3.5} follow immediately.

{\it Step 2}. From \eqref{3.3}, \eqref{3.4} and Step 1 we obtain that
 \bee\label{3.6}
 \begin{cases}
\Theta_{m_1-\Phi_2}\leq\dd\undl U_k\leq \undl U_{k+1}\leq \liminf_{(d_1,d_2)\to(0,0)}U \leq \limsup_{(d_1,d_2)\to(0,0)}U\leq \widetilde U_{k+1}\leq \widetilde U_k\leq \Phi_1,\\
\Theta_{m_2-\Phi_1}\leq\dd\undl V_k\leq \undl V_{k+1}\leq \liminf_{(d_1,d_2)\to(0,0)}V \leq \limsup_{(d_1,d_2)\to(0,0)}V\leq \widetilde V_{k+1}\leq \widetilde V_k\leq \Phi_2,
  \end{cases} \eee
uniformly in $\ol Q_T$, which yields four bounded and time $T$-periodic functions $U^*,U_*,V^*$ and $V_*$ such that for each $(x,t)\in\ol Q_T$,
   \[ \lim_{k\to\infty}\widetilde U_k=U^*,\ \lim_{k\to\infty}\undl U_k=U_*,\ \lim_{k\to\infty}\widetilde V_k=V^*\ \mbox{and}\ \lim_{k\to\infty}\undl V_k=V_*.\]
Moreover, it is inferred from \eqref{3.5} that the functions $\undl V_k,\widetilde U_{k+1}, \undl U_k,\widetilde V_{k+1}$ satisfy
  \bee \begin{cases}
  \partial_t\undl V_k=\undl V_k(m_2(x,t)-\widetilde U_k-\undl V_k) & \text{on} \;\;\ol Q_T,\\[1mm]
 \partial_t \widetilde U_{k+1}=\widetilde U_{k+1}(m_1(x,t)-\undl V_k-\widetilde U_{k+1})& \text{on} \;\;\ol Q_T,\\[1mm]
 \partial_t\undl U_k=\undl U_k(m_1(x,t)-\widetilde V_k-\undl U_k)& \text{on} \;\;\ol Q_T,\\[1mm]
 \partial_t\widetilde V_{k+1}=\widetilde V_{k+1}(m_2(x,t)-\undl U_k-\widetilde V_{k+1})& \text{on} \;\;\ol Q_T.
 \end{cases}\label{3.7}\eee
For each $x\in\boo$, since the functions on the right-hand side of \qq{3.7} are uniformly bounded on $[0,T]$, we first deduce the equicontinuity of $\big\{(\widetilde U_k, \undl V_k, \undl U_k, \widetilde V_k)(x,\cdot)\big\}$ by the mean value theorem. Then, by the Arzel\`{a}-Ascoli theorem, $\big\{(\widetilde U_k, \undl V_k, \undl U_k, \widetilde V_k)(x,\cdot)\big\}$ converges uniformly to $\big\{(U^*, V_*, U_*, V^*)(x,\cdot)\big\}$ on $[0,T]$. Finally, applying the theorem on the interchange of limit and differentiation, we conclude that $U^*(x,\cdot),V_*(x,\cdot),U_*(x,\cdot),V^*(x,\cdot)\in C^1([0,T])$, and $(U^*,V_*)$ and $(U_*,V^*)$ satisfy
 \bee\label{3.8}\begin{cases}
  U_t=U(m_1(x,t)-U-V)& \text{on} \;\;\ol Q_T,\\[.1mm]
  V_t=V(m_2(x,t)-U-V)& \text{on} \;\;\ol Q_T,\\[.1mm]
U(x,0)=U(x,T),\ V(x,0)=V(x,T)& \text{on} \;\;\boo.
\end{cases}
 \eee
Since $m_i\in C^{2,1}(\overline Q_T)$ (cf. the condition {\bf(M)}), the dependence of solutions to ODEs on the parameters yields $U^*,V_*,U_*,V^*\in C_T^{2,1}(\overline Q_T)$.

{\it Step 3}. By Dini's Theorem, there holds that
\[ \lim_{k\to\infty}\widetilde U_k=U^*,\ \lim_{k\to\infty}\undl U_k=U_*,\ \lim_{k\to\infty}\widetilde V_k=V^*\ \mbox{and}\ \lim_{k\to\infty}\undl V_k=V_*\]
in $C(\ol Q_T)$. Moreover,
  \[  \Theta_{m_1-\Phi_2}\leq U_* \leq U^* \leq \Phi_1\quad\mbox{and} \quad \Theta_{m_2-\Phi_1}\leq V_* \leq V^* \leq \Phi_2.\]
By Proposition \ref{p3.1}, for each $x\in\Omega_1\cup\Omega_2$, system \eqref{3.8} has no positive $T$-periodic solution. Using
 \[\widehat{(m_1-\Phi_2)}(x)=\widehat{m}_1(x)-\widehat{m}_2(x)>0,\;\;\forall\, x\in\Omega_1,\]
one concludes that 
 \[\widehat U^*(x)\ge\widehat U_*(x)\ge \widehat\Theta_{m_1-\Phi_2}(x)>0,\;\;\forall\, x\in\Omega_1. \]
Thus, $V_*=V^*=0$, so that $(U^*,V_*)=(U_*,V^*)=(\Phi_1,0)$ for all $x\in\Omega_1$. Similarly, $(U^*,V_*)=(U_*,V^*)=(0,\Phi_2)$ for all $x\in\Omega_2$.
It then follows from \eqref{3.6} that the limit \eqref{a.9} holds. Theorem \ref{th1.1} is proved.
\end{proof}

\begin{proof}[Proof of Theorem $\ref{th1.2}$] By the proof of \cite[Theorem 2.1$'$]{BHN23}, there exists some $\ep_0>0$ such that the following facts hold:
\begin{enumerate}
\item[(a)] If $I(\Phi_i)>0$ and $\min_{x\in\overline\Omega}\int_0^T\frac{\Delta \Phi_i}{\Phi_i}(x,t)>0$ for $i=1,2$, then $\mu(d_2,m_1-\theta_{d_1,m_1})>0$ and $\mu(d_1,m_2-\theta_{d_2,m_2})<0$ for all $(d_1,d_2)\in(0,\ep_0)\times(0,d_1)$.
    \vskip 6pt
\item[(b)] If $I(\Phi_i)>0$ and $\min_{x\in\overline\Omega}\int_0^T\frac{\Delta \Phi_i}{\Phi_i}(x,t)<0$ for $i=1,2$, then $\mu(d_1,m_2-\theta_{d_2,m_2})<0$ and there exists a $C^1$ function $\ol{\mathsf{d}}_2:(0,\ep_0)\to(0,\ep_0)$ with $0<\ol{\mathsf{d}}_2(d_1)<d_1$ such that
    \begin{align*}
    \mu(d_2,m_1-\theta_{d_1,m_1})<\;&0,\;\;\forall\, (d_1,d_2)\in(0,\ep_0)\times(0,\ol{\mathsf{d}}_2(d_1)),\\
    \mu(d_2,m_1-\theta_{d_1,m_1})>\;&0,
    \;\;\forall\, (d_1,d_2)\in(0,\ep_0)\times(\ol{\mathsf{d}}_2(d_1),d_1). \end{align*}
Moreover,
   \[\lim_{d_1\to 0^+}\overline{\mathsf{d}}_2(d_1)=0,\quad\lim_{d_1\to 0^+}\overline{\mathsf{d}}_2'(d_1)=\overline{\mathsf{d}}_2'(0)\in(0,1).\]
\item[(c)] If $I(\Phi_i)<0$ for $i=1,2$, and the condition {\bf(M2$'$)} holds, then $\mu(d_2,m_1-\theta_{d_1,m_1})<0$ and there exists a $C^1$ function $\ud{\mathsf{d}}_2:(0,\ep_0)\to(0,\ep_0)$ with $0<\ud{\mathsf{d}}_2(d_1)<d_1$ such that
     \begin{align*}
    \mu(d_1,m_2-\theta_{d_2,m_2})<\;&0,\;\;\forall\, (d_1,d_2)\in(0,\ep_0)\times(0,\ud{\mathsf{d}}_2(d_1)),\\
    \mu(d_1,m_2-\theta_{d_2,m_2})>\;&0,
    \;\;\forall\, (d_1,d_2)\in(0,\ep_0)\times(\ud{\mathsf{d}}_2(d_1),d_1). \end{align*}
Moreover,
  \[ \lim_{d_1\to 0^+}\ud{\mathsf{d}}_2(d_1)=0,\quad\lim_{d_1\to 0^+}\ud{\mathsf{d}}_2'(d_1)=\ud{\mathsf{d}}_2'(0)\in(0,1).\]
\end{enumerate}

Note that $m_1-m_2$ is spatially homogeneous and $\int_{Q_T}(m_1-m_2)=0$, By Proposition \ref{lm2.1}(4), we obtain 
 \begin{align*} 
 \mu(d_2,m_2-\theta_{d_1,m_1})=\mu(d_2,m_1-\theta_{d_1,m_1}),\;\;\;
 \mu(d_1,m_1-\theta_{d_2,m_2})=\mu(d_1,m_2-\theta_{d_2,m_2}).
 \end{align*}
Conclusion (1) then follows immediately from Fact (a) and Proposition \ref{p2.3}.
Conclusions (2) and (3) are obtained from Facts (b) and (c), respectively, together with Propositions \ref{p2.3} and \ref{p2.4}(3).
\end{proof}

\section{Lemmas for the large-diffusion regime\label{sec4}}\setcounter{equation}{0}
{\setlength\arraycolsep{2pt}

In this section, we establish several lemmas concerning the large-diffusion regime, which are essential for proving Theorems \ref{th1.3}-\ref{th1.5}. The first lemma establishes that, as $d_2\to\infty$, every positive solution to system \eqref{1.5} converges to a nonnegative solution to its associated shadow system.

\begin{lemma}\label{lm4.1} Suppose $d_1^k\to d_1>0$, $d_2^k\to\yy$ as $k\to\infty$, and let $(U_k,V_k)$ be positive solution to \qq{1.5} with $(d_1,d_2)=(d_1^k, d_2^k)$.  Then $U_k\to U$ in $C^{1+\alpha,\frac{1+\alpha}2}(\ol Q_T)$ and $V_k\to r$ in $C(\ol Q_T)$, where $U\in C^{2+\alpha,1+\frac\alpha2}(\ol Q_T)$ and $r\in C^1([0,T])$ are nonnegative solution to the shadow system:
  \bee\begin{cases}
  U_t=d_1\Delta U+U(m_1(x,t)-U-r) &\text{in}\;\;Q_T,\\
   \partial_{\nu}U=0 &\text{on}\;\;S_T,\\
  r'(t)=r(\ol m_2(t)-\ol U(t)-r)&\text{in}\;\;[0,T],\\
  U(x,0)=U(x,T),\;  &\text{on}\;\;\boo,\\
  r(0)=r(T).
\end{cases}\label{4.1}\eee
 \end{lemma}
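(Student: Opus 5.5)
The plan is a compactness argument in three stages: uniform bounds, control of the fast diffuser $V_k$, then passage to the limit in both equations.

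\emph{Uniform bounds.} By Lemma \ref{lm2.7}(2), $0<U_k<\|m_1\|_\infty$ and $0<V_k<\|m_2\|_\infty$. Moreover $\|\partial_tU_k\|_{L^2(Q_T)}$ and $\|\partial_tV_k\|_{L^2(Q_T)}$ are bounded independently of $k$. Since $d_1^k\to d_1>0$, the diffusion coefficient of the $U$-equation stays in a compact subset of $(0,\infty)$. The reaction term $U_k(m_1-U_k-V_k)$ is bounded in $L^\infty$. Periodic parabolic $L^p$ estimates (applied on two periods, using periodicity to absorb initial data) give a uniform $W^{2,1}_p(Q_T)$ bound for every $p$. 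Hence $U_k$ is bounded in $C^{1+\alpha,\frac{1+\alpha}2}(\ol Q_T)$ for every $\alpha<1$, and a subsequence converges in $C^{1+\alpha,\frac{1+\alpha}2}$ for slightly smaller $\alpha$.

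\emph{Control of $V_k$ (the main obstacle).} I would decompose $V_k=\ol V_k(t)+w_k$ with $\ol w_k=0$. Multiplying the $V$-equation by $w_k$ and integrating over $Q_T$, the time derivative term integrates to zero by periodicity, so
\[
d_2^k\int_{Q_T}|\nabla w_k|^2=\int_{Q_T}V_k(m_2-U_k-V_k)w_k\le C .
\]
Thus $\|\nabla V_k\|_{L^2(Q_T)}^2\le C/d_2^k\to0$, and Poincaré gives $\|V_k-\ol V_k\|_{L^2(Q_T)}\to0$. Integrating the equation over $\Omega$ gives
\[
\ol V_k'(t)=\frac1{|\Omega|}\int_\Omega V_k(m_2-U_k-V_k),
\]
which is bounded uniformly in $k$. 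By Arzelà–Ascoli, $\ol V_k\to r$ in $C([0,T])$ along a subsequence.

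To upgrade the $L^2$ convergence to $C(\ol Q_T)$, I would rescale time, $s=d_2^kt$. Parabolic Hölder estimates then yield equicontinuity in $x$ for the function $V_k(\cdot,t)$, and combining this with the $L^2$ smallness of $V_k-\ol V_k$ gives $\|V_k-\ol V_k\|_{C(\ol Q_T)}\to0$. Alternatively, I would view $V_k$ as a periodic solution of $\partial_tV-d_2^k\Delta V=f_k$ with $f_k$ bounded, and use smoothing of the Neumann heat semigroup for large $d_2^k$: the semigroup contracts zero-mean data exponentially at rate $d_2^k\lambda_1$. A Duhamel formula over one period then gives $\|w_k\|_\infty\le C/(d_2^k)^{\beta}$ for some $\beta>0$.

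\emph{Passage to the limit.} With $V_k\to r$ uniformly, I pass to the limit in the weak formulation of the $U$-equation to get $U_t=d_1\Delta U+U(m_1-U-r)$, with the Neumann condition and periodicity preserved. Since $r$ is continuous and $m_1$ is Hölder, Schauder theory gives $U\in C^{2+\alpha,1+\frac\alpha2}$. Passing to the limit in the integral form of the averaged $V$-equation, using uniform convergence of $U_k$ and $V_k$, gives
\[
r(t)=r(0)+\int_0^t r\bigl(\ol m_2-\ol U-r\bigr),
\]
so $r\in C^1$ and $r(0)=r(T)$. Nonnegativity of $U$ and $r$ passes to the limit. Finally, for convergence of the whole sequence as stated, I would read the lemma as holding along subsequences, since the limit need not be unique.
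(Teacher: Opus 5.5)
Your proposal is essentially the paper's proof. Both use the same $L^p$/Schauder compactness for $U_k$ and the same splitting $V_k=\ol V_k+w_k$, with Arzel\`a--Ascoli applied to $\ol V_k$ through the averaged equation; moreover, your ``alternative'' semigroup argument is exactly the paper's Step 2. That argument (exponential decay at rate $d_2^k\mu$ on zero-mean functions, Duhamel over one period, then periodicity) is carried out in the paper in the fractional power space $X_\perp^\alpha\hookrightarrow C^\alpha(\ol\Omega)$, giving $\max_{[0,T]}\|V_k-\ol V_k\|_{C^\alpha(\ol\Omega)}\to0$.

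If you use the time-rescaling route instead, the spatial equicontinuity must be paired with $\sup_t\|w_k(\cdot,t)\|_{L^2(\Omega)}\to0$; smallness in $L^2(Q_T)$ alone is not enough. You can get this from the $L^2(Q_T)$ bound on $\partial_tV_k$ that you already cited. Alternatively, use the pointwise energy inequality $\frac{d}{dt}\|w_k\|_{L^2(\Omega)}^2+d_2^k\lambda_1\|w_k\|_{L^2(\Omega)}^2\le C/d_2^k$ together with $T$-periodicity.
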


\begin{proof}\; {\it Step 1}. By Lemma \ref{lm2.7}, $(U_k, V_k)$ is uniformly bounded in $Q_T$. Let
 \[f_k=U_k(m_1-U_k-V_k),\;\;\;g_k=V_k(m_2-U_k-V_k).\]
Then $f_k$ and $g_k$ are uniformly bounded in $Q_T$. Using the interior estimate in $\oo\times[T/2,2T]$ and the periodicity of $U_k$ we obtain
 \begin{align*}
 \|U_k\|_{W^{2,1}_p(Q_T)}&\le C(p),\;\;\forall\, p>1,\\[1mm]
 \|U_k\|_{C^{1+\gamma,\frac{1+\gamma}2}(\ol Q_T)}&\le C(\gamma),\;\;\forall\,\gamma\in(0,1).
 \end{align*}
Thus, there exists $U\in C^{1+\alpha,\frac{1+\alpha}2}(\ol Q_T)$ such that
 \bee
 U_k\to U \;\;\;\text{in}\;\;C^{1+\alpha,\frac{1+\alpha}2}(\ol Q_T).
 \label{4.2}\eee

Set $W_k=V_k-\ol V_k(t)$. Then $W_k$ satisfies 
  \bee\begin{cases}
 \partial_t W_k=d_2^k\Delta W_k+g_k-\ol g_k(t)&\text{in}\;\; Q_T,\\
 \partial_\nu W_k=0&\text{on}\;\; S_T,\\
 W_k(x,0)=W_k(x,T)&\text{on}\;\; \boo,\\
 \ol W_k(t)=0&\text{on}\;\;[0,T],\end{cases}\label{4.3}\eee
and $\ol V_k$ satisfies
 \bee
 \ol V_k'(t)=\ol g_k(t)\;\;\;\text{in}\;\;[0,T];
 \;\;\;\ol V_k(0)=\ol V_k(T).\label{4.4}\eee

{\it Step 2}. Without loss of generality, we may assume that $d_2^k>1$ for all $k$.
Take $\alpha>1/2$ and $p>\max\{2,\frac{n}{2\alpha}\}$, and define
   \begin{align*}
  X_\perp=\;&\{w\in L^p(\oo): \bar w=0\},\;\;\;\text{with}\;\;\|\cdot\|_{X_\perp}=\|\cdot\|_{L^p(\oo)},\\[.5mm]
  Y_\perp=\;&\kk\{w\in W^2_p(\oo):\,\partial_\nu w|_{\partial\oo}=0,\;\ol w(t)=0\rr\}.
   \end{align*}
Then $X_\perp$ and $Y_\perp$ are Banach spaces. Let $0<\theta<\pi/2$ be fixed. By the Agmon-Douglis-Nirenberg theorem, there exists $C(\oo, p, \theta)>0$ such that for any $\lambda\in\mathbb{C}$ with $|\lambda|\ge 1$ and $|\arg(\lambda)|>\theta$, the following holds:
  \begin{align*}\|u\|_{X_\perp}\le\frac{C(\oo, p, \theta)}{|\lambda|}\|\lambda u
 +d\Delta u\|_{X_\perp},\;\;\forall\, u\in Y_\perp.\end{align*}
This shows that $-d_2^k\Delta:Y_\perp\to X_\perp$ is a sectorial operator  and ${\rm e}^{d_2^k\Delta t}$ is an analytic semigroup in $X_\perp$. Let $\mu>0$ be the second eigenvalue of the operator $-\Delta$ in $\Omega$ under homogeneous Neumann boundary condition. Then $\mathrm{Re}\lambda>d_2^k\mu$ for all $\lambda\in\sigma(-d_2^k\Delta)$, and thereby
 \bee
 \|(-d_2^k\Delta)^\alpha {\rm e}^{d_2^k\Delta t}\|\leq C(\oo, p, \theta,\alpha)t^{-\alpha}{\rm e}^{-d_2^k\mu t},\;\; \forall \,t>0,\label{4.5}\eee
see \cite[Theorem 9.29]{W21} (The detailed proof of this result can be found in \cite[Theorem 7.1.9]{Wangc}). Apply the estimate \qq{4.5} to the solution $W_k$ of \qq{4.3} to derive
  \begin{align*}
 \|W_k(\cdot, t)\|_{X_\perp^\alpha}\le\;&\|(-d_2^k\Delta)^\alpha {\rm e}^{d_2^k\Delta t}W_k(\cdot, 0)\|_{X_\perp}
 +\int_0^t\|(-d_2^k\Delta)^\alpha {\rm e}^{d_2^k\Delta(t-s)}[g_k(\cdot, s)-\ol g_k(s)]\|_{X_\perp}\mathrm{d}s\\
\le\;& Ct^{-\alpha}{\rm e}^{-d_2^k\mu t}\|W_k(\cdot, 0)\|_{X_\perp}
 +C\int_0^t (t-s)^{-\alpha}{\rm e}^{-d_2^k\mu(t-s)}\|g_k(\cdot, s)-\ol g_k(s)\|_{X_\perp}\mathrm{d}s.
  \end{align*}
Thus, for $t\in[T,2T]$, we have
   \begin{align*}
 \|W_k(\cdot, t)\|_{X_\perp^\alpha}
 \le\;& CT^{-\alpha}{\rm e}^{-d_2^k\mu T}\|W_k(\cdot, 0)\|_{X_\perp}
 +CM\int_0^t (t-s)^{-\alpha}{\rm e}^{-d_2^k\mu(t-s)}\mathrm{d}s\\
 \le\;& CT^{-\alpha}{\rm e}^{-d_2^k\mu T}\|W_k(\cdot, 0)\|_{X_\perp}+CM \Gamma(1-\alpha)(d_2^k\mu)^{\alpha-1},
 \end{align*}
where $M=\max_{[0,2T]}\|g_k(\cdot, s)-\ol g_k(s)\|_{X_\perp}$. Since   $X_\perp^\alpha\hookrightarrow C^\alpha(\boo)$, one has 
  \[\max_{[T,2T]}\|W_k(\cdot, t)\|_{C^\alpha(\boo)}\le C\max_{[T,2T]}\|W_k(\cdot, t)\|_{X_\perp^\alpha}\to 0\]
as $k\to\yy$. By periodicity, we also obtain 
 \bee
  \max_{[0,T]}\|W_k(\cdot, t)\|_{C^\alpha(\boo)}=\max_{[T,2T]}\|W_k(\cdot, t)\|_{C^\alpha(\boo)}\to 0\;\;\;\text{as}\;\;k\to\yy.
  \label{4.6}\eee\www

{\it Step 3}. Notice that $\ol g_k(t)$ and $\overline V_k$ are uniformly bounded on $[0,T]$. In view of \qq{4.4}, we derive that $\overline V_k$ is equicontinuous on $[0,T]$. By the Arzel\`a--Ascoli theorem, there exist a subsequence, still denoted by $\{\overline V_k\}$, such that $\ol V_k(t)\to r(t)$ in $C([0,T])$ for some $r(t)\in C_T([0,T])$ as $k\to\infty$.  This combined with \qq{4.6} yields $V_k\to r(t)$ uniformly on $\ol Q_T$ as $k\to\yy$, which together with \qq{4.2} gives
   \begin{align*}
 \frac 1{|\oo|}\int_\oo V_k(m_2-U_k-V_k)=\;&\frac 1{|\oo|}\int_\oo[V_k-\ol V_k(t)](m_2-U_k)+\ol V_k(t)[\ol m_2(t)-\ol U(t)]\\
 &+\ol V_k(t)[\ol U(t)-\ol U_k(t)]-\ol V_k^2(t)-\frac 1{|\oo|}\int_\oo V_k[V_k-\ol V_k(t)]\\
 \to\;& r[\ol m_2(t)-\ol U(t)]-r^2\;\;\;\text{as}\;\;k\to\yy
 \end{align*}
by \qq{4.2}. This combined with \qq{4.4} implies that $r(t)$ satisfies the third equation of \qq{4.1}. Apply the regularity theory to \qq{4.1} to derive that $U\in C^{2+\alpha,1+\frac\alpha2}(\ol Q_T)$ and $r\in C^1([0,T])$.\vspace{-1mm}
\end{proof}

The following Lemmas \ref{lm4.2}-\ref{lm4.4} are devoted to the stability of $(\theta_{d_1,m_1},0)$ and $(0,\theta_{d_2,m_2})$.\vspace{-1mm}

\begin{lemma}\label{lm4.2} The following assertions hold:
\begin{enumerate}
\item[\rm(1)] For any given $b>a>0$, there exists $d_2^{a,b}>0$ such that $(\theta_{d_1,m_1},0)$ is globally asymptotically stable for all $(d_1,d_2)\in[a,b]\times(d_2^{a,b},\infty)$.
\item[\rm(2)] If $m_1(x,t)-m_2(x,t)$ is spatially homogeneous, then there exists $\bar d\gg 1$, depending only on $m_1$ and $m_2$, such that $(\theta_{d_1,m_1},0)$ is globally asymptotically stable for all $d_2>d_1>\bar d$.\end{enumerate}
\end{lemma}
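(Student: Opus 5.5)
Both parts will be proved through Proposition \ref{p2.4}(1). We need three facts: $\mu(d_2,m_2-\theta_{d_1,m_1})>0$, $\mu(d_1,m_1-\theta_{d_2,m_2})<0$, and nonexistence of positive solutions of \eqref{1.5} in the stated parameter range.

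\emph{Part (1).} We argue by contradiction with compactness. For $d_1\in[a,b]$, the family $\theta_{d_1,m_1}$ is bounded in $C^{\alpha,\alpha/2}$ uniformly in $d_1$. By Proposition \ref{lm2.1}(3) and continuity in $m$, as $d_2\to\infty$ we get $\mu(d_2,m_2-\theta_{d_1,m_1})\to -\frac1T\int_0^T(\ol m_2-\ol\theta_{d_1,m_1})\,\mathrm dt$. Integrating \eqref{1.2} for $\theta=\theta_{d_1,m_1}$ over $Q_T$ gives $\int_{Q_T}\theta(m_1-\theta)=0$, hence $\int_{Q_T}\theta^2=\int_{Q_T}m_1\theta$. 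Using this together with $\int_{Q_T}m_1=\int_{Q_T}m_2$ from {\bf(M)}, I will show $\int_{Q_T}(m_2-\theta_{d_1,m_1})<0$, i.e. $\int_{Q_T}(\theta-m_1)>0$. The natural route is to divide \eqref{1.2} by $\theta$: $\int_{Q_T}(m_1-\theta)=-d_1\int_{Q_T}|\nabla\theta|^2/\theta^2<0$, strict since $\theta$ is nonconstant because $\nabla m_1\not\equiv0$. By compactness in $d_1\in[a,b]$, the limit value has a uniform positive lower bound, so the first eigenvalue is positive for $d_2$ large. The uniformity requires some care: $\mu(d,m)\to-\widehat{\ol m}$ must hold uniformly over the compact family $\{m_2-\theta_{d_1,m_1}\}$. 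I would get this from Proposition \ref{p2.2}, or from continuity plus a monotone squeeze.

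For the second eigenvalue, Lemma \ref{lm2.10} gives $\theta_{d_2,m_2}\to l_{m_2}(t)$ uniformly. Then $\mu(d_1,m_1-\theta_{d_2,m_2})\to\mu(d_1,m_1-l_{m_2})=\mu(d_1,m_1)+\frac1T\int_0^T l_{m_2}=\mu(d_1,m_1)+\widehat{\ol m}_2$, by Proposition \ref{lm2.1}(4). This is $<\mu(d_1,m_1)+\widehat{\ol m}_1\le0$ strictly, by Proposition \ref{lm2.1}(1), since $m_1$ is spatially heterogeneous. Continuity in $d_1\in[a,b]$ makes this uniform.

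Nonexistence of coexistence states is the main obstacle. Suppose positive solutions $(U_k,V_k)$ exist with $d_1^k\in[a,b]$ and $d_2^k\to\infty$. By Lemma \ref{lm4.1} they converge to $(U,r)$ solving the shadow system \eqref{4.1}. I would rule out each alternative for the limit:
\begin{itemize}
\item[$\bullet$] If $U\equiv0$, then $r=l_{m_2}$, and $V_k\to l_{m_2}$. The rescaled $U_k/\|U_k\|$ converges to a principal eigenfunction with eigenvalue $0$ for $\mu(d_1,m_1-l_{m_2})$, contradicting that this eigenvalue is negative.
\item[$\bullet$] If $r\equiv0$, then $U=\theta_{d_1,m_1}$. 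Normalizing $V_k$ similarly (Proposition \ref{p2.2}) forces $\mu(d_2^k,m_2-U_k)\to0$, contradicting the positive limit obtained above.
\item[$\bullet$] If both are positive, integrating the $r$-equation gives $\int_0^T(\ol m_2-\ol U-r)=0$. Dividing the $U$-equation by $U$ gives $\int_{Q_T}(m_1-U-r)=-d_1\int|\nabla U|^2/U^2$. Subtracting and using $\int m_1=\int m_2$ gives $d_1\int|\nabla U|^2/U^2=0$. So $U=U(t)$, which contradicts $\nabla m_1\not\equiv0$ in the $U$-equation.
\end{itemize}

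\emph{Part (2).} Now $m_1-m_2=c(t)$ with $\int_0^Tc=0$. By Proposition \ref{lm2.1}(4), the relevant eigenvalues reduce to $\mu(d_2,m_1-\theta_{d_1,m_1})$ and $\mu(d_1,m_2-\theta_{d_2,m_2})$, which is exactly the common-resource setting. I would then invoke the proof of \cite[Theorem 2.3]{BHN23}. It uses the expansion in Lemma \ref{lm2.10} and the monotonicity $\partial_d\mu>0$ of Proposition \ref{p2.2}: $\mu(d_1,m_1-\theta_{d_1,m_1})=0$, so $\mu(d_2,\cdot)>0$ for $d_2>d_1$, and symmetrically for the other eigenvalue. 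Nonexistence for $d_2>d_1>\bar d$ would follow from the same shadow-system argument, or from the BHN23 argument with $m_2$ shifted by the time function $c(t)$. This does not change the competition structure, since $V\mapsto V$ satisfies an equation differing only by a $t$-dependent growth term absorbed by Proposition \ref{lm2.1}(4).
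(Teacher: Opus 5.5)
Your strategy is essentially the paper's. You use Proposition~\ref{p2.4}(1) with the two eigenvalue signs and nonexistence of coexistence states. In part (1) you use the limits from Proposition~\ref{lm2.1}(3), the identity $\int_{Q_T}(m_1-\theta_{d_1,m_1})=-d_1\int_{Q_T}|\nabla\theta_{d_1,m_1}|^2/\theta_{d_1,m_1}^2$, Lemma~\ref{lm2.10}, and the shadow limit of Lemma~\ref{lm4.1}. In part (2) you use the reduction by Proposition~\ref{lm2.1}(4) and large-$d$ monotonicity of the principal eigenvalue. Your case analysis of the shadow limit is a correct way to fill in what the paper only cites from \cite{HMP01}. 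The coexistence case, excluded via $\int_{Q_T}(m_1-m_2)=0$, is exactly right.

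Two small repairs are needed in part (1):
\begin{itemize}
\item When $U\equiv0$ you must also allow $r\equiv0$. Note that $0=\mu(d_1^k,m_1-U_k-V_k)\to\mu(d_1,m_1-U-r)$, while $\mu(d_1,m_1)$ and $\mu(d_1,m_1-l_{m_2})$ are both negative. This disposes of the missing case and makes your rescaling arguments unnecessary.
\item Your chain should read $\mu(d_1,m_1)+\widehat{\ol m}_2=\mu(d_1,m_1)+\widehat{\ol m}_1<0$, since {\bf(M)} gives $\widehat{\ol m}_1=\widehat{\ol m}_2$.
\end{itemize}

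The one real problem is the first option you offer for nonexistence in part (2). A contradiction sequence there has $d_2^k>d_1^k>k$, so both rates blow up. Lemma~\ref{lm4.1}, which needs $d_1^k$ to converge to a finite positive limit, is then unavailable. The leading-order limit is the kinetic system $u'=u(\ol m_1-u-r)$, $r'=r(\ol m_2-u-r)$. This system has a one-parameter family of positive periodic solutions: $u/r=K\exp\int_0^t(\ol m_1-\ol m_2)$, which is periodic since $\int_0^T(\ol m_1-\ol m_2)=0$. So no compactness argument can produce a contradiction.

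Only your second option works, and it is the paper's. At a coexistence state, $\mu(d_1,m_1-U-V)=\mu(d_2,m_2-U-V)=0$. Proposition~\ref{lm2.1}(4) converts the first identity into $\mu(d_1,m_2-U-V)=0$. This contradicts strict monotonicity of $d\mapsto\mu(d,m_2-U-V)$ on $(\bar d,\infty)$.

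For $\bar d$ to be independent of $(U,V)$ (and likewise of $d_1$ in the eigenvalue-sign step), you need three ingredients:
\begin{itemize}
\item the last part of Proposition~\ref{p2.2};
\item the bounds of Lemma~\ref{lm2.7};
\item a uniform lower bound such as $\|\nabla\Gamma_{m_2-U-V}\|_{L^2(Q_T)}\ge\frac12\|\nabla\Gamma_{m_2}\|_{L^2(Q_T)}$, which follows from $d_1\int_{Q_T}|\nabla U|^2+d_2\int_{Q_T}|\nabla V|^2\le C$.
\end{itemize}
This uniform bound, not an appeal to ``the competition structure'', is the actual content of that step and should be stated explicitly.
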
\www

\begin{proof}\; (1) Condition {\bf(M)} and the equation of $\theta_{d_1,m_1}$ imply that
  \[\int_{Q_T}(m_2-\theta_{d_1,m_1})=\int_{Q_T}(m_1-\theta_{d_1,m_1})
  =-\int_{Q_T}\kk|\frac{\nabla\theta_{d_1,m_1}}{\theta_{d_1,m_1}}\rr|^2<0.\]
Then by Proposition \ref{lm2.1}(3),
  \[\lim_{d_2\to\infty}\mu(d_2,m_2-\theta_{d_1,m_1})=-\frac{1}{T|\Omega|}
 \int_{Q_T}(m_2-\theta_{d_1,m_1})>0.\]
Moreover, Lemma \ref{lm2.10} shows that $\theta_{d_2,m_2}\to l_2$ in $C(\ol Q_T)$ as $d_2\to\infty$, where $ l_2$ is the unique positive $T$-periodic solution to \eqref{1.11} with $m=m_2$. By condition {\bf(M)}, we have $\int_{Q_T}(m_1- l_2)=0$. Since $m_1- l_2$ is spatially heterogeneous, it follows from Proposition \ref{lm2.1}(1) and (2) that
   \[\lim_{d_2\to\infty}\mu(d_1,m_1-\theta_{d_2,m_2})
   =\mu(d_1,m_1- l_2)<-\frac{1}{T|\Omega|}\int_{Q_T}(m_1- l_2)=0.\]
By virtue of continuity, there exists a constant $d_2^{a,b}>0$ such that $\mu(d_2,m_2-\theta_{d_1,m_1})>0$ and $\mu(d_1,m_1-\theta_{d_2,m_2})<0$ for all $(d_1,d_2)\in[a,b]\times(d_2^{a,b},\infty)$. Using Lemma \ref{lm4.1} and following the argument in the proof of \cite[Theorem 5.3(a)]{HMP01}, it can be shown that \eqref{1.5} admits no positive solution for sufficiently large $d_2$. By further increasing $d_2^{a,b}$, Proposition \ref{p2.4}(1) implies that $(\theta_{d_1,m_1},0)$ is globally asymptotically stable for all $(d_1,d_2)\in[a,b]\times(d_2^{a,b},\infty)$.

(2) Assume that $m_1-m_2$ is spatially homogeneous. Then $\widehat{m_1-m_2}\equiv 0$. Following the proof of \cite[Claim 5.1(ii)]{BHN23}, we have that there exists $\bar d\gg 1$, depending only on $m_i$, such that
  \bee \frac{\partial \mu(d_1,m_2-\theta_{d_2,m_2})}{\partial d_1}>0,\;\;\; \frac{\partial\mu(d_2,m_1-\theta_{d_1,m_1})}{\partial d_2}>0,\;\; \forall\, d_1, d_2>\bar d.\label{4.7} \eee
Notice that that $\mu(d_i,m_i-\theta_{d_i,m_i})=0$ for $i=1,2$. Applying Proposition \ref{lm2.1}(4) and \qq{4.7} in succession, we obtain that, for all $d_2>d_1>\bar d$,
 \bee\begin{cases}
  \mu(d_2,m_2-\theta_{d_1,m_1})=\mu(d_2,m_1-\theta_{d_1,m_1})
 >\mu(d_1,m_1-\theta_{d_1,m_1})=0,\\
  \mu(d_1,m_1-\theta_{d_2,m_2})=\mu(d_1,m_2-\theta_{d_2,m_2})
  <\mu(d_2,m_2-\theta_{d_2,m_2})=0.\www\end{cases}\label{4.8}\eee

Next, we show the nonexistence of positive solution to \eqref{1.5} for all $d_2>d_1>\bar d$. Suppose for contradiction that \eqref{1.5} admits a positive solution $(U,V)$ for some $d_2>d_1>\bar d$. Then
  \[\mu(d_1,m_1-U-V)=\mu(d_2,m_2-U-V)=0,\]
and hence
  \[\mu(d_1,m_2-U-V)=\mu(d_2,m_1-U-V)=0\]
by Proposition \ref{lm2.1}(4). Following the proof of \cite[Claim 5.1(i)]{BHN23}, we derive that (by choosing $\bar d$ even larger if necessary)
  \[\frac{\partial \mu(d,m_2-U-V)}{\partial d}>0,\;\; \forall\, d>\bar d,\]
which leads to 
  \[\mu(d_2,m_2-U-V)>\mu(d_1,m_2-U-V)=0\] 
since $d_2>d_1>\bar d$. We get a contradiction, and by Proposition \ref{p2.4}(1), $(\theta_{d_1,m_1},0)$ is globally asymptotically stable for all $d_2>d_1>\bar d$.\zzz
\end{proof}

The remainder of this section focuses on the signs of $\mu(d_2,m_2-\theta_{d_1,m_1})$ and $\mu(d_1,m_1-\theta_{d_2,m_2})$, which determine the local stability of $(\theta_{d_1,m_1},0)$ and $(0,\theta_{d_2,m_2})$, respectively.\www

\begin{lemma}\label{lm4.3} The following statements are valid:
\begin{enumerate}
\item[\rm(1)] If $\widehat{m}_2(x)\not\equiv\mathrm{constant}$ on $\overline\Omega$,  then there exist $\bar d_1^1\gg 1$ and $0<\varepsilon_1\ll 1$ such that    
     \[\mu(d_2,m_2-\theta_{d_1,m_1})<0,\;\;\forall\;(d_1,d_2)\in(\bar d_1^1,\infty)\times(0,\varepsilon_1).\]
\item[\rm(2)] If $\widehat{m}_1(x)\not\equiv\mathrm{constant}$ on $\overline\Omega$,  then there exist $\bar d_2^1\gg 1$ and $0<\ep\ll 1$ such that \[\mu(d_1,m_1-\theta_{d_2,m_2})<0,\;\;\forall\;(d_1,d_2)\in(0,\ep)\times(\bar d_2^1,\infty).\]
\end{enumerate}
\end{lemma}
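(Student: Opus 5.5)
By the symmetry between the two species, it suffices to prove part (1); part (2) follows by swapping $(d_1,m_1)$ and $(d_2,m_2)$.

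The plan is to take the limits in the order $d_1\to\infty$ first, then $d_2\to0$, and then upgrade this to uniformity by continuity. First, Lemma \ref{lm2.10} gives $\theta_{d_1,m_1}\to l_1$ in $C(\overline Q_T)$ as $d_1\to\infty$. Consequently $m_2-\theta_{d_1,m_1}\to m_2-l_1$ uniformly.

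Next, Proposition \ref{lm2.1}(3) gives $\mu(d_2,m_2-l_1)\to-\max_{\overline\Omega}\widehat{(m_2-l_1)}$ as $d_2\to0$. By \eqref{1.11}, $\widehat l_1=\widehat{\overline m}_1$, and condition (M) gives $\widehat{\overline m}_1=\widehat{\overline m}_2$. Hence
\[
\widehat{(m_2-l_1)}(x)=\widehat m_2(x)-\widehat{\overline m}_2 .
\]
Since $\widehat m_2$ is not constant and $\widehat{\overline m}_2$ is its spatial average, $\max_{\overline\Omega}\widehat m_2>\widehat{\overline m}_2$. Setting $2\delta:=\max_{\overline\Omega}\widehat m_2-\widehat{\overline m}_2>0$, we get $\lim_{d_2\to0}\mu(d_2,m_2-l_1)=-2\delta<0$.

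The main obstacle is to make the estimate uniform in both parameters, over all small $d_2$ and all large $d_1$ simultaneously. The cleanest way is the Lipschitz bound $|\mu(d,m)-\mu(d,\tilde m)|\le\|m-\tilde m\|_\infty$, which holds uniformly in $d$. It follows from the monotonicity in Proposition \ref{lm2.1}(2) together with Proposition \ref{lm2.1}(4) applied with the constant shifts $\pm\|m-\tilde m\|_\infty$. The argument then runs as follows.
\begin{enumerate}
\item Choose $\varepsilon_1$ so small that $\mu(d_2,m_2-l_1)<-\delta$ for all $d_2\in(0,\varepsilon_1)$. This is possible because $\mu(d_2,m_2-l_1)\to-2\delta$ as $d_2\to0$.
\item Choose $\bar d_1^1$ so large that $\|\theta_{d_1,m_1}-l_1\|_\infty<\delta/2$ for all $d_1>\bar d_1^1$. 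This is possible by Lemma \ref{lm2.10}.
\item For such $(d_1,d_2)$, the Lipschitz bound gives
\[
\mu(d_2,m_2-\theta_{d_1,m_1})\le\mu(d_2,m_2-l_1)+\delta/2<-\delta/2<0 .
\]
\end{enumerate}
This is uniform over the whole region $(\bar d_1^1,\infty)\times(0,\varepsilon_1)$, with no need for joint continuity of $\mu$ as $d_2\to0$.
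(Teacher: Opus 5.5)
Your proof is correct and takes essentially the paper's route. Both arguments combine $\theta_{d_1,m_1}\to l_1$ uniformly (Lemma \ref{lm2.10}), the identity $\widehat l_1=\widehat{\ol m}_1=\widehat{\ol m}_2$, and monotonicity of $\mu$ in the potential, which makes the estimate uniform in $d_1$, with Proposition \ref{lm2.1}(3) as $d_2\to0$. The only difference is cosmetic: the paper compares directly with the shifted potential $m_2-l_1-\delta_0$, whereas you state the Lipschitz bound $|\mu(d,m)-\mu(d,\tilde m)|\le\|m-\tilde m\|_\infty$, which the paper itself uses later in \eqref{5.8}.
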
\zzz

\begin{proof}\; We only prove part (1) as part (2) can be proved analogously.

For part (1), there exists $x_0\in\Omega$ satisfying
\[\widehat{m}_2(x_0)-\ol{\widehat{m}}_1=\widehat{m}_2(x_0)-\ol{\widehat{m}}_2:=2\delta_0>0.\]
By Lemma \ref{lm2.10}, we have $\lim_{d_1\to\infty}\|\theta_{d_1,m_1}- l_1\|_{C(\ol Q_T)}=0$. Hence there exists $\bar d_1^1\gg1$ such that for all $d_1>\bar d_1^1$, 
  \[m_2-\theta_{d_1,m_1}>m_2- l_1-\delta_0\;\;\;\text{on}\;\;\overline Q_T,\]
and hence 
  \[\mu(d_2,m_2-\theta_{d_1,m_1})<\mu(d_2,m_2- l_1-\delta_0)\]
(cf. Proposition \ref{lm2.1}(2)). Moreover, since $\widehat{l}_1=\widehat{\ol m}_1$,  Proposition \ref{lm2.1}(3) gives 
  \[\lim_{d_2\to 0}\mu(d_2,m_2-l_1-\delta_0)
  =-\max_{x\in\overline\Omega}(\widehat{m}_2-\widehat{\ol m}_1-\delta_0)\leq -(\widehat{m}_2(x_0)-\widehat{\ol m}_2)+\delta_0<0.\]
There exists $0<\varepsilon_1\ll 1$ such that  $\mu(d_2,m_2-\theta_{d_1,m_1})<0$ for all $(d_1,d_2)\in(\bar d_1^1,\infty)\times(0,\varepsilon_1)$.
\end{proof}

\begin{lemma}\label{lm4.4} For any fixed $\bar d_1>0$, there exists $d_{2,\bar d_1}>0$ such that the following statements hold.
\begin{enumerate}
\item[\rm(1)] If conditions {\bf(M1)}--{\bf(M2)} hold, then there exists a strictly decreasing $C^1$ function:
\[\mathsf{d}_1:(d_{2,\bar d_1},\infty)\to (0,\infty)\quad \mbox{with}\quad \mathsf{d}_1(d_2)=O\kk(d_2^{-1}\rr)\;\; \mbox{as } d_2\to\infty\]
such that for every $d_2>d_{2,\bar d_1}$,
 \begin{align*}
 \mu(d_2,m_2-\theta_{d_1,m_1})<&0\;\;\;\text{when}\;\;d_1\in(0,\mathsf{d}_1(d_2)),\\ \mu(d_2,m_2-\theta_{d_1,m_1})>&0\;\;\;\text{when}\;\;d_1\in(\mathsf{d}_1(d_2),\bar d_1].\vspace{-2mm}\end{align*}
\item[\rm(2)] If $\widehat{m}_1(x)\not\equiv\mathrm{constant}$ on $\overline\Omega$, then
     \[\mu(d_1,m_1-\theta_{d_2,m_2})<0,\;\;\forall\;(d_1,d_2)\in(0,\bar d_1]\times(d_{2,\bar d_1},\infty).\]
\end{enumerate}
\end{lemma}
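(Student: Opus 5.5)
Part (2) is handled first, since it is a compactness argument in the spirit of Lemma \ref{lm4.3}(2). By Lemma \ref{lm2.10}, $\theta_{d_2,m_2}\to l_2$ in $C(\ol Q_T)$ as $d_2\to\infty$. Hence, by continuity (Proposition \ref{lm2.1}(2)), it suffices to show
\[\mu(d_1,m_1-l_2)<0\quad\text{uniformly for } d_1\in(0,\bar d_1].\]
For each fixed $d_1>0$ we use that $\int_{Q_T}(m_1-l_2)=0$ by {\bf(M)}, since $\widehat l_2=\widehat{\ol m}_2$. Because $m_1-l_2$ is spatially heterogeneous (as $\widehat m_1\not\equiv$ const), Proposition \ref{lm2.1}(1) gives $\mu(d_1,m_1-l_2)<0$.

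For uniformity near $d_1=0$, Proposition \ref{lm2.1}(3) gives the limit
\[\mu(d_1,m_1-l_2)\to-\max_{\overline\Omega}\bigl(\widehat m_1-\widehat{\ol m}_1\bigr)<0.\]
Continuity in $d_1$ on $[\delta,\bar d_1]$ then yields a uniform negative bound $-2\eta$ on $(0,\bar d_1]$. Next choose $d_{2,\bar d_1}$ so large that $\|\theta_{d_2,m_2}-l_2\|_\infty<\eta$. Monotonicity of $\mu$ in $m$, together with Proposition \ref{lm2.1}(4) applied to the constant shift $\eta$, then gives $\mu(d_1,m_1-\theta_{d_2,m_2})<-\eta<0$.

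For part (1) the plan is to study the map $G(d_1,s):=\mu(1/s,\,m_2-\theta_{d_1,m_1})$ with $s=1/d_2$, near $s=0$, and to locate its zero set by the implicit function theorem. At $s=0$ the formal limit is $-\frac1{T|\Omega|}\int_{Q_T}(m_2-\theta_{d_1,m_1})$. By {\bf(M)} and the $\theta$-equation this equals
\[\frac1{T|\Omega|}\int_{Q_T}\frac{|\nabla\theta_{d_1,m_1}|^2}{\theta_{d_1,m_1}^2}=:J(d_1)\ge 0.\]
This quantity is positive for $d_1>0$ but tends to $0$ as $d_1\to0$. Therefore the sign changes must come from the next-order term in $1/d_2$. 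Using the expansion of the principal eigenfunctions in Proposition \ref{p2.2} (with weight $m=m_2-\theta_{d_1,m_1}$), multiplying \eqref{1.3} by $\psi$ and integrating, I expect
\[\mu(d_2,m)=-\widehat{\ol m}-\frac{c}{d_2}\int_{Q_T}w(t)|\nabla\Gamma_m|^2+o(d_2^{-1}),\]
where $w>0$ is a time weight built from $\ol\varphi\,\ol\psi$. Thus $d_2\,\mu\approx d_2 J(d_1)-K(d_1)$, with $K(d_1)>0$ a weighted energy of $\Gamma_{m_2-\theta_{d_1,m_1}}$.

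As $d_1\to0$, Proposition \ref{lm2.8} (which needs {\bf(M1)}) gives $\theta_{d_1,m_1}=\Phi_1+d_1\Psi_1+o(d_1)$ in $C^{0,1}$. Then $J(d_1)=d_1 J_1+o(d_1)$, with
\[J_1=-\frac{1}{T|\Omega|}\int_{Q_T}\Delta\Phi_1\,\frac{1}{\Phi_1}\cdots\]
computed from $\int(\Psi_1\text{-terms})$; positivity of $J_1$ is to be verified from $\int_0^T(\Psi_1/\Phi_1)_t=0$. Meanwhile $K(d_1)\to K_0>0$, because $m_2-\Phi_1$ is spatially heterogeneous by {\bf(M2)}, so $\Gamma_{m_2-\Phi_1}\ne0$. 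Consequently
\[d_2\mu\approx d_2d_1J_1-K_0,\]
whose zero set is $d_1\approx K_0/(J_1d_2)$. This explains $\mathsf d_1(d_2)=O(d_2^{-1})$.

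To make this rigorous, set $\tau=d_1d_2$ and $s=1/d_2$, and define $H(\tau,s)=d_2\mu(d_2,m_2-\theta_{\tau s,m_1})$. I would show that $H$ extends $C^1$ to $s=0$, with $H(\tau,0)=\tau J_1-K_0$ and $\partial_\tau H(\tau,0)=J_1>0$. The implicit function theorem then gives $\tau=\tau(s)$, i.e.\ $\mathsf d_1(d_2)=\tau(1/d_2)/d_2$. This curve is $C^1$ and strictly decreasing for large $d_2$, since $\tau(s)\to K_0/J_1>0$. Away from the curve, i.e.\ for $d_1\in[\delta,\bar d_1]$ with $d_2$ large, positivity follows directly from $J(d_1)\ge J_{\min}>0$ together with Proposition \ref{lm2.1}(3). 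Gluing the small-$d_1$ and large-$d_1$ regimes fixes $d_{2,\bar d_1}$.

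\textbf{Main obstacle.} The hardest step is the second-order expansion of $\mu$ in $1/d_2$ uniformly in the weight $m_2-\theta_{d_1,m_1}$, and with enough regularity in $d_1\to0$ to differentiate in $\tau$. Proposition \ref{p2.2} gives the bounds uniformly in $M$, and Lemma \ref{lm2.7}(1) bounds $\partial_t\theta$, so the $O(d_2^{-2})$ remainders should be uniform. If differentiability is troublesome, I would instead prove strict monotonicity of $d_1\mapsto\mu(d_2,m_2-\theta_{d_1,m_1})$ near the zero set, via a derivative computation using $\partial_{d_1}\theta_{d_1,m_1}$, and obtain the curve from that.
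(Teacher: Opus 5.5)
Your part (2) is correct. It is essentially the paper's argument (Lemma \ref{lm4.3}(2) near $d_1=0$, plus the limit $\mu(d_1,m_1-l_2)<0$ from the proof of Lemma \ref{lm4.2}), packaged more uniformly via the shift identity of Proposition \ref{lm2.1}(4).

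Part (1), however, has a gap at exactly the step that makes the zero unique. You plan to show that $H(\tau,s)$ extends $C^1$ to $s=0$ with $\partial_\tau H(\tau,0)=J_1$, but nothing available gives this. Proposition \ref{lm2.8} yields only $\theta_{d_1,m_1}=\Phi_1+d_1\Psi_1+o(d_1)$, and Proposition \ref{p2.2} only bounds the $d_2^{-2}$ remainder, so $\partial_s H$ at $s=0$ is out of reach. More importantly, uniform convergence $H(\tau,s)\to\tau J_1-K_0$ says nothing about $\partial_\tau H(\tau,s)=\partial_{d_1}\mu(d_2,m_2-\theta_{d_1,m_1})|_{d_1=\tau s}$, and its positivity is precisely what must be proved.

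The expansion in $1/d_2$ that you flag as the main obstacle is not the hard part. Averaging the eigen-equation and dividing by $\ol\varphi$, as in \eqref{4.29}--\eqref{4.31}, gives $\mu(d,a)=-\widehat{\ol a}-\frac{1}{T|\Omega|d}\int_{Q_T}|\nabla\Gamma_a|^2+O(d^{-2})$ uniformly, with no time weight.

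What you list as a fallback is in fact the core of the paper's proof:
\begin{itemize}
\item pairing the $d_1$-derivative of \eqref{4.9} with $\psi$ gives \eqref{4.10};
\item $\partial_{d_1}\theta_{d_1,m_1}\to\Psi_1$ uniformly as $d_1\to0$, which is where {\bf(M1)} enters;
\item the identity \eqref{4.12}, $\int_{Q_T}\Psi_1=\int_{Q_T}|\nabla\Phi_1|^2/\Phi_1^2>0$;
\item the expansion \eqref{4.13}, which makes $\varphi\psi$ nearly constant.
\end{itemize}
Together these give $\partial_{d_1}\mu>0$ on $(0,\ep]\times(d_{2,M},\infty)$. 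Only then does the sign change produce a unique zero and a $C^1$ curve.

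Your monotonicity claim also does not follow as stated. We have $\mathsf d_1'(d_2)=-s^2\bigl(\tau(s)+s\tau'(s)\bigr)$, and $\tau(s)\to K_0/J_1$ gives no control on $s\tau'(s)$. Use instead $\mathsf d_1'=-\partial_{d_2}\mu/\partial_{d_1}\mu$, with $\partial_{d_2}\mu>0$ from the last assertion of Proposition \ref{p2.2}. That assertion applies because $\|\nabla\Gamma_{m_2-\theta_{d_1,m_1}}\|_{L^2(Q_T)}\to\|\nabla\Gamma_{m_2-\Phi_1}\|_{L^2(Q_T)}>0$ by {\bf(M2)}.

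With these two repairs, your scaling picture is a genuine gain over the paper, which defers the decay and monotonicity to \cite[Claim 5.5]{BHN23}. The uniform expansion confines the zero to a band $\tau_-\le d_1d_2\le\tau_+$, giving $\mathsf d_1(d_2)=O(d_2^{-1})$ directly. In fact it gives $d_2\mathsf d_1(d_2)\to K_0/J_1$, in parallel with Lemma \ref{lm4.6}.

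Finally, fix the formula for $J$: it is $\frac{d_1}{T|\Omega|}\int_{Q_T}|\nabla\theta_{d_1,m_1}|^2/\theta_{d_1,m_1}^2$; you dropped the factor $d_1$. Then $J_1=\frac1{T|\Omega|}\int_{Q_T}|\nabla\Phi_1|^2/\Phi_1^2>0$ follows at once from Proposition \ref{lm2.8} and \eqref{1.7}, with no need for $\Psi_1$ there.
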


\begin{proof}\; 
(1) Let $\varphi$ and $\psi$ be the normalized positive principal eigenfunction and the normalized positive adjoint principal eigenfunction corresponding to $\mu=\mu(d_2,m_2-\theta_{d_1,m_1})$, respectively. Then 
  \begin{equation}\label{4.9} \begin{cases}
 \varphi_t-d_2\Delta \varphi-(m_2-\theta_{d_1,m_1})\varphi=\mu\varphi & \text{in}\;\;Q_T,\\
\partial_\nu\varphi=0 & \text{on}\;\;S_T,\\
\varphi(x,0)=\varphi(x,T) & \text{on\ \ }\ol\Omega,
  \end{cases}\end{equation}
and
  \[\begin{cases}
-\psi_t-d_2\Delta \psi-(m_2-\theta_{d_1,m_1})\psi=\mu\psi & \text{in\ \ }Q_T, \\
\partial_\nu\psi=0 & \text{on\ \ }S_T, \\
\psi(x,0)=\psi(x,T) & \text{on\ \ }\ol\Omega.
\end{cases} \]

{\it Step 1}. Differentiating the equation of $\varphi$ in \eqref{4.9} with respect to $d_1$ gives that
  \[ \begin{cases}
\varphi'_t-d_2\Delta \varphi'-(m_2-\theta_{d_1,m_1})\varphi'
+\varphi\theta_{d_1,m_1}'=\mu\varphi'+\mu'\varphi & \text{in\ \ }Q_T,\\
\partial_\nu\varphi'=0 & \text{on\ \ }S_T \\
\varphi'(x,0)=\varphi'(x,T) & \text{on\ \ }\boo,
  \end{cases} \]
where $'$ denotes the derivative with respect to $d_1$. Multiplying the equation of $\varphi'$ by $\psi$ and the equation of $\psi$ by $\varphi'$, integrating them over $Q_T$ and subtracting the results, we get
  \begin{equation}\label{4.10}
\mu'\int_{Q_T}\varphi\psi=\int_{Q_T}\varphi\psi\theta_{d_1,m_1}'.
  \end{equation}
Differentiating the equation of $\theta_{d_1,m_1}$ with respect to $d_1$ yields that
  \[\begin{cases}
\partial_t\theta_{d_1,m_1}'-d_1\Delta \theta_{d_1,m_1}'=\Delta \theta_{d_1,m_1}+\theta_{d_1,m_1}'(m_1-2\theta_{d_1,m_1}) & \text{in\ \ }Q_T,\\[.1mm]
\partial_\nu\theta_{d_1,m_1}'=0 & \text{on\ \ }S_T, \\[.1mm]
\theta_{d_1,m_1}'(x,0)=\theta_{d_1,m_1}'(x,T) & \text{on\ \ }\boo.
  \end{cases}\]
In view of the condition {\bf(M1)}, Proposition \ref{lm2.8} and \cite[Lemma 4.2]{BHN23}, it follows that
  \begin{equation}\label{4.11}
\lim_{d_1\to 0}\|\theta_{d_1,m_1}'-\Psi_1\|_{\infty}=0.
  \end{equation}
Dividing the equation of $\Psi_1$ by $\Phi_1$, and integrating the result over $(0,T)$ and using the equation of $\Phi_1$, we have
  \[\int_0^T\frac{\Psi_1(m_1-2\Phi_1)}{\Phi_1}+\int_0^T\frac{\Delta \Phi_1}{\Phi_1}=\int_0^T\frac{\partial_t\Psi_1}{\Phi_1}
  =\int_0^T\frac{\Psi_1\partial_t\Phi_1}{\Phi_1^2}
  =\int_0^T\frac{\Psi_1(m_1-\Phi_1)}{\Phi_1}.\]
It follows that
  \begin{equation}\label{4.12}
\int_{Q_T}\Psi_1=\int_{Q_T}\frac{\Delta \Phi_1}{\Phi_1}=\int_{Q_T}\frac{|\nabla \Phi_1|^2}{\Phi_1^2}>0.\vspace{-2mm}
  \end{equation}

{\it Step 2}. By Lemma \ref{lm2.7},
 \[\|m_2-\theta_{d_1,m_1}\|_{L^{\infty}(Q_T)}+\|\partial_tm_2-\partial_t
 \theta_{d_1,m_1}\|_{L^2(Q_T)}\le \|m_1\|_{C^{0,1}(\ol Q_T)}+
  \|m_2\|_{C^{0,1}(\ol Q_T)}=:M. \]
Thanks to Proposition \ref{p2.2}, there exist constants $C_M$ and $d_{2,M}$ such that for $d_2\ge d_{2,M}$,
  \bee\label{4.13}\begin{cases}
\dd\varphi(x,t)=\ol\varphi(t)+{\ol\varphi(t)\Gamma_{m_2-\theta_{d_1,m_1}}}d_2^{-1}
 +{\varphi_*(x,t)}d_2^{-2},\\[1mm]
  \dd\psi(x,t)=\ol\psi(t)+{\ol\psi(t)\Gamma_{m_2-\theta_{d_1,m_1}}}d_2^{-1}
  +{\psi_*(x,t)}d_2^{-2},
  \end{cases}\eee
where
  \begin{align*}
&\ol\varphi(t)=\ol\varphi(0)\exp\kk(\int_0^t\big[\,\overline{(m_2
-\theta_{d_1,m_1})}(s)+\mu\big]\mathrm ds\rr)+O\kk(d_2^{-1/2}\rr),\\[.1mm]
&\ol\psi(t)=\ol\psi(0)\exp\kk(-\int_0^t\big[\,\overline{(m_2-\theta_{d_1,m_1})}(s)
+\mu\big]\mathrm ds\rr)+O\kk(d_2^{-1/2}\rr),  \\
&\|\varphi_*\|_{L^2((0,T),H^1(\Omega))}\leq C_M,\; \; \|\varphi_*\|_{L^2((0,T),H^1(\Omega))}\leq C_M, \\[.2mm]
&1/C_M\leq \ol\varphi(t),\ol\psi(t)\leq C_M,\;\; \forall\, t\in[0,T].
  \end{align*}
Enlarge $d_{2,M}$ if necessary. It follows that there exists $\delta_M>0$ such that
$\ol\varphi(0),\ol\psi(0)\ge \delta_M$ for all $d_2\ge d_{2,M}$. Substituting \eqref{4.13} into \eqref{4.10} and using \eqref{4.12}, we deduce that
   \begin{align}
\mu'\int_{Q_T}\varphi\psi
=\;&\int_{Q_T}\varphi\psi\theta_{d_1,m_1}'=\int_{Q_T}\varphi\psi \Psi_1+\int_{Q_T}\varphi\psi (\theta_{d_1,m_1}'-\Psi_1)\nm\\[.1mm]
=\;&\ol\varphi(0)\ol\psi(0)\int_{Q_T}\Psi_1+O\kk(d_2^{-1/2}\rr)
+\rho(d_1)\nm\\[.1mm]
=\;&\ol\varphi(0)\ol\psi(0)\int_{Q_T}\frac{|\nabla \Phi_1|^2}{\Phi_1^2}+O\kk(d_2^{-1/2}\rr)+\rho(d_1),
   \label{4.14}\end{align}
where $\rho(d_1)=\int_{Q_T}\varphi\psi (\theta_{d_1,m_1}'-\Psi_1)\to 0$ as $d_1\to 0$ by \eqref{4.11}. Enlarge $d_{2,M}$ if necessary. Then, by \qq{1.7} and \qq{4.14},  there exists $0<\ep<\bar d_1$ such that
 \bee
 \mu'>0,\;\;\forall\, (d_1,d_2)\in(0,\ep]\times(d_{2,M},\infty).
 \label{4.15}\vspace{-2mm}\eee

{\it Step 3}. Let $d_2^{(\ep,\bar d_1)}$ be determined by Lemma \ref{lm4.2}. Then from the proof of Lemma \ref{lm4.2} we have
 \bee
 \mu(d_2,m_2-\theta_{d_1,m_1})>0,\;\;\forall\, (d_1,d_2)\in[\ep,\bar d_1]\times(d_2^{(\ep,\bar d_1)},\infty).
  \label{4.16}\eee
By Lemma \ref{lm2.5}, we have $\theta_{d_1,m_1}\to \Phi_1$ in $C(\ol Q_T)$ as $d_1\to 0$. In view of the condition {\bf(M2)}, $m_2-\Phi_1$ is spatially heterogeneous. This together with Proposition \ref{lm2.1}(1) implies that
 \[\lim_{d_1\to 0}\mu(d_2,m_2-\theta_{d_1,m_1})=\mu(d_2,m_2-\Phi_1)
    <-\frac{1}{T|\Omega|}\int_{Q_T}(m_2-\Phi_1)\mathrm{d}t=0.\]
Thus, for each $d_2>d_2^{(\ep,\bar d_1)}$, there exists $0<d_1<\ep$ such that $\mu(d_2,m_2-\theta_{d_1,m_1})<0$. Combining this with \qq{4.16}, we see that for each $d_2>d_2^{(\ep,\bar d_1)}$, $\mu(d_2,m_2-\theta_{d_1,m_1})$ changes sign at least once as $d_1$ increases from $0$ to $\ep$.

Let $d_{2,\bar d_1}=\max\{d_{2,M}, d_2^{(\ep,\bar d_1)}\}$. By the implicit function theorem, there exists a unique $C^1$ function $\mathsf{d}_1:(d_{2,\bar d_1},\infty)\to(0,\ep)$ such that for all $(d_1,d_2)\in(0,\ep)\times(d_{2,\bar d_1},\infty)$,
  \bee\label{4.17}
  \mu(d_2,m_2-\theta_{d_1,m_1})=0 \Longleftrightarrow d_1=\mathsf{d}_1(d_2).\eee
Moreover, following the proof of \cite[Claim 5.5]{BHN23}, we see that $\mathsf{d}_1:(d_{2,\bar d_1},\infty)\to(0,\ep)$ is strictly decreasing and
 \[\mathsf{d}_1(d_2)=O\kk(d_2^{-1}\rr)\quad \mbox{as}\;\; d_2\to\infty.\]
It follows from \qq{4.15} and \qq{4.17} that for each $d_2>d_{2,\bar d_1}$, $\mu(d_2,m_2-\theta_{d_1,m_1})<0$ when $d_1\in(0,\mathsf{d}_1(d_2))$ and $\mu(d_2,m_2-\theta_{d_1,m_1})>0$ when $d_1\in(\mathsf{d}_1(d_2),\bar d_1)$. This completes the proof of part (1).

(2) By Lemma \ref{lm4.3}(2), there exist $\bar d_2^1\gg 1$ and $0<\ep\ll 1$ such that $\mu(d_1,m_1-\theta_{d_2,m_2})<0$ for all $(d_1,d_2)\in(0,\ep)\times(\bar d_2^1,\infty)$. Let $d_2^{(\ep,\bar d_1)}$ be determined in Lemma \ref{lm4.2}. Then the proof of Lemma \ref{lm4.2} shows that $\mu(d_1,m_1-\theta_{d_2,m_2})<0$ for all $(d_1,d_2)\in[\ep,\bar d_1]\times(d_2^{(\ep,\bar d_1)},\infty)$. Setting $d_{2,\bar d_1}=\bar d_2^1+d_2^{(\ep,\bar d_1)}$, we obtain the desired conclusion.\vspace{-1mm}
\end{proof}

The following lemmas \ref{lm4.5}-\ref{lm4.6} focus on the properties of $\mu(d_1,m_1-\theta_{d_2,m_2})$ when $d_1$ is large and $d_2$ is small. For each $t\in[0,T]$, let $\Gamma_{12}(\cdot,t)$ and $\Gamma_{a_{d_2}}(\cdot,t)$ denote the unique solution of \eqref{1.10} with $m$ replaced by $m_1-\Phi_{m_2}$ and $a_{d_2}:=m_1-\theta_{d_2,m_2}$, respectively. Define
  \[\mathcal E_{12}:=\frac1{T|\Omega|}\int_{Q_T}|\nabla\Gamma_{12}|^2 \mathrm dx\mathrm dt, \quad \mathcal I_2
  :=\frac1{T|\Omega|}\int_{Q_T}\left|\frac{\nabla\Phi_{m_2}}{\Phi_{m_2}}
  \right|^2\mathrm dx\mathrm dt.\]
It is worth emphasizing that $\mathcal E_{12}$ and $\mathcal I_2$ are independent of both  $d_1$ and $d_2$.\vspace{-1mm}

\begin{lemma}[A mixed large-$d_1$ and small-$d_2$ expansion] \label{lm4.5}
Assume that conditions {\bf(M1)}--{\bf(M2)} hold. Then there exist positive constants $\delta_0,D_0$ and $C_*$ independent of $d_1,d_2$, and continuous functions $\rho_1,\rho_2,\eta_1:[0,\delta_0]\to\mathbb R$ satisfying $\rho_1(0)=\rho_2(0)=\eta_1(0)=0$ such that, for every $d_1>D_0$ and $0<d_2\le\delta_0$,
  \begin{align}
 &\mu(d_1,m_1-\theta_{d_2,m_2})=\mathcal I_2d_2-{\mathcal E_{12}}d_1^{-1}
  +d_2\rho_1(d_2)+{\rho_2(d_2)}d_1^{-1}+R(d_1,d_2),\label{4.18}\\[.6mm]
  &\left|\partial_{d_2}\mu(d_1,m_1-\theta_{d_2,m_2})-\mathcal I_2-\eta_1(d_2)\right|
\le{C_*}d_1^{-1},\label{4.19}
 \end{align}
and
  \begin{equation}\label{4.20}
\left|\partial_{d_1}\mu(d_1,m_1-\theta_{d_2,m_2})-\frac1{T|\Omega|d_1^2}
\int_{Q_T}|\nabla\Gamma_{a_{d_2}}|^2\,\mathrm dx\mathrm dt\right|
 \le{C_*}d_1^{-3},
 \end{equation}
where $\mathcal E_{12}>0,\mathcal I_2>0$ and
\begin{equation}\label{4.21}
  \sup_{0<d_2\le\delta_0}|R(d_1,d_2)|\leq {C_*}d_1^{-2}.
\end{equation}
In particular, as $d_1\to\infty$ and $d_2\to0^+$,
  \begin{align}\label{4.22}
 &\mu(d_1,m_1-\theta_{d_2,m_2})=\mathcal I_2d_2-{\mathcal E_{12}}d_1^{-1}
+o(d_2)+o\left(d_1^{-1}\right),\\[.1mm]
 \label{4.23}
&\partial_{d_2}\mu(d_1,m_1-\theta_{d_2,m_2})=\mathcal I_2+o(1),
\end{align}
and whenever $d_2=O(d_1^{-1})$,
\begin{equation}\label{4.24}
\partial_{d_1}\mu(d_1,m_1-\theta_{d_2,m_2})
={\mathcal E_{12}}d_1^{-2}+o\left(d_1^{-2}\right).
\end{equation}
\end{lemma}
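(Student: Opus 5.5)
The plan is to combine the large-$d$ expansion of the principal eigenvalue (Proposition \ref{p2.2}) with the small-$d$ expansion of $\theta_{d_2,m_2}$ (Proposition \ref{lm2.8}). Set $a_{d_2}=m_1-\theta_{d_2,m_2}$. By Lemma \ref{lm2.7}, $\|a_{d_2}\|_\infty+\|\partial_t a_{d_2}\|_{L^2(Q_T)}\le M$ uniformly in $d_2$, so Proposition \ref{p2.2} applies uniformly.

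\textbf{Step 1: a two-term expansion in $d_1^{-1}$.} Integrate the eigen-equation against the adjoint eigenfunction and insert the expansions of $\varphi,\psi$ from Proposition \ref{p2.2}. This should give the standard large-diffusion formula
\[
\mu(d_1,a)=-\widehat{\ol a}-\frac{1}{T|\Omega|d_1}\int_{Q_T}|\nabla\Gamma_a|^2+R(d_1,a),\qquad |R|\le C_Md_1^{-2},
\]
with $C_M$ uniform over the admissible class. To derive it:
\begin{itemize}
\item Use $\ol\varphi\,\ol\psi\equiv$ const$+O(d_1^{-1/2})$, which follows from the ODEs for $\ol\varphi,\ol\psi$.
\item Note that $-\Delta\Gamma_a=a-\ol a$, so the cross term $\int(a-\ol a)\Gamma_a=\int|\nabla\Gamma_a|^2$.
\end{itemize}
The $O(d_1^{-2})$ remainder comes from the $\varphi_*,\psi_*$ terms, using their $L^2H^1$ bounds. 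I expect this uniform remainder to be the main obstacle. Proposition \ref{p2.2} only gives $O(d^{-1/2})$ for $\ol\varphi$, so I would first try an energy identity: subtract the shadow ODE and test with $\Gamma_a$ to upgrade the error to $O(d_1^{-1})$ inside the integrals. If that fails, I would use a Rayleigh-type identity $\mu\int\varphi\psi=\int(\varphi_t\psi+d_1\nabla\varphi\nabla\psi-a\varphi\psi)$ together with the decomposition, so that the $d_1^{-2}$-terms appear only multiplied by $d_1$ against gradients of size $d_1^{-1}$.

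\textbf{Step 2: expansion in $d_2$.} By condition {\bf(M)} and \eqref{4.12}-type computations,
\[
-\widehat{\ol a}_{d_2}=\frac1{T|\Omega|}\int_{Q_T}(\theta_{d_2,m_2}-m_1)=\frac1{T|\Omega|}\int_{Q_T}(\theta_{d_2,m_2}-m_2),
\]
and integrating the $\theta$ equation divided by $\theta$ gives $\frac1{T|\Omega|}\int_{Q_T}d_2|\nabla\theta/\theta|^2$. Proposition \ref{lm2.8} gives $\theta_{d_2,m_2}\to\Phi_2$ in $C([0,T],C^1)$, so this equals $\mathcal I_2d_2+d_2\rho_1(d_2)$. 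Likewise $\Gamma_{a_{d_2}}\to\Gamma_{12}$ in $H^1$ because $a_{d_2}\to m_1-\Phi_2$ uniformly, which yields $-\mathcal E_{12}d_1^{-1}+\rho_2(d_2)d_1^{-1}$. This proves \eqref{4.18} and \eqref{4.21}. Positivity of the constants follows from {\bf(M2)} for $\mathcal E_{12}$ and from \eqref{1.7} for $\mathcal I_2$.

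\textbf{Step 3: derivatives.} For \eqref{4.19}, differentiate in $d_2$ as in \eqref{4.10}:
\[
\partial_{d_2}\mu\int\varphi\psi=\int\varphi\psi\,\theta'_{d_2,m_2}.
\]
The expansion \eqref{4.13} with $d_1$ large gives $\varphi\psi=\ol\varphi\,\ol\psi+O(d_1^{-1})$. Combining this with \eqref{4.11} ($\theta'\to\Psi_2$) and \eqref{4.12} ($\int\Psi_2=\int|\nabla\Phi_2/\Phi_2|^2$) yields $\mathcal I_2+\eta_1(d_2)+O(d_1^{-1})$.

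For \eqref{4.20}, use $\partial_{d_1}\mu\int\varphi\psi=\int\nabla\varphi\cdot\nabla\psi$. Substituting $\nabla\varphi=\ol\varphi\nabla\Gamma_a d_1^{-1}+\nabla\varphi_*d_1^{-2}$ gives the leading term $d_1^{-2}\int\ol\varphi\,\ol\psi|\nabla\Gamma_a|^2/\int\varphi\psi$, with error $O(d_1^{-3})$; this again relies on the improved estimate from Step 1.

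The consequences \eqref{4.22}–\eqref{4.24} then follow directly: for \eqref{4.24}, use $\Gamma_{a_{d_2}}\to\Gamma_{12}$ as $d_2\to0$, which is guaranteed when $d_2=O(d_1^{-1})\to0$.
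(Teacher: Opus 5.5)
\paragraph{Verdict.} Your Step 2 and the structure of your Step 3 match the paper. Step 1, however, carries the whole lemma, and you leave it open. The mechanism you propose does not close it.

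\paragraph{Why the adjoint pairing fails.} In the time-periodic setting, pairing the eigen-equation with the adjoint eigenfunction does not by itself isolate $\mu$. Write $\tilde\varphi=\varphi-\overline\varphi$, $\tilde\psi=\psi-\overline\psi$ and $B=\overline\varphi\,\overline\psi$. The identity $\mu\int_{Q_T}\varphi\psi=\int_{Q_T}(\varphi_t\psi+d_1\nabla\varphi\cdot\nabla\psi-a\varphi\psi)$ contains the term
\[
\int_{Q_T}\varphi_t\psi=|\Omega|\int_0^T\overline\varphi_t\,\overline\psi\,\mathrm dt+\int_{Q_T}\tilde\varphi_t\tilde\psi,
\]
which your sketch never addresses.
\begin{itemize}
\item The first piece equals $|\Omega|\int_0^T(\ln\overline\varphi)_t(B-\widehat B)\,\mathrm dt$. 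A direct estimate bounds it only by $C\|B-\widehat B\|_{L^\infty}$. That is $O(d_1^{-1/2})$ with Proposition \ref{p2.2}, and still $O(d_1^{-1})$ after your hoped-for upgrade. Either way it is no smaller than the term $-\mathcal E_{12}d_1^{-1}$ you want to identify.
\item The second piece involves $\partial_t\varphi_*$, on which Proposition \ref{p2.2} gives no bound.
\end{itemize}
If you instead substitute the exact averaged ODE for $\overline\varphi_t$, the terms $\mu|\Omega|\int_0^TB\,\mathrm dt$ cancel on both sides, and the identity no longer determines $\mu$. So neither \eqref{4.18} nor the remainder bound \eqref{4.21} follows from what you propose.

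\paragraph{The missing idea.} Test with $1/\overline\varphi(t)$ rather than with $\psi$. Average the eigen-equation over $\Omega$; the Laplacian drops out by the Neumann condition. Then divide by $\overline\varphi>0$ and integrate over $[0,T]$. The time derivative becomes $(\ln\overline\varphi)_t$, which integrates to zero by periodicity. Inserting $\varphi=\overline\varphi+\overline\varphi\,\Gamma_a d_1^{-1}+\varphi_*d_1^{-2}$ gives the exact identity
\[
\mu(d_1,a)=-\frac1{T|\Omega|}\int_{Q_T}a-\frac1{T|\Omega|d_1}\int_{Q_T}|\nabla\Gamma_a|^2-\frac1{T|\Omega|d_1^2}\int_{Q_T}\frac{a\,\varphi_*}{\overline\varphi}.
\]
The last term is $O(d_1^{-2})$ uniformly in $d_2$, directly from $\|\varphi_*\|_{L^2(Q_T)}\le C_M$ and $\overline\varphi\ge1/C_M$. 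Neither the adjoint nor any information on $B$ is needed for this step.

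\paragraph{What Step 3 still needs.} Near-constancy of $B$ is needed only in your Step 3, and there you must actually prove it at rate $O(d_1^{-1})$. With the $O(d_1^{-1/2})$ you cite, \eqref{4.19} and \eqref{4.20} come out with errors $O(d_1^{-1/2})$ and $O(d_1^{-5/2})$. The paper obtains the $O(d_1^{-1})$ rate from the exact averaged equations
\[
\overline\varphi_t=(\overline a+\mu)\overline\varphi+r,\qquad -\overline\psi_t=(\overline a+\mu)\overline\psi+r^*,
\]
where $r=|\Omega|^{-1}\int_\Omega a(\varphi-\overline\varphi)$ and $r^*$ is analogous. The $(\overline a+\mu)$-terms cancel in $B'=r\,\overline\psi-r^*\,\overline\varphi$. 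The expansions give $\|r\|_{L^2(0,T)}+\|r^*\|_{L^2(0,T)}\le Cd_1^{-1}$, so $\|B-\widehat B\|_{L^\infty}\le\|B'\|_{L^1}\le Cd_1^{-1}$.
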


\begin{proof}\; We divide the proof into four steps.

{\it Step 1}. We first establish the positivity of $\mathcal E_{12}$ and $\mathcal I_2$. By the condition {\bf(M2)}, the function $m_1-\Phi_{m_2}$ is spatially
heterogeneous, which implies that
  \[m_1(x,t)-\Phi_{m_2}(x,t)-\overline{(m_1-\Phi_{m_2})}(t)\not\equiv 0\;\;\;\text{in}\;\;Q_T.\]
This combined with the definition of $\Gamma_{12}$ yields that $\Gamma_{12}$ is spatially heterogeneous. Hence,
  \[\mathcal E_{12}=\frac1{T|\Omega|}\int_{Q_T}|\nabla\Gamma_{12}|^2
\,\mathrm dx\mathrm dt>0.\]

We next prove that $\mathcal I_2>0$. Suppose, to the contrary, that $\mathcal I_2=0$. Then $\nabla\Phi_{m_2}\equiv 0$ in $Q_T$, and hence $\Phi_{m_2}$ is independent of $x$. Therefore,
  \[m_2(x,t)=\frac{(\Phi_{m_2})_t(t)}{\Phi_{m_2}(t)}+\Phi_{m_2}(t)\]
is independent of $x$. This contradicts the fact that $\nabla m_2\not\equiv 0$ on $\overline Q_T$. Thus $\mathcal I_2>0$.

{\it Step 2}. We next derive the expansion of the space-time average. Recall that $\theta_{d_2,m_2}$ satisfies \eqref{1.2} with $d=d_2,m=m_2$. Dividing the equation of $\theta_{d_2,m_2}$ by $\theta_{d_2,m_2}$ and integrating the result over $Q_T$, we find
  \[\int_{Q_T}(m_2-\theta_{d_2,m_2})\,\mathrm dx\mathrm dt=-d_2\int_{Q_T}\left|
\frac{\nabla\theta_{d_2,m_2}}{\theta_{d_2,m_2}}\right|^2\,\mathrm dx\mathrm dt.\]
Using again the assumption {\bf(M)}, we obtain
  \begin{equation*}
-\frac1{T|\Omega|}\int_{Q_T}(m_1-\theta_{d_2,m_2})\,\mathrm dx\mathrm dt=
\frac{d_2}{T|\Omega|}\int_{Q_T}\left|
\frac{\nabla\theta_{d_2,m_2}}{\theta_{d_2,m_2}}\right|^2\,\mathrm dx\mathrm dt.
  \end{equation*}
Define
 \begin{align*}
 \rho_1(d_2):=\frac1{T|\Omega|}\int_{Q_T}\left|\frac{\nabla\theta_{d_2,m_2}}
{\theta_{d_2,m_2}}\right|^2\,\mathrm dx\mathrm dt-\mathcal I_2\;\;\;\text{for}\;\; d_2>0.\end{align*}
Then we have
 \begin{equation}\label{4.25}
-\frac1{T|\Omega|}\int_{Q_T}(m_1-\theta_{d_2,m_2})\,\mathrm dx\mathrm dt
=\mathcal I_2d_2+d_2\rho_1(d_2).
\end{equation}
Moreover, by Proposition \ref{lm2.8}, $\theta_{d_2,m_2}\to \Phi_{m_2}$ in $C([0,T];C^1(\overline\Omega))$ as $d_2\to 0^+$, which implies $\rho_1(d_2)\to 0$ as $d_2\to 0^+$. Thus $\rho_1$ is continuous on $[0,\delta_0]$ by setting $\rho_1(0)=0$.

{\it Step 3}. We next establish the uniform expansion for large $d_1$. Given  $a_{d_2}=m_1-\theta_{d_2,m_2}$, by Lemma \ref{lm2.7} we have
  \begin{align*}
  \|a_{d_2}\|_{L^\infty(Q_T)}&\le\|m_1\|_{L^\infty(Q_T)}+\|m_2\|_{L^\infty(Q_T)},\\
\|(a_{d_2})_t\|_{L^2(0,T;L^2(\Omega))}&\le\|(m_1)_t\|_{L^2(0,T;L^2(\Omega))}+
C\|m_2\|_{L^\infty(Q_T)}^2,\end{align*}
where $C>0$ is independent of $d_2$. Thus, 
  \[ M_0:=\sup_{0\le d_2\le\delta_0}\left\{\|a_{d_2}\|_{L^\infty(Q_T)}
+\|(a_{d_2})_t\|_{L^2(0,T;L^2(\Omega))}\right\}<\infty.\]\zzz

Let $\varphi\in C^{2+\alpha,1+\alpha/2}(\overline Q_T)$ be the positive principal eigenfunction corresponding to $\mu(d_1,m_1-\theta_{d_2,m_2})$, normalized by $\|\varphi\|_{L^2(Q_T)}=1$. Then $\varphi$ satisfies
  \begin{equation}\label{4.26}
 \begin{cases}
 \varphi_t-d_1\Delta\varphi-a_{d_2}\varphi=
\mu(d_1,m_1-\theta_{d_2,m_2})\varphi, & \text{in } Q_T, \\
\partial_{\nu}\varphi=0, & \text{on }S_T, \\
\varphi(x,0)=\varphi(x,T), & \text{on }\overline\Omega.
\end{cases}
 \end{equation}
By applying Proposition \ref{p2.2} with the common bound $M_0$, we obtain constants  $D_0>0$ and $C_M>0$, depending only on $M_0$, $\Omega$ and $T$, such that for all $d_1>D_0$ and $0\leq d_2\le\delta_0$,
\begin{equation}\label{4.27}
\varphi(x,t)=\overline\varphi(t)+{\overline\varphi(t)
\Gamma_{a_{d_2}}(x,t)}d_1^{-1}+{\varphi_2(x,t)}d_1^{-2},
\end{equation}
where
\begin{equation}\label{4.28}
1/{C_M}\leq \overline\varphi(t) \leq C_M,\quad
\|\varphi_2\|_{L^2(0,T;H^1(\Omega))}\leq C_M.
\end{equation}
Averaging the first equation of \eqref{4.26} over $\Omega$, dividing by $\overline\varphi(t)$, and integrating over $[0,T]$, we obtain
  \begin{equation}\label{4.29}
\mu(d_1,m_1-\theta_{d_2,m_2})=-\frac1{T|\Omega|}\int_{Q_T}a_{d_2}\,\mathrm dx\mathrm dt
-\frac1{T|\Omega|d_1}\int_{Q_T}a_{d_2}\Gamma_{a_{d_2}}\,\mathrm dx\mathrm dt
+R(d_1,d_2),\end{equation}
where
\begin{equation}\label{4.30}
  R(d_1,d_2)=-\frac1{T|\Omega|d_1^2}\int_{Q_T}a_{d_2}(x,t)\frac{\varphi_2(x,t)}
{\overline\varphi(t)}\,\mathrm dx\mathrm dt.
\end{equation}
Noticing that $\Gamma_{a_{d_2}}$ satisfies \eqref{1.10} with $m$ replaced by $a_{d_2}$, we have
  \begin{equation}\label{4.31}
  \int_{Q_T}a_{d_2}\Gamma_{a_{d_2}}\,\mathrm dx\mathrm dt=
\int_{Q_T}|\nabla\Gamma_{a_{d_2}}|^2\,\mathrm dx\mathrm dt.
  \end{equation}
By use of \eqref{4.28}, H\"older's inequality and the definition of $M_0$, there exists a constant $C_*>0$, depending only on $M_0$, $\Omega$ and $T$, such that \eqref{4.21} holds.

Using Lemma \ref{lm2.5} or Proposition \ref{lm2.8}, we have $a_{d_2}\to a_0:=m_1-\Phi_{m_2}$ in $L^2(Q_T)$ as $d_2\to 0^+$. The elliptic estimate for the Neumann problem gives
  \[\|\nabla(\Gamma_{a_{d_2}}-\Gamma_{a_0})\|_{L^2(Q_T)}\leq C\|a_{d_2}-a_0\|_{L^2(Q_T)}\to 0\quad \text{as }d_2\to 0^+.\]
As $\Gamma_{a_0}=\Gamma_{12}$, it follows that $\frac1{T|\Omega|}\int_{Q_T}|\nabla\Gamma_{a_{d_2}}|^2\,\mathrm dx\mathrm dt\to \mathcal E_{12}$ as $d_2\to 0^+$. Define
  \begin{align*}
 \rho_2(d_2)=\mathcal E_{12}-\frac1{T|\Omega|}
\int_{Q_T}|\nabla\Gamma_{a_{d_2}}|^2\,\mathrm dx\mathrm dt\;\;\;\text{for}\;\; d_2>0.\end{align*}
Then $\rho_2(d_2)\to 0$ as $d_2\to 0^+$, and $\rho_2$ is continuous on $[0,\delta_0]$ by setting $\rho_2(0)=0$.

Combining \eqref{4.25} and \eqref{4.29}--\eqref{4.31}, we obtain \eqref{4.18}. The estimate \eqref{4.21} and the limits of $\rho_1(d_2),\rho_2(d_2)$ imply
\eqref{4.22}. 

{\it Step 4}. We finally deduce the rigorous derivative estimates. We first consider differentiation with respect to $d_2$. Set
 \[Z_{d_2}:=\partial_{d_2}\theta_{d_2,m_2}.\]
By virtue of the assumption {\bf(M1)} and Proposition \ref{lm2.8}, 
  \[\theta_{d_2,m_2}\to\Phi_{m_2}, \;\;\Delta\theta_{d_2,m_2}\to\Delta\Phi_{m_2}\;\;\;\text{in}\;\; C(\overline Q_T)\]
as $d_2\to 0^+$, where for each $x\in\overline\Omega$, $\Psi_{m_2}$ is the unique $T$-periodic solution of \eqref{2.20} with $\Theta_m=\Phi_{m_2}$. Employing the same argument for \eqref{4.11} and \eqref{4.12}, we derive that
  \begin{equation}\label{4.32}
 \lim_{d\to 0}\|Z_{d_2}-\Psi_{m_2}\|_{C(\overline Q_T)}=0\quad \text{and}\quad \frac1{T|\Omega|}\int_{Q_T}\Psi_{m_2}\,\mathrm dx\mathrm dt=\mathcal I_2.
\end{equation} 

Let $\psi\in C^{2+\alpha,1+\alpha/2}(\overline Q_T)$ be the positive adjoint principal eigenfunction associated with
$\mu(d_1,m_1-\theta_{d_2,m_2})$ normalized by $\|\psi\|_{L^2(Q_T)}=1$. Then $\psi$ satisfies
\begin{equation}\label{4.33}
\begin{cases}
-\psi_t-d_1\Delta\psi-a_{d_2}\psi=\mu(d_1,m_1-\theta_{d_2,m_2})\psi, & \text{in } Q_T, \\
\partial_{\nu}\psi=0, & \text{on }S_T, \\
\psi(x,0)=\psi(x,T), & \text{on }\overline\Omega.
\end{cases}
\end{equation}
Differentiating the first equation in \eqref{4.26} with respect to $d_2$, taking the pairing with $\psi$, and utilizing $\partial_{d_2}a_{d_2}=-\partial_{d_2}\theta_{d_2,m_2}=-Z_{d_2}$, we arrive at 
 \begin{align}
 \partial_{d_2}\mu(d_1,m_1-\theta_{d_2,m_2})
=-\frac{\dd\int_{Q_T}(\partial_{d_2}a_{d_2})
\varphi\psi\,\mathrm dx\mathrm dt}{\dd\int_{Q_T}
\varphi\psi\,\mathrm dx\mathrm dt}
=\frac{\dd\int_{Q_T}Z_{d_2}\varphi\psi\,\mathrm dx\mathrm dt}{\dd\int_{Q_T}\varphi\psi\,\mathrm dx\mathrm dt}.\label{4.34}
 \end{align}
By applying Proposition \ref{p2.2} with the common bound $M_0$, we have that for every $d_1>D_0$ and $0\leq d_2\le\delta_0$,
\begin{equation}\label{4.35}
\psi(x,t)=\overline\psi(t)+{\overline\psi(t)\Gamma_{a_{d_2}}(x,t)}d_1^{-1}+
{\psi_2(x,t)}d_1^{-2},
\end{equation}
where
  \begin{equation*}
1/{C_M}\leq\overline\psi(t)\leq C_M,\quad\|\psi_2\|_{L^2(0,T;H^1(\Omega))}\leq C_M.
\end{equation*}

Averaging the first equation of \eqref{4.26} and \eqref{4.33} over $\Omega$
gives
 \[\begin{cases}\overline\varphi_t=\big(\overline{a_{d_2}}
 +\mu\big)\overline\varphi+r(t),&t\in(0,T],\\[1mm] -\overline{\psi}_t=\big(\overline{a_{d_2}}+\mu\big)
\overline{\psi}+r^*(t),&t\in(0,T],\end{cases}\]
where
\[r(t)=\frac1{|\Omega|}\int_{\Omega} a_{d_2}\big(\varphi-\overline\varphi\big)\,\mathrm dx,\quad r^*(t)=\frac1{|\Omega|}\int_{\Omega}a_{d_2} \big(\psi-\overline{\psi}\big)\,\mathrm dx. \]
The uniform eigenfunction expansions \eqref{4.27} and \eqref{4.35} imply that
\[\|r\|_{L^2(0,T)}+\|r^*\|_{L^2(0,T)}
\leq {C_1}d_1^{-1}\quad\text{for}\;\; d_1>D_0,\;0\leq d_2\leq\delta_0. \]

Set
 \[B(t):=\overline\varphi(t)\,\overline{\psi}(t),\;\;\;\langle\varphi,\psi\rangle=
\int_{Q_T}\varphi\psi\,\mathrm dx\mathrm dt.\]
Then $B'(t)=r\,\overline{\psi}-\overline\varphi\,r^*$, and therefore $\|B'\|_{L^1(0,T)}\leq {C_2}d_1^{-1}$ for $d_1>D_0$ and $0\leq d_2\le\delta_0$. Recalling that $B$ is $T$-periodic, we have 
\begin{equation}\label{4.36}
\|B-\widehat B\|_{L^\infty(0,T)}\leq \|B'\|_{L^1(0,T)} \leq {C_3}d_1^{-1}\quad \text{for}\;\;d_1>D_0,0\leq d_2\le\delta_0,
\end{equation}
where $C_3$ is independent of $d_1$ and $d_2$. On the other hand, the uniform eigenfunction expansions \eqref{4.27} and \eqref{4.35} yield that for $d_1>D_0$ and $0\leq d_2\le\delta_0$,
  \[\|\varphi\psi-B\|_{L^1(Q_T)}=\|\varphi(\psi-\overline\psi)
  +\overline\psi(\varphi-\overline\varphi)\|_{L^1(Q_T)}\leq{C_4}d_1^{-1}.\]
Recall that $\int_\Omega\Gamma_{a_{d_2}}(x,t)\mathrm dx\equiv 0$. Combining \eqref{4.27} and \eqref{4.35}, we conclude that for all $d_1>D_0$ and $0\leq d_2\leq\delta_0$,
  \[\langle\varphi,\psi\rangle=|\Omega|\int_0^T B(t)\mathrm dt+O\left(d_1^{-2}\right)=
T|\Omega|\widehat B+O\left(d_1^{-2}\right).\]
Moreover, the uniform positive lower bounds on
$\overline\varphi$ and $\overline\psi$ yield $\langle\varphi,\psi\rangle\ge c_0>0$ for all sufficiently large $d_1$, uniformly for $0\le d_2\le\delta_0$. Consequently, for $d_1>D_0,0\leq d_2\le\delta_0$,
\begin{align}
\left\|\frac{\varphi\psi}{\langle\varphi,\psi\rangle}-\frac1{T|\Omega|}\right\|_{L^1(Q_T)}
\leq{}&\frac{\|\varphi\psi-B\|_{L^1(Q_T)}}{\langle\varphi,\psi\rangle}+\frac{|\Omega|}{\langle\varphi,\psi\rangle}\|B-\widehat B\|_{L^1((0,T))}+T|\Omega|\left|\frac{\widehat B}{\langle\varphi,\psi\rangle}-\frac1{T|\Omega|}\right|\notag\\
\leq{}&{C_5}d_1^{-1}.\label{4.37}\vspace{-2mm}
\end{align}

Note that $Z_{d_2}\to\Psi_{m_2}$ in $C(\overline Q_T)$ as $d_2\to 0^+$ owing to  \eqref{4.32}. After reducing $\delta_0$ if necessary, the family $\{Z_{d_2}:0<d_2\le\delta_0\}$ is uniformly bounded in $L^\infty(Q_T)$. Therefore, by \eqref{4.34} and \eqref{4.37},
\[\partial_{d_2}\mu(d_1,m_1-\theta_{d_2,m_2})=
\frac1{T|\Omega|}\int_{Q_T}Z_{d_2}\,\mathrm dx\mathrm dt+O\left(d_1^{-1}\right)\quad \text{as } d_1\to\infty,\]
uniformly for $0<d_2\le\delta_0$. Define
 \begin{align*}\eta_1(d_2)=\frac1{T|\Omega|}\int_{Q_T}Z_{d_2}\,\mathrm dx\mathrm dt
-\mathcal I_2\quad\text{for}\;\; d_2>0.\end{align*}
Then \eqref{4.19} holds by enlarging $C_*$. Moreover, $\eta_1(d_2)\to 0$ as $d_2\to 0^+$ by \eqref{4.32}, and $\eta_1$ is continuous on $[0,\delta_0]$ by setting $\eta_1(0)=0$. Therefore, \eqref{4.23} is valid.

It remains to estimate the partial derivative of $\mu(d_1,m_1-\theta_{d_2,m_2})$ with respect to $d_1$. Differentiating the first equation of \eqref{4.26} with respect to $d_1$ and pairing it with \eqref{4.33} gives
  \begin{equation*}
 \partial_{d_1}\mu(d_1,m_1-\theta_{d_2,m_2})=\frac{\int_{Q_T}
\nabla\varphi\cdot\nabla\psi\,\mathrm dx\mathrm dt}{\int_{Q_T}\varphi\psi
\,\mathrm dx\mathrm dt}.
\end{equation*}
The uniform eigenfunction expansions \eqref{4.27} and \eqref{4.35} yield that
  \[\nabla\varphi={\overline\varphi\,\nabla\Gamma_{a_{d_2}}}d_1^{-1}
  +{\nabla\varphi_2}d_1^{-2},\quad
\nabla\psi={\overline{\psi}\,\nabla\Gamma_{a_{d_2}}}d_1^{-1}
+{\nabla\psi_2}d_1^{-2},\;\;\forall\; d_1>D_0,\]
uniformly for $0\le d_2\le\delta_0$. Using also the estimate for $B(t)$ in \eqref{4.36}, we obtain
  \[\partial_{d_1}\mu(d_1,m_1-\theta_{d_2,m_2})
=\frac1{T|\Omega|d_1^2}\int_{Q_T}|\nabla\Gamma_{a_{d_2}}|^2\,\mathrm dx\mathrm dt+O\left(d_1^{-3}\right),\;\;\forall\; d_1>D_0,\]
uniformly for $0\le d_2\le\delta_0$. Hence, \eqref{4.20} holds by enlarging $C_*$ if necessary.

If $d_2=O(d_1^{-1})$, then $d_2\to0^+$ as $d_1\to\infty$, and
  \[\frac1{T|\Omega|}\int_{Q_T}|\nabla\Gamma_{a_{d_2}}|^2\,\mathrm dx\mathrm dt
=\mathcal E_{12}+o(1).\]
Therefore,
\[\partial_{d_1}\mu(d_1,m_1-\theta_{d_2,m_2})={\mathcal E_{12}}d_1^{-2}+o\left(d_1^{-2}\right)\quad \text{whenever } d_2=O(d_1^{-1}),\]
which proves \eqref{4.24}. 
\end{proof}\zzz

\begin{lemma}[The small-$d_2$ critical curve]\label{lm4.6}
Assume that conditions {\bf(M1)}--{\bf(M2)} hold, and let $\delta_0>0$ be the constant given in Lemma \ref{lm4.5}. Then there exist constants $\delta_2^*\in(0,\delta_0]$ and $d_1^0>0$, and a strictly decreasing $C^1$ function $\mathsf d_2:(d_1^0,\infty)\to(0,\delta_2^*)$ such that, for every $d_1>d_1^0$,
  \begin{equation}\begin{cases}\label{4.38}
 \mu\bigl(d_1,m_1-\theta_{\mathsf d_2(d_1),m_2}\bigr)=0,\\[1mm]
 \mu(d_1,m_1-\theta_{d_2,m_2})<0
\quad\text{for }0<d_2<\mathsf d_2(d_1),\\[1mm]
\mu(d_1,m_1-\theta_{d_2,m_2})>0
\quad\text{for }\mathsf d_2(d_1)<d_2\le\delta_2^*.
 \end{cases}\end{equation}
Moreover, as $d_1\to\infty$, 
  \begin{align}\label{4.39}
 d_1\mathsf d_2(d_1)\to {\mathcal E_{12}}/{\mathcal I_2},\;\;\;
 \mathsf d_2'(d_1)=-({\mathcal E_{12}}/{\mathcal I_2})d_1^{-2}+o\left(d_1^{-2}\right).
\end{align}
In particular, $\mathsf d_2(d_1)=O(d_1^{-1})$ as $d_1\to\infty$.
\end{lemma}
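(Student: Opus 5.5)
The plan is to work entirely with the function $F(d_1,d_2):=\mu(d_1,m_1-\theta_{d_2,m_2})$ on the strip $d_1>D_0$, $0<d_2\le\delta_0$, using Lemma \ref{lm4.5}. The key input is the monotonicity in $d_2$ given by \eqref{4.19}. Shrinking $\delta_2^*\le\delta_0$ so that $|\eta_1(d_2)|\le\mathcal I_2/4$ on $(0,\delta_2^*]$, and enlarging $d_1^0\ge D_0$ so that $C_*d_1^{-1}\le\mathcal I_2/4$, we get
\[
\partial_{d_2}F(d_1,d_2)\ge \mathcal I_2/2>0\quad\text{for } d_1>d_1^0,\ 0<d_2\le\delta_2^*.
\]
Hence, for each fixed $d_1$, $F(d_1,\cdot)$ is strictly increasing on $(0,\delta_2^*]$ and has at most one zero.

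\emph{Existence of the zero.} We use \eqref{4.18} and \eqref{4.21}. As $d_2\to0^+$, $F(d_1,d_2)\to -\mathcal E_{12}d_1^{-1}+\rho_2(0)d_1^{-1}+R(d_1,0^+)\le -\mathcal E_{12}d_1^{-1}+C_*d_1^{-2}<0$ for $d_1$ large. This limit is legitimate because $\theta_{d_2,m_2}\to\Phi_{m_2}$ by Lemma \ref{lm2.5} and $\mu$ is continuous by Proposition \ref{lm2.1}(2); equivalently, we may evaluate at a small positive $d_2$ instead. At $d_2=\delta_2^*$ we have
\[
F\ge \mathcal I_2\delta_2^*-|\rho_1|\delta_2^*-(\mathcal E_{12}+|\rho_2|)d_1^{-1}-C_*d_1^{-2},
\]
after also shrinking $\delta_2^*$ so that $|\rho_1|\le \mathcal I_2/2$. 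This bound is $>0$ once $d_1$ is large. By the intermediate value theorem and strict monotonicity, there is a unique zero $\mathsf d_2(d_1)\in(0,\delta_2^*)$, and the sign pattern \eqref{4.38} follows. Since $F$ is $C^1$ in both variables (differentiability in $d_1,d_2$ is implicit in Lemma \ref{lm4.5}, via simplicity of the principal eigenvalue and smooth dependence of $\theta_{d_2,m_2}$ on $d_2$) and $\partial_{d_2}F\ne0$, the implicit function theorem gives $\mathsf d_2\in C^1$ together with
\[
\mathsf d_2'(d_1)=-\frac{\partial_{d_1}F}{\partial_{d_2}F}\Big|_{d_2=\mathsf d_2(d_1)}.
\]

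\emph{Asymptotics.} First we show $\mathsf d_2(d_1)\to0$. Plugging $d_2=\mathsf d_2(d_1)$ into \eqref{4.18} gives
\[
0=\mathsf d_2\bigl(\mathcal I_2+\rho_1(\mathsf d_2)\bigr)-\bigl(\mathcal E_{12}-\rho_2(\mathsf d_2)\bigr)d_1^{-1}+O(d_1^{-2}),
\]
so $\mathsf d_2\le C d_1^{-1}$, and in particular $\mathsf d_2=O(d_1^{-1})$. Feeding this back, $\rho_i(\mathsf d_2)\to0$, hence
\[
d_1\mathsf d_2(d_1)=\frac{\mathcal E_{12}+o(1)}{\mathcal I_2+o(1)}\to\frac{\mathcal E_{12}}{\mathcal I_2}.
\]
For the derivative: since $\mathsf d_2=O(d_1^{-1})$, \eqref{4.24} applies along the curve and gives $\partial_{d_1}F=\mathcal E_{12}d_1^{-2}+o(d_1^{-2})$, while \eqref{4.23} gives $\partial_{d_2}F=\mathcal I_2+o(1)$. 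The quotient yields \eqref{4.39}, and strict decrease of $\mathsf d_2$ follows for large $d_1$ (after enlarging $d_1^0$), because $\mathcal E_{12}>0$.

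The main obstacle is uniformity. All $o(\cdot)$ terms must be uniform in $d_2\in(0,\delta_0]$ when $d_1\to\infty$, and in $d_1$ when $d_2\to0$; Lemma \ref{lm4.5} is stated precisely with such uniform remainders, so the steps above only need care in choosing $\delta_2^*$ first and then $d_1^0$. A secondary point is the $C^1$ regularity of $F$ in $d_2$ needed for the implicit function theorem. I would take it from the differentiability of $d_2\mapsto\theta_{d_2,m_2}$, which follows from the implicit function theorem applied to \eqref{1.2}, since $\theta_{d,m}$ is linearly stable.
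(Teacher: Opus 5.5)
Your proposal is correct and follows essentially the same route as the paper. The shared steps are the uniform bound $\partial_{d_2}F\ge \mathcal I_2/2$ from \eqref{4.19}, a sign change together with the implicit function theorem for existence, uniqueness and $C^1$ regularity of $\mathsf d_2$, the expansion \eqref{4.18} evaluated along the curve for $d_1\mathsf d_2(d_1)\to\mathcal E_{12}/\mathcal I_2$, and \eqref{4.23}--\eqref{4.24} in the quotient $\mathsf d_2'=-\partial_{d_1}F/\partial_{d_2}F$.

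The only difference is in bracketing the zero. The paper brackets it between $c_-d_1^{-1}$ and $c_+d_1^{-1}$ with $c_-<\mathcal E_{12}/\mathcal I_2<c_+$, which gives the localization $\mathsf d_2(d_1)=O(d_1^{-1})$ immediately. You bracket it between $d_2\to0^+$ and $d_2=\delta_2^*$, and then recover the $O(d_1^{-1})$ bound a posteriori from \eqref{4.18}, using $|\rho_1|\le \mathcal I_2/2$ on $(0,\delta_2^*]$. Both ways are valid.
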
\zzz

\begin{proof}\; For $d_1,d_2>0$, we set $\mathcal F(d_1,d_2):=\mu(d_1,m_1-\theta_{d_2,m_2})$. The simplicity of the principal eigenvalue and the $C^1$-dependence of $\theta_{d_2,m_2}$ on $d_2>0$ imply that $\mathcal F\in C^1((0,\infty)\times(0,\infty))$.

We divide the proof into four steps.

{\it Step 1}. We first investigate the sign behavior of $F(d_1,d_2)$ for $d_2$ of order $d_1^{-1}$. Choose constants $c_-$ and $c_+$ such that
\begin{equation}\label{4.41}
0<c_-<{\mathcal E_{12}}/{\mathcal I_2}<c_+.
\end{equation}
We now determine the signs of $\mathcal F\left(d_1,{c_-}d_1^{-1}\right)$ and $\mathcal F\left(d_1,{c_+}d_1^{-1}\right)$ for sufficiently large $d_1>0$.

Let $c>0$ be fixed and set $d_2=cd_1^{-1}$. By the precise expansion \eqref{4.18} in Lemma \ref{lm4.5}, we see that for sufficiently large $d_1>0$,
 \[d_1\mathcal F\left(d_1,cd_1^{-1}\right)=\mathcal I_2c-\mathcal E_{12}+c\rho_1\left(cd_1^{-1}\right)+\rho_2\left(cd_1^{-1}\right)
 +d_1R\left(d_1,cd_1^{-1}\right).\]
From Lemma \ref{lm4.5}, $\rho_1(cd_1^{-1})\to 0$, $\rho_2(cd_1^{-1})\to 0$ and $|d_1R(d_1,cd_1^{-1})|\leq cd_1^{-1}\to 0$ as $d_1\to\infty$. Therefore, we obtain
  \begin{equation*}
  d_1\mathcal F\left(d_1,cd_1^{-1}\right)=\mathcal I_2c-\mathcal E_{12}+o(1)
\quad\text{as }d_1\to\infty.\end{equation*}
This combined with \eqref{4.41} yields that there exists $\mathfrak D_1>D_0$ such that
  \begin{equation}\label{4.42}
\mathcal F\left(d_1,{c_-}d_1^{-1}\right)<0,\quad
\mathcal F\left(d_1,{c_+}d_1^{-1}\right)>0,\;\;\forall\; d_1>\mathfrak D_1.
\end{equation}

{\it Step 2}. We next establish the existence and uniqueness of the critical curve. To this end, we first verify the strict monotonicity of $\mathcal F(d_1,\cdot)$ on a sufficiently small interval that is independent of $d_1$. Combining \eqref{4.19} with the limit $\lim_{d_2\to 0^+}\eta_1(d_2)=0$, we obtain constants $\mathfrak D_2>\mathfrak D_1$ and $0<\delta_2^*<\delta_0$ such that
  \[\left|\partial_{d_2}\mathcal F(d_1,d_2)-\mathcal I_2-\eta_1(d_2)\right|
\le{C_*}d_1^{-1}\le {\mathcal I_2}/{4},\;\;\forall\;d_1>\mathfrak D_2,\;0<d_2\le\delta_2^*,\]
and
  \[|\eta_1(d_2)|\leq{\mathcal I_2}/{4},\;\;\forall\;0\le d_2\le\delta_2^*.\]
It follows that
  \begin{equation}\label{4.43}
\partial_{d_2}\mathcal F(d_1,d_2) \geq{\mathcal I_2}/{2}>0,\;\;\forall\;d_1>\mathfrak D_2,\;0<d_2\le\delta_0.
\end{equation}

Choose $d_1^0>\mathfrak D_2$ such that ${c_+}/d_1^0<\delta_2^*$.
This, together with \eqref{4.43} and \eqref{4.42}, implies that for every $d_1>d_1^0$, there exists a unique $\mathsf d_2(d_1)\in\left({c_-}d_1^{-1},{c_+}d_1^{-1}\right)\subset(0,\delta_2^*)$ such that $\mathcal F(d_1,\mathsf d_2(d_1))=0$. Consequently, \eqref{4.38} holds for $d_1>d_1^0$.
Moreover, by \eqref{4.43}, $\partial_{d_2}\mathcal F\bigl(d_1,\mathsf d_2(d_1)\bigr) \geq{\mathcal I_2}/{2}>0$ for every $d_1>d_1^0$. As $\mathcal F\in
C^1\bigl((d_1^0,\infty)\times(0,\delta_2^*)\bigr)$, the implicit function theorem applies. For each fixed $d_1^*>d_1^0$, it yields an open interval  $I_{d_1^*}\subset(d_1^0,\infty)$ containing $d_1^*$, an open interval $J_{d_1^*}\subset(0,\delta_2^*)$ containing $\mathsf d_2(d_1^*)$, and a unique $C^1$ function $g_{d_1^*}:I_{d_1^*}\to J_{d_1^*}$ such that $\mathcal F\bigl(d_1,g_{d_1^*}(d_1)\bigr)=0$ for all $d_1\in I_{d_1^*}$, and so $g_{d_1^*}(d_1^*)=\mathsf d_2(d_1^*)$ by the uniqueness.

In particular, any two such local $C^1$ representations agree on the overlap of their domains. Therefore they patch together to yield $\mathsf d_2\in C^1((d_1^0,\infty))$.

{\it Step 3}. We next characterize the asymptotic behavior of the critical curve. The location of the zero gives
 \begin{equation}\label{4.44}
  {c_-}d_1^{-1}<\mathsf d_2(d_1)<{c_+}d_1^{-1},\;\;\forall\; d_1>d_1^0.
\end{equation}
In particular, $\mathsf d_2(d_1)=O(d_1^{-1})$ and $\mathsf d_2(d_1)\to 0$ as $d_1\to\infty$.

Evaluating \eqref{4.18} at $d_2=\mathsf d_2(d_1)$ and using $\mathcal F(d_1,\mathsf d_2(d_1))=0$, we obtain
  \[ 0=\mathcal I_2\mathsf d_2(d_1)-{\mathcal E_{12}}d_1^{-1}+\mathsf d_2(d_1)\rho_1(\mathsf d_2(d_1))+{\rho_2(\mathsf d_2(d_1))}d_1^{-1}+R(d_1,\mathsf d_2(d_1)). \]
Multiplying this equality by $d_1$ gives
\[ 0=\mathcal I_2d_1\mathsf d_2(d_1)-\mathcal E_{12}+d_1\mathsf d_2(d_1)\rho_1(\mathsf d_2(d_1))+\rho_2(\mathsf d_2(d_1))+d_1R(d_1,\mathsf d_2(d_1)). \]
In view of the estimates \eqref{4.44}, $d_1\mathsf d_2(d_1)$ is bounded. Moreover, $\rho_1(\mathsf d_2(d_1))\to 0,\rho_2(\mathsf d_2(d_1))\to 0$ and $\left|
d_1R(d_1,\mathsf d_2(d_1))\right|
\leq {C_*}d_1^{-1}\to 0$ as $d_1\to\infty$. Therefore, $\mathcal I_2d_1\mathsf d_2(d_1)
-\mathcal E_{12}\to 0$ as $d_1\to\infty$, which gives the first relation in \eqref{4.39}.

{\it Step 4}. We finally establish the strict monotonicity of the critical curve. 
Differentiating $\mathcal F(d_1,\mathsf d_2(d_1))=0$ with respect to $d_1$, we obtain
\begin{equation}\label{4.45}
\mathsf d_2'(d_1)=-\frac{\partial_{d_1}\mathcal F(d_1,\mathsf d_2(d_1))}{\partial_{d_2}\mathcal F(d_1,\mathsf d_2(d_1))}.
\end{equation}
The estimates \eqref{4.44} yield $\mathsf d_2(d_1)=O(d_1^{-1})$ for $d_1>d_1^0$. Consequently, the derivative estimate \eqref{4.24} from Lemma \ref{lm4.5} yields
  \begin{equation}\label{4.46}
\partial_{d_1}\mathcal F(d_1,\mathsf d_2(d_1))
={\mathcal E_{12}}d_1^{-2}+o\left(d_1^{-2}\right),\;\;\forall\; d_1>d_1^0,
  \end{equation}
while \eqref{4.23} yields that, enlarging $d_1^0$ if necessary,
  \begin{equation}\label{4.47}
\partial_{d_2}\mathcal F(d_1,\mathsf d_2(d_1))=\mathcal I_2+o(1),\;\;\forall\; d_1>d_1^0.
  \end{equation}
Substituting \eqref{4.46} and \eqref{4.47} into \eqref{4.45} gives the second relation in \eqref{4.39}. Furthermore, in view of $\mathcal E_{12}>0$ and $\mathcal I_2>0$, we conclude $\mathsf d_2'(d_1)<0$ for all sufficiently large $d_1$.\vspace{-1mm}
\end{proof}

The next lemma constructs two strictly increasing $C^1$ functions $\widehat{\mathsf{d}}_2:(D_1,\infty)\to(d_0,\infty)$ and $\widehat{\mathsf{d}}_1:(D_2,\infty)\to (d_0,\infty)$, and establishes their sharp asymptotic behavior.\vspace{-1mm}

\begin{lemma}\label{lm4.7} There exist a constant $d_0>0$, which depends only on  $\|\nabla\Gamma_{m_i}\|_{L^2(Q_T)}$ and $\|m_i\|_{L^\yy(Q_T)}$ for $i=1,2$, sufficiently large constants $D_1, D_2>0$, and strictly increasing $C^1$ functions $\widehat{\mathsf{d}}_2:(D_1,\infty)\to(d_0,\infty)$ and $\widehat{\mathsf{d}}_1:(D_2,\infty)\to (d_0,\infty)$ such that
   \begin{align}
\text{for}\;\;(d_1, d_2)\in(D_1,\infty)\times(d_0,\infty),\;\;\;\mu(d_2,m_2-\theta_{d_1,m_1})=0 \Longleftrightarrow d_2=\widehat{\mathsf{d}}_2(d_1),\label{4.48}\\[1mm]
 \text{for}\;\;(d_1, d_2)\in(d_0,\infty)\times(D_2,\infty),\;\;\; \mu(d_1,m_1-\theta_{d_2,m_2})=0 \Longleftrightarrow d_1=\widehat{\mathsf{d}}_1(d_2).\label{4.49} \end{align}
Furthermore,
  \begin{align}\frac{\widehat{\mathsf{d}}_2(d_1)}{d_1}=A+o(1)\;\;\;\text{for}\;\; d_1>D_1,\;\;\;\text{and}\;\;\;
 \frac{\widehat{\mathsf{d}}_1(d_2)}{d_2}=\frac 1A+o(1)\;\;\; \text{for}\;\;  d_2>D_2,\label{4.50}\end{align}
where
 \bee
  A={\mathcal{E}_2}/{\mathcal{E}_1}.\label{4.51}
  \eee
 \end{lemma}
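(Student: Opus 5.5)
The plan is to treat $\mathcal G(d_1,d_2):=\mu(d_2,m_2-\theta_{d_1,m_1})$ exactly as $\mathcal F$ was treated in Lemmas \ref{lm4.5}--\ref{lm4.6}, but with both diffusion rates large. The claim for $\widehat{\mathsf d}_1$ then follows by swapping the indices $1\leftrightarrow2$, so I only describe $\widehat{\mathsf d}_2$.

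\emph{Step 1: a two-term expansion of $\mathcal G$.} Set $a_{d_1}:=m_2-\theta_{d_1,m_1}$.
\begin{itemize}
\item By Lemma \ref{lm2.7}, $\|a_{d_1}\|_\infty+\|(a_{d_1})_t\|_{L^2(Q_T)}\le M$ uniformly in $d_1$. So Proposition \ref{p2.2} applies with $d=d_2$, uniformly in $d_1$.
\item Averaging the eigenvalue equation as in \eqref{4.29}--\eqref{4.31} gives
\[\mathcal G=-\frac1{T|\Omega|}\int_{Q_T}a_{d_1}-\frac{1}{T|\Omega|d_2}\int_{Q_T}|\nabla\Gamma_{a_{d_1}}|^2+O(d_2^{-2}),\]
with the $O(d_2^{-2})$ uniform in $d_1$.
\item For the first term, condition {\bf(M)} and the equation of $\theta_{d_1,m_1}$ give $\int_{Q_T}a_{d_1}=-d_1\int_{Q_T}|\nabla\theta_{d_1,m_1}/\theta_{d_1,m_1}|^2$.
\item By \eqref{2.24}, $\nabla\theta_{d_1,m_1}/\theta_{d_1,m_1}=d_1^{-1}\nabla\Gamma_1+o(d_1^{-1})$. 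The $o$ must hold in $L^2$ for gradients, which I would take from the $\mathbb X_q$-convergence in the implicit-function-theorem proof of Lemma \ref{lm2.10}. Hence $-\int_{Q_T}a_{d_1}=\mathcal E_1d_1^{-1}+o(d_1^{-1})$.
\item Since $l_1$ depends only on $t$, $\Gamma_{a_{d_1}}\to\Gamma_{m_2-l_1}=\Gamma_2$. Therefore
\[T|\Omega|\,\mathcal G(d_1,d_2)=\mathcal E_1d_1^{-1}-\mathcal E_2d_2^{-1}+o(d_1^{-1})+o(d_2^{-1})+O(d_2^{-2}).\]
\item $\mathcal E_i>0$ because $\nabla m_i\not\equiv0$ forces $\Gamma_{m_i}$ to be nonconstant.
\item Taking $d_2=cd_1$ and multiplying by $d_1$, I get $\mathcal G(d_1,cd_1)<0$ for $c<A$ and $\mathcal G(d_1,cd_1)>0$ for $c>A$, once $d_1$ is large.
\end{itemize}

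\emph{Step 2: monotonicity in $d_2$, uniqueness and $C^1$.}
\begin{itemize}
\item Take $\kappa=\tfrac12\|\nabla\Gamma_2\|_{L^2(Q_T)}$. For $d_1\ge D_1$ we have $\|\nabla\Gamma_{a_{d_1}}\|_{L^2}\ge\kappa$.
\item The last part of Proposition \ref{p2.2} then gives $\partial_{d_2}\mathcal G>0$ for all $d_2>d_0:=d_{\kappa,M}$. This is why $d_0$ depends only on $\|\nabla\Gamma_{m_i}\|_{L^2}$ and $\|m_i\|_\infty$.
\item For $d_1>D_1$ large, $\mathcal G(d_1,\cdot)$ is strictly increasing on $(d_0,\infty)$. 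It is negative at $c_-d_1$ and positive at $c_+d_1$, for any fixed $c_-<A<c_+$.
\item I also need $\mathcal G(d_1,d_2)<0$ on the whole range $d_0<d_2\le c_-d_1$, not just at the endpoint; monotonicity in $d_2$ gives this for free. So there is exactly one zero $\widehat{\mathsf d}_2(d_1)\in(c_-d_1,c_+d_1)$, which is \eqref{4.48}.
\item Letting $c_\pm\to A$ yields \eqref{4.50}.
\item The implicit function theorem, with local pieces patched as in Step 2 of Lemma \ref{lm4.6}, gives $\widehat{\mathsf d}_2\in C^1$.
\end{itemize}

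\emph{Step 3: strict increase of $\widehat{\mathsf d}_2$.} I use $\widehat{\mathsf d}_2'=-\partial_{d_1}\mathcal G/\partial_{d_2}\mathcal G$, and the denominator is positive by Step 2. For the numerator:
\begin{itemize}
\item The pairing argument of \eqref{4.10} gives $\partial_{d_1}\mathcal G=\int_{Q_T}\varphi\psi\,\partial_{d_1}\theta_{d_1,m_1}\big/\int_{Q_T}\varphi\psi$.
\item By the flattening estimate \eqref{4.37}, now in the variable $d_2$, this equals
\[\frac1{T|\Omega|}\int_{Q_T}\partial_{d_1}\theta_{d_1,m_1}+O\bigl(d_2^{-1}\|\partial_{d_1}\theta_{d_1,m_1}\|_\infty\bigr).\]
\item Differentiating the identity $\int_{Q_T}\theta_{d_1,m_1}=\int_{Q_T}m_1+\mathcal E_1d_1^{-1}+o(d_1^{-1})$ at the level of the IFT solution branch $s\mapsto(\sigma(s),r(s))$ of Lemma \ref{lm2.10} should give $\int_{Q_T}\partial_{d_1}\theta_{d_1,m_1}=-\mathcal E_1d_1^{-2}(1+o(1))$ and $\|\partial_{d_1}\theta_{d_1,m_1}\|_\infty=O(d_1^{-2})$.
\item Along the curve $d_2\sim Ad_1$, the remainder is $O(d_1^{-3})$. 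Hence $\partial_{d_1}\mathcal G\approx-\mathcal E_1/(T|\Omega|d_1^2)<0$ and $\widehat{\mathsf d}_2'>0$.
\end{itemize}

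\emph{Expected main obstacle.} The hard part is Step 3's derivative information: showing that $s\mapsto(\sigma(s),r(s))$ is $C^1$ up to $s=0$ in $\mathbb X_q\times E$. This would give $\partial_{d_1}\theta_{d_1,m_1}=-d_1^{-2}\partial_s(\sigma+r)$ with a uniform bound, and also the gradient-level convergence used in Step 1. I would try first to check that $\mathcal F$ in Lemma \ref{lm2.10} is $C^1$ in $s$, since it is polynomial in $s$, so the implicit function theorem yields $C^1$ dependence directly. If that fails, the fallback is the cruder route of \cite[Claim 5.1]{BHN23} via energy estimates.
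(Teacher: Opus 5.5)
Your argument is correct in its main line, but it follows a different route from the paper's.

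\textbf{How the paper argues.} It finds the zero of $\mathcal G(d_1,\cdot)$ by a soft sign change. Negativity on $d_2\in[d_0/2,d_0]$ comes from $\mu(d_2,m_2-l_1)<0$ and the limit $d_1\to\infty$; positivity comes from the limit $d_2\to\infty$. Uniqueness follows from the last part of Proposition~\ref{p2.2}. The slope is then obtained separately:
\begin{enumerate}
\item it proves $\widehat{\mathsf d}_2(d_1)\to\infty$;
\item it passes to limiting eigenfunctions $\varphi_*(t),\psi_*(t)$ as in \cite[Lemma 3.7]{HMP01};
\item it inserts the ansatz $\varphi=\varphi_*+\varphi_*(\Gamma_2+c)/\widehat{\mathsf d}_2(d_1)+\sigma d_1^{-2}$, with $c$ from \eqref{4.56};
\item it reads off $\widehat{\mathsf d}_2(d_1)/d_1\to\mathcal E_2/\mathcal E_1$ from the solvability condition $\int_{Q_T}R\psi=0$ together with \eqref{2.24}.
\end{enumerate}

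\textbf{How you argue, and what each route buys.} You use one expansion $T|\Omega|\,\mathcal G=\mathcal E_1d_1^{-1}-\mathcal E_2d_2^{-1}+o(d_1^{-1})+o(1)\,d_2^{-1}+O(d_2^{-2})$, uniform in both variables. This is the device of Lemmas~\ref{lm4.5}--\ref{lm4.6}. You then trap the zero between $c_-d_1$ and $c_+d_1$, so existence, uniqueness on $(d_0,\infty)$ and the slope come out together, with no eigenfunction convergence argument. The paper's route pays off later: Lemma~\ref{lm4.8} refines its ansatz/solvability framework to obtain the constant $B_1$. Your bracketing cannot reach $B_1$, because Proposition~\ref{p2.2} controls $\mu$ only up to $O(d_2^{-2})$. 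Conversely, your Step 3 supplies something the paper does not prove. The paper only asserts that the implicit function theorem yields a strictly increasing function, and in Section~\ref{sec5} it infers $\widehat{\mathsf d}_1'>0$ from \eqref{4.50}, which does not follow. You correctly reduce monotonicity to $\partial_{d_1}\mathcal G<0$ along the curve.

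\textbf{Three corrections.}

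First, Step 1 does not need gradient convergence. Since $\int_0^T(\ol m_1-l_1)\,\mathrm dt=0$, condition {\bf(M)} gives $-\int_{Q_T}a_{d_1}=\int_{Q_T}(\theta_{d_1,m_1}-l_1)$. Then \eqref{2.24} in $C(\ol Q_T)$, together with $\int_\Omega\Gamma_1=0$ and $|\Omega|\int_0^Tl_1h_1\,\mathrm dt=\mathcal E_1$ (integrate \eqref{2.21} over a period), gives $\mathcal E_1d_1^{-1}+o(d_1^{-1})$. This uses only the statement of Lemma~\ref{lm2.10}.

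Second, with $\theta_{d_1,m_1}=l_1+s(\sigma+r)$ and $s=d_1^{-1}$, the chain rule gives $\partial_{d_1}\theta_{d_1,m_1}=-d_1^{-2}\bigl[(\sigma+r)+s\,\partial_s(\sigma+r)\bigr]$, not $-d_1^{-2}\partial_s(\sigma+r)$.

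Third, your first plan for this derivative, reading $C^1$ dependence off the implicit function theorem in Lemma~\ref{lm2.10}, is not available. That proof only checks injectivity of $D_{\sigma r}\mathcal F$. Moreover, $\phi\mapsto\Delta\phi$ does not map $H^1((0,T),W^2_q(\Omega))$ onto $L^2((0,T),L^q(\Omega))$, so the linearization is not an isomorphism between the chosen spaces.

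Use your fallback instead. The function $w=\partial_{d_1}\theta_{d_1,m_1}$ solves $w_t-d_1\Delta w-(m_1-2\theta_{d_1,m_1})w=\Delta\theta_{d_1,m_1}$. Here $\|\Delta\theta_{d_1,m_1}\|_{L^\infty(Q_T)}=O(d_1^{-1})$, by the equation for $\theta_{d_1,m_1}$ and Lemma~\ref{lm2.7}. Split $w$ into its spatial mean and mean-free part, as in Proposition~\ref{p2.2}:
\begin{enumerate}
\item the mean-free part satisfies $w-\ol w\approx-l_1\Gamma_1d_1^{-2}$;
\item the averaged equation then gives $\ol w\approx-l_1h_1d_1^{-2}$;
\item hence $\int_{Q_T}w=-\mathcal E_1d_1^{-2}+o(d_1^{-2})$, with $w=O(d_1^{-2})$.
\end{enumerate}
This is exactly the input your Step 3 requires.
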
\vspace{-1mm}

\begin{proof}\; We only prove the existence and properties of $\widehat{\mathsf{d}}_2$ as  those of $\widehat{\mathsf{d}}_1$ can be proved similarly.

{\it Step 1}. We will use the implicit function theorem to determine $\widehat{\mathsf{d}}_2$. Thanks to the fact that $\Gamma_{m_2-\theta_{d_1,m_1}}=\Gamma_2-\Gamma_{\theta_{d_1,m_1}}$, one has
  \bee\label{4.52}
 \|\nabla\Gamma_{m_2-\theta_{d_1,m_1}}\|_{L^2(Q_T)}\geq
  \|\nabla\Gamma_2\|_{L^2(Q_T)}-\|\nabla\Gamma_{\theta_{d_1,m_1}} \|_{L^2(Q_T)}.
   \eee
Multiplying the equation of $\Gamma_{\theta_{d_1,m_1}}$ by $\Gamma_{\theta_{d_1,m_1}}$ and integrating the result over $Q_T$ firstly, and using the H\"{o}lder and Poincar\'{e} inequalities and the fact that $\overline{\Gamma}_{\theta_{d_1,m_1}}(t)\equiv 0$ secondly, we derive that
  \begin{align}
\int_{Q_T}|\nabla\Gamma_{\theta_{d_1,m_1}}|^2&=\int_{Q_T}
\Gamma_{\theta_{d_1,m_1}}(\theta_{d_1,m_1}-\bar\theta_{d_1,m_1})\notag\\
&\leq \varepsilon\int_{Q_T}\Gamma_{\theta_{d_1,m_1}}^2+C(\varepsilon)
\int_{Q_T}(\theta_{d_1,m_1}-\bar\theta_{d_1,m_1})^2\notag\\
&\leq \frac{1}{2}\int_{Q_T}|\nabla\Gamma_{\theta_{d_1,m_1}}|^2+C(\varepsilon)
\int_{Q_T}(\theta_{d_1,m_1}-\bar\theta_{d_1,m_1})^2.\label{4.53}
  \end{align}
Multiplying the equation of $\theta_{d_1,m_1}$ by $\theta_{d_1,m_1}$ and integrating the result over $Q_T$, we obtain
  \[  d_1\int_{Q_T}|\nabla\theta_{d_1,m_1}|^2=
 \int_{Q_T}\theta_{d_1,m_1}(m_1-\theta_{d_1,m_1}).\]
This together with the Poincar\'{e} inequality and Lemma \ref{lm2.7}(1) yields
  \[ \int_{Q_T}(\theta_{d_1,m_1}-\bar\theta_{d_1,m_1})^2\leq C\int_{Q_T}|\nabla\theta_{d_1,m_1}|^2\leq C d_1^{-1}.\]
It follows from \eqref{4.53} that $\|\nabla\Gamma_{\theta_{d_1,m_1}}\|_{L^2(Q_T)}\leq Cd_1^{-1/2}$. Note that $\|\nabla\Gamma_2\|_{L^2(Q_T)}>0$. Using \eqref{4.52}, we can find a constant $D_1>0$, such that
   \[\|\nabla\Gamma_{m_2-\theta_{d_1,m_1}}\|_{L^2(Q_T)}\geq \frac{1}{2}\|\nabla\Gamma_2\|_{L^2(Q_T)}>0,\;\;\forall\, d_1\ge D_1.\]
Now, in view of Proposition \ref{p2.2}, there exists $d_0>0$ depending only on $\|\nabla\Gamma_2\|_{L^2(Q_T)}$ and $\|m_1\|_{L^\yy(Q_T)}+\|m_2\|_{L^\yy(Q_T)}$ such that
  \bee\label{4.54}
  \frac{\partial \mu(d_2,m_2-\theta_{d_1,m_1})}{\partial d_2}>0,\;\; \forall\, d_1\ge D_1,\, d_2\ge d_0. \eee

Make using of Lemma \ref{lm2.10} and Proposition \ref{lm2.1}(1), we have
  \[\lim_{d_1\to\infty}\mu(d_2,m_2-\theta_{d_1,m_1})
  =\mu(d_2,m_2- l_1)<-\frac{1}{T|\Omega|}\int_{Q_T}(m_2- l_1)=0.\]
By the continuity and enlarge $D_1$ if necessary, we deduce that $\mu(d_2,m_2-\theta_{d_1,m_1})<0$ for all $(d_1,d_2)\in(D_1,\infty)\times[d_0/2,d_0]$. Notice that
  \[ \lim_{d_2\to\infty}\mu(d_2,m_2-\theta_{d_1,m_1})=-\frac{1}{T|\Omega|}
 \int_{Q_T}(m_2-\theta_{d_1,m_1})>0.\]
For each $d_1>D_1$, $\mu(d_2,m_2-\theta_{d_1,m_1})$ changes sign at least once when $d_2$ increases from $d_0$ to $\infty$.

The application of the implicit function theorem yields a unique strictly increasing $C^1$ function $\widehat{\mathsf{d}}_2:(D_1,\infty)\to(d_2^*,\infty)$ such that \eqref{4.48} holds.

{\it Step 2}. We now establish the first estimate of \eqref{4.50}. To this end, we first prove
  \[\lim_{d_1\to\yy}\widehat{\mathsf{d}}_2(d_1)=\infty. \]
Suppose for the contradiction that there exist $d_1^k$ with $d_1^k\to\infty$ such that  $\widehat{\mathsf{d}}_2(d_1^k)\to d_2\in(d_0,\infty)$ as $k\to\infty$. Then by Proposition \ref{lm2.1}(1) and Lemma \ref{lm2.10},
   \[0=\lim_{k\to\infty}\mu(\widehat{\mathsf{d}}_2(d_1^k),m_2-\theta_{d_1^k,m_1})
   =\mu(d_2,m_2- l_1)<-\frac{1}{T|\Omega|}\int_{Q_T}(m_2- l_1)=0.\]
This ia a contradiction.

Let $\varphi$ and $\psi$ be the positive normalized eigenfunction and adjoint eigenfunction, respectively, associated with the principal eigenvalue $\mu(\widehat{\mathsf{d}}_2(d_1), m_2-\theta_{d_1,m_1})=0$. Then we have
  \[\begin{cases}
 \varphi_t-\widehat{\mathsf{d}}_2(d_1)\Delta \varphi=(m_2-\theta_{d_1,m_1})\varphi &\text{in}\;\;Q_T,\\
\partial_{\nu}\varphi=0 &\text{on}\;\;S_T,\\
\varphi(x,0)=\varphi(x,T) &\text{on}\;\;\boo,
  \end{cases}\]
and
  \[\begin{cases}
-\psi_t-\widehat{\mathsf{d}}_2(d_1)\Delta\psi=(m_2-\theta_{d_1,m_1})\psi &\text{in}\;\;Q_T,\\
\partial_{\nu}\psi=0 &\text{on}\;\;S_T,\\
\psi(x,0)=\psi(x,T) &\text{on}\;\;\boo.
  \end{cases}\]
By virtue of Lemma \ref{lm2.10}, $\lim_{d_1\to\infty}\theta_{d_1,m_1}= l_1$ in $C(\ol Q_T)$. Owing to the limit $\lim_{d_1\to\infty}\widehat{\mathsf{d}}_2(d_1)=\infty$, an argument analogous to the proof of \cite[Lemma 3.7]{HMP01} shows that  $\varphi\to\varphi_*(t)$ and $\psi\to\psi_*(t)$ in $C(\ol Q_T)$ as $d_1 \to\infty$. Here $\varphi_*(t)$ and $\psi_*(t)$ are, respectively, the unique positive solutions to
  \[ \begin{cases}
\varphi_*'(t)=(\ol m_2- l_1)\varphi_*(t), \;\; t\in[0,T],\\[0.5mm]
\varphi_*(0)=\varphi_*(T), \;\;\|\varphi_*\|_{L^{\infty}((0,T))}=1.
  \end{cases}\]
and
  \[ \begin{cases}
\psi_*'(t)=-(\ol m_2- l_1)\psi_*(t), \;\; t\in[0,T],\\[0.5mm]
\psi_*(0)=\psi_*(T), \;\;\|\psi_*\|_{L^{\infty}((0,T))}=1.
  \end{cases}\]
It is easy to verify that $\varphi_*\psi_*$ is a positive constant. Recall that  $\Gamma_m$, $\mathcal{E}(m)$, $l_m$ and $h_m$ are defined by \eqref{1.10}, \eqref{1.12}, \eqref{1.11} and \eqref{2.21}, respectively. By virtue of
  \bee\int_0^T\kk(\frac{1}{A|\Omega|}\int_{\Omega}|
  \nabla\Gamma_2|^2-l_1h_1\rr)=\int_{Q_T}\big(|\nabla\Gamma_1|^2
 -l_1h_1\big)=0,\label{4.55}\eee
the periodic problem
  \bee\label{4.56}\begin{cases}
 c'(t)=\dd\frac 1{A|\Omega|}\int_{\Omega}|\nabla\Gamma_2|^2
-l_1h_1, \;\; t\in[0,T],\\[3mm]
c(0)=c(T), \;\;\;\widehat{\varphi_*c}=0
  \end{cases}\eee
possesses a unique solution $c\in C^1([0,T])$. Let
   \[\varphi=\varphi_*+\frac{\varphi_*(\Gamma_2+c)}
   {\widehat{\mathsf{d}}_2(d_1)}+\frac{\sigma}{d_1^2}.\]
A direct calculation gives the following problem satisfied by $\sigma$:
   \[\begin{cases}
\sigma_t-\widehat{\mathsf{d}}_2(d_1)\Delta \sigma-(m_2-\theta_{d_1,m_1})\sigma=R & \text{in}\;\;Q_T,\\[0.1mm]
\partial_{\nu}\sigma=0 & \text{on}\;\;S_T,\\[0.1mm]
\sigma(x,0)=\sigma(x,T), & \text{on}\;\;\overline\Omega,
   \end{cases}\]
where
  \begin{align}
  R=\;&\frac{d_1^2}{\widehat{\mathsf{d}}_2(d_1)}
 \bigg[\widehat{\mathsf{d}}_2(d_1)\varphi_*( l_1-\theta_{d_1,m_1})
+\varphi_*(m_2-\ol m_2+ l_1-\theta_{d_1,m_1})(\Gamma_2+c)\notag\\
&-\varphi_*\kk(\partial_t\Gamma_2+\frac 1{A|\Omega|}
\int_{\Omega}|\nabla\Gamma_2|^2- l_1h_1\rr)\bigg]. \label{4.57}
   \end{align}
Multiplying the equation of $\sigma$ by $\psi$ and the equation of $\psi$ by $\sigma$, and integrating the results over $Q_T$, we arrive at 
 \bee\int_{Q_T}R\psi=0,\;\;\;\text{i.e.},\;\;\int_{Q_T}
  \frac{\widehat{\mathsf{d}}_2(d_1)}{d_1^2}R\psi=0.\label{4.58}\eee
The relation \qq{2.24} yields
 \[ \theta_{d_1,m_1}- l_1=l_1(\Gamma_1+h_1)d_1^{-1}
 +o\!\kk(d_1^{-1}\rr)\;\;\;\text{as}\;\;d_1\gg 1. \]
Inserting this into \qq{4.57} and using \qq{4.58}, \qq{4.55}, together with the $T$-periodicity of $\Gamma_2$, the facts that $\varphi_*\psi_*$ is a positive constant, that $\psi\to\psi_*$ in $C(\ol Q_T)$ as $d_1\to\infty$, and that
 \[\int_{Q_T}\kk(\partial_t\Gamma_2+\frac 1{A|\Omega|}
\int_{\Omega}|\nabla\Gamma_2|^2- l_1h_1\rr)=0, \]
we arrive at the limit
  \begin{align}
 \lim_{d_1\to\infty}\frac{\widehat{\mathsf{d}}_2(d_1)}{d_1}=\;& \frac{\dd\int_{Q_T}\varphi_*\psi_*(m_2-\ol m_2)(\Gamma_2+c)}
 {\dd\int_{Q_T}\varphi_*\psi_* l_1(\Gamma_1+h_1)}-\frac{\dd\int_{Q_T}\varphi_*\psi_*\kk( \partial_t\Gamma_2+\frac 1{A|\Omega|}\int_{\Omega}|\nabla\Gamma_2|^2- l_1h_1\rr)}
{\dd\int_{Q_T}\varphi_*\psi_* l_1(\Gamma_1+h_1)}\nm\\[1mm]
=\;&\frac{\dd\int_{Q_T}(m_2-\ol m_2)(\Gamma_2+c)}
 {\dd\int_{Q_T} l_1(\Gamma_1+h_1)}.\label{4.59}
  \end{align}
Notice that
   \begin{align*}
\int_{Q_T}(m_2-\ol m_2)\Gamma_2
=\;&\int_{Q_T}|\nabla\Gamma_2|^2=\mathcal{E}_2,\;\;
 \int_{Q_T}(m_2-\ol m_2)c=0,\\[1mm]
\int_{Q_T} l_1(\Gamma_1+h_1)
=\;&\int_0^T l_1\bigg(\int_{\Omega}\Gamma_1 \bigg)+|\Omega|\int_0^T l_1h_1=\int_{Q_T}|\nabla\Gamma_1|^2=\mathcal{E}_1\;\;\;\text{by}\;\eqref{2.21}.
  \end{align*}
The first estimate of \eqref{4.50} follows from \qq{4.59}, and the proof is complete.
\end{proof}

To identify the positions of $\widehat{\mathsf{d}}_1(d_2)$ and $\widehat{\mathsf{d}}_2(d_1)$, we further explore their asymptotic profiles. Observe that 
  \[\int_{\Omega}\kk((m_2-\ol m_2)(\Gamma_2+c)
  -A l_1\Gamma_1-\partial_t\Gamma_2-\frac{1}{|\Omega|}
 \int_{\Omega}|\nabla\Gamma_2|^2\rr)\equiv 0.\]
Thus the problem
   \bee\label{4.60}\begin{cases}
-A\Delta \beta=(m_2-\ol m_2)(\Gamma_2+c)-A l_1\Gamma_1
-\partial_t\Gamma_2-\dd\frac{1}{|\Omega|}\int_{\Omega}|\nabla\Gamma_2|^2, \;\;x\in\Omega,\\
\partial_{\nu}\beta|_{\partial\Omega}=0,\;\;\;\bar\beta(t)=0
  \end{cases}\eee
admits a unique solution $\beta$ for each $t\in[0,T]$. Define
   \begin{align}
   B_1=\;&\frac{1}{\mathcal{E}_2}\int_{Q_T}\!\!\big[(m_2\!-\!\ol m_2)\Gamma_2^2\!-\!2Al_1\Gamma_1\Gamma_2-A^2(m_1\!-\!\ol m_1)\Gamma_1^2+2A^2 l_1\Gamma_1^2\big],\label{4.61}\\[.1mm]
  \Psi(t)=\;&\frac{1}{|\Omega|}\int_{\Omega}\left[(m_2\!-\!l_1)
\left(\beta\!-\!\frac{B_1}{A}\Gamma_2\right)\!-\!l_1\big[
\Gamma_1\Gamma_2\!-\!l_1h_1c\!-\!A l_1k_1\big]\right].\nm
   \end{align}

We will show that $\widehat\Psi=0$, and thereby the periodic problem
 \bee\label{4.62}\begin{cases}
c_t=\Psi(t), \;\; t\in[0,T],\\[.1mm]
c(0)=c(T), \;\;\widehat{\varphi_*c}=0
 \end{cases}\eee
possesses a unique solution $\tilde c$. Indeed, let $\Lambda_1$ and $k_1$ be the unique solutions to \eqref{2.22} and \eqref{2.23}, respectively, with $m=m_1$. Through the careful calculation we have that
  \begin{align*}
\int_{\Omega}(m_2- l_1)\beta
&=\int_{\Omega}(m_2-\overline m_2)\beta=-\int_{\Omega}\beta\Delta\Gamma_2=-\int_{\Omega}\Gamma_2\Delta\beta  \\
&=\int_{\Omega}\left(\frac{1}{A}(m_2-\ol m_2)\Gamma_2^2+\frac{c}{A}|\nabla\Gamma_2|^2- l_1\Gamma_1\Gamma_2-\frac{1}{2A}(\Gamma_2^2)_t\right),
  \end{align*}
and
  \begin{align*}
\int_{Q_T} l_1k_1&=\int_{Q_T}\kk(\Lambda_1(m_1-\ol m_1)-l_1\Gamma_1^2-l_1h_1^2 \rr)\\
&=\int_{Q_T}\kk(-\Lambda_1\Delta\Gamma_1-l_1\Gamma_1^2-l_1h_1^2 \rr)\\
&=\int_{Q_T}\Big(-\Gamma_1\Delta\Lambda_1-l_1\Gamma_1^2-l_1h_1^2\Big)\\
&=\int_{Q_T}\kk[\Gamma_1(\Gamma_1+h_1)(m_1-\ol m_1)-(\Gamma_1^2)_t-2l_1\Gamma_1^2-\frac{\Gamma_1}{|\Omega|}
\int_{\Omega}|\nabla\Gamma_1|^2-l_1h_1^2\rr]\\
&=\int_{Q_T}\kk[\Gamma_1(\Gamma_1+h_1)(m_1-\ol m_1)
 -2 l_1\Gamma_1^2- l_1h_1^2 \rr].
 \end{align*}
We multiply the equation for $c$ in \eqref{4.56} by $c$ and the equation for $h_1$ in \eqref{2.21} by $h_1$, then integrate the resulting equations over $(0,T)$ to obtain
  \begin{align*}
|\oo|\!\int_0^T\!\!\Psi(t)\dt=\;&\int_{Q_T}\kk[(m_2- l_1)\kk(\beta-\frac{B_1}{A}\Gamma_2\rr)
- l_1\Gamma_1\Gamma_2- l_1h_1c-A l_1k_1\rr]\\
=\;&\int_{Q_T}\bigg[\frac{(m_2-\ol m_2)\Gamma_2^2}{A}
+\frac{c}{A}|\nabla\Gamma_2|^2
-2 l_1\Gamma_1\Gamma_2-\frac{B_1}{A}|\nabla\Gamma_2|^2\\
&- l_1h_1c-A\kk(\Gamma_1(\Gamma_1
+h_1)(m_1-\ol m_1)-2 l_1\Gamma_1^2- l_1h_1^2\rr)\bigg]\\
=\;&\int_{Q_T}\bigg(\frac{(m_2-\ol m_2)\Gamma_2^2}{A}
-2 l_1\Gamma_1\Gamma_2-A(m_1-\ol m_1)\Gamma_1^2\\
\;&+2A l_1\Gamma_1^2+\frac{c}{A}|
\nabla\Gamma_2|^2- l_1h_1c-A\big[h_1\Gamma_1(m_1-\ol m_1)- l_1h_1^2\big]\bigg)-B_1\mathcal{E}_1\\
=\;&\int_{Q_T}\!\kk[\frac {(c^2-Ah_1^2)_t}2+\frac {(m_2-\ol m_2)\Gamma_2^2}A-2 l_1\Gamma_1\Gamma_2-A(m_1-\ol m_1)\Gamma_1^2+2A l_1\Gamma_1^2\rr]-B_1\mathcal{E}_1\\
=\;&0.
\end{align*}
Hence \qq{4.62} admits a unique solution $\tilde c$.

Define
  \[\tilde \varphi:=\frac{\|\varphi_*\|_{L^1(Q_T)}}{\|\varphi\|_{L^1(Q_T)}}\varphi.\]
Then $\|\tilde \varphi\|_{L^1(Q_T)}=\|\varphi_*\|_{L^1(Q_T)}$. Using the fact  $\varphi\to \varphi_*$ in $C(\ol Q_T)$ as $d_1\to\infty$, we have $\tilde\varphi\to \varphi_*$ in $C(\ol Q_T)$ as $d_1\to\infty$.
The next lemma provides the expansions of the functions  $\widehat{\mathsf{d}}_2(d_1)$ and $\tilde\varphi(x,t)$.

\begin{lemma}\label{lm4.8} Assume that the condition {\bf(M3)} holds. Define $A$ by \qq{4.51}, and $B_1$ by \qq{4.61}. Let $c$ and $\tilde c$ be the unique solutions of \eqref{4.56} and \eqref{4.62}, respectively, and $\beta$ be the unique solution of \qq{4.60}. Then as $d_1\to\infty$,
 \begin{align}
 \widehat{\mathsf{d}}_2(d_1)=\;&Ad_1+B_1+o(1),\label{4.63}\\[1mm]
 \tilde\varphi=\;&\varphi_*+\frac{\varphi_*(\Gamma_2+c)}{A}d_1^{-1}
-\frac{B_1\varphi_*\Gamma_2}{A^2}d_1^{-2}+\frac{\varphi_*(\beta+\tilde c)}{A}d_1^{-2}+o\!\kk(d_1^{-2}\rr), \label{4.64}
  \end{align}
where the high order term $o(\cdot)$ in \eqref{4.63} is understood in the sense of real number, and the high order term $o(\cdot)$ in \eqref{4.64} is understood in the norm of $C(\ol Q_T)$.
\end{lemma}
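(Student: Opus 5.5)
We set $s=d_1^{-1}$ and treat the problem as a perturbation expansion, with the implicit function theorem supplying the remainder control. This follows Step 2 of Lemma \ref{lm2.10} and the first-order computation in Lemma \ref{lm4.7}. On the zero set $\mu(\widehat{\mathsf d}_2(d_1),m_2-\theta_{d_1,m_1})=0$ we write
\[
\widehat{\mathsf d}_2(d_1)=d_1\bigl(A+s\,b(s)\bigr),
\]
\[
\tilde\varphi=\varphi_*+\frac{\varphi_*(\Gamma_2+c)}{A}\,s+\Bigl(-\frac{B_1\varphi_*\Gamma_2}{A^2}+\frac{\varphi_*(\beta+\tilde c)}{A}\Bigr)s^2+\sigma s^2,
\]
where $b$ and $\sigma$ are the unknowns. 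The claim is then $b(s)\to B_1$ and $\sigma\to0$ in $C(\ol Q_T)$. By {\bf(M3)}, Lemma \ref{lm2.10} gives the second-order expansion
\[
\theta_{d_1,m_1}=l_1+l_1(\Gamma_1+h_1)s+l_1(\Lambda_1+k_1)s^2+o(s^2).
\]
Substituting this into the eigen-equation $\tilde\varphi_t-\widehat{\mathsf d}_2\Delta\tilde\varphi=(m_2-\theta_{d_1,m_1})\tilde\varphi$ and multiplying by $s$ produces an equation of the form
\[
s\tilde\varphi_t-(A+sb)\Delta\tilde\varphi=s(m_2-\theta_{d_1,m_1})\tilde\varphi .
\]

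We match orders of $s$. Order $s^0$ holds because $\Delta\varphi_*=0$. Order $s^1$ reproduces $-\Delta\Gamma_2=m_2-\ol m_2$ together with the averaged equation for $\varphi_*$ (since $\widehat{\ol m_2}=\widehat{l_1}$). Order $s^2$ splits into two parts. The mean-zero part in $x$ is exactly \eqref{4.60} for $\beta$; the term $-B_1\Gamma_2/A^2$ absorbs the shift $b$. The spatial-average part gives the ODE for $c$, namely \eqref{4.56}. Order $s^3$ involves only the spatial average. Its solvability, i.e.\ the vanishing of its time average against $\psi_*$ (using that $\varphi_*\psi_*$ is constant), is what forces $b(0)=B_1$. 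The computation preceding the lemma, showing $\widehat\Psi=0$, is precisely this solvability condition, and \eqref{4.62} then yields $\tilde c$. So the formal construction is already essentially done in the paper.

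For rigor, we decompose $\sigma=\sigma^\perp+\bar\sigma(t)$ in the spaces $\mathbb X_p\times E$ of \eqref{2.27}, and add the scalar $b$ as an extra unknown. The normalization $\|\tilde\varphi\|_{L^1(Q_T)}=\|\varphi_*\|_{L^1(Q_T)}$, linearized, becomes $\widehat{\varphi_*\bar\sigma}$-type constraints consistent with $\widehat{\varphi_* c}=\widehat{\varphi_*\tilde c}=0$. We define a map
\[
\mathcal H(\sigma^\perp,\bar\sigma,b,s)\in\mathbb Y_p\times\widetilde E\times\mathbb R,
\]
whose components are the mean-zero equation divided by $s^2$ at the appropriate order, the averaged equation divided by $s^3$, and the normalization constraint. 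We verify that $\mathcal H(0,0,B_1,0)=0$ using the identities above. The Fréchet derivative in $(\sigma^\perp,\bar\sigma,b)$ at $s=0$ acts as
\[
(\phi,e,\beta_0)\mapsto\Bigl(-A\Delta\phi,\; e_t-(\ol m_2-l_1)e+\beta_0\,\mathcal G(t),\; \widehat{\varphi_*e}\Bigr),
\]
where $\mathcal G(t)=\frac{1}{A^2|\Omega|}\varphi_*\int_\Omega|\nabla\Gamma_2|^2$ is the coefficient produced by varying $b$. This operator is an isomorphism for two reasons. First, $e\mapsto e_t-(\ol m_2-l_1)e$ has one-dimensional kernel spanned by $\varphi_*$ and cokernel detected by $\psi_*$. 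Second, $\int_0^T\psi_*\mathcal G\,\mathrm dt=\mathcal E_2/(A^2|\Omega|)\cdot\varphi_*\psi_*\neq0$, so $\beta_0$ fills the cokernel while the normalization kills the kernel. The implicit function theorem then gives continuous $(\sigma^\perp,\bar\sigma,b)(s)$ with $b(0)=B_1$. Finally, the uniqueness part of Lemma \ref{lm4.7}, namely \eqref{4.48} together with simplicity of the principal eigenvalue and positivity of $\tilde\varphi$ for small $s$, identifies this solution with the actual $\widehat{\mathsf d}_2(d_1)$ and $\tilde\varphi$. The embedding $H^1((0,T),W^2_p)\hookrightarrow C(\ol Q_T)$ then gives \eqref{4.63} and \eqref{4.64}.

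The main obstacle is regularity. The $o(s^2)$ remainder in $\theta_{d_1,m_1}$ is only controlled in $C(\ol Q_T)$, so it enters $\mathcal H$ through a term that is continuous in $s$ but need not be differentiable. We therefore need the version of the implicit function theorem that requires continuity in $s$ and $C^1$ only in the unknowns, as implicitly used in Lemma \ref{lm2.10}. We also need $\partial_t\Gamma_2$, $\beta$ and $\Lambda_1$ to lie in the right spaces, which Lemma \ref{lm2.9} under {\bf(M3)} provides. A second delicate point is bookkeeping the $s^3$ averaged equation so that its constant term matches exactly $\Psi(t)$ with the $-B_1\Gamma_2/A$ correction. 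We would check this first by redoing the computation of $\widehat\Psi$ with $b$ left free, to confirm that the coefficient of $b-B_1$ is the nonzero quantity $\mathcal E_2/A^2$.
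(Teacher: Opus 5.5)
Your proposal follows essentially the same route as the paper's proof. With $s=1/d_1$ and the second-order expansion of $\theta_{d_1,m_1}$ from Lemma~\ref{lm2.10}, the paper writes $\widehat{\mathsf d}_2(d_1)=Ad_1+f$ and $\tilde\varphi=\varphi_*+\frac{\varphi_*(\Gamma_2+c)}{A}d_1^{-1}+\frac{g}{A}d_1^{-2}$. It then solves for $(f,g)$ using the same three-component map (mean-zero equation, averaged equation, normalization $\int_{Q_T}g=0$) around $(B_1,\varphi_*\beta-\frac{B_1}{A}\varphi_*\Gamma_2,\varphi_*\tilde c)$. Nondegeneracy is proved by your $\psi_*$-solvability argument: a nonzero variation of $f$ would make the spatially constant component non-periodic, via the integrating factor $\mathrm e^{-\int_0^t(\ol m_2-l_1)}$, which is $\psi_*$ up to a constant. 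The paper concludes with the implicit function theorem and $H^1((0,T),W_p^2(\Omega))\hookrightarrow C(\ol Q_T)$.

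Your explicit identification of the implicit-function branch with the true $\widehat{\mathsf d}_2(d_1)$ and $\tilde\varphi$, through \eqref{4.48}, simplicity and positivity, is a step the paper only asserts. One small correction: with the $L^1$ normalization and $\widehat{\varphi_*c}=\widehat{\varphi_*\tilde c}=0$, the constraint on your remainder is exactly $\int_{Q_T}\sigma=0$. So the linearized third component is $\int_0^T e\,\mathrm dt$ rather than $\widehat{\varphi_*e}$; this is harmless, since either functional is nonzero on the kernel $\mathrm{span}\{\varphi_*\}$.
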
\vspace{-2mm}

\begin{proof}\; We first analyze the regularities of $\beta$ and $\tilde c$. Note that the condition {\bf(M3)} holds, and $l_1,h_1,c\in C_T^1([0,T])$. Make use of Lemma \ref{lm2.9} it is not hard to show that
 \[\beta\in C_T([0,T],C^{1+\alpha}(\overline\Omega))\cap H^1((0,T),W_p^2(\Omega)),\;\;\tilde c\in C_T([0,T])\cap H^1((0,T)).\]

Let
  \[\widehat{\mathsf{d}}_2(d_1)=Ad_1+f,\quad \tilde\varphi=\varphi_*+\frac{\varphi_*(\Gamma_2+c)}{A}d_1^{-1}
 +\frac{g}{A}d_1^{-2}\]
and
  \[\theta_{d_1,m_1}= l_1+l_1(\Gamma_1+h_1)d_1^{-1}+\omega d_1^{-2}.\]
We will use the implicit function theorem to determine the values of $f$ and $g$.

{\it Step 1}. From Lemma \ref{lm2.10}, we have $\omega\to l_1(\Lambda_1+k_1)$ in $C(\ol Q_T)$ as $d_1\to\infty$. Notice that 
 \[\int_{Q_T}(\tilde\varphi-\varphi_*)=0,\;\;\;\text{and}\;\;\int_{Q_T}\varphi_*(\Gamma_2+c)=0.\]
The direct calculations show that $(f,g)\in \mathbb R\times C_T^1([0,T],W_p^2(\Omega))$ and satisfies, with $s=1/d_1$,
 \bee\label{4.65}\begin{cases}
s g_t-(A+s f)\Delta g-s(m_2- l_1)g=Q(f,g,s) &\;\;\text{in}\;\;Q_T,\\
\partial_{\nu}g=0 &\;\;\text{on}\;\;S_T,\\
g(x,0)=g(x,T) &\;\;\text{on}\;\;\boo,\\
\int_{Q_T}g(x,t)=0 &\;\;\text{on}\;\;[0,T],
  \end{cases} \eee
where
 \begin{align*}
 Q(f,g,s)=\;&f\varphi_*\Delta\Gamma_2+\varphi_*(m_2-\ol m_2)(\Gamma_2+c)
 -\frac{\varphi_*}{|\Omega|}\int_{\Omega}|\nabla\Gamma_2|^2
 -A\varphi_* l_1\Gamma_1-\varphi_*\partial_t\Gamma_2\\
\;&-s\kk[\varphi_* l_1(\Gamma_1+h_1)(\Gamma_2+c)+A\varphi_*\omega
+s l_1(\Gamma_1+h_1)g+s\varphi_*(\Gamma_2+c)\omega+s^2\omega g\rr].
  \end{align*}
Set
  \begin{align*}
 h_1(f,\sigma,r,s)=\;&(A+s f)\Delta \sigma-s(\sigma+r)_t
+s(m_2- l_1)(\sigma+r)+Q(f,\sigma+r,s)\\[1mm]
 h_2(f,\sigma,r,s)=\;&\int_{\Omega}\Big\{(\sigma+r)_t-
 (m_2- l_1)(\sigma+r)+\varphi_*l_1(\Gamma_1\Gamma_2+h_1c)+A\varphi_*\omega\\
\;&+s[ l_1(\Gamma_1+h_1)(\sigma+r)+
 \varphi_*(\Gamma_2+c)\omega]+s^2\omega(\sigma+r)\Big\},\\
 h_3(f,\sigma,r,s)=&\;\int_{Q_T}(\sigma+r),
 \end{align*}
and define an operator $\mathcal H:\mathbb R\times\mathbb X_p\times E\times[0,\infty)\to\mathbb Y_p\times\widetilde E\times\mathbb R$ by
  \[ \mathcal H(f,\sigma,r,s)=\big(h_1(f,\sigma,r,s),\;
    h_2(f,\sigma,r,s),\;h_3(f,\sigma,r,s)\big)^{\text{T}},\]
where $E$, $\widetilde E$, $\mathbb X_p$ and $\mathbb Y_p$ are defined by \qq{2.27}. Note that 
 \[\int_\oo\Delta\Gamma_2=0,\;\;\int_\oo(m_2-\ol m_2)\Gamma_2=
 \frac1{|\Omega|}\int_{\Omega}|\nabla\Gamma_2|^2,\;\;\int_\oo\Gamma_i=0.\]
It is easy to very that a quadruple $(f,\sigma,r,s)\in\mathbb R\times\mathbb X_p\times E\times(0,\infty)$ satisfies $\mathcal H(f,\sigma,r,s)=0$ if and only if  the pair $(f,g)=(f,\sigma+r)$ solves \eqref{4.65}.\vspace{-0.2mm}

{\it Step 2}. Write $\omega= l_1(\Lambda_1+k_1)+o(1)$ and set
  \[ f_0=B_1,\;\;\sigma_0=\varphi_*\beta-\frac{B_1}{A}\varphi_*\Gamma_2,\;\; r_0=\varphi_*\tilde c. \]
Then $(f_0,\sigma_0,r_0)\in \mathbb R\times\mathbb X_p\times E$, and a direct computation gives
   \begin{align*}
  h_1(f_0,\sigma_0,r_0,0)
=\;&A\varphi_*\Delta\kk(\beta-\frac{B_1}{A}\Gamma_2\rr)
+B_1\varphi_*\Delta\Gamma_2+\varphi_*(m_2-\ol m_2)(\Gamma_2+c)\\
&-A\varphi_* l_1\Gamma_1-\varphi_*\partial_t\Gamma_2
-\frac{\varphi_*}{|\Omega|}\int_{\Omega}|\nabla\Gamma_2|^2=0,\\[1mm]
 h_2(f_0,\sigma_0,r_0,0)=&\int_{\Omega}\kk( \varphi_*\tilde c+\varphi_*\beta-\frac{B_1}{A}\varphi_*\Gamma_2\rr)_t\\
&-\int_{\Omega}\varphi_*\kk[ (m_2\!-\! l_1)\kk(\tilde c\!+\!\beta\!-\!\frac{B_1}{A}\Gamma_2\rr)
\!-\! l_1(\Gamma_1\Gamma_2\!+\!h_1c)\!-\!A l_1(\Lambda_1\!+\!k_1)\rr]\\
=\;&|\Omega|\kk[\varphi_*'\tilde c+\varphi_*\tilde c'-(\ol m_2- l_1)\varphi_*\tilde c\rr]-\varphi_*\int_{\Omega}(m_2- l_1)\kk(\beta-\frac{B_1}{A}\Gamma_2\rr)\\
&-\varphi_* l_1\int_{\Omega}\Gamma_1\Gamma_2
-|\Omega|\varphi_* l_1(h_1c-Ak_1)=0,\\[1mm]
h_3(f_0,\sigma_0,r_0,0)=\;&\int_{Q_T}\varphi_*\tilde c +\int_0^T\varphi_*\int_{\Omega}\beta
-\frac{B_1}{A}\int_0^T\varphi_*\int_{\Omega}\Gamma_2=0,
 \end{align*}
i.e., $\mathcal H(f_0,\sigma_0,r_0,0)=0$. A direct calculation determines the Fr\'echet derivative of $\mathcal H$ with respect to $(f,r,\sigma)$ at $(f_0,\sigma_0,r_0,0)$ as follows:
  \[ D_{fr\sigma}\mathcal H(f_0,\sigma_0,r_0,0)(\mu,\phi,b)^{\mathrm{T}}
 =\begin{pmatrix}
 A\Delta \phi+ \mu\varphi_*\Delta\Gamma_2\\[1mm]
\int_{\Omega}(\phi+b)_t-\int_{\Omega}(m_2- l_1)(\phi+b)\\[1mm]
 \int_{Q_T}(\phi+b)
\end{pmatrix}. \]

{\it Step 3}. We show that problem
\bee\label{4.66}
D_{fr\sigma}\mathcal H(f_0,\sigma_0,r_0,0)(\mu,\phi,b)^{\mathrm{T}}=0,
\;\;(\mu,\phi,b)\in\mathbb R\times\mathbb X_p\times E
 \eee
admits only the trivial solution $0$. Suppose that \eqref{4.66} admits a nontrivial solution $(\mu,\phi,b)$. Then for each $t\in[0,T]$, we have
 \bee \begin{cases}
-A\Delta \phi=\mu\varphi_*\Delta\Gamma_2=\mu\varphi_*(\ol m_2-m_2), & x\in\Omega,\\
\partial_{\nu}\phi=0, & x\in\partial\Omega,\\
\int_{\Omega}\phi=0.
  \end{cases}\label{4.67}\eee
  
If $\mu\neq 0$, then $\nabla\phi(\cdot,t)\not\equiv 0$ for every $t\in[0,T]$. This follows from $\nabla m_2\not\equiv 0$, $A>0$, and $\varphi_*>0$ on $[0,T]$. Using \qq{4.67}, we obtain
  \begin{align}
 \int_{\oo}(m_2-l_1)\phi=\;&\int_{\oo}(m_2-\ol m_2)\phi+(\ol m_2-l_1)\int_\oo\phi\nm\\
 =\;&\int_{\oo}(m_2-\ol m_2)\phi
  =-\frac{A}{\mu\varphi_*}\int_{\Omega}|\nabla\phi|^2\neq 0,\;\;\forall\, t\in[0,T].
  \label{4.68}\end{align}
Notice that $\int_{\Omega}\phi_t=(\int_{\Omega}\phi)_t=0$ and $b$ is spatially homogeneous. Thanks to the relation \eqref{4.68}, the second equation of \qq{4.66} reduces to
  \[  0=\int_{\Omega}(\phi+b)_t-\int_{\Omega}(m_2-l_1)(\phi+b)
  =|\Omega|\kk(b_t-(\ol m_2-l_1)b+
  \frac{A}{\mu\varphi_*|\Omega|}\int_{\Omega}|\nabla\phi|^2\rr).\]
It follows that
 \[ b(T){\rm e}^{-\int_0^T(\ol m_2-l_1)}=b(0)-\frac{A}{\mu|\Omega|}\int_0^T{\rm e}^{-\int_0^t(\ol m_2-l_1)}\frac{1}{\varphi_*}\int_{\Omega}|\nabla\phi|^2. \]
Recall that
  \[\int_0^T(\ol m_2-l_1)=0,\;\;\text{and}\;\;\frac{1}{\varphi_*}\int_{\Omega}|\nabla\phi|^2>0\]
for all $t\in[0,T]$. The above equation implies $b(T)\neq b(0)$. We have a contradiction, and so $\mu=0$. 

From  \qq{4.67}, we further deduce $\phi\equiv 0$, whence the third equation of \qq{4.66} reduces to $\int_{Q_T}b=0$, i.e., $\int_0^Tb=0$. This together with the second equation of \qq{4.66} yields that  $b$ satisfies
  \[  b_t=(\ol m_2-l_1)b,\;\; t\in[0,T];\quad\int_0^Tb=0,\]
which forces $b\equiv 0$. Hence \eqref{4.66} admits only trivial solution $0$ in $\mathbb R\times E\times \mathbb X_p$, and thereby the linear operator $D_{fr\sigma}\mathcal H(f_0,\sigma_0,r_0,0)$ is non-degenerate.

{\it Step 4}. By the implicit function theorem, there exist a constant $0<s_0\ll 1$ and a map
  \[  s\mapsto (f(s),\sigma(s),r(s))
 \in C\big([0,s_0),\mathbb R\times E\times \mathbb X_p\big)\]
such that $(f(0),\sigma(0),r(0))=(f_0,\sigma_0,r_0)$ and $(f(s),\sigma(s),r(s))$ solves uniquely $\mathcal H(f,\sigma,r,s)=0$ for $s\in[0,s_0)$. Therefore, $(f,g)=(f(s), \sigma(s)+r(s))$ for $s\in[0,s_0)$, and $\lim_{d_1\to\infty}f=\lim_{s\to 0}f(s)=B_1$, which implies \eqref{4.63}. By the continuous embedding $H^1((0,T), W_p^2(\Omega))\hookrightarrow C(\ol Q_T)$ as  $p>n$, we obtain that
  \[  \lim_{d_1\to\infty}g=\lim_{s\to 0}(\sigma(s)+r(s))=\varphi_*\mu-\frac{B_1}{A}\varphi_*\Gamma_2+
  \varphi_*\tilde c\quad \mbox{in }C(\ol Q_T),\]
which implies \eqref{4.64}. The proof is complete.
\end{proof}

By an argument similar to the proof of Lemma \ref{lm4.8}, we have the following lemma.\vspace{-2mm}

\begin{lemma}\label{lm4.9}
Assume that the condition {\bf(M3)} holds. Then as $d_2\to\infty$,
   \bee\label{4.69}
\widehat{\mathsf{d}}_1(d_2)=\frac 1Ad_2+B_2+o(1),
 \eee
where\vskip 2pt
 \bee
B_2=\frac{1}{\mathcal{E}_1}\int_{Q_T}\kk((m_1-\ol m_1)\Gamma_1^2-\frac{2\mathcal{E}_1}{\mathcal{E}_2} l_2\Gamma_1\Gamma_2
+\frac{\mathcal{E}^2_1}{\mathcal{E}^2_2}\Gamma_2^2[2 l_2-(m_2-\ol m_2)]\rr)\mathrm dx\mathrm dt.\;\label{4.70}
 \eee
\end{lemma}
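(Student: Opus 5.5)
The approach is to follow the proof of Lemma \ref{lm4.8} with the roles of the two species interchanged, $(d_1,m_1)\leftrightarrow(d_2,m_2)$. Lemma \ref{lm4.7} already provides the curve $\widehat{\mathsf d}_1$ with $\widehat{\mathsf d}_1(d_2)/d_2\to 1/A$. It is defined by $\mu(\widehat{\mathsf d}_1(d_2),m_1-\theta_{d_2,m_2})=0$. Let $\varphi$ be the associated positive eigenfunction. As $d_2\to\infty$, $\varphi\to\varphi_\sharp(t)$, the positive periodic solution of $\varphi_\sharp'=(\ol m_1-l_2)\varphi_\sharp$. The analogue of \eqref{4.56} gives a periodic function $c_\sharp$ solving
\[
c_\sharp'=\frac{A}{|\Omega|}\int_\Omega|\nabla\Gamma_1|^2-l_2h_2,\qquad \widehat{\varphi_\sharp c_\sharp}=0 .
\]
It is solvable because the right-hand side has zero time integral, by the definition of $A$ and \eqref{2.21} with $m=m_2$.

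The first step is to make the ansatz
\[
\widehat{\mathsf d}_1(d_2)=\frac{d_2}{A}+f,\qquad
\tilde\varphi=\varphi_\sharp+A\varphi_\sharp(\Gamma_1+c_\sharp)d_2^{-1}+A g\,d_2^{-2}.
\]
We insert \eqref{2.25} for $\theta_{d_2,m_2}$ (valid under {\bf(M3)}), namely $\theta_{d_2,m_2}=l_2+l_2(\Gamma_2+h_2)d_2^{-1}+\omega d_2^{-2}$ with $\omega\to l_2(\Lambda_2+k_2)$. Setting $s=1/d_2$, this yields a system for $(f,g)$ analogous to \eqref{4.65}. At $s=0$ its spatial part is
\[
\frac1A\Delta g+f\varphi_\sharp\Delta\Gamma_1+\varphi_\sharp(m_1-\ol m_1)(\Gamma_1+c_\sharp)-\frac{\varphi_\sharp}{A}l_2\Gamma_2-\varphi_\sharp\partial_t\Gamma_1-\frac{\varphi_\sharp}{|\Omega|}\int_\Omega|\nabla\Gamma_1|^2=0,
\]
up to consistent scaling. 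We split $g=\sigma+r$ with $\sigma\in\mathbb X_p$ and $r\in E$. We then define the operator $\mathcal H$ on $\mathbb R\times\mathbb X_p\times E\times[0,\infty)$ exactly as in Lemma \ref{lm4.8}, with three components: the elliptic equation, its spatial average (an ODE in $t$), and the normalization $\int_{Q_T}g=0$.

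The second step is to build the $s=0$ solution $(f_0,\sigma_0,r_0)$. We take $\sigma_0=\varphi_\sharp(\beta_\sharp-AB_2\Gamma_1)$, where $\beta_\sharp$ solves the elliptic problem analogous to \eqref{4.60}. Its compatibility is automatic because $\int_\Omega(m_1-\ol m_1)\Gamma_1=\int_\Omega|\nabla\Gamma_1|^2$. We take $r_0=\varphi_\sharp\tilde c_\sharp$, where $\tilde c_\sharp$ solves the analogue of \eqref{4.62}. That problem is solvable only if the analogue $\Psi_\sharp$ of $\Psi$ has zero mean, $\widehat{\Psi_\sharp}=0$. This is precisely the condition that fixes $f_0=B_2$.

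\textbf{Main obstacle.} The key computation is the verification that $\widehat{\Psi_\sharp}=0$ holds exactly when $f_0$ equals $B_2$ as given in \eqref{4.70}. We would mimic the calculation preceding Lemma \ref{lm4.8}. First, compute $\int_\Omega(m_1-l_2)\beta_\sharp=-\int_\Omega\Gamma_1\Delta\beta_\sharp$. Next, evaluate $\int_{Q_T}l_2k_2$ using \eqref{2.22}--\eqref{2.23} with $m=m_2$. Then multiply the $c_\sharp$ and $h_2$ equations by $c_\sharp$ and $h_2$ to produce exact time derivatives. Collecting terms, the coefficient of $f_0$ is $\mathcal E_1/A^2$ (times a normalization), and the remaining terms give
\[
\int_{Q_T}\Big[(m_1-\ol m_1)\Gamma_1^2-\tfrac{2}{A}l_2\Gamma_1\Gamma_2+\tfrac{1}{A^2}\Gamma_2^2\big(2l_2-(m_2-\ol m_2)\big)\Big].
\]
Dividing by $\mathcal E_1$ yields \eqref{4.70}, since $1/A=\mathcal E_1/\mathcal E_2$. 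Careful bookkeeping of the signs and of the powers of $A$ is the delicate part.

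The last step is non-degeneracy of $D_{f\sigma r}\mathcal H$ at $(f_0,\sigma_0,r_0,0)$, proved exactly as in Step 3 of Lemma \ref{lm4.8}. Suppose the kernel contains $(\mu,\phi,b)$ with $\mu\ne0$. Then, using $\nabla m_1\not\equiv0$, we get $\int_\Omega(m_1-l_2)\phi<0$ (or $>0$) for all $t$, and this contradicts the periodicity of $b$. Once $\mu=0$, it follows that $\phi=0$ and then $b=0$. The implicit function theorem then gives $f(s)\to B_2$ as $s\to0$, which is \eqref{4.69}.
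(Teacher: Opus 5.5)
Your proposal is correct and follows the paper's route exactly. The paper proves this lemma only by saying that the implicit-function-theorem argument of Lemma~\ref{lm4.8} carries over with $(d_1,m_1)\leftrightarrow(d_2,m_2)$ and $A\to 1/A$, which is what you lay out: the ansatz, the $s=0$ solution with $\widehat{\Psi_\sharp}=0$ fixing $f_0=B_2$, non-degeneracy, and the implicit function theorem. There are two harmless bookkeeping slips, the first inherited from \eqref{4.56}: your displayed $s=0$ equation is what your ansatz produces only if $c_\sharp'=\frac{1}{|\Omega|}\int_\Omega|\nabla\Gamma_1|^2-\frac1A l_2h_2$ (which is $1/A$ times what you wrote), and in the normalization where the remaining terms are your displayed bracket, the coefficient of $f_0$ is $\mathcal E_1$ rather than $\mathcal E_1/A^2$; neither changes $B_2$, since $c_\sharp$ enters $\widehat{\Psi_\sharp}$ only through $\int_0^T c_\sharp c_\sharp'\,\mathrm dt=0$.
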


We now present the following key results, which characterize the signs of $\mu(d_2,m_2-\theta_{d_1,m_1})$ and $\mu(d_1,m_1-\theta_{d_2,m_2})$, together with the positions of $\widehat{\mathsf{d}}_1(d_2)$ and $\widehat{\mathsf{d}}_2(d_1)$.

\begin{lemma}\label{lm4.10} Let $d_0, D_1, D_2>0$ and strictly increasing $C^1$ functions $\widehat{\mathsf{d}}_2:(D_1,\infty)\to(d_0,\infty)$ and $\widehat{\mathsf{d}}_1:(D_2,\infty)\to (d_0,\infty)$ be given in Lemma \ref{lm4.7}. Then for every $d_1>D_1$,
  \bee
  \begin{cases}\mu(d_2,m_2-\theta_{d_1,m_1})<0,\;\;\forall\;d_2\in (d_0,\widehat{\mathsf{d}}_2(d_1)),\\[0.5mm]
\mu(d_2,m_2-\theta_{d_1,m_1})>0,\;\;\forall\;d_2\in(\widehat{\mathsf{d}}_2(d_1),\infty),
   \end{cases}\label{4.71}\eee
while for every $d_2>D_2$,
   \bee
  \begin{cases}
   \mu(d_1,m_1-\theta_{d_2,m_2})<0,\;\;\forall\;d_1\in (d_0,\widehat{\mathsf{d}}_1(d_2)),\\[0.5mm] \mu(d_1,m_1-\theta_{d_2,m_2})>0,\;\;\forall\;d_1\in(\widehat{\mathsf{d}}_1(d_2),\infty).
  \end{cases}\label{4.72}\eee
Moreover, the following statements hold:
\begin{enumerate}
\item[\rm(1)] If $m_1-m_2$ is spatially homogeneous, then $\widehat{\mathsf{d}}_2(d_1)=d_1$ for all $d_1\in(D_1,\infty)$, and $\widehat{\mathsf{d}}_1(d_2)=d_2$ for all $d_2\in(D_2,\infty)$.

\item[\rm(2)] Suppose further that the condition {\bf(M3)} is satisfied. Then we have
\begin{enumerate}
\item[\rm({2}a)] If $\ol{m_1-m_2}\not\equiv 0$ on $[0,T]$ and $\Pi=0$, then as $d_1,d_2\to\infty$, the distance between the curves $\widehat{\mathsf{d}}_2(d_1)$ and $\widehat{\mathsf{d}}_1(d_2)$ approaches to zero.
\item[\rm({2}b)] If either $\ol{m_1-m_2}\not\equiv 0$ on $[0,T]$ and $\Pi>0$, or $\ol{m_1-m_2}\equiv 0$ on $[0,T]$, then as $d_1,d_2\to\infty$, the curve $\widehat{\mathsf{d}}_2(d_1)$ lies above the curve $\widehat{\mathsf{d}}_1(d_2)$ with distance: $\frac{\mathcal{E}_1\Pi}{\sqrt{\mathcal{E}^2_1+\mathcal{E}^2_2}}>0$.
\item[\rm({2}c)] If $\ol{m_1-m_2}\not\equiv 0$ on $[0,T]$ and $\Pi<0$, then as $d_1,d_2\to\infty$, the curve $\widehat{\mathsf{d}}_2(d_1)$ lies below the curve $\widehat{\mathsf{d}}_1(d_2)$ with distance: $-\frac{\mathcal{E}_1\Pi}
 {\sqrt{\mathcal{E}^2_1+\mathcal{E}^2_2}}>0$.
\end{enumerate}
\end{enumerate}
\end{lemma}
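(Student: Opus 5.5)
The plan is to get the sign patterns \eqref{4.71}--\eqref{4.72} from the construction in Lemma \ref{lm4.7}. Part (1) then follows from a shift identity, and part (2) reduces to an algebraic comparison of the two-term expansions in Lemmas \ref{lm4.8} and \ref{lm4.9}.

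\textbf{Sign patterns.} Fix $d_1>D_1$. By \eqref{4.54}, $d_2\mapsto\mu(d_2,m_2-\theta_{d_1,m_1})$ is strictly increasing on $(d_0,\infty)$. By \eqref{4.48}, its only zero there is at $\widehat{\mathsf d}_2(d_1)$. Hence it is negative on $(d_0,\widehat{\mathsf d}_2(d_1))$ and positive beyond, which is \eqref{4.71}. This is consistent with the negativity on $[d_0/2,d_0]$ and the positive limit as $d_2\to\infty$ found in Step 1 of Lemma \ref{lm4.7}. The same argument with the roles of the indices swapped gives \eqref{4.72}.

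\textbf{Part (1).} Suppose $m_1-m_2=g(t)$ with $\int_0^Tg=0$. Proposition \ref{lm2.1}(4) gives
\[\mu(d_1,m_2-\theta_{d_1,m_1})=\mu(d_1,m_1-\theta_{d_1,m_1})=0 .\]
Enlarging $D_1$ so that $D_1>d_0$, the point $d_2=d_1$ lies in $(d_0,\infty)$. The uniqueness in \eqref{4.48} then forces $\widehat{\mathsf d}_2(d_1)=d_1$. Symmetrically, $\widehat{\mathsf d}_1(d_2)=d_2$.

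\textbf{Part (2): comparing the asymptotes.} Under {\bf(M3)}, Lemma \ref{lm4.8} gives $\widehat{\mathsf d}_2(d_1)=Ad_1+B_1+o(1)$. Inverting Lemma \ref{lm4.9} writes the other curve as $d_2=Ad_1-AB_2+o(1)$. The two curves are therefore asymptotic to parallel lines of slope $A=\mathcal E_2/\mathcal E_1$, with vertical gap $B_1+AB_2$. Their Euclidean distance is
\[\frac{|B_1+AB_2|}{\sqrt{1+A^2}}=\frac{\mathcal E_1|B_1+AB_2|}{\sqrt{\mathcal E_1^2+\mathcal E_2^2}}.\]
So everything reduces to the identity $B_1+AB_2=\Pi$.

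To verify it, insert \eqref{4.61} and \eqref{4.70} and use $A/\mathcal E_1=A^2/\mathcal E_2=\mathcal E_2/\mathcal E_1^2$. The two terms containing $(m_1-\ol m_1)\Gamma_1^2$ cancel. The two terms containing $(m_2-\ol m_2)\Gamma_2^2$ also cancel, since their coefficients are $1/\mathcal E_2$ and $-1/(A\mathcal E_1)=-1/\mathcal E_2$. What remains is
\[\frac{2\mathcal E_2}{\mathcal E_1^2}\,l_1\Gamma_1^2-\frac{2}{\mathcal E_1}(l_1+l_2)\Gamma_1\Gamma_2+\frac{2}{\mathcal E_2}\,l_2\Gamma_2^2,\]
integrated over $Q_T$, which is exactly $\Pi$ from \eqref{1.13}. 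This gives (2a) when $\Pi=0$ and (2c) when $\Pi<0$. In case (2b) with $\Pi>0$, the distance is $\mathcal E_1\Pi/\sqrt{\mathcal E_1^2+\mathcal E_2^2}$ with $\widehat{\mathsf d}_2$ lying above.

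\textbf{Main obstacle: case $\ol m_1\equiv\ol m_2$ in (2b).} Here \eqref{1.11} gives $l_1=l_2=:l>0$, so $\Pi$ is a perfect square:
\[\Pi=\frac{2}{\mathcal E_1}\int_{Q_T}l\Bigl(\sqrt{\mathcal E_2/\mathcal E_1}\,\Gamma_1-\sqrt{\mathcal E_1/\mathcal E_2}\,\Gamma_2\Bigr)^2\ge0 .\]
The difficulty is proving strict positivity. If $\Pi=0$, then $\Gamma_2=A\Gamma_1$, and applying \eqref{1.10} gives $m_2-\ol m_2=A(m_1-\ol m_1)$. Consequently $\mathcal E_2=A^2\mathcal E_1$, and since $A=\mathcal E_2/\mathcal E_1$ this forces $A=1$. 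Then $m_1-m_2=\ol m_1-\ol m_2$ is spatially homogeneous, which contradicts the standing assumption of Theorem \ref{th1.4}(2) that $m_1-m_2$ is spatially heterogeneous; that assumption must be imported into part (2). Hence $\Pi>0$, and the stated distance follows.
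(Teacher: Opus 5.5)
Your proposal is correct and follows the paper's proof essentially step for step: the sign patterns come from \eqref{4.48} and \eqref{4.54}, part (1) comes from Proposition \ref{lm2.1}(4) plus uniqueness of the zero, and part (2) comes from the asymptotes in Lemmas \ref{lm4.8}--\ref{lm4.9} together with $B_1+AB_2=\Pi$ (your direct coefficient cancellation is a tidier version of \eqref{4.74}). The one adjustment is in the case $\ol m_1\equiv\ol m_2$: you do not need to import spatial heterogeneity of $m_1-m_2$, because $A=1$ together with $\ol m_1\equiv\ol m_2$ gives $m_1\equiv m_2$, which already contradicts {\bf(M)}, exactly as the paper argues.
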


\begin{remark} When $m_1(x,t)-m_2(x,t)$ is spatially homogeneous, by the assumption $\int_{Q_T}(m_1-m_2)=0$ and $m_1\not\equiv m_2$, we have $m_1-m_2\equiv \ol m_1-\ol m_2$ and $\ol m_1\not\equiv \ol m_2$ on $[0,T]$. Thus, $m_1-\ol m_1\equiv m_2-\ol m_2$ on $[0,T]$. Combined with \eqref{1.10}, \eqref{1.12} yields that $\Gamma_1\equiv \Gamma_2$ on $\overline Q_T$ and $\mathcal{E}_1=\mathcal{E}_2$, which gives $\Pi=0$. Hence, the condition that $m_1-m_2$ is spatially homogeneous is a special situation of Case (2a) in Lemma \ref{lm4.10} without the condition {\bf(M3)}.
\end{remark}

\begin{proof}[Proof of Lemma \ref{lm4.10}]\; It follows from \eqref{4.48} and \eqref{4.54} that \eqref{4.71} holds for all $d_1>D_1$. Similarly, \eqref{4.72} holds for all $d_2>D_2$.

We first prove the assertion (1). Given that $m_1(x,t)-m_2(x,t)$ is spatially homogeneous and $\int_{Q_T}(m_1-m_2)=0$, we have $\int_0^T(m_1-m_2)=0$. It then follows from Proposition \ref{lm2.1}(4) and Lemma \ref{lm4.7} that
 \[\mu(d_2,m_1-\theta_{d_1,m_1})=\mu(d_2,m_2-\theta_{d_1,m_1}),\;\;
 \mu(d_2,m_2-\theta_{d_1,m_1})=0 \Longleftrightarrow d_2=\widehat{\mathsf{d}}_2(d_1).\]
Since $\mu(d_1,m_1-\theta_{d_1,m_1})=0$, the above implies $\widehat{\mathsf{d}}_2(d_1)=d_1$ for all $d_1\in(D_1,\infty)$. Similarly, $\widehat{\mathsf{d}}_1(d_2)=d_2$ for all $d_2\in(D_2,\infty)$.

We next prove the assertion (2). Curves $\widehat{\mathsf{d}}_2(d_1)$ and $\widehat{\mathsf{d}}_1(d_2)$ can be written as
 \[ L_1: d_2=Ad_1+B_1+o(1),\;\;\; L_2: d_2=Ad_1-AB_2+o(1),\;\;\text{as}\;\;d_1\to\yy\]
by \eqref{4.63} and \eqref{4.69}. The distance between the curves $\widehat{\mathsf{d}}_2(d_1)$ and $\widehat{\mathsf{d}}_1(d_2)$ is
 \bee
 \mathrm{dist}(L_1, L_2)=\frac{|B_1+AB_2|}{\sqrt{1+A^2}}+o(1)=
 \frac{\mathcal{E}_1}{\sqrt{\mathcal{E}^2_1+\mathcal{E}^2_2}}|B_1+AB_2|
 +o(1),\;\;\text{as}\;\;d_1\to\yy.\label{4.73}\eee
Let $\Pi$ be defined by \eqref{1.13}. Since the condition {\bf(M3)} holds. By the expressions of $B_1$ and $B_2$ (cf. \eqref{4.61}, \eqref{4.70}), we derive that by the careful computation
 \begin{align}
 B_1+AB_2=\;&\int_{Q_T} l_1\kk(\frac{\sqrt{\mathcal{E}_2}}{\mathcal{E}_1}\Gamma_1-\frac{1}{\sqrt{\mathcal{E}_2}}
 \Gamma_2\rr)^2+\frac{1}{\mathcal{E}_2}\int_{Q_T}(m_2-\ol m_2- l_1)\Gamma_2^2\nm\\
 \;&-\frac{\mathcal{E}_2}{\mathcal{E}^2_1}\int_{Q_T}(m_1-\ol m_1- l_1)\Gamma_1^2+A\Bigg[ \frac{1}{\mathcal{E}_1}\int_{Q_T}(m_1-\ol m_1- l_2)\Gamma_1^2\nm\\
\;&-\frac{\mathcal{E}_1}{\mathcal{E}^2_2}\int_{Q_T}(m_2-\ol m_2- l_2)\Gamma_2^2+\int_{Q_T}l_2\kk(\frac{\sqrt{\mathcal{E}_1}}{\mathcal{E}_2}\Gamma_2
-\frac{1}{\sqrt{\mathcal{E}_1}}\Gamma_1\rr)^2 \Bigg]\nm\\
=\;&\int_{Q_T}(l_1+l_2)\kk(\frac{\sqrt{\mathcal{E}_2}}{\mathcal{E}_1}\Gamma_1
 -\frac{1}{\sqrt{\mathcal{E}_2}}\Gamma_2\rr)^2+\frac{\mathcal{E}_2}{\mathcal{E}^2_1}
 \int_{Q_T}(l_1-l_2)\Gamma_1^2+\frac{1}{\mathcal{E}_2}\int_{Q_T}(l_2-l_1)\Gamma_2^2\nm\\
=\;&\frac 2{\mathcal{E}_1}\int_{Q_T}\kk(A l_1\Gamma_1^2-(l_1+l_2)\Gamma_1\Gamma_2+\frac 1Al_2\Gamma_2^2 \rr)\nm\\[2mm]
 =\;&\Pi. \label{4.74}
  \end{align}

In case (2a), as $\Pi=0$, we have $B_1+AB_2=0$, which implies $\mathrm{dist}(L_1, L_2)\to 0$ as $d_1\to\infty$.

In case (2b), if $\overline m_1\not\equiv\overline m_2$ on $[0,T]$ and $\Pi>0$, the assertion follows from \eqref{4.73} and \qq{4.74}.
If $\overline m_1\equiv\overline m_2$ on $[0,T]$, then $l_1=l_2$ for all $t\in[0,T]$, and
   \[  B_1+AB_2=2\int_{Q_T}l_1\kk(\frac{\sqrt{\mathcal{E}_2}}{\mathcal{E}_1}\Gamma_1
   -\frac{1}{\sqrt{\mathcal{E}_2}}\Gamma_2\rr)^2\ge 0.\]
We claim that $B_1+AB_2>0$. Otherwise, one has $A\Gamma_1\equiv \Gamma_2$,
which together with \eqref{1.10} and \eqref{1.12} implies
$A(m_1-\ol m_1)\equiv m_2-\ol m_2$.
Building on this, we obtain $\mathcal{E}_1=\mathcal{E}_2$, i.e., $A=1$, and so $m_1\equiv m_2$. This contradicts the condition {\bf(M)}. Thus $B_1+AB_2>0$, and the assertion follows from \eqref{4.73} and \qq{4.74}.

In case (2c), since $\Pi<0$, the curve $\widehat{\mathsf{d}}_2(d_1)$ lies blow the curve $\widehat{\mathsf{d}}_1(d_2)$, and the assertion follows from \eqref{4.73} and \qq{4.74}.
\end{proof}

\section{At least one diffusion rate is large: Proofs of Theorems \ref{th1.3}-\ref{th1.5}}\label{sec5}

This section is devoted to establishing the results concerning large diffusion rates and mixed-scale asymptotics. We begin with Theorem \ref{th1.3} on global stability, then proceed to the detailed classification in Theorem \ref{th1.4}, and conclude with the asymptotic profile result in Theorem \ref{th1.5}.

\subsection{Proof of Theorem \ref{th1.3}}

(1) By Lemma \ref{lm4.2}(2), there exists $\bar d\gg 1$, depending only on $m_i$, such that $(\theta_{d_1,m_1},0)$ is globally asymptotically stable for all $d_2>d_1>\bar d$. Now take $a=\ep$ and $b=\bar d$ in Lemma \ref{lm4.2}(1). Then there exists $d_2^{\ep}=d_2^{\ep,\bar d}>\bar d$ such that $(\theta_{d_1,m_1},0)$ is globally asymptotically stable for all $(d_1,d_2)\in[\ep,\bar d]\times(d_2^{\ep},\infty)$. Therefore, $(\theta_{d_1,m_1},0)$ is globally asymptotically stable for all $(d_1,d_2)\in[\ep,d_2)\times(d_2^{\ep},\infty)$.

(2) This follows directly from Lemma \ref{lm4.2}(1).

\subsection{Proof of Theorem \ref{th1.4}}

Assume that conditions {\bf(M1)}--{\bf(M2)} hold and $\widehat{m}_i(x)\not\equiv\text{constant}$ on $\boo$ for $i=1,2$. Let $ d_0, D_1, D_2>0$ and strictly increasing $C^1$ functions $\widehat{\mathsf{d}}_2:(D_1,\infty)\to( d_0,\infty)$ and $\widehat{\mathsf{d}}_1:(D_2,\infty)\to ( d_0,\infty)$ be obtained in Lemma \ref{lm4.7}. The estimate \eqref{4.50} gives $\widehat{\mathsf d}_1'(d_2)>0$ for $d_2>D_2$. Let $\widetilde{\mathsf{d}}_2(d_1)$ denote the inverse function of $\widehat{\mathsf{d}}_1(d_2)$. Define $D_1^\sharp :=\lim\limits_{d_2\searrow D_2}\widehat{\mathsf d}_1(d_2)$. From \qq{4.49} and the second relation in \qq{4.50} we see that for all $(d_1,d_2)\in(D_1^\sharp,\infty)\times(D_2,\infty)$,
\[\mu(d_1,m_1-\theta_{d_2,m_2})=0 \Longleftrightarrow d_1=\widehat{\mathsf{d}}_1(d_2)\Longleftrightarrow d_2=\widetilde{\mathsf{d}}_2(d_1), \]
and $\lim_{d_1\to\infty}\frac{\widetilde{\mathsf{d}}_2(d_1)}{d_1}=A$.

By virtue of Lemma \ref{lm4.7}, it is concluded that
\begin{equation}\label{5.1}
\begin{cases}
\mu(d_2,m_2-\theta_{d_1,m_1})<0
&\text{if }d_1>D_1,
\quad  d_0<d_2<\widehat{\mathsf d}_2(d_1),\\
\mu(d_2,m_2-\theta_{d_1,m_1})>0
&\text{if }d_1>D_1,
\quad d_2>\widehat{\mathsf d}_2(d_1),\\
\mu(d_1,m_1-\theta_{d_2,m_2})>0
&\text{if }d_1>D_1^\sharp,
\quad D_2<d_2<\widetilde{\mathsf d}_2(d_1),\\
\mu(d_1,m_1-\theta_{d_2,m_2})<0
&\text{if }d_1>D_1^\sharp,
\quad d_2>\widetilde{\mathsf d}_2(d_1).
\end{cases}
\end{equation}
Furthermore,
\begin{equation}\label{5.2}
\lim_{d_1\to\infty}
\frac{\widehat{\mathsf d}_2(d_1)}{d_1}
=\lim_{d_1\to\infty}
\frac{\widetilde{\mathsf d}_2(d_1)}{d_1}
=A=\frac{\mathcal E(m_2)}{\mathcal E(m_1)}.
\end{equation}
Thanks to Lemma \ref{lm4.6}, there exist constants
$\delta_2^*>0$ and $\bar d_1^0>0$, together with a strictly decreasing $C^1$ function $\mathsf d_2:(\bar d_1^0,\infty)\to(0,\delta_2^*)$ such that, for every $d_1>\bar d_1^0$,
\begin{equation}\label{5.3}
\begin{cases}
\mu(d_1,m_1-\theta_{d_2,m_2})<0,&\forall\;0<d_2<\mathsf d_2(d_1),\\
\mu(d_1,m_1-\theta_{d_2,m_2})>0, &\forall\;\mathsf d_2(d_1)<d_2\leq\delta_2^*.
\end{cases}
\end{equation}
Moreover,
  \begin{equation}\label{5.4}
\lim_{d_1\to\infty}d_1\mathsf d_2(d_1)=\frac{\mathcal E_{12}}{\mathcal I_2},
\quad\mathsf d_2'(d_1)=-\frac{\mathcal E_{12}}{\mathcal I_2}d_1^{-2}
+o\left(d_1^{-2}\right).
\end{equation}
Note that $\widehat m_2(x)$ is assumed to be non-constant on $\overline\Omega$ in Theorem \ref{th1.4}. By Lemma \ref{lm4.3}(1), there exist constants $\bar d_1^1>0$ and $\varepsilon_1>0$ such that
 \begin{equation}\label{5.5}
\mu(d_2,m_2-\theta_{d_1,m_1})<0,\;\;\forall\; d_1>\bar d_1^1,\;0<d_2\leq\varepsilon_1.
\end{equation}

Choose a constant $\delta_*>0$, independently of $d_1$, such that
  \begin{equation}\label{5.6}
  \delta_*<\min\{\delta_2^*,\varepsilon_1, d_0,D_2\}.
  \end{equation}
The equation for $l_{m_1}$ and the equal-mass condition {\bf(M)} imply
\[ \frac{1}{T|\Omega|}\int_{Q_T}(m_2(x,t)-l_{m_1}(t))\,\mathrm dx\,\mathrm dt=0. \]
Let $\varphi_{d_2}>0$ be the principal eigenfunction associated with
$\mu(d_2,m_2-l_{m_1})$. Dividing its eigenvalue equation by
$\varphi_{d_2}$ and integrating over $Q_T$, we obtain
  \[\mu(d_2,m_2-l_{m_1})=-\frac{d_2}{T|\Omega|}
\int_{Q_T}|\nabla\log\varphi_{d_2}|^2
\,\mathrm dx\,\mathrm dt<0,\;\;\forall\; d_2>0.\]
The inequality is strict because $m_2-l_{m_1}$ is spatially heterogeneous. Hence,
 \begin{equation}\label{eq5.7}
\max_{d_2\in[\delta_*, d_0]}\mu(d_2,m_2-l_{m_1})<0.
\end{equation}

Set $\xi:=\|\theta_{d_1,m_1}- l_{m_1}\|_{L^\infty(Q_T)}$. Then on $\overline{Q}_T$,
   \[m_2-l_{m_1}-\xi\leq m_2-\theta_{d_1,m_1}\leq m_2-l_{m_1}+\xi\]
By Proposition \ref{lm2.1}(2), $\mu(d,\cdot)$ is decreasing in its second argument, so \[\mu(d,m_2-l_{m_1}-\xi)\geq\mu(d,m_2-\theta_{d_1,m_1})\geq\mu(d,m_2-l_{m_1}+\xi).\]
We further use the identities
 \begin{align*}
 \mu(d,m_2-l_{m_1}-\xi)=\mu(d,m_2-l_{m_1})+\xi,\\ \mu(d,m_2-l_{m_1}+\xi)=\mu(d,m_2-l_{m_1})-\xi,
 \end{align*}
to deduce
\begin{equation}\label{5.8}
|\mu(d,m_2-\theta_{d_1,m_1})-\mu(d,m_2-l_{m_1})|\leq\xi=\|\theta_{d_1,m_1}-l_{m_1}\|_{L^\infty(Q_T)}.
\end{equation}
Combined with the uniform convergence $\theta_{d_1,m_1}\to l_{m_1}$ on $\overline Q_T$ as $d_1\to\infty$, the inequalities \eqref{5.8} and \eqref{eq5.7} yield 
\begin{equation}\label{5.9}
\mu(d_2,m_2-\theta_{d_1,m_1})<0,\;\;\forall\;d_2\in[\delta_*, d_0]
\end{equation}
whenever $d_1$ is sufficiently large.

On the other hand, by Lemma \ref{lm2.7}(1),
  \[\sup_{\delta_*\le d_2\le D_2}\left(\|m_1-\theta_{d_2,m_2}\|_{L^\infty(Q_T)}
  +\|(m_1-\theta_{d_2,m_2})_t\|_{L^2(0,T;L^2(\Omega))} \right)<\infty.\]
Applying Proposition \ref{p2.2} with the above bound, yields a constant $C_*>0$, independent of $d_1$ and $d_2$, such that for sufficiently large $d_1>0$,
  \[\sup_{d_2\in[\delta_*,D_2]}\left|\mu(d_1,m_1-\theta_{d_2,m_2})+
\frac{1}{T|\Omega|}\int_{Q_T}(m_1-\theta_{d_2,m_2})\,\mathrm dx\,\mathrm dt
\right|\le {C_*}d_1^{-1}.\]
It follows that
 \[\lim_{d_1\to\infty}\mu(d_1,m_1-\theta_{d_2,m_2})=
-\frac{1}{T|\Omega|}\int_{Q_T}(m_1-\theta_{d_2,m_2})\mathrm dx\mathrm dt\;\;\;
 \text{uniformly for}\;\;d_2\in[\delta_*,D_2].\]
By {\bf(M)} and the equation for $\theta_{d_2,m_2}$, we see that
  \[ -\frac{1}{T|\Omega|}\int_{Q_T}(m_1-\theta_{d_2,m_2})\,\mathrm dx\,\mathrm dt
  =\frac{d_2}{T|\Omega|}\int_{Q_T}\left|\frac{\nabla\theta_{d_2,m_2}}{\theta_{d_2,m_2}}\right|^2\,\mathrm dx\,\mathrm dt>0. \]
Consequently,
\begin{equation}\label{5.10}
\mu(d_1,m_1-\theta_{d_2,m_2})>0,\;\;\forall\; d_2\in[\delta_*,D_2]
\end{equation}
whenever $d_1$ is sufficiently large.\zzz
\begin{enumerate}
\item[(1)] When $m_1-m_2$ is spatially homogeneous, we choose a constant $\bar d_1>\max\{D_1,D_1^\sharp,\bar d_1^0,\bar d_1^1\}$.
 \item[(2)] When either $\ol m_1(t)\equiv\ol m_2(t)$ on $[0,T]$, or $\ol m_1(t)\not\equiv\ol m_2(t)$ on $[0,T]$ and $\Pi>0$, Lemma \ref{lm4.10}(2b) yields a constant $D_{+}>0$ such that $\widehat{\mathsf d}_2(d_1)>\widetilde{\mathsf d}_2(d_1)$  for all $d_1>D_{+}$. 
In this case, we choose a constant $\bar d_1>\max\{D_1,D_1^\sharp,\bar d_1^0,\bar d_1^1,D_{+}\}$.
\item[(3)] When $\ol m_1(t)\not\equiv\ol m_2(t)$ on $[0,T]$ and $\Pi<0$, Lemma
\ref{lm4.10}(2c) yields a constant $D_->0$ such that $\widehat{\mathsf d}_2(d_1)<\widetilde{\mathsf d}_2(d_1)$ for all $d_1>D_{-}$. 
In this case, we choose a constant $\bar d_1>\max\{D_1,D_1^\sharp,\bar d_1^0,\bar d_1^1,D_{-}\}$.\zzz
\end{enumerate}
Moreover, in the above three cases, $\bar d_1$ can be enlarged so that the sign relations \eqref{5.9} and
\eqref{5.10} hold for every $d_1>\bar d_1$, and
  \begin{equation}\label{5.11}
\mathsf d_2(d_1)<\delta_*,\;\;\forall\; d_1>\bar d_1.
  \end{equation}

Combining \eqref{5.5}, \eqref{5.9}, and the first two relations in \eqref{5.1}, we obtain
\begin{equation}\label{5.12}
\begin{cases}
\mu(d_2,m_2-\theta_{d_1,m_1})<0&\text{when}\;\;0<d_2<\widehat{\mathsf d}_2(d_1),\\
\mu(d_2,m_2-\theta_{d_1,m_1})>0&\text{when}\;\;d_2>\widehat{\mathsf d}_2(d_1)
\end{cases}
\end{equation}
for all $d_1>\bar d_1$. Similarly, \eqref{5.3}, \eqref{5.10}, and the last two relations in \eqref{5.1} yield
\begin{equation}\label{5.13}
\begin{cases}
\mu(d_1,m_1-\theta_{d_2,m_2})<0&\text{when}\;\; 0<d_2<\mathsf d_2(d_1),\\
\mu(d_1,m_1-\theta_{d_2,m_2})>0&\text{when}\;\;\mathsf d_2(d_1)<d_2<\widetilde{\mathsf d}_2(d_1),\\
\mu(d_1,m_1-\theta_{d_2,m_2})<0&\text{when}\;\;d_2>\widetilde{\mathsf d}_2(d_1)
\end{cases}
\end{equation}
for all $d_1>\bar d_1$.

We next apply Lemma \ref{lm4.4} with this fixed value of $\bar d_1$. There exist a constant $d_{2,\bar d_1}>0$ and a strictly decreasing
$C^1$ function $\mathsf d_1:(d_{2,\bar d_1},\infty)\to(0,\infty)$, satisfying $\mathsf d_1(d_2)=O(d_2^{-1})$ as $d_2\to\infty$, such that, whenever $d_2>d_{2,\bar d_1}$,
\begin{equation}\label{5.14}
\begin{cases}
\mu(d_2,m_2-\theta_{d_1,m_1})<0,&\forall\;0<d_1<\mathsf d_1(d_2),\\
\mu(d_2,m_2-\theta_{d_1,m_1})>0,&\forall\;\mathsf d_1(d_2)<d_1\leq\bar d_1,\\
\mu(d_1,m_1-\theta_{d_2,m_2})<0,&\forall\;0<d_1\leq\bar d_1.
\end{cases}
\end{equation}
Only at this stage do we choose $\bar d_2$ so large that
\begin{equation}\label{5.15}
\bar d_2>\max\{d_{2,\bar d_1},D_2, d_0,\bar d_1\}
\end{equation}
and
\begin{equation}\label{5.16}
\mathsf d_1(d_2)<\bar d_1,\;\;\forall\;d_2>\bar d_2.
\end{equation}
This final choice changes only $\bar d_2$ and does not alter the
already fixed value of $\bar d_1$.

\begin{proof}[Proof of Theorem $\ref{th1.4}(1)$] Suppose that $m_1-m_2$ is spatially homogeneous. In view of \eqref{4.8} and the definition of  $\bar d_1$,
\begin{equation}\label{5.17}
\mu(d_2,m_2-\theta_{d_1,m_1})>0,\;\;\;
\mu(d_1,m_1-\theta_{d_2,m_2})<0\quad\text{whenever }d_2>d_1>\bar d_1.
\end{equation}

For $d_2>\bar d_2$ and $0<d_1<\mathsf d_1(d_2)$. Combining \eqref{5.16} with the first and third relations in \eqref{5.14}, we obtain $\mu(d_2,m_2-\theta_{d_1,m_1})<0$ and $\mu(d_1,m_1-\theta_{d_2,m_2})<0$. Consequently, system \eqref{1.4} is uniformly persistent, and \eqref{1.5} possesses a linearly stable positive solution by Proposition \ref{p2.4}(3).

For $d_2>\bar d_2$ and $\mathsf d_1(d_2)<d_1<d_2$. If $d_1\leq\bar d_1$, the second and third relations in \eqref{5.14} yield $\mu(d_2,m_2-\theta_{d_1,m_1})>0$ and $\mu(d_1,m_1-\theta_{d_2,m_2})<0$. If instead $d_1>\bar d_1$, the same sign relations hold by virtue of \eqref{5.17}. It then follows from Proposition \ref{p2.3} that \((\theta_{d_1,m_1},0)\) is linearly stable, whereas \((0,\theta_{d_2,m_2})\) is linearly unstable.
\end{proof}\vskip 4pt

\begin{proof}[Proof of Theorem $\ref{th1.4}(2)$]
Assume that $m_1(x,t)-m_2(x,t)$ is spatially heterogeneous, and condition {\bf(M3)} holds.

(2a) Either $\bar m_1(t)\equiv\bar m_2(t)$ on $[0,T]$, or $\bar m_1(t)\not\equiv\bar m_2(t)$ on $[0,T]$ and $\Pi>0$.

By \eqref{5.11}, \eqref{5.6}, and the range of $\widetilde{\mathsf d}_2$, we have
$\mathsf d_2(d_1)<\widetilde{\mathsf d}_2(d_1)$ for $d_1>\bar d_1$. Lemma \ref{lm4.10}(2b) together with the choice of $\bar d_1$ further gives
  \[0<\mathsf d_2(d_1)<\widetilde{\mathsf d}_2(d_1)<\widehat{\mathsf d}_2(d_1)\quad\text{for }d_1>\bar d_1.\]
From \eqref{5.12}–\eqref{5.14}, we draw the following conclusions.

(1) In the $(d_1,d_2)$‑region defined by \eqref{1.x14},
  \[\mu(d_2,m_2-\theta_{d_1,m_1})<0 \quad\text{and}\quad \mu(d_1,m_1-\theta_{d_2,m_2})<0.\]
Consequently, system \eqref{1.4} is uniformly persistent, and \eqref{1.5} admits a linearly stable positive solution by Proposition \ref{p2.4}(3);

(2) In the $(d_1,d_2)$‑region defined by \eqref{1.x15},
  \[\mu(d_2,m_2-\theta_{d_1,m_1})>0 \quad\text{and}\quad \mu(d_1,m_1-\theta_{d_2,m_2})<0.\]
Hence, by Proposition \ref{p2.3}, $(\theta_{d_1,m_1},0)$ is linearly stable, while $(0,\theta_{d_2,m_2})$ is linearly unstable;

(3) For $(d_1,d_2)\in(\bar d_1,\infty)\times(\mathsf d_2(d_1),\widetilde{\mathsf d}_2(d_1))$,
  \[\mu(d_2,m_2-\theta_{d_1,m_1})<0 \quad\text{and}\quad \mu(d_1,m_1-\theta_{d_2,m_2})>0.\]
Hence, by Proposition \ref{p2.3}, $(\theta_{d_1,m_1},0)$ is linearly unstable, while $(0,\theta_{d_2,m_2})$ is linearly stable.

This proves part (2a).

(2b) $\bar m_1(t)\not\equiv\bar m_2(t)$ on $[0,T]$ and $\Pi<0$.

By \eqref{5.11}, \eqref{5.6}, and the range of $\widehat{\mathsf d}_2$, we have $\mathsf d_2(d_1)<\widehat{\mathsf d}_2(d_1)$ for $d_1>\bar d_1$. Lemma \ref{lm4.10}(2c) together with the choice of $\bar d_1$ further gives $0<\mathsf d_2(d_1)<\widehat{\mathsf d}_2(d_1)<\widetilde{\mathsf d}_2(d_1)$ for $d_1>\bar d_1$. Relations \eqref{5.12}–\eqref{5.14} imply the following.

(1) In the $(d_1,d_2)$ region defined by \eqref{1.x16}, 
 \[\mu(d_2,m_2-\theta_{d_1,m_1})<0 \quad\text{and}\quad \mu(d_1,m_1-\theta_{d_2,m_2})<0.\] Consequently, system \eqref{1.4} is uniformly persistent, and \eqref{1.5} admits a linearly stable positive solution by Proposition \ref{p2.4}(3);

(2) In the $(d_1,d_2)$ region defined by \eqref{1.x17}, 
  \[\mu(d_2,m_2-\theta_{d_1,m_1})>0 \quad\text{and}\quad \mu(d_1,m_1-\theta_{d_2,m_2})<0.\] 
Hence, by Proposition \ref{p2.3}, $(\theta_{d_1,m_1},0)$ is linearly stable, while $(0,\theta_{d_2,m_2})$ is linearly unstable;

(3) In the $(d_1,d_2)$ region defined by \eqref{1.x18}, 
 \[\mu(d_2,m_2-\theta_{d_1,m_1})<0 \quad\text{and}\quad \mu(d_1,m_1-\theta_{d_2,m_2})>0.\] 
Hence, by Proposition \ref{p2.3}, $(\theta_{d_1,m_1},0)$ is linearly unstable, while $(0,\theta_{d_2,m_2})$ is linearly stable;

(4) In the $(d_1,d_2)$ region defined by \eqref{1.x18}, 
 \[\mu(d_2,m_2-\theta_{d_1,m_1})>0 \quad\text{and}\quad \mu(d_1,m_1-\theta_{d_2,m_2})>0.\] 
Hence,  both $(\theta_{d_1,m_1},0)$ and $(0,\theta_{d_2,m_2})$ are linearly stable by Proposition \ref{p2.3}, and \eqref{1.5} admits a linearly unstable positive solution by Proposition \ref{p2.4}(4).

This proves part (2b).
\end{proof}

\subsection{Proof of Theorem \ref{th1.5}}

In view of \qq{5.15} and the first and third inequalities in \qq{5.14},
\begin{align}
\mu(d_2,m_2-\theta_{d_1,m_1})<0,\quad \mu(d_1,m_1-\theta_{d_2,m_2})<0
\quad\text{for }(d_1,d_2)\in(0,\mathsf{d}_1(d_2))\times(\bar d_2,\infty).\label{5.22}
\end{align}
The existence of a linearly stable positive solution $(U,V)$ to \qq{1.5} follows directly from \eqref{5.22} and Proposition \ref{p2.4}(3).

We next derive the asymptotic profile of $(U,V)$ as $d_1\to 0$ and $d_2\to\infty$.
Theorem \ref{th1.5} imposes condition \eqref{1.8}, namely $\widehat{m}_1(x)\not\equiv \widehat{m}_2(x)$ on $\overline{\Omega}$. Let $\{\widetilde u_{k,d_1,d_2}\}_{k\ge 1}$, $\{\undl u_{k,d_1,d_2}\}_{k\ge 1}$, $\{\widetilde v_{k,d_1,d_2}\}_{k\ge 1}$ and $\{\undl v_{k,d_1,d_2}\}_{k\ge 1}$ denote the sequences of functions in $C_T^{2+\alpha,1+\alpha/2}(\overline Q_T)$ defined in \eqref{3.2}. Clearly, relation \eqref{3.3} remains valid.
\vskip 4pt

{\it Step 1: The limits of $\bigl(\widetilde u_{k,d_1,d_2},\,\undl u_{k,d_1,d_2},\,\widetilde v_{k,d_1,d_2},\,\undl v_{k,d_1,d_2}\bigr)$ and $(U,V)$ as $d_1\to 0$}.

Following the proof of Theorem \ref{th1.1}, we deduce that there exist  $\dd\bigl(\widetilde U_{k,d_2},\,\undl U_{k,d_2}, \,\widetilde V_{k,d_2},\,\undl V_{k,d_2}\bigr)$ with $\widetilde U_{k,d_2}, \undl U_{k,d_2}\in C_T^{\alpha,1}(\overline Q_T)$ and $\widetilde V_{k,d_2}, \undl V_{k,d_2}\in C_T^{2+\alpha,1+\alpha/2}(\overline Q_T)$ such that
  \bee\label{5.23}
 \lim_{d_1\to 0}\bigl(\widetilde u_{k,d_1,d_2},\,\undl u_{k,d_1,d_2},\,\widetilde v_{k,d_1,d_2},\,\undl v_{k,d_1,d_2}\bigr)=\bigl(\widetilde U_{k,d_2},\,\undl U_{k,d_2}, \,\widetilde V_{k,d_2},\,\undl V_{k,d_2}\bigr)\;\;\;\text{in}\;\;C(\ol Q_T).
  \eee
Moreover, these functions satisfy
  \bee\label{5.24}\begin{cases}
  \partial_t\widetilde U_{k+1,d_2}=\widetilde U_{k+1,d_2}\bigl(m_1(x,t)-\undl V_{k,d_2}-\widetilde U_{k+1,d_2}\bigr) & \text{in }Q_T,\\[1mm]
  \partial_t\undl V_{k,d_2}=d_2\Delta\undl V_{k,d_2}+\undl V_{k,d_2}\bigl(m_2(x,t)-\widetilde U_{k,d_2}-\undl V_{k,d_2}\bigr)&\text{in }Q_T, \\[1mm]
 \partial_t\undl U_{k,d_2}=\undl U_{k,d_2}\bigl(m_1(x,t)-\widetilde V_{k,d_2}-\undl U_{k,d_2}\bigr) & \text{in }Q_T, \\[1mm]
 \partial_t\widetilde V_{k+1,d_2}=d_2\Delta\widetilde V_{k+1,d_2}+\widetilde V_{k+1,d_2}\bigl(m_2(x,t)-\undl U_{k,d_2}-\widetilde V_{k+1,d_2}\bigr)& \text{in }Q_T,\\[1mm]
 \partial_{\nu}\undl V_{k,d_2}=\partial_{\nu}\widetilde V_{k,d_2}=0  & \text{on }S_T.
 \end{cases}
 \eee
In view of \eqref{3.3} and \eqref{5.23}, and use an analogous argument to the derivation of \qq{3.6} we have
  \begin{align}\label{5.25}
\Theta_{m_1-\theta_{d_2,m_2}}\leq \undl U_{k,d_2}\leq \undl U_{k+1,d_2}\leq \dd\liminf_{d_1\to 0}U \leq \limsup_{d_1\to 0}U\leq \widetilde U_{k+1,d_2}\leq \widetilde U_{k,d_2}\leq \Phi_1,\\
\Theta_{d_2,m_2-\Phi_1}\leq \undl V_{k,d_2}\leq \undl V_{k+1,d_2}\dd\leq \liminf_{d_1\to 0}V \leq\limsup_{d_1\to 0}V\leq \widetilde V_{k+1,d_2}\leq \widetilde V_{k,d_2}\leq \theta_{d_2,m_2}\nm
  \end{align}
uniformly in $\ol Q_T$, where $\Theta_{d,m}$ and $\Theta_{m}$ are the maximal nonnegative solutions to \eqref{1.2} and \qq{1.6}, respectively.

{\it Step 2: The limit of $\bigl(\widetilde U_{k,d_2},\,\undl U_{k,d_2}, \,\widetilde V_{k,d_2},\,\undl V_{k,d_2}\bigr)$ as $k\to\yy$}.

There exist four bounded  and time $T$-periodic functions $\widetilde U_{d_2},\undl U_{d_2},\widetilde V_{d_2}$ and $\undl V_{d_2}$ such that
  \bee\label{5.26}
\lim_{k\to\infty}\bigl(\widetilde U_{k,d_2},\,\undl U_{k,d_2},\,\widetilde V_{k,d_2},\,\undl V_{k,d_2}\bigr)=\bigl(\widetilde U_{d_2},\,\undl U_{d_2},\,\widetilde V_{d_2},\,\undl V_{d_2}\bigr)\quad \text{pointwise on }\; \ol Q_T.
 \eee
The $L^p$ theory for time-periodic parabolic equations (cf. \cite{L99,KS19}) shows that $\{\widetilde V_{k,d_2}\}$ and $\{\undl V_{k,d_2}\}$ are bounded in $W_p^{2,1}(Q_T)$ for any $p>n$. Hence, $\widetilde V_{k,d_2},\undl V_{k,d_2}\in C^{1+\alpha,\frac{1+\alpha}{2}}_T(\overline Q_T)$, and are also bounded in $C^{1+\alpha,\frac{1+\alpha}{2}}(\ol Q_T)$. Applying the variation-of-constants formula to the equations of $\widetilde U_{k,d_2}$ and $\undl U_{k,d_2}$ in \eqref{5.24}, we deduce that $\{\widetilde U_{k,d_2}\}$ and $\{\undl U_{k,d_2}\}$ are bounded in $C^{\alpha,1}(\ol Q_T)$. Then the Schauder theory for time-periodic parabolic equations implies that $\{\widetilde V_{k,d_2}\}$ and $\{\undl V_{k,d_2}\}$ are bounded in $C^{2+\alpha,1+\alpha/2}(\ol Q_T)$. Combined with \eqref{5.26}, this gives $\widetilde V_{d_2},\undl V_{d_2}\in C^{2+\gamma,1+\gamma/2}_T(\overline Q_T)$ and
   \[\lim_{k\to\infty}\|\widetilde V_{k,d_2}-\widetilde V_{d_2}\|_{C^{2+\gamma,1+\gamma/2}(\ol Q_T)}=0, \quad \lim_{k\to\infty}\|\undl V_{k,d_2}-\undl V_{d_2}\|_{C^{2+\gamma,1+\gamma/2}(\ol Q_T)}=0\]
for some $0<\gamma<\alpha$. By \eqref{5.24}, an argument similar to the proof of Theorem \ref{th1.1} and Dini's Theorem, we have that $\widetilde U_{d_2},\undl U_{d_2}\in C^{\gamma,1}_T(\overline Q_T)$,
\[  \lim_{k\to\infty}\|\widetilde U_{k,d_2}-\widetilde U_{d_2}\|_{C(\ol Q_T)}=0, \quad \lim_{k\to\infty}\|\undl U_{k,d_2}-\undl U_{d_2}\|_{C(\ol Q_T)}=0,\]
and $(\widetilde U_{d_2},\undl V_{d_2},\undl U_{d_2},\widetilde V_{d_2})$ is a $T$-periodic solution of
  \bee\label{5.27}\begin{cases}
\partial_t\widetilde U_{d_2}=\widetilde U_{d_2}\big(m_1(x,t)-\undl V_{d_2}-\widetilde U_{d_2}\big)& \text{in }Q_T,\\[1mm]
\partial_t\undl V_{d_2}=d_2\Delta \undl V_{d_2}+\undl V_{d_2}\big(m_2(x,t)-\widetilde U_{d_2}-\undl V_{d_2}\big)& \text{in }Q_T, \\[1mm]
\partial_t \undl U_{d_2}=\undl U_{d_2}\big(m_1(x,t)-\widetilde V_{d_2}-\undl U_{d_2} \big)& \text{in }Q_T, \\[1mm]
\partial_t\widetilde V_{d_2}=d_2\Delta \widetilde V_{d_2}+\widetilde V_{d_2}\big(m_2(x,t)-\undl U_{d_2}-\widetilde V_{d_2}\big)& \text{in }Q_T, \\[1mm]
\partial_{\nu}\undl V_{d_2}=\partial_{\nu}\widetilde V_{d_2}=0
  & \text{on }S_T.
  \end{cases}\eee

{\it Step 3: We claim that $\min_{\overline\Omega}\int_0^T[m_1(x,t)-\undl V_{d_2}(x,t)]\le 0$}.

Suppose to the contrary that $\min_{\overline\Omega}\int_0^T[m_1(x,t)-\undl V_{d_2}(t)]>0$. By \qq{5.22} and Proposition \ref{lm2.1}(1), $\max_{\boo}\int_0^T(m_1-\theta_{d_2,m_2})>0$. There exists $x_0\in\boo$ such that
  \[\int_0^T(m_1(x_0,t)-\theta_{d_2,m_2}(x_0,t))>0,\]
which implies $\Theta_{m_1-\theta_{d_2,m_2}}(x_0,t)>0$ for all $t\in[0,T]$ by \qq{1.6}. It follows from \qq{5.25} and \qq{5.26} that $\widetilde U_{d_2}(x_0,t)>0$  for all $t\in[0,T]$. By the continuity, the set
 \[ \Omega_*=\bigl\{x\in\boo:\widetilde U_{d_2}(x,t)>0,\;\;\forall\, t\in[0,T]\bigr\} \]
is a relatively open subset of $\boo$. We confirm that $\Omega_*$ is also a relatively closed subset of $\boo$. In fact, let $x_i\in\Omega_*$ and $x_i\to\bar x\in\boo$. It then follows from the first equation of \qq{5.27} that
 \[\int_0^T\widetilde U_{d_2}(x_i,t)\dt=\int_0^T(m_1-\undl V_{d_2})(x_i,t)\dt
 \ge \min_{\overline\Omega}\int_0^T(m_1-\undl V_{d_2})>0. \]
Letting $i\to\yy$ yields
 \[\int_0^T\widetilde U_{d_2}(\bar x,t)\dt
 \ge \min_{\overline\Omega}\int_0^T(m_1-\undl V_{d_2})>0. \]
This combined with the first equation of \qq{5.27} implies $\widetilde U_{d_2}(\bar x,t)>0$ for all $t\in[0,T]$, i.e., $\bar x\in\Omega_*$. Thus $\Omega_*=\boo$ and $\widetilde U_{d_2}>0$ on $\ol Q_T$.

Making use of the first equation of \qq{5.27}, we have
  \[ \int_0^T(m_1-\undl V_{d_2}-\widetilde U_{d_2})=0,\;\;\forall\, x\in\overline{\Omega},\]
which yields that
\[ \int_0^T(m_2-\undl V_{d_2}-\widetilde U_{d_2})\mathrm dt=\int_0^T(m_2-m_1)\mathrm dt,\;\;\forall\, x\in\overline{\Omega}.\]
Thanks to the condition {\bf(M2)}, we see that $m_2-\Phi_1$ is spatially heterogeneous. Then by Proposition \ref{lm2.1}(1),
\[ \mu(d_2,m_2-\Phi_1)<-\frac{1}{T|\Omega|}\int_{Q_T}(m_2-\Phi_1)=0,\]
which means that $\Theta_{d_2,m_2-\Phi_1}>0$ on $\overline Q_T$. This together with \eqref{5.25}-\eqref{5.26} yields that $\undl V_{d_2}>0$ on $\overline Q_T$. Hence,
  \[  d_2\int_{Q_T}\bigg| \frac{\nabla \undl V_{d_2}}{\undl V_{d_2}} \bigg|^2=d_2\int_{Q_T}\frac{\Delta \undl V_{d_2}}{\undl V_{d_2}}=-\int_{Q_T}(m_2-\widetilde U_{d_2}-\undl V_{d_2})=\int_{Q_T}(m_1-m_2)=0\]
by the equation of $\undl V_{d_2}$ in \eqref{5.27}. The above identity means that $\undl V_{d_2}$ is spatially homogeneous, and so is $m_2-\widetilde U_{d_2}$ by the second equation of \eqref{5.27}. Consequently, 
 \[\int_0^T(m_2-m_1)=\int_0^T(m_2-\undl V_{d_2}-\widetilde U_{d_2})\]
is spatially homogeneous. This contradicts the assumption that $\widehat{m}_1(x)\not\equiv\widehat{m}_2(x)$ on $\overline\Omega$. The claim follows. 

{\it Step 4: The limit of $\dd\bigl(\widetilde U_{d_2},\,\undl V_{d_2},\,\undl U_{d_2},\,\widetilde V_{d_2}\bigr)$ as $d_2\to\yy$}.

Similar to the proof of Lemma \ref{lm4.2}, by the regularity and boundedness of $(\widetilde U_{d_2},\undl V_{d_2},\undl U_{d_2},\widetilde V_{d_2})$, we derive that passing to a subsequence of $d_2$ if necessary,
 \bee
 \lim_{d_2\to\yy}\bigl(\widetilde U_{d_2},\,\undl V_{d_2},\,\undl U_{d_2},\,\widetilde V_{d_2}\bigr)=(U^*,V_*,U_*,V^*)\;\;\;\text{in}\;\;C(\ol Q_T),
 \label{5.28}\eee
and $(U^*(x,t),V_*(t),U_*(x,t),V^*(t))$ is a $T$-periodic solution to
  \bee\label{5.29}\begin{cases}
U^*_t=U^*\big(m_1(x,t)-V_*-U^*\big), & t>0,\;\;x\in\boo,\\
 V_*'(t)=V_*\big(\ol m_2(t)-U^*-V_*\big), & t>0,\\
 (U_*)_t=U_*\big(m_1(x,t)-V^*-U_*\big), &t>0,\;\;x\in\boo\\
  {V^*}'(t)=V^*\big(\ol m_2(t)-U_*-V^*\big), & t>0.
\end{cases}
 \eee
Moreover, by the equation of $U_*$ we have that
  \[\text{for any}\;\; x\in\boo,\;\;\; \text{either}\;\; U_*(x,t)\equiv 0\;\;\;\text{or}\;\;U_*(x,t)>0\;\;\text{on}\;\;[0,T].\]

{\it Step 5: We claim that $\min_{\overline\Omega}\int_0^T[m_1(x,t)-V^*(t)]\geq 0$}.

It follows from {\it Step 3} that $\min_{\overline\Omega}\int_0^T[m_1(x,t)-V_*(t)]\le 0$, which implies $V_*(t)>0$ in $[0,T]$ by the equation of $V_*(t)$. Then $V^*(t)\geq V_*(t)>0$ in $[0,T]$, and by the condition {\bf(M)} and the equation of $V^*$ in \eqref{5.29}, we have
  \bee\label{5.30}
\int_{Q_T}(m_1-U_*-V^*)=\int_{Q_T}(m_2-U_*-V^*)=0.
 \eee
By virtue of $\widehat{m}_1(x)\not\equiv\text{constant}$ on $\boo$ together with $\int_\oo(\widehat{m}_1-\widehat{\ol m}_1)=0$, we see that the function
\[
\int_0^T(m_1-l_2)=\int_0^T(m_1-\ol m_1)=\widehat{m}_1(x)-\widehat{\ol m}_1
\]
changes sign on $\boo$. Accordingly, the set
\begin{align*}
\Omega'=\left\{ x\in\overline\Omega:\int_0^T[m_1(x,t)-l_2(t)]\mathrm dt> 0 \right\}
\end{align*}
is nonempty, and $\Theta_{m_1-l_2}(x,t)>0$ on $[0,T]$ for all $x\in\oo'$. Notice that
  \[\Theta_{m_1-\theta_{d_2,m_2}}\to\Theta_{m_1-l_2}\;\;\;\text{in}\;\; C(\ol Q_T)\]
as $d_2\to\infty$. For every $x\in\oo'$, by \eqref{5.25}, \qq{5.26} and \eqref{5.28}, we have $U_*(x,t)\ge \Theta_{m_1-l_2}(x,t)>0$ on $[0,T]$. Then $U_*(x,t)=\Theta_{m_1-V^*}(x,t)>0$ on $[0,T]$, and by the equation of $U_*$,
   \[\int_0^T[m_1(x,t)-V^*(t)]\mathrm dt=\int_0^TU_*(x,t)\dt>0. \]

If the claim $\min_{\overline\Omega}\int_0^T[m_1(x,t)-V^*(t)]\geq 0$ fails, then there exists $x_*\in\overline\Omega$ such that
  \[\int_0^T[m_1(x_*,t)-V^*(t)]\mathrm dt<0.\]
Therefore, the sets defined by
  \begin{align*}
\Omega_1&=\left\{ x\in\overline\Omega:\int_0^T[m_1(x,t)-V^*(t)]\mathrm dt< 0 \right\},\\
\Omega_2&=\left\{ x\in\overline\Omega:\int_0^T[m_1(x,t)-V^*(t)]\mathrm dt> 0 \right\}
  \end{align*}
are both nonempty. It is clear that $U_*(x,t)\equiv 0$ on $[0,T]$ for any $x\in\ol\oo_1$. We will show that,
 \bee
 \text{for each}\;\;x\in\oo_2, \; U_*(x,t)>0\;\;\text{and}\;\;U_*(x,t)=\Theta_{m_1-V^*}(x,t)\;\;\text{on}
 \;\;[0,T]. \label{5.31}
 \eee
In fact, take $\ep(x)=\frac 14\int_0^T[m_1(x,t)-V^*(t)]\mathrm dt$. Then we have
 \begin{align*}
 \int_0^T V^*(t)\mathrm dt>\;&\int_0^T\widetilde V_{d_2}(x,t)\dt-\ep(x)
  \;\;\text{fixed}\;\;d_2\gg 1\\
  >\;&\int_0^T\widetilde V_{k,d_2}(x,t)\dt-2\ep(x)
  \;\;\text{fixed}\;\;n\gg 1.
  \end{align*}
Therefore,
  \[\int_0^T[m_1(x,t)-\widetilde V_{k,d_2}(x,t)]>\ep(x), \]
which implies $\Theta_{m_1-\widetilde V_{k,d_2}}(x,t)=\Phi_{m_1-\widetilde V_{k,d_2}}(x,t)>0$, and
 \bee
  \int_0^T\Theta_{m_1-\widetilde V_{k,d_2}}(x,t)\dt=\int_0^T[m_1(x,t)-\widetilde V_{k,d_2}(x,t)]\dt>\ep(x).\label{5.32}\eee
By the construction of $\undl u_{k,d_1,d_2}$, we have $\undt u_k=\Theta_{d_1,m_1-\tilde v_k}$. Since $\widetilde V_{k,d_2}=\lim_{d_1\to 0}\tilde v_k$, it follows form \eqref{5.23} and Lemma \ref{lm2.6} that
  \[ \undl{U}_{k,d_2}=\lim_{d_1\to 0}\undt u_k=\lim_{d_1\to 0}\Theta_{d_1,m_1-\tilde v_k}
  =\Theta_{m_1-\widetilde V_{k,d_2}}\;\;\;\text{in}\;\;C(\ol Q_T). \]
This combined with  \qq{5.32} implies $\int_0^T\undl{U}_{k,d_2}(x,t)\dt>\ep(x)$. Hence, by \qq{5.26} and \qq{5.28}, we have $\int_0^T U_*(x,t)\dt>\ep(x)>0$. This implies $U_*(x,t)>0$  and $U_*(x,t)=\Theta_{m_1-V^*}(x,t)>0$ on $[0,T]$ for all $x\in\oo_2$.

Consequently,
  \begin{align*} \int_{Q_T}U_*&=\int_0^T\int_{\Omega_2}\Theta_{m_1-V^*}
=\int_0^T\int_{\Omega_2}(m_1-V^*) \\
  &>\int_0^T\int_{\Omega_2}(m_1-V^*)+\int_0^T\int_{\Omega_1}(m_1-V^*)
  =\int_{Q_T}(m_1-V^*),
\end{align*}
which contradicts \eqref{5.30}. This claim is proved.

{\it Step 6}. From the results of Steps 3 and 5, together with \eqref{5.25} and \eqref{5.26}, we conclude that $\widehat V^*=\widehat V_*=\min_{\overline\Omega}\widehat{m}_1$. Define
  \begin{align*}
 \Omega_0=\left\{ x\in\overline\Omega: \widehat{m}_1(x)=\min_{\overline\Omega}\widehat{m}_1\right\},\;\;
\Omega_+=\left\{x\in\overline\Omega:\widehat{m}_1(x)>\min_{\overline\Omega}\widehat{m}_1\right\}.
   \end{align*}
In view of the equations of $U^*$ and $U_*$ in \eqref{5.29}, we have that for each $x\in\Omega_0$, $U^*(x,t)=U_*(x,t)\equiv 0$ on $[0,T]$. Same as the proof of \qq{5.31} we can show that for each $x\in\Omega_2$, $U^*(x,t)>0$ and $U_*(x,t)>0$ on $[0,T]$. Hence, for each $x\in\Omega_0$,
  \[\int_0^T(m_1-V_*-U^*)=\int_0^T(m_1-V^*-U_*)=0,\]
while for each $x\in\Omega_+$,
   \[ \int_0^T(m_1-V_*-U^*)=\int_0^T\frac{U^*_t}{U^*}=0,\;\;
   \int_0^T(m_1-V^*-U_*)=\int_0^T\frac{(U_*)_t}{U_*}=0.\]
Thus we have
  \[  \int_0^TU^*=\int_0^TU_*=\int_0^Tm_1(x,t)-\min_{\overline\Omega}
  \int_0^Tm_1(x,t),\;\;\forall\, x\in\overline\Omega. \]
Finally, combining the above argument with \eqref{5.25} and the convergence of  $(\widetilde U_{k,d_2},\undl V_{k,d_2}, \undl U_{k,d_2},\widetilde V_{k,d_2})$, we obtain \qq{1.14}, thereby completing the proof of Theorem \ref{th1.5}.

\begin{appendix}
\renewcommand{\thesection}{Appendix}
\section{Examples for the conditions in Theorem \ref{th1.4}\label{secB}}
\setcounter{equation}{0}

We will construct explicit examples of $m_1$ and $m_2$ such that all hypotheses of Theorem \ref{th1.4} hold.

\noindent{\bf Example}:\, {\it
Let $\Omega=(0,1)$ and $T=2$, $m_i(x,t)=10+r_i\cos(\pi x)+l_i\sin(\pi t)$, where $r_i$ and $l_i$ are positive integers, $r_i+l_i\leq 9$ and $(r_1,l_1)\neq (r_2,l_2)$.}

The conditions {\bf(M)}--{\bf(M2)} are satisfied, and $\widehat{m}_i(x)\not\equiv\mathrm{constant}$ for $i=1,2$. Notice that
  \[ \int_0^1m_i(x,t)\mathrm dx=10+l_i\sin(\pi t),\quad m_i(x,t)-\int_0^1m_i(x,t)\mathrm dx=r_i\cos(\pi x),\quad i=1,2.\]
It follows from \eqref{1.10} and \eqref{1.11} that for $i=1,2$,
  \[\Gamma_{m_i}(x,t)=\frac{r_i}{\pi^2}\cos(\pi x),\]
and
  \[  l_{m_i}(t)=\frac{\kk[\exp\kk(\int_{Q_T}m_i\rr)-1\rr]
  \exp\kk(\int_0^t\int_0^1m_i\rr)}{\int_0^2
  \exp\kk(-\int_0^{\tau}\int_0^1m_i\rr)+\kk[\exp\kk(\int_{Q_T}m_i\rr)-1\rr]
  \int_0^t\exp\kk(-\int_0^{\tau}\int_0^1m_i\rr)}.\]
Hence, for $i=1,2$,
   \[  \mathcal{E}_i=\int_{Q_T}|\partial_x\Gamma_{m_i}|^2
   =\frac{2r_i^2}{\pi^2}\int_0^1\sin^2(\pi x)=\frac{r_i^2}{\pi^2}.\]
\begin{enumerate}
\item[(1)] Choose $r_1=r_2$ and $l_1\neq l_2$. Then $m_1(x,t)-m_2(x,t)$ is spatially homogeneous.
\item[(2)] Choose $r_1\neq r_2$ and $l_1=l_2$. Then $m_1(x,t)-m_2(x,t)$ is spatially heterogeneous and $\int_0^1(m_1-m_2)\equiv 0$.
\item[(3)] Choose $r_1=7,r_2=6,l_1=1$ and $l_2=2$. Then $m_1(x,t)-m_2(x,t)$ is spatially heterogeneous and $\int_0^1(m_1-m_2)\not\equiv 0$. Using \eqref{1.13} and the tool of Matlab software, we calculate that $\Pi\approx 485.0953$.
\item[(4)] Choose $r_1=7,r_2=5,l_1=2$ and $l_2=1$. Then $m_1(x,t)-m_2(x,t)$ is spatially heterogeneous and $\int_0^1(m_1-m_2)\not\equiv 0$. Using \eqref{1.13} and the tool of Matlab software, we calculate that $\Pi\approx -75.8628$.
\end{enumerate}
\end{appendix}

{\bf Acknowledgement}: The authors would like to thank Professors Binxiang Dai and Zhi-An Wang for their valuable comments and suggestions.

{\bf Credit Author Statement}

{\bf Ethical approval}: The manuscript is original and hasn't been published elsewhere in any form or language.

{\bf Data Availability}:  Not applicable.

{\bf Conflicts of Interest}:
The authors declare that there are no known competing financial interests or personal relationships that could have influenced the work reported in this paper.

\end{document}